\DeclareFontFamily{U}{russian}{}
\DeclareFontShape{U}{russian}{m}{n}
        { <5><6> wncyr5
        <7><8><9> wncyr7
        <10><10.95><12><14.4><17.28><20.74><24.88> wncyr10 }{}
\DeclareSymbolFont{Russian}{U}{russian}{m}{n}
\DeclareSymbolFontAlphabet{\mathcyr}{Russian}
\let\@math@cyr\mathcyr
\renewcommand{\mathcyr}[1]{\@math@cyr{\cyracc #1}}
\newcommand{\sch}{\mathit{Sch}}%{\mathscr S\mspace{-2.mu}ch}
\newcommand{\reg}{\mathit{Reg}_{k}}%{\mathscr S\mspace{-2.mu}ch}
\newcommand{\sft}{\mathscr S^{ft}}
\newcommand{\sm}{\mathit{Sm}}%{\mathscr S\mspace{-2.mu}m}
\newcommand{\smc}{\sm^{\mathit{cor}}}%{\mathscr S\mspace{-2.mu}m^{cor}}
\newcommand{\sftcx}[1]{\mathscr S^{ft,\mathit{cor}}_{#1}}
\newcommand{\smcx}[1]{\smc_{#1}}%{\mathscr S\mspace{-2.mu}m^{cor}_{#1}}
\newcommand{\zequi}{z_{\mathit{equi}}}
\DeclareMathOperator{\sh}{Sh}
\newcommand{\shtr}{\sh^{\mathit{tr}}}
\newcommand{\ushtr}{\underline {\sh}^{\mathit{tr}}}
\newcommand{\eff}{\mathit{eff}}
\newcommand{\DM}{\mathrm{DM}}
\newcommand{\DMe}{\mathrm{DM}^{\eff}}
\newcommand{\uDM}{\underline{\mathrm{DM}\!}\,}
\newcommand{\uDMe}{\underline{\mathrm{DM}\!}\,^{\eff}}
\DeclareMathOperator{\Sp}{Sp}
\DeclareMathOperator{\Hom}{Hom}
\DeclareMathOperator{\uHom}{\underline{Hom}} %Hom interne
\DeclareMathOperator{\Spec}{Spec}
\newcommand{\ilim} { \varinjlim }
\DeclareMathOperator{\Der}{D}
\newcommand{\derL}{\mathbf{L}}
\newcommand{\derR}{\mathbf{R}}
\newcommand{\NN} {\mathbf N}
\newcommand{\ZZ} {\mathbf Z}
\newcommand{\QQ} {\mathbf Q}
\renewcommand{\AA} {\mathbf A}
\newcommand{\PP} {\mathbf P}
\newcommand{\cO}{\mathcal O}
\newcommand{\T}{\mathcal T}
\newcommand{\un}{\mathbbm 1} %unit structure monodale
\newcommand{\uM}{\underline{M\!}\,}
\newcommand{\nis}{{\mathrm{Nis}}}
\newcommand{\et}{\mathrm{\acute{e}t}}
\newcommand{\cdh}{{\mathrm{cdh}}}
\newcommand{\h}  {{\mathrm{h}}}
\newcommand{\SH}{\mathrm{SH}}
\newcommand{\Mod}{\text{-}\mathrm{Mod}}
\newcommand{\HZ}{\mathit{HR}}
\newcommand{\DMcdh}{\DM_{\cdh}}
\newcommand{\ldh}{{\ell\mathrm{dh}}}
\title{Integral mixed motives in equal characteristic}
\author{Denis-Charles Cisinski}
\address{Universit\'e Paul Sabatier\\
Institut de Math\'ematiques de Toulouse\\
118\\ route de Narbonne\\
31062 Toulouse Cedex 9\\France}
\email{denis-charles.cisinski@math.univ-toulouse.fr}
\urladdr{http://www.math.univ-toulouse.fr/~dcisinsk/}
\author{Fr\'ed\'eric D\'eglise}
\address{E.N.S. Lyon\\UMPA\\
46\\all\'ee d'Italie\\
69364 Lyon Cedex~07\\
France}
\email{frederic.deglise@ens-lyon.fr}
\urladdr{http://perso.ens-lyon.fr/frederic.deglise/}
\thanks{Partially supported by the ANR (grant No. ANR-12-BS01-0002)}
\newtheorem{thm}{Theorem}[section]
\newtheorem{prop}[thm]{Proposition}
\newtheorem{lm}[thm]{Lemma}
\newtheorem{cor}[thm]{Corollary}
\theoremstyle{remark} % env. "remark" de l'AMS
\newtheorem{rem}[thm]{Remark}
\newtheorem{ex}[thm]{Example}
\newtheorem*{conj}{Conjecture}
\theoremstyle{definition} % env. "definition" de l'AMS
\newtheorem{df}[thm]{Definition}
\newtheorem{num}[thm]{}
\newtheorem{paragr}[thm]{}
\numberwithin{equation}{thm}
\renewcommand{\leftrightarrows}{\rightleftarrows}
\begin{document}

\begin{abstract}
For noetherian schemes of finite dimension over
a field of characteristic exponent $p$,
we study the triangulated categories of $\ZZ[1/p]$-linear
mixed motives obtained from $\cdh$-sheaves with transfers.
We prove that these have many of the expected properties.
In particular,
the formalism of the six operations holds in this context.
When we restrict ourselves to regular schemes,
we also prove that these categories of motives
are equivalent to the more classical
triangulated categories of mixed motives
constructed in terms of Nisnevich sheaves with transfers.
Such a program is achieved by comparing these various
triangulated categories of motives with modules over
motivic Eilenberg-MacLane spectra.
\end{abstract}

\maketitle

\setcounter{tocdepth}{3}
\tableofcontents
The main advances of the actual theory of mixed motivic complexes
over a field come from the fact they are defined integrally.
 Indeed, this divides the theory in two variants,
 the Nisnevich one and the \'etale one.
 With rational coefficients, the two theories agree and share their
 good properties. But with integral coefficients,
 their main success comes from the comparison
 of these two variants, the so-called Beilinson-Lichtenbaum
 conjecture which was proved by Voevodsky and gave the solution
 of the Bloch-Kato conjecture.

One of the most recent works in the theory has been devoted
 to extend the definitions in order to get the 6 operations of
 Grothendieck and to check they satisfy the required formalism;
 in chronological order: an unpublished work of Voevodsky, \cite{ayoub},
 \cite{CD3}. While the project has been finally completely realized
 with rational coefficients in \cite{CD3}, the case of integral
 coefficients remains unsolved. In fact, this is half true:
 the \'etale variant is now completely settled:
 see \cite{ayoub5}, \cite{CD4}.

But the Nisnevich variant is less mature.
Markus Spitzweck \cite{spit2} has constructed a motivic ring spectrum
over any Dedekind domain, which allows to define motivic cohomology
of arbitrary schemes, and even triangulated categories
of motives on a general base (under the form of modules over the
pullbacks of the motivic ring spectrum over $\Spec(\ZZ)$).
However, at this moment, there is no proof that Spitzweck's
motivic cohomology satisfies
the absolute purity theorem, and we do not know how to compare
Spitzweck's construction with triangulated categories of motives
constructed in the language of algebraic correspondences (except for fields).
What is concretely at stake is the theory of algebraic cycles:
 we expect that motivic cohomology of a regular scheme 
 in degree $2n$ and twist $n$ agrees with the Chow group
 of $n$-codimensional cycles of $X$.
 Let us recall for example that the localization
 long exact sequence for higher Chow groups and the existence
 of a product of Chow groups of regular schemes are
 still open questions in the arithmetic case
 (\emph{i.e.} for schemes of unequal residual characteristics). For sake of completeness, let us recall that the localization long exact sequence in equal characteristic already is the fruit of non trivial contributions of Spencer Bloch \cite{Bloch1,Bloch2} and Marc Levine \cite{localization}.
Their work involves moving lemmas which 
are generalizations of the classical moving lemma used to understand the intersection product of cycles \cite{Ful}.

 Actually, Suslin and Voevodsky have already provided
 an intersection theoretic basis for the integral definition
 of Nisnevich motivic complexes: the theory of relative
 cycles of \cite[chap. 2]{FSV}. Then, along the lines drawn
 by Voevodsky, and especially the homotopy theoretic setting
 realized by Morel and Voevodsky, it was at least possible
 to give a reasonable definition of such a theory
 over an arbitrary base,
 using Nisnevich sheaves with transfers over this base,
 and the methods of $\AA^1$-homotopy and $\PP^1$-stabilization:
 this was done in \cite[Sec. 7]{CD3}.
 Interestingly enough, the main technical issue of this construction is to
 prove that these motivic complexes satisfy the existence
 of the localization triangle:
 $$j_!\, j^*(M)\to M\to i_*\, i^*(M)\to j_!\, j^*(M)[1]$$
 for any closed immersion $i$ with open complement $j$.
 This echoes much with the question
 of localization sequence for higher Chow groups.

In our unsuccessful efforts to prove this property with integral coefficients,
 we noticed two things: the issue of dealing with singular schemes
 (the property is true for smooth schemes over any base, and,
 with rational coefficients,
 for any closed immersion between excellent geometrically unibranch
 scheme); the fact this property implies $\cdh$-descent
 (i.e. Nisnevich descent together with descent by blow ups).
 Moreover, in \cite{CD4}, we show that, at least for torsion coefficients,
 the localization property for \'etale motivic complexes is true without any restriction,
 but this is due to rigidity properties ({\it \`a la} Suslin)
 which only hold \'etale locally,
 and for torsion coefficients.

 Therefore, the idea of replacing Nisnevich topology by a finer one, 
 which allows to deal with singularities, but remains compatible with algebraic cycles,
 becomes obvious. The natural choice goes to the $\cdh$-topology: in
 Voevodsky's work \cite{FSV}, motivic (co)homology of smooth schemes
 over a field is naturally extended to schemes of finite type by
 $\cdh$-descent in characteristic zero (or, more generally, if we admit
 resolution of singularities), and S.~Kelly's thesis~\cite{kelly} generalizes this
result to arbitrary perfect fields of characteristic $p>0$, at least
with $\ZZ[1/p]$-linear coefficients.

\bigskip

In this work, we prove that if one restricts to noetherian schemes
of finite dimension over
 a prime field (in fact, an arbitrary perfect field) $k$,
 and if we invert solely the characteristic exponent of $k$,
 then mixed motives built out of $\cdh$-sheaves with transfers
 (Definition \ref{df:cdh-motives}) do satisfy 
 the localization property: Theorem \ref{thm:continuityabsDMcdh}.
 Using the work of Ayoub,
 it is then possible to get the complete 6 functors formalism for
 these $\cdh$-motives. Note that we also prove
 that these $\cdh$-motives agree with the Nisnevich ones for
 regular $k$-schemes -- hence proving that the original
 construction done in \cite[Def. 11.1.1]{CD3} is meaningful if
 one restricts to regular schemes of equal characteristic
 and invert the residue characteristic
 (see Corollary \ref{cor:DM_reg&6functors} for a precise account).

The idea is to extend a result of
 R\"ondigs and  {\O}stv{\ae}r, which identifies motivic complexes
 with modules over the motivic Eilenberg-MacLane spectrum
 over a field of characteristic $0$.
 This was recently generalized to perfect fields
 of characteristic $p>0$, up to inverting $p$,
 by Hoyois, Kelly and {\O}stv{\ae}r \cite{HKO}.
 Our main result, proved in Theorem \ref{thm:comparisoncdh},
 is that this property holds for arbitrary noetherian $k$-schemes
 of finite dimension provided we use $\cdh$-motives 
 and invert the exponent characteristic $p$ of $k$ in their coefficients.
For any noetherian $k$-scheme
of finite dimension $X$ with structural map $f:X\to\Spec(k)$, let us put
$H\ZZ_{X/k}=\derL f^*(H\ZZ_k)$. Then there is a canonical equivalence
of triangulated categories
$$H\ZZ_{X/k}[1/p]\Mod\simeq\DM_\cdh(X,\ZZ[1/p])\, .$$
 One of the ingredients is to prove this result for
 Nisnevich motivic complexes with $\ZZ[1/p]$-coefficients
 if one restricts to noetherian regular $k$-schemes 
 of finite dimension: see Theorem \ref{thm:comparisonnis}.
 The other ingredient is to use Gabber's refinement 
 of de~Jong resolution of singularities by alteration
 via results and methods from Kelly's thesis.
 
We finally prove the stability of the notion of constructibility
 for $\cdh$-motives up to inverting the
 characteristic exponent in Theorem \ref{thm:constructmotivic}.
 While the characteristic $0$ case can be obtained
 using results of \cite{ayoub}, the positive characteristic case
 follows from a geometrical argument of Gabber (used in
 his proof of the analogous fact for torsion \'etale sheaves).
 We also prove a duality theorem for schemes of finite type over a field
 (\ref{thm:duality}), and describe cycle cohomology of Friedlander and Voevodsky
using the language of the six functors (\ref{thm:bivcycleDM6op}).
In particular, Bloch's higher Chow groups
and usual Chow groups of schemes of finite type over a field are
are obtained via the expected formulas (see \ref{cor:higherChow} and \ref{cor:Chow}).

\bigskip

We would like to thank Offer Gabber for pointing
out Bourbaki's notion of $n$-gon\-flement, $0\leq n\leq \infty$.
We also want to warmly thank the referee for many precise and constructive
comments and questions, which helped us to greatly improve the readability
of this article.

%The paper is organized as follows.
% In section 1, we recall the definition of
% Nisnevich motivic complexes and their $\cdh$-variant
% following \cite{CD3}.
% In section 2, we define modules over motivic Eilenberg-MacLane spectra
% of regular $k$-schemes using the key property
% of \emph{continuity} (Def. \ref{df:continuous}).
% In section 3, we prove our main theorem in the regular case.
% In section 4, we recall the definition of (left-)modules
% over motivic Eilenberg-MacLane spectra of any $k$-scheme.
% Section 5 contains the proof of the main theorem in
% all cases. It also deals with the central notion of $\ldh$-descent,
% based on, and improving some results of Kelly \cite{kelly}.
% Section 6 finally contains the study of \emph{constructible}
% $\cdh$-motives.

\section*{Conventions}

We will fix a perfect base field $k$ of characteristic exponent $p$
 -- the case where $k$ is a prime field is enough.
All the schemes appearing in the paper are assumed to be noetherian
 of finite dimension.

We will fix a commutative ring $R$ which will serve as
 our coefficient ring.

\section{Motivic complexes and spectra}

In \cite[chap. 5]{FSV}, Voevodsky introduced the category of motivic complexes
 $\DM_-^{\mathit{eff}}(S)$
 over a perfect field with integral coefficients, a candidate for a conjectural theory
 described by Beilinson. Since then, several generalizations to more general bases have been proposed.

In \cite{CD3}, we have introduced the following ones over a general base
noetherian scheme $S$:
\begin{num} \label{num:shtr} \textit{The Nisnevich variant}.-- 
Let $\Lambda$ be the localization of $\ZZ$ by the prime numbers
 which are invertible in $R$.
The first step is to consider the category $\smcx{\Lambda,S}$ whose ojects
 are smooth separated $S$-schemes of finite type
 and morphisms between $X$ and $Y$
 are \emph{finite $S$-correspondences from $X$ to $Y$ with coefficients
 in $\Lambda$}
 (see \cite[Def. 9.1.8]{CD3} with $\mathscr P$ the category
  of smooth separated morphisms of finite type).\footnote{Recall:
  a finite $S$-correspondence from $X$ to $Y$ with coefficients
	 in $\Lambda$ is
  an algebraic cycle in $X \times_S Y$ with $\Lambda$-coefficients 
  such that:
  \begin{enumerate}
  \item its support is finite equidimensional over $X$,
  \item it is a relative cycles over $X$ in the sense of Suslin and Voevodsky
	 (cf. \cite[chap. 2]{FSV}) -
	 equivalently it is a special cycle over $X$
	 (cf. \cite[def. 8.1.28]{CD3}),
	\item it is $\Lambda$-universal
	 (cf. \cite[def. 8.1.48]{CD3}).
\end{enumerate}
When $X$ is geometrically unibranch,
 condition (2) is always fulfilled (cf. \cite[8.3.26]{CD3}).
When $X$ is regular of the characteristic exponent of
 any residue field of $X$ is invertible in $\Lambda$,
 condition (3) is always fulfilled (cf. \cite[8.3.29]{CD3}
 in the first case). Everything gets much simpler when we work
 locally for the $\cdh$-topology; see \cite[Chap.~2, 4.2]{FSV}.

Recall also for future reference this definition makes sense
 even if $X$ and $Y$ are singular of finite type over $S$.}
Taking the graph of a morphism between smooth $S$-schemes,
 one gets a faithful functor $\gamma$ from the usual category
 of smooth $S$-schemes to the category $\smcx{\Lambda,S}$.
 
Then one defines the category $\shtr_\nis(S,R)$ of \emph{sheaves with transfers over $S$}
 as the category of presheaves $F$ of $R$-modules over $\smcx{\Lambda,S}$
 whose restriction to the category of smooth $S$-schemes $F \circ \gamma$ is
 a sheaf for the Nisnevich topology. Essentially according to the
 original proof of Voevodsky over a field (see \cite[10.3.3 and 10.3.17]{CD3}
 for details), this is a symmetric monoidal
 Grothendieck abelian category.

The category $\DM(S,R)$ of Nisnevich motivic spectra over $S$ is defined
 by applying the process of $\AA^1$-localization, and then of $\PP^1$-stabilization,
 to the (adequate model category structure corresponding to the) derived category of $\shtr_\nis(S,R)$; see
 \cite[Def. 11.1.1]{CD3}.
By construction, any smooth $S$-scheme $X$ defines
 a (homological) motive $M_S(X)$ in $\DM(S,R)$
  which is a compact object. Moreover, the triangulated category
  $\DM(S,R)$ is generated by Tate twists of such
  homological motives, i.e. by objects of the
  form $M_S(X)(n)$ for a smooth $S$-scheme $X$,
  and an integer $n \in \ZZ$.
\end{num}

\begin{rem}
When $S=\Spec(K)$ is the spectrum of a perfect field,
 the triangulated category $\DM(S,\ZZ)$ contains as a full and faithful subcategory
 the category $\DM_-^{\mathit{eff}}(K)$ defined in \cite[chap. 5]{FSV}.
 This follows from the description of $\AA^1$-local objects in this case
 and from the cancellation theorem of Voevodsky \cite{cancel}
 (see for example \cite[Sec. 4]{Deg9}
 for more details).
\end{rem}

\begin{num}\label{num:generalized_DM}
\textit{The generalized variants}.-- This variant
 is an \emph{enlargement}\footnote{See \cite[1.4.13]{CD3}
 for a general definition of this term.}
 of the previous context.
 However, at the same time, one can consider several possible Grothendieck topologies $t$:
 the Nisnevich topology $t=\nis$, the $\cdh$-topology $t=\cdh$,
 the \'etale topology $t=\et$, or the $\h$-topology $t=\h$.
 
Instead of using the category $\smcx{\Lambda,S}$,
 we consider the larger category $\sftcx{\Lambda,S}$
 made by all separated $S$-schemes of
 finite type whose morphisms are made by the finite $S$-correspondences
 with coefficients in $\Lambda$
 as in the previous paragraph (see again \cite[9.1.8]{CD3} with
 $\mathscr P$ the class of all separated morphisms of finite type).

Then we can still define the category $\ushtr_t(S,R)$ of generalized $t$-sheaves
 with transfers over $S$
 as the category of additive presheaves of $R$-modules over \smash{$\sftcx{\Lambda,S}$} 
 whose restriction to $\sft_S$ is a sheaf for the $\cdh$ topology.
 This is again a well suited Grothendieck abelian category
 (by which we mean that, using the terminology of \cite{CD3},
 when we let $S$ vary, we get an abelian premotivic category which is compatible with the topology $t$; see \cite[Sec. 10.4]{CD3}).
Moreover we have natural adjunctions:
\begin{equation} \label{eq:nis2cdh_abelian}
\xymatrix@=30pt{
\shtr_\nis(S,R)\ar@<2pt>^{\rho_!}[r]
 & \ushtr_\nis(S,R)\ar@<2pt>^{a_\cdh^*}[r]\ar@<2pt>^{\rho^*}[l]
 & \ushtr_\cdh(S,R)\ar@<2pt>[l]
}
\end{equation}
where $\rho^*$ is the natural restriction functor
 and $a_\cdh^*$ is the associated $\cdh$-sheaf with
 transfers functor (see \emph{loc. cit.})

Finally,
 one defines the category $\uDM_t(S,R)$ of generalized motivic $t$-spectra over $S$
 and coefficients in $R$
 as the triangulated category obtained by $\PP^1$-stabilization and $\AA^1$-localization of the (adequate model category structure corresponding to the) derived category of $\ushtr_t(S,R)$.
 
Note that in the generalized context, any $S$-scheme $X$ defines
 a (homological) $t$-motive $M_S(X)$ in $\uDM_t(S,R)$
  which is a compact object and depends covariantly on $X$. 
  This can even be extended to
   simplicial $S$-schemes (although we might then obtain
   non compact objects). Again, the triangulated category
  $\uDM_t(S,R)$ is generated by objects of the form
  $M_S(X)(n)$ for a smooth $S$-scheme $X$ and an integer $n \in \ZZ$.

Thus, we have three variants of motivic spectra.
Using the adjunctions \eqref{eq:nis2cdh_abelian}
 (which are Quillen adjunctions for suitable underlying model categories),
 one deduces adjunctions made by exact functors as follows:
\begin{equation} \label{eq:nis2cdh_tri}
\xymatrix@=30pt{
\DM(S,R)\ar@<2pt>^{\derL \rho_!}[r]
 & \uDM(S,R)\ar@<2pt>^{\derL a_\cdh^*}[r]\ar@<2pt>^{\derR \rho^*}[l]
 & \uDM_\cdh(S,R)\ar@<2pt>[l]
}
\end{equation}
The following assertions are consequences of the model category structures
 used to get these derived functors:
\begin{enumerate}
\item for any smooth $S$-scheme $X$ and any integer $n \in \ZZ$,
 $\derL \rho_!\big(M_S(X)(n)\big)=\uM_S(X)(n)$.
\item for any $S$-scheme $X$ and any integer $n \in \ZZ$,
 $\derL a_\cdh ^*\big(\uM_S(X)(n)\big)=\uM_S(X)(n)$.
\end{enumerate}
 \end{num}

The following proposition is a formal consequence of
 these definitions:
\begin{prop}
The category $\uDM_\cdh(S,R)$ is the localization
 of $\uDM(S,R)$ obtained by inverting
 the class of morphisms of the form:
$$
\uM_S(X_\bullet) \xrightarrow{p_*} \uM_S(X)
$$
for any $\cdh$-hypercover $p$ of any $S$-scheme $X$.
Moreover, the functor $a_\cdh$ is the canonical projection functor.
\end{prop}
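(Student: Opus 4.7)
The plan is to exhibit $\uDM_\cdh(S,R)$ as a reflective Bousfield localization of $\uDM(S,R)$ at the class $W$ of morphisms $\uM_S(X_\bullet)\to\uM_S(X)$ coming from $\cdh$-hypercovers, via two formal observations: (a) the right adjoint $\derR\rho^*$ of \eqref{eq:nis2cdh_tri} is fully faithful, and (b) its essential image is precisely the right orthogonal of $W$.

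For (a), I would argue as follows. At the abelian level of \eqref{eq:nis2cdh_abelian}, $a_\cdh^*$ is an exact Gabriel localization with fully faithful right adjoint $\rho^*$: this is literally $\cdh$-sheafification restricted to generalized Nisnevich sheaves with transfers. The compatibility of the $\cdh$-topology with the premotivic structure established in \cite[Sec.~10.4]{CD3} guarantees that this reflective structure is preserved when one passes to the descent model structure on chain complexes, then to the $\AA^1$-localized model structure, and finally to the $\PP^1$-stabilization; hence $\derR\rho^*$ is still fully faithful on the triangulated level and $\derL a_\cdh^*$ is a reflective localization.

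For (b), I would check both inclusions. Given a $\cdh$-hypercover $p: X_\bullet\to X$, assertion (2) recalled after \eqref{eq:nis2cdh_tri} combined with $\cdh$-hyperdescent for representables shows that $\derL a_\cdh^*$ sends the canonical map $\uM_S(X_\bullet)\to\uM_S(X)$ to an isomorphism; hence every object in the essential image of $\derR\rho^*$ is right-orthogonal to $W$. Conversely, if $K\in\uDM(S,R)$ is right-orthogonal to $W$, then the presheaf with transfers underlying each level of $K$ satisfies $\cdh$-hyperdescent, which, because the $\cdh$-topology is of finite cohomological dimension on noetherian schemes of finite dimension, reduces to ordinary $\cdh$-descent; thus $K$ lies in the essential image of $\derR\rho^*$.

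Steps (a) and (b) together say that $\derL a_\cdh^*$ is the universal triangulated functor out of $\uDM(S,R)$ that inverts $W$, which is precisely the claimed localization statement; the final sentence of the proposition then follows by definition. The only mild technical point — ensuring that the reflective structure survives all three operations (derivation, $\AA^1$-localization, $\PP^1$-stabilization) — is not an obstacle in practice, as it is exactly what is built into the framework of premotivic categories compatible with a topology in \cite{CD3}; this is why the proposition is indeed ``a formal consequence of the definitions.''
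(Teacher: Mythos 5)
The paper itself gives no proof of this proposition---it is simply asserted to be ``a formal consequence of these definitions''---so your task was effectively to unwind the formalism, and you have done so along the standard Bousfield-localization lines, which is clearly what the authors have in mind. Two small points deserve attention.

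First, a notational slip: in the triangulated diagram \eqref{eq:nis2cdh_tri}, $\derR\rho^*$ is the right adjoint of $\derL\rho_!$ (the inclusion $\DM\hookrightarrow\uDM$), not the right adjoint of $\derL a^*_\cdh$, which is the \emph{unlabelled} arrow $\uDM_\cdh(S,R)\to\uDM(S,R)$ coming, at the abelian level, from the forgetful inclusion $\ushtr_\cdh\hookrightarrow\ushtr_\nis$. Your argument in (a) and (b) is really about this unlabelled functor, and while the content is right, calling it $\derR\rho^*$ risks confusion with the paper's notation and, more importantly, suggests a wrong picture of which adjoint pair is being localized.

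Second, in the reverse inclusion of (b), the detour through finite cohomological dimension to reduce $\cdh$-hyperdescent to $\cdh$-descent is both unnecessary and somewhat backwards. In the framework of \cite[Sec.~5]{CD3}, the $t$-descent model structure is \emph{defined} so that fibrant objects are exactly those satisfying $t$-hyperdescent; right orthogonality to the hypercover maps $W$ is literally this condition, and is directly the characterization of objects in the essential image of the right adjoint of $\derL a^*_\cdh$. No passage to \v{C}ech descent is needed (nor is it quite what you want, since a priori you need the stronger hyperdescent property to describe fibrancy). There is also an implicit subtlety you pass over silently: after $\PP^1$-stabilization the localizing class should \emph{a priori} include Tate twists of hypercover maps, and one needs to observe that these are already in the localizing subcategory generated by $W$ (e.g.\ because a smooth base change of a $\cdh$-hypercover is again a $\cdh$-hypercover, so twisted hypercover maps appear as direct summands of untwisted ones). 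With these small corrections your proof captures the intended ``formal consequence'' accurately.
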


The definition that will prove most useful is the following one.
\begin{df}\label{df:cdh-motives}
Let $S$ be any noetherian scheme.

One defines the triangulated category $\DM_\cdh(S,R)$
 of $\cdh$-motivic spectra, as the full localizing triangulated subcategory
 of $\uDM_{\cdh}(S,R)$ generated by motives of the form
 $\uM_S(X)(n)$ for a smooth $S$-scheme $X$ and an integer $n \in \ZZ$.
\end{df}

\begin{num}
 These categories for various base schemes $S$ are equipped with a basic
 functoriality
 ($f^*$, $f_*$, $f_\sharp$ for $f$ smooth, $\otimes$ and $\uHom$)
 satisfying basic properties. In  \cite{CD3},
 we have summarized these properties
 saying that $\DM(-,R)$ is a \emph{premotivic triangulated category} --
 see 1.4.2 for the definition and 11.1.1 for the construction.
\end{num}

\section{Modules over motivic Eilenberg-MacLane spectra}

%%\section{Continuity of premotivic categories}

\subsection{Symmetric Tate spectra and continuity}

\begin{num}
Given a scheme $X$ we write $\Sp_X$ for the category of
symmetric $T$-spectra, where $T$ denotes a cofibrant resolution
of the projective line $\mathbf{P}^1$ over $X$
(with the point at infinity as a base point, say)
in the projective model structure
 of pointed Nisnevich simplicial sheaves of sets.
We will consider $\Sp_X$ as combinatorial
stable symmetric monoidal model category,
obtained as the $T$-stabilization
of the $\AA^1$-localization of the projective model category structure
on the category of pointed Nisnevich simplicial sheaves of sets
on the site $\sm_X$ of smooth separated $X$-schemes of finite type.
The corresponding homotopy category
$$\mathrm{Ho}(\Sp_X)=\SH(X)$$
is thus the stable homotopy category of schemes over $X$,
as considered by Morel, Voevodsky and various other authors.
This defines a motivic triangulated category in the sense
of \cite{CD3}: in other words, thanks to Ayoub's thesis \cite{ayoub,ayoub2},
we have the whole formalism of the six operations in $\SH$.
We note that the categories $\SH(X)$ can be defined as the
homotopy categories of their $(\infty,1)$-categorical counterparts;
see \cite[2.3]{robalo} and \cite[Appendix C]{hoyois}.

%
%This construction has a large counterpart $\underline{\Sp}(S)$,
%obtained by exactly the same process, but from the site Nisnevich
%site $\sch_S$ of all separated $S$-schemes of finite type
%instead of $\sm_S$. We write
%$$\mathrm{Ho}(\underline{\Sp}(S))=\underline{\SH}(S)$$
%for the corresponding homotopy category.
%The inclusion functor $\iota:\sm_S\to\sch_S$ induces
%an adjunction at the level of sheaves of sets
%$$\iota_!:\sh_\nis(\sm_S)\leftrightarrows\sh_\nis(\sch_S):\iota^*$$
%where $\iota^*$ simply is the restriction functor $F\mapsto F_{|_{\sm_S}}$,
%and its left adjoint $i_!$ is the left Kan extension of $\iota$
%along the Yoneda embedding (in particular, $\iota_!(X)=X$ for any
%separated smooth $S$-scheme $X$, seen as a representable sheaf).
%The functor $\iota_!$ has furthermore the good taste of preserving
%finite products, while its right adjoint commutes with any small
%limit or colimit. Therefore, the functor $\iota_!$ is fully
%faithful. This readily implies that we get an Quillen
%adjunction
%$$\iota_!:\Sp(S)\leftrightarrows\underline{\Sp}(S):\iota^*$$
%and thus an adjunction of triangulated categories
%$$\derL\iota_!:\SH(S)\leftrightarrows\underline{\SH}(S):\derR \iota^*\, ,$$
%with $\derL \iota_!$ symmetric monoidal.
\end{num}

\begin{num}
In \cite{CD3}, we have introduced the notion
 of continuity for a premotivic category $\T$
 which comes from the a premotivic model category.
 In the sequel, we will need to work in a more 
 slightly general context,
 in which we do not consider a monoidal structure.
 Therefore,
  we will recast the definition of continuity 
	for complete triangulated $\sm$-fibred categories
	over $\sch$
	(see \cite[1.1.12, 1.3.13]{CD3} for the definitions; in particular, the adjective `complete' refers to the existence of right adjoints for the pullback functors).

Here $\sch$ will be a full subcategory of the category
 of schemes stable by smooth base change
 and $\mathcal F$ will be a class of affine morphisms
 in $\sch$.\footnote{The examples we will use
 here are: $\sch$ is the category of regular (excellent) $k$-schemes
 or the category of all noetherian finite dimensional (excellent) $k$-schemes;
 $\mathcal F$ is the category of dominant affine morphisms or the category
 of all affine morphisms.}
\begin{df}
Let $\T$ be a complete triangulated $\sm$-fibred category
	over $\sch$ and $c$ be a small family
 of cartesian sections $(c_i)_{i\in I}$ of $\T$. 

We will say that $\T$ is \emph{$c$-generated}
if, for any scheme $X$ in $\sch$,
 the family of objects $c_{i,X}$, $i\in I$,
 form a generating family of the triangulated category.
We will then define $\T_c(X)$ as the smallest thick subcategory
of $\T(X)$ which contains the elements of of the form
$f_\sharp f^*(c_{i,X})=f_\sharp(c_{i,Y})$, for any separated
smooth morphism $f:Y\to X$ and any $i\in I$.
The objects of $\T_c(X)$ will be called \emph{$c$-constructible}
(or simply \emph{constructible}, when $c$ is clearly determined
by the context).
\end{df}

\begin{rem}\label{rem:constructcompact}
If for any $i \in I$,
 the objects $c_{i,X}$ are compact,
 then $\T_c(X)$ is the category of compact objects of $\T(X)$
 and so does not depend on $c$.

When $\T$ has a symmetric monoidal structure,
 or in other words, is a premotivic category,
 and if we ask that $c$ is stable by tensor product,
 then $c$ is what we call a \emph{set of twists} in \cite[1.1.d]{CD3}.
This is what happens in practice (e.g. for $\T=\SH$, $\DM$ or $\DM_\cdh$),
and the family $c$ consists of the Tate twist $\un_X(n)$
 of the unit object for $n \in \ZZ$.
Moreover, constructible objects coincide with compact objects for
 $\SH$, $\DM$ and $\DM_\cdh$.
\end{rem}
%
% Recall we say $\T$ is $\tau$-twisted (\emph{cf.} \cite[1.1.39]{CD3}) if $\tau$
% is a set of cartesian sections of $\T$ which form
% a monoid for the tensor product and contains the object $\un[1]$.
% Given $i \in \tau$ and an object $M \in \T(S)$,
% we denote by $M\{i\}$ the tensor product of $M$ with the component over $S$
% of $i$. We will say that $\T$ is $\tau$-generated if for any $S$,
% the objects $\un_S\{i\}$ for any twist $i \in \tau$ form
% a generating family in $\T(S)$.

For short, a $(\sch,\mathcal F)$-pro-scheme will
 be a pro-scheme $(S_\alpha)_{\alpha \in A}$ with values
 in $\sch$, whose transition morphisms are in $\mathcal F$,
 which admits a projective limit $S$ in the category
 of schemes such that $S$ belongs to $\sch$.
The following definition is a slightly more general
version of \cite[4.3.2]{CD3}.
\end{num}

\begin{df}\label{df:continuous}
Let $\T$ be a $c$-generated complete triangulated
 $\sm$-fibred category over $\sch$.

We say that $\T$ is \emph{continuous with respect to $\mathcal F$},
 if given any $(\sch,\mathcal F)$-pro-scheme $(X_\alpha)$ with limit $S$,
 for any index $\alpha_0$, any object $E_{\alpha_0}$ in $\T(X_{\alpha_0})$,
 and any $i \in I$, the canonical map
$$
\varinjlim_{\alpha\geq \alpha_0}
 \Hom_{\T(X_\alpha)}(c_{i,X_\alpha},E_\alpha)
 \rightarrow \Hom_{\T(X)}(c_{i,S},E),
$$
is bijective, where $E_\alpha$ is the pullback of $E_{\alpha_0}$
 along the transition morphism $X_\alpha \rightarrow X_ {\alpha_0}$,
 while $E$ is the pullback of $E_{\alpha_0}$ along the projection
 $X\to X_{\alpha_0}$
\end{df}

\begin{ex} \label{ex:continuity}
\begin{enumerate}
\item The premotivic category
 $\SH$ on the category of noetherian finite dimensional
 schemes satisfies continuity without restriction (\emph{i.e.}
 $\mathcal F$ is the category of all affine morphisms).
 This is a formal consequence of \cite[Proposition C.12]{hoyois}
 and of \cite[Lemma 6.3.3.6]{htt}, for instance.
\item According to \cite[11.1.4]{CD3},
 the premotivic triangulated categories
 $\DM$ and $\DM_\cdh$, defined over the category of all schemes,
 are continuous with respect to dominant affine morphisms.
 (Actually, this example is the only reason why we
 introduce a restriction on the transition morphisms
 in the previous continuity property.)
\end{enumerate}
\end{ex}

%Recall that given a $\tau$-twisted premotivic category
% $\T$ over $\sch$, we denote by $\T_c(S)$ the full triangulated
% subcategory of $\T(S)$ stable by direct factors and
% containing the objects $\un_S\{i\}$ for a twist
% $i \in \tau$.
% 
The following proposition
is a little variation on \cite[4.3.4]{CD3},
in the present slightly generalized context:
\begin{prop}\label{prop:continuity&2lim}
Let $\T$ be a $c$-generated complete triangulated
 $\sm$-fibred category over $\sch$
 which is continuous with respect to $\mathcal F$.
Let $(X_\alpha)$ be a $(\sch,\mathcal F)$-pro-scheme 
 with projective limit $X$ and let $f_\alpha:X \rightarrow X_\alpha$
 be the canonical projection.
 
 For any index $\alpha_0$ and any objects $M_{\alpha_0}$ and $E_{\alpha_0}$
 in $\T(S_{\alpha_0})$, if $M_{\alpha_0}$ is $c$-constructible,
 then the canonical map
$$
\varinjlim_{\alpha\geq \alpha_0}
 \Hom_{\T(S_\alpha)}(M_\alpha,E_\alpha)
 \rightarrow \Hom_{\T(S)}(M,E),
$$
is bijective, where $M_\alpha$ and $E_\alpha$
are the respective pullbacks of $M_{\alpha_0}$ and $E_{\alpha_0}$
 along the transition morphisms $S_\alpha \rightarrow S_ {\alpha_0}$,
 while $M=f^*_{\alpha_0}(M_{\alpha_0})$ and $E=f^*_{\alpha_0}(E_{\alpha_0})$.
 
 Moreover, the canonical functor:
$$
2\mbox{-}\ilim_{\alpha} \T_c(X_\alpha)
 \xrightarrow{2\mbox{-}\ilim_{\alpha}(f_\alpha^*)}
  \T_c(X)
$$
is an equivalence of triangulated categories.
\end{prop}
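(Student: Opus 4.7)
The strategy is the standard one: deduce both assertions from the continuity hypothesis by applying it not only at the base scheme but at all smooth base changes of it, and then propagate from the generators to all constructibles via a thick-subcategory argument. This is a minor variant of \cite[4.3.4]{CD3}; the only novelty is the presence of the class $\mathcal F$ of allowable transition morphisms.

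For the first (Hom-bijectivity) assertion, fix $E_{\alpha_0}$ and let $\mathcal C \subset \T(X_{\alpha_0})$ be the class of objects $M_{\alpha_0}$ for which the displayed map is bijective for every integer shift of $E_{\alpha_0}$. Since filtered colimits of abelian groups are exact and commute with finite direct sums, a five-lemma argument shows that $\mathcal C$ is a thick triangulated subcategory. It therefore suffices to check that $\mathcal C$ contains $f_\sharp(c_{i,Y}) = f_\sharp f^*(c_{i,X_{\alpha_0}})$ for every smooth separated $f\colon Y \to X_{\alpha_0}$ and every $i \in I$. Form the smooth base-changed pro-scheme $Y_\alpha := Y \times_{X_{\alpha_0}} X_\alpha$ with limit $Y_S := Y \times_{X_{\alpha_0}} S$; under the (mild) hypothesis that $\mathcal F$ is stable under smooth base change — true in the cases considered in Example~\ref{ex:continuity} — this is again a $(\sch,\mathcal F)$-pro-scheme. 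Using the $\sm$-fibred adjunction $f_\sharp \dashv f^*$ together with smooth base change, the map in question is identified with
\begin{equation*}
\varinjlim_{\alpha \geq \alpha_0} \Hom_{\T(Y_\alpha)}(c_{i,Y_\alpha}, f^*_\alpha E_\alpha) \longrightarrow \Hom_{\T(Y_S)}(c_{i,Y_S}, f^*_S E),
\end{equation*}
which is bijective by continuity of $\T$ applied to the pro-scheme $(Y_\alpha)$.

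For the second assertion, fully faithfulness of $2\text{-}\ilim_\alpha(f_\alpha^*)$ is exactly the first assertion (applied both to isomorphism classes of maps and to all shifts, in order to compute Homs in the 2-colimit). For essential surjectivity, any $M \in \T_c(X)$ is built from finitely many generators $g_\sharp(c_{i,Y})$, with $g\colon Y \to X$ smooth separated, by iterated shifts, cones, and splittings of idempotents. By standard finite-presentation results for schemes (EGA~IV, \S 8), each such $Y \to X$ descends to a smooth separated $Y_\alpha \to X_\alpha$ at some sufficiently large $\alpha$, providing lifts of the generators; the finitely many structure morphisms used to assemble $M$ from these generators lift to some finite stage by the first assertion, and the same holds for the idempotents producing the direct summands. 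Collecting these lifts at a single sufficiently large index yields a constructible object whose pullback along $f_\alpha$ is isomorphic to $M$.

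The main technical obstacle is the essential surjectivity step: transporting retracts through a 2-colimit requires splittings of idempotents to be detected at some finite stage, and this reduces to a careful application of the Hom-bijectivity of the first part. A minor but necessary ingredient is to confirm that the smooth base change $(Y_\alpha)$ remains a $(\sch,\mathcal F)$-pro-scheme, which is exactly why one wants the flexibility of working with a class $\mathcal F$ of transition morphisms closed under smooth base change.
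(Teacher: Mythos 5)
Your proof is correct and follows essentially the same path as \cite[4.3.4]{CD3}, which the paper invokes without reproducing: reduce Hom-bijectivity to the generators $f_\sharp(c_{i,Y})$ via the thick-subcategory argument, identify the relevant Homs with Homs over the base-changed pro-scheme $(Y_\alpha)$ using $f_\sharp\dashv f^*$ and smooth base change, and deduce the equivalence of 2-colimits by the standard finiteness bootstrap. The one adaptation to the $\mathcal F$-refined setting that you rightly flag --- stability of $\mathcal F$ under smooth base change, so that $(Y_\alpha)$ remains a $(\sch,\mathcal F)$-pro-scheme with limit in $\sch$ --- does hold in both of the paper's examples (all affine morphisms trivially, and dominant affine morphisms because smooth morphisms are open).
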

The proof is identical to that of \emph{loc. cit.}

%Here is a simple example of application of that property,
% and most of all, of Popescu theorem 
% which was not given in \cite{CD3}, and which will be useful
% in our context:
\begin{prop}\label{prop:exactness_regular}
Let $f:X \rightarrow Y$ be a regular morphism of schemes.
Then the pullback functor 
$$
f^*:\Sp_Y \rightarrow \Sp_X
$$
of the premotivic model category of Tate spectra
 (relative to simplicial sheaves)
 preserves stable weak $\AA^1$-equivalences
 as well as $\AA^1$-local fibrant objects.
\end{prop}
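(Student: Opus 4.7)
The strategy combines Popescu's desingularization theorem with the continuity property of $\SH$ (Example~\ref{ex:continuity}(1)), reducing the assertion to the trivial smooth case.

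\textbf{Step 1 (Popescu).} Since $f : X \to Y$ is regular and both schemes are noetherian, the Popescu--N\'eron--Swan--Andr\'e theorem writes $X \cong \varprojlim_\alpha X_\alpha$ as a cofiltered limit of smooth affine $Y$-schemes $X_\alpha$, with affine transition morphisms. Call $f_\alpha : X_\alpha \to Y$ the structural maps, so that $f$ factors canonically as the composition of $X \to X_\alpha$ with $f_\alpha$.

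\textbf{Step 2 (Smooth case).} For a smooth morphism $g : X' \to Y$, the functor $g^*$ fits into two Quillen adjunctions: it is left adjoint to $g_*$ and right adjoint to $g_\sharp$. Being simultaneously a left and a right Quillen functor on $\Sp_Y$, it preserves all stable weak $\AA^1$-equivalences and all $\AA^1$-local fibrant objects. This applies in particular to each $f_\alpha^*$.

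\textbf{Step 3 (Passing to the limit via continuity).} A morphism in $\Sp_X$ is a stable weak $\AA^1$-equivalence iff it induces isomorphisms in $\SH(X)$, which can be tested on the compact generators $\Sigma^\infty_T(U_+)(n)$ for $U$ smooth over $X$ and $n\in\ZZ$. By continuity of $\SH$ (Example~\ref{ex:continuity}(1)), applied to the pro-system $(X_\alpha)$, and by Proposition~\ref{prop:continuity&2lim}, any such generator comes from a smooth $U_\alpha\to X_\alpha$ at some finite stage, and
\[
\Hom_{\SH(X)}\bigl(\Sigma^\infty_T(U_+)(n),\, f^*(E)\bigr)
\;\cong\;\varinjlim_{\beta\geq\alpha}\Hom_{\SH(X_\beta)}\bigl(\Sigma^\infty_T(U_{\beta,+})(n),\, f_\beta^*(E)\bigr).
\]
Given a stable weak $\AA^1$-equivalence $u : E \to E'$ in $\Sp_Y$, each $f_\beta^*(u)$ is a stable weak $\AA^1$-equivalence by Step~2, hence induces an isomorphism on the right hand side for every generator. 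Therefore $f^*(u)$ induces isomorphisms on all such $\Hom$-groups and is itself a stable weak $\AA^1$-equivalence.

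\textbf{Step 4 (Fibrant objects).} An object $F \in \Sp_X$ is $\AA^1$-local fibrant iff the mapping spectra out of the compact generators $\Sigma^\infty_T(U_+)(n)$ have the correct homotopy type (i.e.\ $F$ represents the expected cohomology on smooth $X$-schemes, and is an $\Omega_T$-spectrum). By the same continuity argument these mapping spectra identify, at each generator, with filtered colimits of the analogous mapping spectra in $\Sp_{X_\beta}$ applied to $f_\beta^*(E)$. Since each $f_\beta^*(E)$ is $\AA^1$-local fibrant by Step~2, so is $f^*(E)$.

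The main obstacle is of a purely model-categorical flavour: one must be careful that continuity, which is a homotopical statement about $\SH$, is actually sufficient to control weak equivalences and fibrant objects at the level of the model category $\Sp_X$. The point is that both notions are characterized, via the standard recognition principles for cofibrantly generated stable model structures, by behaviour against the compact generators in the homotopy category, which is precisely the information to which continuity applies.
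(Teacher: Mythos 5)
Your Step 1 (Popescu) matches the paper, and your Step 2 is correct and nicely isolated: for $g$ smooth, $g^*$ is simultaneously left Quillen (adjoint pair $(g^*,g_*)$) and right Quillen (adjoint pair $(g_\sharp,g^*)$), hence preserves all weak equivalences by factoring any one into a trivial cofibration followed by a trivial fibration. But Steps 3 and 4 have a genuine gap that your final paragraph names but does not resolve, and it is not a cosmetic issue: the continuity property of $\SH$ only controls the \emph{derived} functor $\derL f^*$, whereas the Proposition is precisely the assertion that the underived $f^*$ computes $\derL f^*$ correctly. The isomorphism you write in Step 3 identifies $\Hom_{\SH(X)}(-,\derL\pi_\alpha^*\derL f_\alpha^*(E))$, not $\Hom_{\SH(X)}(-,[f^*(E)])$. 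These agree when $E$ is cofibrant (in which case your argument becomes redundant: each transition map $\pi_{\alpha\beta}^*$ is left Quillen and $f_\alpha^*(E)$ is cofibrant, so Ken Brown's lemma already gives the conclusion directly). For non-cofibrant $E$, the comparison $[f^*(E)]\simeq\derL f^*([E])$ is exactly what you are trying to prove, so the argument is circular. Note also that the transition maps in the Popescu system are only affine, not smooth, so $\pi_{\alpha\beta}^*$ is only a left (not right) Quillen functor — this is why there is no escape by applying Step 2 to the transition morphisms.

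Step 4 has a separate and more basic flaw: fibrancy in the stable $\AA^1$-local projective model structure is not characterized by mapping spectra from compact generators in $\SH(X)$. A cofibrant replacement of a fibrant object has the same mapping spectra but need not be fibrant; being levelwise projectively fibrant (pointwise Kan, satisfying the sheaf condition literally rather than up to homotopy, being a genuine $\Omega_T$-spectrum) is a strict model-category property, not a homotopy-invariant one. Your claimed "recognition principle" does not exist in the form you state it.

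The paper's proof avoids both pitfalls by staying at the level of the underlying categories of (simplicial) sheaves throughout. The crucial observation is that each $f_\alpha^*$ is \emph{exact} at the sheaf level (having both $g_\sharp$ and $g_*$, it preserves all small limits and colimits), and this exactness passes to the limit: $f^*$ preserves small colimits and \emph{finite} limits on sheaves. This sheaf-level exactness is what implies preservation of stalkwise weak equivalences, and it also implies that $f^*$ commutes (on the nose) with the explicit Morel–Voevodsky $\AA^1$-local fibrant replacement functor and with the iterated-$T$-loop construction of $\Omega$-spectrum replacements, since those are built from exactly the limits and colimits that $f^*$ preserves. That is how the paper controls both weak equivalences and actual fibrant objects simultaneously. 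Your homotopy-category approach cannot see this structure, because the homotopy category collapses the distinction between an object and its fibrant/cofibrant replacements.
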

\begin{proof}
This property is local in $X$ so that
 replacing $X$ (resp. $Y$) by a suitable affine
 open neighbourhood of any point $x \in X$ (resp. $f(x)$),
 we can assume that $X$ and $Y$ are affine.

Then, according to Popescu's theorem (as stated in Spivakovsky's article \cite[Th. 1.1]{spivak}), the morphism $f$
 can be written as a projective limit of smooth morphisms
 $f_\alpha:X_\alpha \rightarrow Y$.
 By a continuity argument (in the context of sheaves of sets!),
 as each functor $f^*_\alpha$ commutes with small limits and colimits,
 we see that the functor $f^*$ commutes with small colimits as well as with
 finite limits. These exactness properties imply that the functor $f^*$ preserves stalkwise simplicial weak equivalences.
 One can also check that, for any Nisnevich
 sheaves $E$ and $F$ on $\sm_Y$, the canonical map
\begin{equation}\label{eq:prop:exactness_regular}
 f^*\uHom(E,F)\to\uHom(f^*(E),f^*(F))
\end{equation}
 is an isomorphism (where $\uHom$ denotes the internal Hom of the category of sheaves), at least when $E$ is a finite colimit of representable sheaves.
Since the functor $f^*$ preserves projections of the form $\AA^1\times U\to U$, this readily implies that,
 if $L$ denotes the explicit
 $\AA^1$-local fibrant replacement functor defined in \cite[Lemma 3.21, page 93]{MV},
 then, for any simplicial
 sheaf $E$ on $\sm_Y$, the map $f^*(E)\to f^*(L(E))$ is an $\AA^1$-equivalence with fibrant
 $\AA^1$-local codomain. Therefore, the functor $f^*$ preserves both
 $\AA^1$-equivalences and $\AA^1$-local fibrant objects at the level of
 simplicial sheaves. Using the isomorphism \eqref{eq:prop:exactness_regular},
 it is easy to see that $f^*$ preserves $\AA^1$-local motivic
 $\Omega$-spectra. Given that one can turn a levelwise $\AA^1$-local fibrant
 Tate spectrum into a motivic $\Omega$-spectrum by a suitable filtered
 colimit of iterated $T$-loop space functors, we see that there exists
 a fibrant replacement functor $R$ in $\Sp_Y$ such that, for any
 Tate spectrum $E$ over $Y$, the map $f^*(E)\to f^*(R(E))$ is a stable
 $\AA^1$-equivalence with fibrant codomain. This implies that $f^*$
 preserves stable $\AA^1$-equivalences.
\end{proof}

\begin{cor}\label{cor:cartsectionReg}
Let $A$ be a commutative monoid in $\Sp_k$.
Given a regular $k$-scheme $X$ with structural map $f:X\to\Spec(k)$,
 let us put $A_X=f^*(R)$.
Then, for any $k$-morphism between regular $k$-schemes $\varphi:X\to Y$,
the induced map $\derL\varphi^*(A_Y)\to A_X$ is an isomorphism in $\SH(X)$.
\end{cor}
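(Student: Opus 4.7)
Let $g\colon Y\to\Spec(k)$ denote the structural morphism of $Y$, so that $A_Y=g^*(A)$. The key observation is that, since $k$ is perfect and $X,Y$ are regular noetherian $k$-schemes, both $f$ and $g$ are regular morphisms of schemes: flatness over a field is automatic, and over a perfect base field regularity of the source coincides with geometric regularity of the (unique) fibre. Proposition \ref{prop:exactness_regular} therefore applies to each of $f$ and $g$, so that the pullback functors $f^*$ and $g^*$ preserve all stable $\AA^1$-equivalences.

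Fix a cofibrant resolution $q\colon QA\to A$ in $\Sp_k$. Since $g^*$ is a left Quillen functor, $g^*(QA)$ is cofibrant in $\Sp_Y$; since $g^*$ moreover preserves weak equivalences by the proposition, $g^*(q)\colon g^*(QA)\to A_Y$ is a stable $\AA^1$-equivalence, and hence a cofibrant resolution of $A_Y$. One may therefore compute
\[
\derL\varphi^*(A_Y)\;\simeq\;\varphi^*\bigl(g^*(QA)\bigr)\;=\;(g\varphi)^*(QA)\;=\;f^*(QA)
\]
in $\SH(X)$. Under this identification, the canonical comparison map $\derL\varphi^*(A_Y)\to A_X$ is precisely $f^*(q)\colon f^*(QA)\to f^*(A)=A_X$, which is again a stable $\AA^1$-equivalence by Proposition \ref{prop:exactness_regular} applied now to $f$. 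Hence the map is an isomorphism in $\SH(X)$.

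The whole argument is thus a direct formal consequence of Proposition \ref{prop:exactness_regular} applied twice, once for $g$ and once for $f$, together with standard Quillen-functor bookkeeping. The only non-automatic ingredient is the preliminary remark that a regular scheme over a perfect field has a regular structural morphism, which follows from the fact that finite extensions of a perfect field are separable. I do not anticipate any substantive obstacle beyond this routine verification.
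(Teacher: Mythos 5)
Your proof is correct and takes essentially the same approach as the paper: the paper simply reduces to the case $Y=\Spec(k)$ by transitivity of pullbacks and cites Proposition \ref{prop:exactness_regular}, while you have unfolded that reduction explicitly via a cofibrant replacement of $A$ in $\Sp_k$. The two arguments are the same in substance.
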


\begin{proof}
It is clearly sufficient to prove this property when $Y=\Spec(k)$, in which
case this is a direct consequence of the preceding proposition.
\end{proof}

We will use repeatedly the following easy
 fact to get the continuity property.
 
\begin{lm} \label{lm:trivial_continuity}
Let
$$\varphi^*:\T \leftrightarrows \T':\varphi_*$$
be an adjunction of complete triangulated $\sm$-fibred
 categories.
We make the following assumptions:
\begin{itemize}
\item[(i)] There is a small family $c$ of cartesian sections of $\T$
such that $\T$ is $c$-generated.
\item[(ii)] The functor $\varphi_*$ is conservative
(or equivalently, $\T'$ is $\varphi^*(c)$-generated;
by abuse, we will then write $\varphi^*(c)=c$ and
will say that $\T'$ is $c$-generated).
\item[(iii)] The functor $\varphi_*$ commutes with the operation $f^*$
 for any morphism $f \in \mathcal F$.
\end{itemize}
 Then, if $\T$ is continuous with respect to $\mathcal F$,
 the same is true for $\T'$.
\end{lm}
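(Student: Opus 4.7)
The strategy is to transfer continuity from $\T$ to $\T'$ by means of the adjunction $(\varphi^*,\varphi_*)$, using hypothesis (iii) to handle the interaction with pullbacks along $\mathcal F$-morphisms, and using (ii) to ensure that $\varphi^*(c)$ plays the role of generating cartesian sections for $\T'$.

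First, I would unpack assumption (ii): if $\varphi_*$ is conservative and $\T$ is $c$-generated, then the family $\varphi^*(c)=(\varphi^*(c_i))$ generates each fibre $\T'(X)$. Indeed, an object $E'\in\T'(X)$ satisfies $\Hom_{\T'(X)}(\varphi^*(c_{i,X})[n],E')=0$ for all $i,n$ if and only if $\Hom_{\T(X)}(c_{i,X}[n],\varphi_*(E'))=0$ for all $i,n$, which holds if and only if $\varphi_*(E')=0$, and hence (by conservativity) if and only if $E'=0$. Moreover, since $\varphi^*$ is part of a morphism of $\sm$-fibred categories, it commutes with pullbacks, so each $\varphi^*(c_i)$ is again a cartesian section of $\T'$. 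This justifies the abuse $\varphi^*(c)=c$ in the statement.

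Next, given a $(\sch,\mathcal F)$-pro-scheme $(X_\alpha)$ with limit $S$, an index $\alpha_0$, an object $E'_{\alpha_0}\in\T'(X_{\alpha_0})$, and $i\in I$, I would identify the relevant $\Hom$ groups. Writing $f_\alpha:X_\alpha\to X_{\alpha_0}$ for the transition morphisms and $f:S\to X_{\alpha_0}$ for the projection — all of which belong to $\mathcal F$ — the adjunction gives
$$\Hom_{\T'(X_\alpha)}\bigl(\varphi^*(c_{i,X_\alpha}),E'_\alpha\bigr)\cong\Hom_{\T(X_\alpha)}\bigl(c_{i,X_\alpha},\varphi_*(E'_\alpha)\bigr),$$
and by (iii),
$$\varphi_*(E'_\alpha)=\varphi_*\bigl(f_\alpha^*(E'_{\alpha_0})\bigr)\cong f_\alpha^*\bigl(\varphi_*(E'_{\alpha_0})\bigr),$$
with the analogous identification for $E'=f^*(E'_{\alpha_0})$. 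Setting $F_{\alpha_0}:=\varphi_*(E'_{\alpha_0})$, the continuity diagram for $\T'$ applied to $E'_{\alpha_0}$ is thus tautologically identified with the continuity diagram for $\T$ applied to $F_{\alpha_0}$.

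Finally, I would invoke continuity of $\T$ with respect to $\mathcal F$ for $F_{\alpha_0}$ to conclude that
$$\varinjlim_{\alpha\geq\alpha_0}\Hom_{\T(X_\alpha)}\bigl(c_{i,X_\alpha},f_\alpha^*(F_{\alpha_0})\bigr)\to\Hom_{\T(S)}\bigl(c_{i,S},f^*(F_{\alpha_0})\bigr)$$
is bijective, and then translate back through the adjunction to obtain the desired bijection for $\T'$. There is no real obstacle here — the lemma is essentially formal. The only points one must be attentive to are that $\varphi^*(c_i)$ remains cartesian and generating in $\T'$ (immediate from (ii) and the fact that $\varphi^*$ is a morphism of $\sm$-fibred categories), and that the commutation $\varphi_*\circ f^*\cong f^*\circ\varphi_*$ in (iii) applies to \emph{all} the transition morphisms and to the projection $S\to X_{\alpha_0}$; the latter requires that $\mathcal F$ contain these structure maps, which is built into the definition of a $(\sch,\mathcal F)$-pro-scheme.
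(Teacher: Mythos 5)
Your proof is correct and takes essentially the same approach as the paper: the paper's proof consists of the single chain of isomorphisms $\Hom_{\T'(Y)}(\varphi^*(c_{i,Y}),f^*E)\simeq\Hom_{\T(Y)}(c_{i,Y},\varphi_*f^*E)\simeq\Hom_{\T(Y)}(c_{i,Y},f^*\varphi_*E)$ for $f\in\mathcal F$, followed by "this readily implies the lemma," and your write-up simply spells out the surrounding bookkeeping (that $\varphi^*(c)$ is a generating cartesian section of $\T'$, and the passage to the colimit) that the paper leaves implicit.
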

\begin{proof}
Let $c=(c_{i,?})_{i \in I}$.
 For any morphism $f:Y \rightarrow X$
 in $\mathcal F$,
 any object $E \in \T'(X)$ 
 and any $i \in I$, one has a canonical isomorphism:
\begin{align*}
\Hom_{\T'(Y)}(c_{i,Y},f^*(E))
&=\Hom_{\T'(Y)}(\varphi^*(c_{i,Y}),f^*(E))
\simeq \Hom_{\T(Y)}(c_{i,Y},\varphi_*f^*(E)) \\
&\simeq \Hom_{\T(Y)}(c_{i,Y},f^*\varphi_*(E))\, .
\end{align*}
This readily implies the lemma.
\end{proof}

\begin{ex}\label{ex:modules_reg}
Let $\reg$ be the category of regular $k$-schemes
 with morphisms all morphisms of $k$-schemes.

Let $(A_X)_{X \in \reg}$ be a cartesian section
 of the category of commutative monoids in the category
 of Tate spectra (\emph{i.e.} a strict commutative ring spectrum
 stable by pullbacks with respect to morphisms in $\reg$).
In this case, we have defined in \cite[7.2.11]{CD3}
 a premotivic model category over $\reg$
 whose fiber $A_X\Mod$ over a scheme $X$ in $\reg$ is the homotopy category of the symmetric
 monoidal stable model category of
 $A_X$-modules\footnote{In order to apply
 this kind of construction, we need to know that the
 model category of symmetric Tate spectra in simplicial
 sheaves satisfies the monoid axiom of Schwede and Shipley \cite{SS}. This is proved explicitely
 in \cite[Lemma 4.2]{hoyois2}, for instance.}
 (i.e. of Tate spectra over $S$, equiped with an action
 of the commutative monoid $A_X$).
 Since Corollary \ref{cor:cartsectionReg}
 ensures that $(A_X)_{X \in \reg}$ is a homotopy cartesian
 section in the sense of \cite[7.2.12]{CD3},
 according to \cite[7.2.13]{CD3}, there exists a
 premotivic adjunction:
$$
L_A:\SH \leftrightarrows A\Mod:\mathcal O_A
$$
of triangulated premotivic categories over $\reg$,
 such that $L_A(E)=A_S \wedge E$
 for any spectrum $E$ over a scheme $S$ in $\reg$.
 Lemma \ref{lm:trivial_continuity}
 ensures that $A\Mod$ is continuous
 with respect to affine morphisms in $\reg$.
\end{ex}

\subsection{Motivic Eilenberg-MacLane spectra over regular $k$-schemes}

\begin{num}\label{num:HZ-mod_reg}
There is a canonical premotivic adjunction:
\begin{equation} \label{eq:SH<->DM}
\varphi^*:\SH \leftrightarrows \DM:\varphi_*
\end{equation}
(see \cite[11.2.16]{CD3}).
It comes from an adjunction of the premotivic model
 categories of Tate spectra built out of simplicial sheaves of sets
 and of complexes of sheaves with transfers respectively (see \ref{num:shtr}):
\begin{equation} \label{eq:SH<->DM_model}
\tilde \varphi^*:\Sp \leftrightarrows \Sp^{tr}:\tilde \varphi_*.
\end{equation}
In other words, we have $\varphi^*=\derL\tilde\varphi^*$ and
$\varphi_*=\derR\tilde\varphi_*$ (strictly speaking,
we can construct the functors $\derL\tilde\varphi^*$ and $\derR\tilde\varphi_*$ so that these equalities are true at the level of objects).
Recall in particular from \cite[10.2.16]{CD3} 
 that the functor $\tilde\varphi_*$
 is composed by the functor $\tilde \gamma_*$ with values in Tate spectra
 of Nisnevich sheaves of $R$-modules (without transfers),
 which \emph{forgets transfers} and by the functor induced
 by the right adjoint of the Dold-Kan equivalence.
We define, for any scheme $X$:
\begin{equation}\label{eq:defHR}
\HZ_X=\tilde\varphi_*(R_X)\, .
\end{equation}
This is Voevodsky's motivic Eilenberg-MacLane spectrum over $X$, originally defined in \cite[6.1]{V3}. In the case where
$X=\Spec(K)$ for some commutative ring $K$, we sometimes
write
\begin{equation}\label{eq:defHRaffine}
\HZ_K=\HZ_{\Spec K}\, .
\end{equation}

According to \cite[6.3.9]{CD3}, the functor $\tilde \gamma_*$ preserves
 (and detects) stable $\AA^1$-equi\-va\-lences. We deduce that
 the same fact is true for $\tilde \varphi_*$. Therefore, we have
 a canonical isomorphism
 $$\HZ_X\simeq\varphi_*(R_X)\simeq\derR\tilde\varphi_*(R_X)\, .$$
The Tate spectrum $\HZ_X$ is a commutative motivic ring spectrum
 in the strict sense (\emph{i.e.} a commutative monoid in the
 category $\Sp_X$). We denote by $\HZ_X\Mod$
 the homotopy category of $\HZ_X$-modules. This defines a
 fibred triangulated category over the category of schemes;
 see \cite[Prop. 7.2.11]{CD3}.
 
 The functor $\tilde \varphi_*$ being weakly monoidal,
 we get a natural structure of a commutative monoid on $\tilde \varphi_*(M)$
for any symmetric Tate spectrum with transfers $M$.
This means that the Quillen adjunction \eqref{eq:SH<->DM_model}
induces a Quillen adjunction from the fibred model
category of $\HZ$-modules to
the premotivic model category of symmetric Tate spectra with transfers\footnote{The fact that the induced adjunction is a Quillen adjunction is obvious: this readily comes from the fact that the forgetful functor from $\HZ$-modules
to symmetric Tate spectra preserves and detects weak equivalences as well as fibrations (by definition).},
and thus defines an adjunction
\begin{equation} \label{eq:Mod<->DM}
t^*:\HZ_X\Mod \leftrightarrows \DM(X,R):t_*
\end{equation}
for any scheme $X$. For any object $E$ of $\SH(X)$, there is a
canonical isomorphism $t^*(\HZ_X\otimes^\derL E)=\varphi^*(E)$.
For any object $M$ of $\DM(X,R)$, when we forget the
$\HZ_X$-module structure on $t_*(M)$,
we simply obtain $\varphi_*(M)$.

Let $f:X \rightarrow S$ be a regular morphism of schemes.
 Then according to Proposition \ref{prop:exactness_regular},
 $f^*=\derL f^*$. In particular, the isomorphism $\tau_f$
 of $\SH(X)$ can be lifted as a morphism of strict ring spectra:
\begin{equation} \label{eq:fibred_struct_strict_HZ}
\tilde \tau_f:f^*(\HZ_S) \rightarrow \HZ_X.
\end{equation}

Let $\reg$ be the category of regular $k$-schemes
 as in Example \ref{ex:modules_reg}. 
% For any regular $k$-scheme $S$ with structural morphism
% $f$, we put $\HZ'_S:=f^*(\HZ_k)$.
% Then $(\HZ'_S)_{S \in \reg}$ is a cartesian section of the
% category of strict ring spectra, fibred over $\reg$.
\end{num}

\begin{prop}\label{prop:adjHZModDM}
The adjunctions \eqref{eq:Mod<->DM} define a premotivic adjunction
$$t^*:\HZ\Mod\rightleftarrows\DM(-,R):t_*$$
over the category $\reg$ of regular $k$-schemes.
\end{prop}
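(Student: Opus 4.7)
The strategy is to reduce each premotivic compatibility of $(t^*, t_*)$ over $\reg$ to the already-established corresponding property of the premotivic adjunction $(\varphi^*, \varphi_*)\colon \SH \rightleftarrows \DM$ from \cite[11.2.16]{CD3}, using that $\HZ_X\Mod$ is generated as a localizing triangulated category by objects of the form $\HZ_X \otimes^{\derL} E$ with $E \in \SH(X)$, together with the identity $t^*(\HZ_X \otimes^{\derL} E) = \varphi^*(E)$ (cf.\ \ref{num:HZ-mod_reg}). I would organize the verification into: fibred compatibility along all morphisms of $\reg$, smooth left adjoints, and monoidality.

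For any morphism $f\colon X \to Y$ in $\reg$, the key point is to check the base-change isomorphism $\derL f^* \circ t^* \simeq t^* \circ \derL f^*$. Both functors preserve small sums, so it suffices to check it on generators $M = \HZ_Y \otimes^{\derL} E$. A direct computation using that $\derL f^*$ is symmetric monoidal, together with the identification $\derL f^*(\HZ_Y) \simeq \HZ_X$ provided by Corollary \ref{cor:cartsectionReg}, gives
\[
t^*\,\derL f^*(\HZ_Y \otimes^{\derL} E)
 \simeq t^*(\HZ_X \otimes^{\derL} \derL f^*(E))
 = \varphi^*(\derL f^*(E)),
\]
while on the other hand $\derL f^* \circ t^*(\HZ_Y \otimes^{\derL} E) = \derL f^*(\varphi^*(E)) \simeq \varphi^*(\derL f^*(E))$ since $(\varphi^*, \varphi_*)$ is premotivic over all schemes. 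Passing to adjoints yields $t_* \circ f_* \simeq f_* \circ t_*$ as well.

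Compatibility of $t^*$ with $f_\sharp$ for smooth $f$ follows formally from the preceding compatibility (that is, from $f^* \circ t_* \simeq t_* \circ f^*$) by the adjunction $(f_\sharp, f^*)$. Monoidality, namely $t^*(\HZ_X) \simeq \un$ and $t^*(M \otimes^{\derL}_{\HZ_X} N) \simeq t^*(M) \otimes^{\derL} t^*(N)$, reduces on generators to the strong monoidality of $\varphi^*$, since $(\HZ_X \otimes^{\derL} E) \otimes^{\derL}_{\HZ_X} (\HZ_X \otimes^{\derL} F) \simeq \HZ_X \otimes^{\derL} (E \otimes^{\derL} F)$ and $t^*(\HZ_X) = t^*(\HZ_X \otimes^{\derL} \un) = \varphi^*(\un) = \un$.

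The only substantive ingredient, and the only place where the restriction to regular $k$-schemes is genuinely used, is Corollary \ref{cor:cartsectionReg} (ultimately Proposition \ref{prop:exactness_regular}): it is precisely the cartesian nature of $\HZ$ on morphisms of $\reg$ that both makes the base-change step above succeed and makes the premotivic model structure on $\HZ\Mod$ available at all (Example \ref{ex:modules_reg}). Everything else is a formal manipulation with generators, adjointness, and the premotivic structure of $\varphi^*$ already known on arbitrary noetherian schemes.
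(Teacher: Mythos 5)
Your reduction of all compatibilities to the generators $\HZ_X\otimes^\derL E$ and the known structure of $\varphi^*$ is the right idea, and the verification of the base-change isomorphism $f^*t^*\simeq t^*f^*$ on generators is correct (although the paper simply takes the fibred compatibility as already established by the construction of \eqref{eq:Mod<->DM} from Quillen adjunctions of fibred model categories, so the real content of the paper's proof is elsewhere). Your monoidality argument is also fine.

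However, there is a genuine gap in the step handling $f_\sharp$. You assert that compatibility of $t^*$ with $f_\sharp$ ``follows formally from the preceding compatibility (that is, from $f^*\circ t_*\simeq t_*\circ f^*$).'' But what you actually established is $f^*\circ t^*\simeq t^*\circ f^*$, and passing to right adjoints gives only $t_*\circ f_*\simeq f_*\circ t_*$. The statement $f^*\circ t_*\simeq t_*\circ f^*$ is a \emph{different} exchange isomorphism: it is the right-adjoint reformulation of ``$t^*$ commutes with $f_\sharp$,'' which is precisely the thing you are trying to prove. In general, for an adjunction of $\sm$-fibred categories, ``$t^*$ commutes with $f^*$'' and ``$t^*$ commutes with $f_\sharp$'' are logically independent conditions; neither implies the other. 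So the $f_\sharp$-step is not formal and requires its own computation.

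The missing ingredient is the projection formula in $\SH$: for smooth $f:X\to S$ in $\reg$ and $E\in\SH(X)$,
$$\derL f_\sharp(\HZ_X\otimes^\derL E)\simeq\HZ_S\otimes^\derL\derL f_\sharp(E)\, ,$$
using $f^*(\HZ_S)\simeq\HZ_X$. Combined with $t^*(\HZ_X\otimes^\derL E)=\varphi^*(E)$ and the known premotivic structure of $\varphi^*$, this gives $t^*f_\sharp\simeq f_\sharp t^*$ on free modules and hence everywhere (by preservation of small sums). This is exactly what the paper does, and it is the only substantive point of the proof: you already have all the pieces, but as written, the ``formal'' claim would leave the actual argument missing.
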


\begin{proof}
We already know that this is a an adjunction of fibred
categories over $\reg$ and that $t^*$ is (strongly)
symmetric monoidal. Therefore, it is sufficient to check that
$t^*$ commutes with the operations $f_\sharp$ for any smooth
morphism between regular $k$-scheme $f:X\to S$ (via the canonical exchange
map). For this, it is sufficient to check what happens on
free $\HZ_X$-modules (because we are comparing exact functors
which preserve small sums, and because the smallest localizing subcategory of
$\HZ_X\Mod$ containing free $\HZ_X$-modules is $\HZ_X\Mod$).
For any object $E$ of $\SH(X)$, we have, by the projection
formula in $\SH$, a canonical isomorphism in $HZ_S\Mod$:
$$\derL f_\sharp(\HZ_X\otimes^\derL E)\simeq\HZ_S\otimes^\derL\derL f_\sharp(E)\, .$$
Therefore, formula $t^*(\HZ_X\otimes^\derL E)=\varphi^*(E)$
tells us that $t^*$ commutes with $f_\sharp$ when restricted to
free $\HZ_X$-modules, as required.
\end{proof}
%
%To be able to apply Example \ref{ex:modules_reg},
% we will need to show this is in addition
% a homotopy cartesian section. This will be done in
% Corollary \ref{cor:DM->SH_commutes_reg_pullbacks}.

\section{Comparison theorem: regular case}

The aim of this section is to prove the following result:

\begin{thm}\label{thm:comparisonnis}
Let $R$ be a ring in which the characteristic exponent of $k$
is invertible. Then the premotivic adjunction of
Proposition \ref{prop:adjHZModDM}
is an equivalence of premotivic categories over $\reg$.
In particular, for any regular noetherian scheme of finite dimension $X$ over $k$,
we have a canonical equivalence of symmetric monoidal triangulated
categories
$$\HZ_X\Mod\simeq\DM(X,R)\, .$$
\end{thm}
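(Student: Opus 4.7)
The plan is to reduce the assertion to the case $X=\Spec k$, where it is the theorem of Hoyois--Kelly--{\O}stv{\ae}r \cite{HKO}. The reduction proceeds in two stages: from arbitrary regular $k$-schemes down to smooth $k$-schemes by a continuity/Popescu argument, and then from smooth $k$-schemes to the point by smooth base change.

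\medskip

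\emph{Step 1 (Continuity reduction to smooth bases).} Both $\HZ\Mod$ and $\DM(-,R)$ over $\reg$ are compactly generated by Tate twists, and $t^*$ is a premotivic left adjoint sending the generator $\HZ_X(n)$ to $\un_X(n)$. It therefore suffices to prove $t^*_X$ is an equivalence on compact objects. Both categories are continuous with respect to affine morphisms in $\reg$: for $\HZ\Mod$ this is Example~\ref{ex:modules_reg} (via Lemma~\ref{lm:trivial_continuity}), for $\DM(-,R)$ this follows from Example~\ref{ex:continuity}(2). Using Zariski/Nisnevich descent (which both sides satisfy) we may assume $X$ is affine; then Popescu's theorem (as already invoked in Proposition~\ref{prop:exactness_regular}) writes $X=\plim_\alpha X_\alpha$ as a cofiltered limit of smooth affine $k$-schemes along dominant affine transition maps. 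By Proposition~\ref{prop:continuity&2lim}, the categories of constructible objects are $2$-colimits of the corresponding categories over the $X_\alpha$, compatibly with $t^*$. Hence the statement for $X$ reduces to the statement for each smooth $X_\alpha$.

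\medskip

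\emph{Step 2 (Smooth case reduces to $\Spec k$).} Assume now $X$ is smooth over $k$, with structural map $f:X\to\Spec k$. The compact generators of $\HZ_X\Mod$ and $\DM(X,R)$ are respectively $g_\sharp\HZ_Y(n)$ and $g_\sharp\un_Y(n)$, for $g:Y\to X$ smooth and $n\in\ZZ$; since $t^*$ commutes with $g_\sharp$ and sends $\HZ_Y$ to $\un_Y$, it matches generators. For two such generators $g:Y\to X$ and $g':Y'\to X$, the adjunction $g_\sharp\dashv g^*$, smooth base change for $g^*g'_\sharp$, the projection formula, and the cartesianness isomorphism $q^*\HZ_{Y'}\simeq\HZ_{Y\times_X Y'}$ of Corollary~\ref{cor:cartsectionReg}, rewrite the graded $\Hom$-groups on both sides as the motivic cohomology of the smooth $k$-scheme $Y\times_X Y'$ — computed respectively via $\HZ$-modules and via $\DM$. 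A further adjunction along the structural map $h:Y\times_X Y'\to\Spec k$ pushes both computations down to $\Spec k$, producing respectively $\Hom^*_{\HZ_k\Mod}(\HZ_k\otimes\Sigma^\infty_+(Y\times_X Y'),\HZ_k(n'))$ and $\Hom^*_{\DM(k,R)}(M_k(Y\times_X Y'),\un_k(n'))$.

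\medskip

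\emph{Step 3 (Invocation of HKO).} Since $k$ is perfect and its characteristic exponent is invertible in $R$, the Hoyois--Kelly--{\O}stv{\ae}r theorem \cite{HKO} asserts that $t^*_k:\HZ_k\Mod\to\DM(k,R)$ is an equivalence; the comparison map on the two $\Hom$-groups above is therefore bijective. Consequently $t^*_X$ is fully faithful on compact generators, and by compact generation this upgrades to an equivalence on the whole category, which is moreover compatible with $f^*$ and $f_\sharp$ in a premotivic way by construction.

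\medskip

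The main obstacle is Step~1: to transport continuity from $\SH$ to $\HZ\Mod$ along the Popescu presentations one needs that $\HZ$ is sufficiently well-behaved under pullback along regular morphisms. This is precisely what Proposition~\ref{prop:exactness_regular} and Corollary~\ref{cor:cartsectionReg} provide; they feed through Lemma~\ref{lm:trivial_continuity} to yield the required continuity for $\HZ\Mod$ on $\reg$.
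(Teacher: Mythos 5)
The proposal has two genuine gaps, one in each reduction step.

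\emph{Step 1 (Popescu reduction) is missing the dominance of the transition maps.} You invoke Popescu's theorem to present a regular affine $k$-scheme $X$ as a cofiltered limit $\plim_\alpha X_\alpha$ of smooth affine $k$-schemes ``along dominant affine transition maps,'' and then apply Proposition~\ref{prop:continuity&2lim}. But Popescu's theorem only asserts that the regular $k$-algebra $B=\Gamma(X,\cO_X)$ is a filtered colimit of smooth $k$-algebras; it says nothing about the transition maps being injective (equivalently, the $\Spec$'s being dominant). This matters because $\DM(-,R)$ is only known to be continuous with respect to \emph{dominant} affine morphisms (Example~\ref{ex:continuity}(2)), so Proposition~\ref{prop:continuity&2lim} is not available for a generic Popescu presentation. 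The paper's proof addresses precisely this: it first reduces (via conservativity of a jointly conservative family of localizations, then Nisnevich neighbourhoods) to the henselian local case, next passes to $S\times_k\Spec k(t)$ to ensure an infinite residue field (where one must separately verify conservativity of the pullback via transfers, Lemma~\ref{lm:transcendental_split_DM}), and then replaces $S$ by its $\infty$-gonflement $T=\Spec(B)$ (Definition~\ref{df:gonflement}, Lemma~\ref{lm:gonfleconserve}). Only then does $B$ have a fraction field of infinite transcendence degree over $k$, so that Spivakovsky's refinement of Popescu -- ``resolution by smooth sub-algebras'' -- applies and produces a presentation by smooth \emph{sub}algebras, hence with dominant transition maps. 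Omitting this whole chain leaves the continuity argument without a foundation.

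\emph{Step 2 (reduction from smooth $X$ to $\Spec k$) does not go through by the stated adjunctions.} After applying $g_\sharp\dashv g^*$ and smooth base change, the $\Hom$-group becomes
\[
\Hom_{\HZ_Y\Mod}\bigl(\HZ_Y(n),\; p_\sharp\HZ_W(n')[j]\bigr),\qquad W=Y\times_X Y',\ p:W\to Y,
\]
and the analogous group in $\DM(Y,R)$. The ``further adjunction along $h:W\to\Spec k$'' you describe would require replacing a $\Hom$ out of $a^*\HZ_k(n)$ (for $a:Y\to\Spec k$) by a $\Hom$ over $\Spec k$, and that adjunction produces $\derR a_*$, not $\derL a_\sharp$: one gets $\Hom_{\SH(k)}(\un_k(n),\derR a_*(\cdots))$, which is \emph{not} the motivic cohomology of $W$ computed over $\Spec k$. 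For $a$ proper one could trade $\derR a_*$ for $\derL a_\sharp$ via Poincar\'e duality, but $Y$ is an arbitrary smooth $k$-scheme; turning this into a reduction to the projective case (and to degree prime to $\ell$) is exactly the content of the proof of Proposition~\ref{prop:fields}, which uses Gabber's alteration theorem, Riou's inductive generation by smooth projective schemes, and the trace maps of \ref{num:elementary_traces}--\ref{lm:formuledegre2}. For schemes of positive dimension, the paper avoids such a direct reduction altogether: it proceeds by induction on dimension, using the localization property at a closed point (valid for $\DM$ over essentially smooth schemes by Proposition~\ref{prop:DM_localiz_regular} and the base change Lemma~\ref{lm:basechangebycontinuity}), together with the compatibility of $t_*$ with $i^*$, $j_!$, $j^*$ (Lemma~\ref{lm:oubliDMversHZmod}), to reduce to the residue field and to the open complement of lower dimension. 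Note, in addition, that the residue field at a closed point of a regular $k$-scheme need not be perfect, so the ``base case'' must be Proposition~\ref{prop:fields} for arbitrary extension fields of $k$, which the paper obtains from the perfect case by passing to the inseparable closure (Lemma~\ref{lm:insepclosure}); invoking HKO over $k$ alone is not enough.

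In short: your high-level plan (continuity to reduce to smooth bases, then reduce to the known field case) mirrors the paper's architecture, but both reductions as written are incomplete. Dominance of the Popescu presentation requires the gonflement/Spivakovsky argument, and the smooth-to-point reduction requires either the localization induction or a duality/alterations argument; neither is a formal consequence of smooth base change and adjunctions.
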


The preceding theorem tells us that 
 the 6 operations constructed on $\DM(-,R)$ 
 in \cite[11.4.5]{CD3}, behave appropriately
 if one restricts to regular noetherian schemes
 of finite dimension over $k$:
\begin{cor}\label{cor:DM_reg&6functors}
Consider the notations of paragraph \ref{num:HZ-mod_reg}.
\begin{enumerate}
\item The functors $\varphi^*$ and $\varphi_*$
 commute with the operations $f^*$, $f_*$ (resp. $p_!$, $p^!$)
 for any morphism $f$ (resp. separated morphism
 of finite type $p$) between regular $k$-schemes.
\item The premotivic category $\DM(-,R)$ over $\reg$ satisfies:
\begin{itemize}
\item the localization property;
\item the base change formula
 ($g^*f_!\simeq f'_!g^{\prime *}$,
 with notations of \cite[11.4.5, (4)]{CD3});
\item the projection formula
 ($f_!(M\otimes f^*(N)) \simeq f_!(M) \otimes N$, 
 with notations of \cite[11.4.5, (5)]{CD3}).
\end{itemize}
\end{enumerate}
\end{cor}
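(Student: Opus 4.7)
The strategy is to exploit the equivalence $\DM(X,R)\simeq \HZ_X\Mod$ provided by Theorem~\ref{thm:comparisonnis} in order to transport the six functors formalism from $\SH$ (available by Ayoub's thesis) to $\DM(-,R)$ restricted to $\reg$. Under this equivalence, $\varphi^*$ corresponds to the free module functor $L_\HZ(E)=\HZ\otimes^\derL E$, while $\varphi_*$ corresponds to the conservative forgetful functor $\mathcal O_\HZ$. The key input making this work uniformly on $\reg$ is Corollary~\ref{cor:cartsectionReg}, which asserts that $\HZ$ is a cartesian section of commutative ring spectra, i.e.\ $\derL f^*(\HZ_Y)\simeq \HZ_X$ for every morphism $f:X\to Y$ between regular $k$-schemes.

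For part~(1), commutation of $\varphi^*$ with $f^*$ follows from the strong monoidality of $f^*$ together with Corollary~\ref{cor:cartsectionReg}:
\[
f^*\varphi^*(E)\simeq f^*(\HZ_Y)\otimes^\derL f^*(E)\simeq \HZ_X\otimes^\derL f^*(E)\simeq \varphi^* f^*(E).
\]
The projection formula in $\SH$ applied to $p^*\HZ_Y\simeq \HZ_X$ yields $p_!\varphi^*\simeq \varphi^* p_!$. The commutations $\varphi_*f_*\simeq f_*\varphi_*$ and $\varphi_* p^!\simeq p^!\varphi_*$ then follow by passing to right adjoints. The remaining commutations (namely $\varphi_*$ with $f^*$ and $p_!$, and $\varphi^*$ with $f_*$ and $p^!$) hold because, under the equivalence of Theorem~\ref{thm:comparisonnis}, the six operations on $\DM(-,R)|_\reg$ correspond to those on $\HZ\Mod$ inherited from $\SH$ by passage to underlying spectra with induced $\HZ$-module structure; the forgetful functor $\mathcal O_\HZ$ thus tautologically commutes with all four, and the commutations for $L_\HZ$ follow by adjunction.

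For part~(2), each of the three properties transfers from $\SH$ to $\HZ\Mod\simeq \DM(-,R)|_\reg$ through the conservative forgetful functor $\varphi_*$. Concretely, given a closed immersion $i$ with open complement $j$ in $\reg$, apply $\varphi_*$ to the candidate triangle $j_!j^*(M)\to M\to i_*i^*(M)\to j_!j^*(M)[1]$ in $\DM(S,R)$; using the commutations from part~(1) this becomes the localization triangle for $\varphi_*(M)$ in $\SH(S)$, which is distinguished by Ayoub. Conservativity of $\varphi_*$ then implies the original triangle is distinguished. The base change and projection formulas are verified by the same pattern: reduce via $\varphi_*$ to the corresponding statement in $\SH$ and conclude by conservativity.

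The main obstacle is to verify that the operations $f_!$ and $f^!$ on $\DM(-,R)|_\reg$, as constructed in \cite[11.4.5]{CD3} via Ayoub's cross-functor machinery applied directly to the premotivic category $\DM(-,R)$, are compatible, under the equivalence of Theorem~\ref{thm:comparisonnis}, with those on $\HZ\Mod$ inherited from $\SH$ through the module structure. This compatibility should follow from the uniqueness of $(f_!,f^!)$ as adjoints extending $(f_\sharp,f^*)$ for smooth $f$, combined with the premotivic nature of the equivalence; however, it requires a careful comparison of the gluing data of the two constructions.
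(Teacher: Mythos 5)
Your proposal is essentially correct and takes the same route as the paper: factor $(\varphi^*,\varphi_*)=(t^*,t_*)\circ(L_{\HZ},\mathcal O_{\HZ})$, use the module formalism of \cite[\S 7.2]{CD3} together with the fact that $\HZ$ is a homotopy cartesian section (Corollary~\ref{cor:cartsectionReg}) to transfer the commutations from $\SH$ to $\HZ\Mod$, and then invoke the equivalence of Theorem~\ref{thm:comparisonnis} to conclude for $\DM(-,R)$ over $\reg$. The paper's proof of part~(2) is organized slightly differently: it first deduces the localization property from the equivalence with $\HZ\Mod$, and then obtains the base change and projection formulas as automatic consequences of localization plus the corresponding statements for $\SH$, via the general machinery of \cite[Thm.~2.4.50]{CD3} for motivic triangulated categories, rather than re-deriving each one by conservativity. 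Both arguments are correct.

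The ``obstacle'' you flag at the end is not an actual gap. The exceptional functors $f_!$ and $f^!$ are constructed, both in \cite[11.4.5]{CD3} and in the module-theoretic setting, purely from the premotivic data $(f^*,f_*,f_\sharp)$ via Nagata compactification: for a separated morphism of finite type $f=p\circ j$ with $p$ proper and $j$ an open immersion, one sets $f_!=p_*\circ j_\sharp$, and the independence of the compactification is a formal consequence of the support property and exchange structure. This construction is functorial in the premotivic triangulated category, so any equivalence of premotivic categories --- in particular $(t^*,t_*)$ over $\reg$ --- automatically intertwines $f_!$ and $f^!$; there is no separate gluing data to compare. Once you accept this, your proof is complete and in agreement with the paper's.
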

\begin{proof}
Point (1) follows from the fact the premotivic adjunction 
 $(L_{\HZ},\mathcal O_{\HZ})$
 satisfies the properties stated for $(\varphi^*,\varphi_*)$
 and that they are true for $(t^*,t_*)$ because it is 
 an equivalence of premotivic categories, due to
 Theorem \ref{thm:comparisonnis}.
The first statement of Point (2) follows from the fact
that the localization property over $\reg$
holds in $\HZ\Mod$, and from the equivalence $\HZ\Mod\simeq\DM(-,R)$
over $\reg$; the remaining two statements follow from Point (2) and
 the fact they are true for $\SH$ (see \cite{ayoub} in the
 quasi-projective case and \cite[2.4.50]{CD3} in the general
 case).
\end{proof}

The proof of Theorem \ref{thm:comparisonnis}
will be given in Section \ref{subsec:proofregcase} (page \pageref{subsec:proofregcase}),
after a few preparations. But before that, we will explain some of its
consequences.

\begin{num}\label{num:basic_motivic_EM_ring}
Let $f:X \rightarrow S$ be a morphism of schemes.
Since \eqref{eq:SH<->DM} is an adjunction of fibred
categories over the category of schemes,
we have a canonical exchange transformation
 (see \cite[1.2.5]{CD3}):
\begin{equation} \label{eq:ex_phi_f_*}
Ex(f^*,\varphi_*):\derL f^*\varphi_* \rightarrow \varphi_*\derL f^*.
\end{equation}
Evaluating this natural transformation on the object $\un_S$
 gives us a map:
$$
\tau_f:\derL f^*(\HZ_S) \rightarrow \HZ_X.
$$
Voevodsky conjectured in \cite{V_OpenPB} the following property:
\begin{conj}[Voevodsky] The map $\tau_f$ is an isomorphism.
\end{conj}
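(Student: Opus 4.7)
The plan is to prove Voevodsky's conjecture in the setting of this paper: for any morphism $f:X\to S$ between noetherian finite-dimensional $k$-schemes, the map $\tau_f$ is an isomorphism after inverting $p$. My first move is to reduce to the absolute case $S=\Spec(k)$. Factoring the structural maps as $f_X=f_S\circ f$ and exploiting the $2$-functoriality of the exchange transformation \eqref{eq:ex_phi_f_*} with respect to composition of pullbacks, one obtains a canonical identification $\tau_{f_X}\simeq\tau_f\circ\derL f^*(\tau_{f_S})$. Hence if $\tau_{f_X}$ and $\tau_{f_S}$ are both isomorphisms, then so is $\tau_f$. It therefore suffices to prove that $\tau_g:\derL g^*(\HZ_k)\to\HZ_Y$ is an isomorphism for every structural morphism $g:Y\to\Spec(k)$ with $Y$ a noetherian $k$-scheme of finite dimension.

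When $Y$ is regular, this is precisely Corollary \ref{cor:cartsectionReg} applied to $A=\HZ$ viewed as a commutative monoid in $\Sp_k$. For possibly singular $Y$, the strategy is to combine the main comparison theorem \ref{thm:comparisoncdh}---which identifies $\HZ_{Y/k}[1/p]\Mod$ with $\DM_\cdh(Y,\ZZ[1/p])$---with Gabber's refinement of de~Jong's alteration theorem. The latter produces, after inverting $p$, a $\cdh$-hypercover $Y_\bullet\to Y$ with each $Y_n$ regular, while the former ensures that $\HZ_{Y/k}$-modules satisfy $\cdh$-descent automatically. Viewing $\tau_g$ as a morphism of commutative ring objects in the $\cdh$-local category $\HZ_{Y/k}\Mod$ and pulling back to $Y_\bullet$ reduces the claim to the regular case already established.

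The main obstacle is precisely this last descent step: $\SH$ does not itself satisfy $\cdh$-descent, so one cannot naively verify that $\tau_g$ is an isomorphism by checking on the pullback along a $\cdh$-covering of $Y$ in $\SH(Y)$. The way around this is to carry out all reasoning inside the module category $\HZ_{Y/k}\Mod$, where $\cdh$-descent holds via the equivalence with $\DM_\cdh$. The technical crux will be to rigidify $\tau_g$ as a morphism of $\HZ_{Y/k}$-algebras (using that $\HZ_Y$ is a strict commutative monoid), to verify that the forgetful functor $\HZ_{Y/k}\Mod\to\SH(Y)$ is conservative on cones of such maps, and to check that $\HZ_Y$ inherits a natural $\HZ_{Y/k}$-module structure whose $\cdh$-descent datum recovers, termwise on the regular cover $Y_\bullet$, the isomorphisms provided by Corollary \ref{cor:cartsectionReg}. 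Once this compatibility is in place, any non-trivial cofiber of $\tau_g$ in $\HZ_{Y/k}\Mod$ would vanish after $\cdh$-hyperdescent, forcing $\tau_g$ to be an isomorphism.
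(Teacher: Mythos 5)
The statement in question is a \emph{conjecture}, and the paper does not prove it in general: Corollary \ref{cor:DM->SH_commutes_reg_pullbacks} establishes it only for morphisms $f:X\to S$ between \emph{regular} $k$-schemes, via the reduction to the structural map, continuity (which rests on Theorem \ref{thm:comparisonnis}), and the trivial smooth case. Your reduction to absolute structural maps via $2$-functoriality is fine and matches the paper's ``transitivity of pullbacks''. For the regular case, however, Corollary \ref{cor:cartsectionReg} alone does not suffice: it gives $\derL\varphi^*(\HZ_{Y/k})\simeq\HZ_{X/k}$, which compares two pullbacks of $\HZ_k$, but says nothing about the identification $\HZ_{Y/k}\simeq\HZ_Y$, i.e.\ it does not relate the pullback of $\HZ_k$ to Voevodsky's spectrum $\HZ_Y=\tilde\varphi_*(R_Y)$. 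That identification is precisely the content of the regular case of the conjecture and requires the continuity argument of Corollary \ref{cor:DM->SH_commutes_reg_pullbacks}.

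The serious gap is in the singular case and it is circular. Granted, the cone of $\tau_g$ can be placed in $\HZ_{Y/k}\Mod\simeq\DM_\cdh(Y,\ZZ[1/p])$, where $\cdh$-descent is available, and one may try to test vanishing on a $\cdh$-hypercover $\varphi_\bullet:Y_\bullet\to Y$ by regular schemes. But since the forgetful functor $\HZ_{(-)/k}\Mod\to\SH$ commutes with $\derL\varphi_n^*$ (Proposition \ref{prop:HZmodrelatif2}), the pullback of $\tau_g$ to $Y_n$ is a map $\HZ_{Y_n/k}\to\derL\varphi_n^*(\HZ_Y)$ in $\SH(Y_n)$. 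To conclude that this is an isomorphism you would need to identify $\derL\varphi_n^*(\HZ_Y)$ with $\HZ_{Y_n}$ --- and that identification is exactly $\tau_{\varphi_n}$ being an isomorphism for the morphism $\varphi_n:Y_n\to Y$, whose \emph{target} is singular, hence not covered by the regular case. (Equivalently, by transitivity $\tau_{g_n}=\tau_{\varphi_n}\circ\derL\varphi_n^*(\tau_g)$ with $\tau_{g_n}$ known to be invertible; extracting the invertibility of $\derL\varphi_n^*(\tau_g)$ requires knowing $\tau_{\varphi_n}$ is invertible.) There is no escape from this circle with the tools of the paper: the whole framework of $\DM_\cdh$ was introduced precisely because the Nisnevich-defined $\HZ_Y$ over singular $Y$ is not expected to coincide with $\HZ_{Y/k}=\derL g^*(\HZ_k)$; the conjecture therefore remains open beyond the regular case, and your proposal does not close it.
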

When $f$ is smooth, the conjecture is obviously true
 as $Ex(f^*,\varphi_*)$ is an isomorphism.
\end{num}

\begin{rem}
The preceding conjecture of Voevodsky is closely related
 to the localization property for $\DM$.
In fact,  let us also mention the following result
 which was implicit in \cite{CD3}
 -- as it will not be used in the sequel
  we leave the proof as an exercise for the
	reader.\footnote{Hint: use the fact that $\varphi_*$ commutes
	with $j_\sharp$
	(\cite[6.3.11]{CD3} and \cite[11.4.1]{CD3}).}
\begin{prop}\label{prop:localization&Voevodsky_conj}
We use the notations of Par. \ref{num:basic_motivic_EM_ring}.
Let $i:Z \rightarrow S$ be a closed immersion.
 Then the following properties are equivalent:
\begin{enumerate}
\item[(i)] The premotivic triangulated category $\DM$
 satisfies the localization property with respect to $i$
 (see \cite[2.3.2]{CD3}).
\item[(ii)] The natural transformation $Ex(i^*,\varphi_*)$ is an isomorphism.
\end{enumerate}
\end{prop}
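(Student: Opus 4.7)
The plan is to exploit the conservativity of $\varphi_*$, the localization property already available in $\SH$ (by Morel--Voevodsky--Ayoub), and several compatibilities of $\varphi_*$ with base change. The key compatibilities are: $\varphi_*\, j_\sharp \simeq j_\sharp\, \varphi_*$ (the hint, via \cite[6.3.11]{CD3} and \cite[11.4.1]{CD3}); $\varphi_*\, j^* \simeq j^*\, \varphi_*$ (since the $\sm$-fibred structure makes smooth pullback commute with the right adjoint $\varphi_*$); and $\varphi_*\, i_* \simeq i_*\, \varphi_*$ (by passage to right adjoints from the tautological identity $i^* \varphi^* = \varphi^* i^*$). I will also use the general vanishing $j^* i_* = 0$ (by base change to the empty scheme $U \times_S Z$) and conservativity of $\varphi_*:\DM(S,R) \to \SH(S)$, which holds because at the model level it is induced by the forgetful functor from sheaves with transfers to sheaves, itself obviously conservative.

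For $(ii) \Rightarrow (i)$, take $M \in \DM(S,R)$ and complete the counit $j_\sharp j^* M \to M$ to a distinguished triangle $j_\sharp j^* M \to M \to C \to j_\sharp j^* M[1]$. Because $j^* i_* = 0$, the composite $j_\sharp j^* M \to M \to i_* i^* M$ vanishes, so there is a canonical factorization $C \to i_* i^* M$. Applying $\varphi_*$ and using the compatibilities above together with assumption $(ii)$ in the form $i^* \varphi_* \xrightarrow{\sim} \varphi_* i^*$, the resulting distinguished triangle identifies with the $\SH$-localization triangle for $\varphi_* M$. This shows that $\varphi_*(C \to i_* i^* M)$ is an isomorphism in $\SH(S)$, and conservativity of $\varphi_*$ promotes this to an isomorphism $C \simeq i_* i^* M$ in $\DM(S,R)$, which is $(i)$.

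For the converse $(i) \Rightarrow (ii)$, apply $\varphi_*$ to the localization triangle granted by $(i)$ and use the first three compatibilities (\emph{not} invoking $(ii)$) to obtain a distinguished triangle
$$
j_\sharp j^* \varphi_* M \to \varphi_* M \to i_* \varphi_* i^* M \to j_\sharp j^* \varphi_* M[1].
$$
Compare this with the $\SH$-localization triangle of $\varphi_* M$: the first two vertices and the connecting map agree, so the third vertices are canonically isomorphic, yielding $i_* \varphi_* i^* M \simeq i_* i^* \varphi_* M$. Since $i_*$ is fully faithful in $\SH$ (a consequence of localization there), cancelling $i_*$ produces an isomorphism $\varphi_* i^* M \simeq i^* \varphi_* M$ natural in $M$. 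The main obstacle in executing the plan is precisely to verify that this isomorphism coincides with the canonical exchange $Ex(i^*, \varphi_*)$ rather than being just some abstract isomorphism between the same functors; this amounts to chasing the defining adjunction squares for $Ex(i^*,\varphi_*)$ through the construction of the comparison map of triangles, which requires careful bookkeeping but no new idea.
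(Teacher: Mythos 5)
The paper contains no proof of this proposition: it is explicitly left as an exercise, with the footnote hint that $\varphi_*$ commutes with $j_\sharp$, and your argument is a correct execution of exactly the intended strategy — transport the localization triangle through $\varphi_*$ via the exchange isomorphisms for $j_\sharp$, $j^*$ and $i_*$, compare with the localization triangle in $\SH$, and conclude by conservativity of $\varphi_*$. Two small points. First, your justification of that conservativity is not valid as stated (a derived functor of a conservative functor need not be conservative); the correct reason, recalled in \ref{num:HZ-mod_reg} from \cite[6.3.9]{CD3}, is that $\tilde\gamma_*$, hence $\tilde\varphi_*$, \emph{detects} stable $\AA^1$-equivalences. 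Second, the ``bookkeeping'' you defer in $(i)\Rightarrow(ii)$ is avoided by noting at the outset that the composite $\varphi_*M\to\varphi_*i_*i^*M\simeq i_*\varphi_*i^*M$ equals $i_*(Ex(i^*,\varphi_*))\circ\eta$ (the standard compatibility of exchange maps with adjunction units, cf.\ \cite[1.2.5]{CD3}), so the comparison of the two triangles may be taken to be $\mathrm{id}$, $\mathrm{id}$, $i_*(Ex(i^*,\varphi_*))$, whence $Ex(i^*,\varphi_*)$ is invertible since $i_*$ is fully faithful in $\SH$; and if one reads \cite[2.3.2]{CD3} as also requiring $i^*i_*\simeq 1$ in $\DM$, this follows from your triangles applied to $M=i_*N$ together with conservativity of $i_*$ in $\DM$ (a consequence of $\varphi_*i_*\simeq i_*\varphi_*$ and the two conservativity statements already in hand).
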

\end{rem}

From the case of smooth morphisms, we get the following particular
 case of the preceding conjecture.

\begin{cor} \label{cor:DM->SH_commutes_reg_pullbacks}
The conjecture of Voevodsky holds
 for any morphism $f:X \rightarrow S$ of regular $k$-schemes.
\end{cor}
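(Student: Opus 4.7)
The plan is to reduce to the case of the structural morphism of a regular $k$-scheme, and then to exploit the fact that such a morphism is automatically regular in the EGA sense.

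First I would use the functoriality of the exchange transformation with respect to composition of morphisms. Writing $f\colon X\to S$ as part of the triangle $p=q\circ f$ with structural morphisms $p\colon X\to\Spec k$ and $q\colon S\to\Spec k$, the naturality of the construction \eqref{eq:ex_phi_f_*} yields a factorization
$$\tau_p \;=\; \tau_f\circ\derL f^*(\tau_q)$$
as maps $\derL p^*(\HZ_k)=\derL f^*\derL q^*(\HZ_k)\to\HZ_X$. In particular, if we already know that $\tau_p$ and $\tau_q$ are isomorphisms, then $\derL f^*(\tau_q)$ is an isomorphism as well, and the above factorization forces $\tau_f$ to be one too.

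Second, I would handle the special case of the structural morphism $p\colon X\to\Spec k$ of a regular $k$-scheme $X$. Because $k$ is perfect, $X$ is automatically geometrically regular over $k$, so $p$ is flat with geometrically regular fibres, i.e.\ a regular morphism in the sense of EGA. The remark accompanying \eqref{eq:fibred_struct_strict_HZ} already observes that $\tau_f$ is an isomorphism for any regular morphism $f$: indeed, Proposition \ref{prop:exactness_regular} ensures $f^*=\derL f^*$ on the model category of symmetric Tate spectra, while the functor $\tilde\varphi_*$ commutes strictly with inverse image functors (it is essentially a composite of the Dold--Kan nerve with the forgetful functor from sheaves with transfers), so that
$$\derL p^*(\HZ_k)=p^*\tilde\varphi_*(R_k)=\tilde\varphi_*(p^*R_k)=\tilde\varphi_*(R_X)=\HZ_X,$$
and this chain of equalities implements precisely the map $\tau_p$. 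An equivalent route is to apply Corollary \ref{cor:cartsectionReg} with the strict commutative ring spectrum $A=\HZ_k$, and to identify its output $A_X=p^*(\HZ_k)$ with $\HZ_X$ by the same computation.

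I do not expect a conceptual obstacle. The only delicate point is the last identification: one must verify that the chain of equalities obtained from the strict commutation of $\tilde\varphi_*$ with pullbacks really implements the map $\tau_p$ of the statement (which is defined indirectly as the evaluation of the exchange transformation $Ex(p^*,\varphi_*)$ at the unit object). This is a bookkeeping check using the fact that both $p^*$ and $\tilde\varphi_*$ are underived in the situation at hand, and it is where the assumption that $k$ is perfect enters in an essential way.
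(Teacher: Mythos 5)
The first step—reducing to the case of a structural morphism $p\colon X\to\Spec k$ via the factorization $\tau_p=\tau_f\circ\derL f^*(\tau_q)$—is sound and is exactly the paper's "transitivity of pullbacks." The gap is entirely in the second step. You assert that "$\tilde\varphi_*$ commutes strictly with inverse image functors" because it is a composite of Dold--Kan with the forgetful functor from sheaves with transfers. This is precisely the assertion that needs proof, and it is not a bookkeeping check. The forgetful functor $\tilde\gamma_*$ is a restriction along $\gamma\colon\sm\to\smc$, so it commutes with $f_*$ (which is also a restriction), but $f^*$ for a general (non-smooth) morphism is a sheafified left Kan extension, and the Beck--Chevalley exchange $f^*\tilde\gamma_*\to\tilde\gamma_*f^*$ is not automatically invertible. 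The only case where this is transparent is when $f$ is smooth, because there $f^*$ is itself a restriction functor (along $\sm_X\to\sm_Y$) and the relevant square of precompositions strictly commutes. For the structural morphism of a regular (non-smooth) $k$-scheme, some genuine input is needed; this is exactly why the paper proves it in Lemma \ref{lm:oubliDMversHZmod} via continuity and localization arguments rather than asserting it.

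You have also misread the remark at \eqref{eq:fibred_struct_strict_HZ}. The content there is that, because $f^*=\derL f^*$ on the model category when $f$ is regular (Proposition \ref{prop:exactness_regular}), the morphism $\tau_f$ in $\SH(X)$ can be \emph{represented} by a map $\tilde\tau_f$ of strict ring spectra; the word "isomorphism" in that sentence is a slip (or a forward reference), not a claim that $\tau_f$ is known to be invertible for any regular $f$—if it were, Corollary \ref{cor:DM->SH_commutes_reg_pullbacks} would be vacuous and the paper would not bother proving it. The paper's actual proof proceeds quite differently: after reducing to $p\colon X\to\Spec k$ with $k$ a prime field, it writes $X$ as a limit of smooth $k$-schemes with affine transition maps (Popescu), and appeals to the continuity of $\DM(-,R)$ over $\reg$ for \emph{arbitrary} affine transition maps, not merely dominant ones. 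That stronger continuity is not available out of the box (Example \ref{ex:continuity}(2) only gives continuity along dominant affine morphisms); it is obtained by transporting continuity from $\HZ\Mod$ across the equivalence of Theorem \ref{thm:comparisonnis}. In short, the paper's proof of this corollary is a nontrivial consequence of the main comparison theorem, whereas your proposal replaces that input with an unproved commutation claim; if you want to avoid Theorem \ref{thm:comparisonnis}, you would need to establish, by an explicit noetherian-approximation argument at the level of sheaves with transfers, that $p^*$ on $\shtr_\nis(-,R)$ is the filtered colimit of the smooth $p_\alpha^*$'s compatibly with the forgetful functor—but that argument is not in your write-up and is substantially more than bookkeeping.
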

\begin{proof}
By transitivity of pullbacks,
 it is sufficient to consider the case where $f=p$ is the structural
 morphism of the $k$-scheme $S$, with $k$ a prime field (in particular,
with $k$ perfect). Since $\DM$ is continuous with respect to projective
systems of regular $k$-schemes with affine transition maps
(because this is the case for $\HZ$-modules, using Theorem \ref{thm:comparisonnis}),
we are reduced to the case where $S$ is smooth over $k$, which is trivial.
\end{proof}

\begin{rem}
The previous result is known to have interesting consequences
 for the motivic Eilenberg-MacLane spectrum $\HZ_X$
 where $X$ is an arbitrary noetherian regular $k$-scheme
 of finite dimension.

For example, we get the following extension of a result
 of Hoyois on a theorem first stated by Hopkins and
 Morel (for $p=1$).
 Given a scheme $X$ as above, the canonical map
$$
\mathit{MGL}_X/(a_1,a_2,\hdots)[1/p] \rightarrow \mathit{H}\ZZ_X[1/p]
$$
from the algebraic cobordism ring spectrum modulo
 generators of the Lazard ring is an isomorphism up to inverting
 the characteristic exponent of $k$.
 This was proved in \cite{hoyois2}, for the base field $k$,
 or, more generally, for any essentially smooth $k$-scheme $X$.

This shows in particular that $\mathit{H}\ZZ_X[1/p]$
 is the universal oriented ring $\ZZ[1/p]$-linear spectrum over $X$ with additive formal group law.
 
 All this story remains true for arbitrary noetherian $k$-schemes of finite dimension if we are eager to replace $\mathit{H}\ZZ_X$ by its $\cdh$-local version:
 this is one of the meanings of Theorem \ref{thm:comparisoncdh} below. Note that, since Spitweck's version of the motivic spectrum has the same relation with algebraic cobordism (see \cite[Theorem 11.3]{spit2}), it coincides with the $\cdh$-local version of $\mathit{H}\ZZ_X$ as well, at least up to $p$-torsion.
\end{rem}

\begin{df}\label{df:HZrelatif}
Let $X$ be a regular $k$-scheme with structural map
$f:X\to\Spec(k)$. We define the \emph{relative motivic
Eilenberg-MacLane spectrum} of $X/k$ by the formula
$$\HZ_{X/k}=f^*(\HZ_{\Spec(k)})$$
(where $f^*:\Sp_k\to\Sp_X$ is the pullback functor
at the level of the model categories).
\end{df} 

\begin{rem}\label{rem:df:HZrelatif}
By virtue of Propositions \ref{prop:exactness_regular}
and Corollary \ref{cor:DM->SH_commutes_reg_pullbacks}, we have
canonical isomorphisms
$$\derL f^*(\HZ_{\Spec{(k)}})\simeq\HZ_{X/k}\simeq\HZ_X\, .$$
Note that, the functor $f^*$ being symmetric monoidal, each
relative motivic Eilenberg-MacLane spectrum $\HZ_{X/k}$
is a commutative monoid in $\Sp_X$.
This has the following consequences.
\end{rem}

\begin{prop}\label{prop:HZmodpremotivicReg}\label{prop:HZmodrelatif1}
For any regular $k$-scheme $X$, there is a canonical equivalence of
symmetric monoidal triangulated categories
$$\HZ_{X/k}\Mod\simeq\HZ_X\Mod\, .$$
In particular, the assignment $X\mapsto\HZ_X\Mod$ defines
a premotivic symmetric monoidal triangulated category $\HZ\Mod$
over $\reg$, which is continuous with respect to any projective system
of regular $k$-schemes with affine transition maps.

Moreover the forgetful functor
$$\HZ\Mod\to\SH$$
commutes with $\derL f^*$ for any $k$-morphism $f:X\to Y$
between regular schemes, and with $\derL f_\sharp$ for any
smooth morphism of finite type between regular schemes.
\end{prop}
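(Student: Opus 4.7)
The plan is to deduce everything from the single non-trivial ingredient already in hand, namely Corollary \ref{cor:DM->SH_commutes_reg_pullbacks} (Voevodsky's conjecture for morphisms between regular $k$-schemes), together with the formal machinery of Example \ref{ex:modules_reg} and Lemma \ref{lm:trivial_continuity}.

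First I would establish the equivalence $\HZ_{X/k}\Mod\simeq\HZ_X\Mod$ for a fixed regular $k$-scheme $X$ with structural map $f:X\to\Spec(k)$. By \eqref{eq:fibred_struct_strict_HZ} the comparison map $\tilde\tau_f:f^*(\HZ_k)=\HZ_{X/k}\to\HZ_X$ is by construction a morphism of \emph{strict} commutative monoids in $\Sp_X$, and Corollary \ref{cor:DM->SH_commutes_reg_pullbacks} says that it is a weak equivalence of underlying Tate spectra. Since the model category $\Sp_X$ satisfies the monoid axiom (cf. the footnote to Example \ref{ex:modules_reg}), a weak equivalence between cofibrant commutative monoids induces a Quillen equivalence between the associated categories of modules via restriction and extension of scalars, and one may replace $\HZ_{X/k}$ by a cofibrant model without changing the discussion. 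This yields the first assertion.

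Second, I would construct the premotivic structure. The assignment $X\mapsto\HZ_{X/k}=f^*(\HZ_k)$ is, by transitivity of pullback at the model category level ($g^*p^*=(pg)^*$), a \emph{strictly} cartesian section of the fibred category of commutative monoids in Tate spectra over $\reg$. Applying the machinery of \cite[7.2.11, 7.2.13]{CD3} (exactly as in Example \ref{ex:modules_reg}), this produces a premotivic symmetric monoidal triangulated category $\HZ\Mod$ over $\reg$ whose fibre over $X$ is $\HZ_{X/k}\Mod\simeq\HZ_X\Mod$, together with the premotivic adjunction $L_\HZ:\SH\rightleftarrows\HZ\Mod:\mathcal O_\HZ$ where $\mathcal O_\HZ$ is the forgetful functor. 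Continuity of $\HZ\Mod$ with respect to projective systems of regular $k$-schemes with affine transition maps is then obtained by invoking Lemma \ref{lm:trivial_continuity} applied to this adjunction: $\SH$ satisfies continuity without restriction by Example \ref{ex:continuity}(1); the forgetful functor $\mathcal O_\HZ$ is conservative by definition of modules; and $\mathcal O_\HZ$ commutes with $f^*$ for every morphism in $\reg$ since pullback of an $\HZ_{Y/k}$-module acquires its $\HZ_{X/k}$-module structure through the canonical identification $f^*\HZ_{Y/k}\simeq\HZ_{X/k}$, leaving the underlying Tate spectrum unchanged.

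Finally, for the commutation statements. That the forgetful functor commutes with $\derL f^*$ is automatic from the construction: at the level of model categories the pullback of modules is computed by pulling back the underlying spectrum and then transporting the module structure along the strictly cartesian section, so the underlying Quillen functors agree, and deriving preserves this. Commutation with $\derL f_\sharp$ for $f:X\to Y$ a smooth morphism of finite type between regular schemes is the only place where one must argue non-trivially: for a free module $\HZ_{X/k}\otimes^\derL E$ with $E\in\SH(X)$, the projection formula in $\SH$ yields
\[
\derL f_\sharp\bigl(\HZ_{X/k}\otimes^\derL E\bigr)\simeq f^*(\HZ_{Y/k})\otimes^\derL \derL f_\sharp(E)\otimes_{f^*(\HZ_{Y/k})}^\derL\HZ_{X/k}\simeq\HZ_{Y/k}\otimes^\derL\derL f_\sharp(E),
\]
showing compatibility on free modules; since $\HZ_{X/k}\Mod$ is generated as a localizing subcategory by free modules, and both $\derL f_\sharp$ and $\mathcal O_\HZ$ commute with arbitrary small sums and preserve distinguished triangles, the commutation extends to all objects. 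The same argument already appeared in the proof of Proposition \ref{prop:adjHZModDM}, so I expect it to be the only non-routine step; the overall difficulty of the proposition is entirely absorbed into Corollary \ref{cor:DM->SH_commutes_reg_pullbacks}.
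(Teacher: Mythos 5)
Your proposal is correct, and for the first two assertions (the equivalence $\HZ_{X/k}\Mod\simeq\HZ_X\Mod$ and continuity) it follows essentially the same route as the paper: both rest on the comparison map $\HZ_{X/k}\to\HZ_X$ being a stable $\AA^1$-equivalence (from Proposition \ref{prop:exactness_regular}) together with the general module machinery of \cite[7.2.11--7.2.13]{CD3} and Lemma \ref{lm:trivial_continuity} via Example \ref{ex:modules_reg}. One small point: your phrasing ``weak equivalence between cofibrant commutative monoids'' imposes a hypothesis that neither $\HZ_X$ nor (a priori) $\HZ_{X/k}$ satisfies; the relevant result in \cite[7.2.13]{CD3} only needs the monoid axiom, not cofibrancy of the monoids, so this qualifier is unnecessary (and your suggested fix --- replacing $\HZ_{X/k}$ by a cofibrant model --- leaves $\HZ_X$ untouched anyway).

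Where you genuinely diverge from the paper is in the last assertion, the commutation of the forgetful functor with $\derL f_\sharp$. The paper replaces $\HZ_X$ by a cofibrant monoid via \cite[7.1.11, 7.2.12]{CD3} so that the forgetful functor becomes a left Quillen functor preserving all weak equivalences, and then observes that the point-set-level commutation with $f^*$ and $f_\sharp$ passes directly to derived functors (citing \cite[7.2.14]{CD3}). You instead argue in the homotopy category: the exchange transformation is checked to be an isomorphism on free modules via the projection formula (exactly as in Proposition \ref{prop:adjHZModDM}), and then extended by exactness and preservation of small sums, since free modules generate a localizing subcategory equal to the whole category. Both arguments are valid; the paper's is more uniform in handling $f^*$ and $f_\sharp$ by the same mechanism, while yours trades a bit of model-category bookkeeping for a generation argument that you already have at hand. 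The displayed computation in your $f_\sharp$ step is a little garbled (the extra $\otimes_{f^*(\HZ_{Y/k})}\HZ_{X/k}$ factor is an identity since $\HZ_{X/k}=f^*(\HZ_{Y/k})$), but the underlying projection-formula argument is sound.
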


\begin{proof}
Since the canonical morphism of commutative
monoids $\HZ_{X/k}\to\HZ_X$ is a stable $\AA^1$-equivalence
the first assertion is a direct consequence of
\cite[Prop. 7.2.13]{CD3}.
The property of continuity is a particular
case of Example \ref{ex:modules_reg}, with $R_X=\HZ_{X/k}$.
For the last part of the proposition,
by virtue of the last assertion of \cite[Prop. 7.1.11 and 7.2.12]{CD3}
we may replace (coherently) $\HZ_X$ by a cofibrant monoid $R_X$ (in the model category
of monoids in $\Sp_X$), in order to apply \cite[Prop. 7.2.14]{CD3}:
The forgetful functor from $R_X$-modules to $\Sp_X$ is a left Quillen
functor which preserves weak equivalences and commutes with $f^*$
for any map $f$ in $\reg$: therefore, this relation remains true after we pass to the
total left derived functors. The case of $\derL f_\sharp$ is similar.
\end{proof}

We now come back to the aim of proving Theorem \ref{thm:comparisonnis}.

\subsection{Some consequences of continuity}

\begin{lm}\label{lm:basechangebycontinuity}
Consider the cartesian square of schemes below.
$$
\xymatrix@=10pt{
X'\ar^q[r]\ar_g[d] & X\ar^f[d] \\
Y'\ar ^p[r] & Y
}
$$
We assume that $Y'$ is the projective limit of
a projective system of $Y$-schemes $(Y_\alpha)$ with
affine flat transition maps, and make the
following assumption.
For any index $\alpha$, if $p_\alpha:Y_\alpha\to Y$
denotes the structural morphism, the base change morphism
associated to the pullback square
$$
\xymatrix@=10pt{
X_\alpha\ar^{q_\alpha}[r]\ar_{g_\alpha}[d] & X\ar^f[d] \\
Y_\alpha\ar^{p_\alpha}[r] & Y
}
$$
in $\DM(Y_\alpha,R)$ is an isomorphism:
$\derR p^*_\alpha\, \derR f_*\simeq \derR g_{\alpha,*}\, \derL q^*_\alpha$.

Then the base change morphism $\derL p^*\, \derR f_*\to \derR g_*\, \derL q^*$ is invertible in $\DM(Y',R)$.
\end{lm}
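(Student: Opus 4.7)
The strategy is to reduce the base change isomorphism on the limit $Y'$ to the ones on the finite levels $Y_\alpha$, by using the continuity of $\DM$ twice: once for the pro-system $(Y_\alpha)$ with limit $Y'$, and once for the pro-system $(X_\alpha)$ with limit $X'$, whose transition maps are flat and affine as base changes of those of $(Y_\alpha)$. Both pro-systems fall within the scope of Example \ref{ex:continuity}(2), since flat affine morphisms between noetherian schemes with irreducible targets are dominant.

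I would first reduce to showing that the canonical base change morphism
$$\phi_E: \derL p^*\, \derR f_*(E) \longrightarrow \derR g_*\, \derL q^*(E)$$
becomes an isomorphism after applying $\Hom_{\DM(Y',R)}(M,-)$ for every constructible object $M$ of $\DM(Y',R)$; this is enough, since constructibles generate. By Proposition \ref{prop:continuity&2lim}, any such $M$ is, up to isomorphism, the pullback $\derL v_{\alpha_0}^*(M_{\alpha_0})$ of a constructible object $M_{\alpha_0} \in \DM(Y_{\alpha_0},R)$ for some index $\alpha_0$. Write $v_\alpha: Y' \to Y_\alpha$ and $u_\alpha: X' \to X_\alpha$ for the canonical projections and $M_\alpha = \derL v_\alpha^*(M_{\alpha_0})$; the factorizations $p_\alpha \circ v_\alpha = p$ and $q_\alpha \circ u_\alpha = q$ set the stage for two successive applications of continuity.

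The core computation is then a chain of isomorphisms. By Proposition \ref{prop:continuity&2lim} applied to $(Y_\alpha)$, followed by the base change hypothesis at each finite level, and then the $(\derL g_\alpha^*, \derR g_{\alpha,*})$-adjunction,
\begin{align*}
\Hom_{\DM(Y')}(M, \derL p^*\, \derR f_*\, E)
&\simeq \varinjlim_\alpha \Hom_{\DM(Y_\alpha)}(M_\alpha, \derL p_\alpha^*\, \derR f_*\, E) \\
&\simeq \varinjlim_\alpha \Hom_{\DM(Y_\alpha)}(M_\alpha, \derR g_{\alpha,*}\, \derL q_\alpha^*\, E) \\
&\simeq \varinjlim_\alpha \Hom_{\DM(X_\alpha)}(\derL g_\alpha^*\, M_\alpha, \derL q_\alpha^*\, E).
\end{align*}
The object $\derL g_{\alpha_0}^*(M_{\alpha_0})$ is constructible in $\DM(X_{\alpha_0},R)$, because $\derL g_{\alpha_0}^*$ commutes with $\sharp$-pushforwards along smooth maps and preserves Tate twists, so it sends the generators $f_\sharp \un_V(n)$ to generators. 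A second application of Proposition \ref{prop:continuity&2lim}, now to the pro-system $(X_\alpha)$ with limit $X'$, then identifies the last colimit with $\Hom_{\DM(X')}(\derL g^*\, M, \derL q^*\, E)$, which is in turn isomorphic to $\Hom_{\DM(Y')}(M, \derR g_*\, \derL q^*\, E)$ by the $(\derL g^*,\derR g_*)$-adjunction.

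The hardest point is not any single one of these manipulations --- each being a formal consequence of continuity or adjunction --- but rather the verification that the resulting composite isomorphism between the two Hom-sets coincides with $\Hom_{\DM(Y')}(M,\phi_E)$. This requires a naturality diagram chase, tracing $\phi_E$ through its construction as a composition of units and counits of the various $(\derL p^*,\derR p_*)$, $(\derL q^*,\derR q_*)$ adjunctions, and checking that each colimit isomorphism from Proposition \ref{prop:continuity&2lim} is compatible with the base change 2-cells. Once this formal compatibility is in place, the conclusion follows.
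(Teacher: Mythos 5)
Your proof is essentially identical to the paper's: both reduce to checking on $\Hom(M,-)$ for constructible $M$, descend $M$ to some level $\alpha_0$ by continuity, run the same chain of isomorphisms (continuity on $(Y_\alpha)$, levelwise base change hypothesis, $(\derL g_\alpha^*,\derR g_{\alpha,*})$-adjunction, continuity on $(X_\alpha)$, $(\derL g^*,\derR g_*)$-adjunction), and rely on flatness to get the transition maps of $(X_\alpha)$ into the class where continuity applies. The naturality check you flag at the end is implicit in the paper but is indeed the point that makes the chain of isomorphisms a proof of the stated claim rather than just an abstract isomorphism of $\Hom$-sets.
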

\begin{proof}
We want to prove that, for any object $E$ of $\DM(X,R)$, the map
$$\derL p^*\, \derR f_*(E)\to \derR g_*\, \derL q^*(E)$$
is invertible.
For this, it is sufficient to prove that, for any constructible object $M$
of $\DM(Y',R)$, the map
$$\Hom(M,\derL p^*\, \derR f_*(E))
\to\Hom(M,\derR g_*\, \derL q^*(E))$$
is bijective. Since $\DM(-,R)$ is continuous with respect to
dominant affine morphisms, we may assume that there exists an index
$\alpha_0$ and a constructible object $M_{\alpha_0}$, such that
$M\simeq\derL p^*_{\alpha_0}(M_{\alpha_0})$.
For $\alpha>\alpha_0$, we will write $M_\alpha$ for the
pullback of $M_{\alpha_0}$ along the transition map $Y_\alpha\to Y_{\alpha_0}$.
By continuity, we have a canonical identification
$$\varinjlim_\alpha\Hom(M_\alpha,\derL p^*_\alpha\, \derR f_*(E))
\simeq \Hom(M,\derL p^*\, \derR f_*(E))\, .$$
On the other hand, by assumption, we also have:
\begin{align*}
\varinjlim_\alpha\Hom(M_\alpha,\derL p^*_\alpha\, \derR f_*(E))
&\simeq\varinjlim_\alpha\Hom(M_\alpha,\derR g_{\alpha,*}\, \derL q^*_\alpha(E))\\
&\simeq\varinjlim_\alpha\Hom(\derL g^*_\alpha(M_\alpha),\derL q^*_\alpha(E))\, .\\
\end{align*}
The flatness of the maps $p_{\beta\alpha}$ ensures that the transition
maps of the projective system $(X_\alpha)$ are also affine and dominant, so that,
by continuity, we get the isomorphisms
\begin{align*}
\varinjlim_\alpha\Hom(\derL g^*_\alpha(M_\alpha),\derL q^*_\alpha(E))
&\simeq\Hom(\derL g^*(M),\derL q^*(E))\\
&\simeq\Hom(M,\derR g_*\, \derL q^*(E))\, ,
\end{align*}
and this achieves the proof.
\end{proof}

\begin{prop}\label{prop:DM_localiz_regular}
Let $i:Z \rightarrow S$ be a closed immersion between regular
 $k$-schemes. Assume that $S$ is the limit of a projective
 system of smooth separated $k$-schemes of finite type, with
 affine flat transition maps.
 Then $\DM(-,R)$ satisfies the localization property
 with respect to $i$ (cf. \cite[Def. 2.3.2]{CD3}).
\end{prop}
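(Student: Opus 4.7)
The plan is to realize $i$ as a limit of closed immersions whose ambient schemes are smooth of finite type over $k$, where localization is already known, and then transport the property to the limit via continuity. Using the hypothesis that $S = \varprojlim_\alpha S_\alpha$ with $S_\alpha$ smooth separated of finite type over $k$ and affine flat transition maps, standard noetherian approximation lets one descend the closed immersion $i : Z \hookrightarrow S$ to a closed immersion $i_{\alpha_0} : Z_{\alpha_0} \hookrightarrow S_{\alpha_0}$ for some index $\alpha_0$; set $Z_\alpha = Z_{\alpha_0} \times_{S_{\alpha_0}} S_\alpha$ with open complement $U_\alpha$ in $S_\alpha$, so that $Z = \varprojlim_\alpha Z_\alpha$ and $U = \varprojlim_\alpha U_\alpha$, still with affine flat transition maps inherited from $(S_\alpha)$.

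For each finite stage $\alpha$, since $S_\alpha$ is smooth over the perfect field $k$, the premotivic category $\DM(-, R)$ already satisfies the localization property with respect to $i_\alpha$ --- this is the ``easy'' case alluded to in the introduction (``the property is true for smooth schemes over any base''), and crucially it requires no regularity or smoothness assumption on the closed subscheme $Z_\alpha$ itself. Concretely, at each stage one obtains $i_\alpha^* i_{\alpha,*} \simeq \mathrm{id}$ and $i_\alpha^* j_{\alpha,\sharp} = 0$, together with all expected base-change formulas between $i_\alpha$, $j_\alpha$, and the flat transition maps of the two pro-systems.

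By Proposition \ref{prop:continuity&2lim}, every compact object of $\DM(S, R)$, $\DM(Z, R)$, or $\DM(U, R)$ is pulled back from some $\DM_c(S_\alpha, R)$, $\DM_c(Z_\alpha, R)$, or $\DM_c(U_\alpha, R)$; combined with Lemma \ref{lm:basechangebycontinuity} applied to the cartesian squares relating $(S, Z, U)$ to $(S_\alpha, Z_\alpha, U_\alpha)$, the functors $\derL i^*$, $\derL j^*$, $\derR i_*$, and $\derL j_\sharp$ at the limit are computed, on compacts, as filtered colimits of their analogues $i_\alpha^*$, $j_\alpha^*$, $i_{\alpha,*}$, $j_{\alpha,\sharp}$ at the finite stages. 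The identities $i^* i_* \simeq \mathrm{id}$ and $i^* j_\sharp = 0$ then descend from each $S_\alpha$ to $S$ on compact objects, and hence on all of $\DM(S, R)$ since the relevant functors commute with small sums, which yields the localization property for $i$. The only real obstacle is the base case at Step~2, namely localization for closed immersions into smooth $k$-schemes of finite type with possibly singular closed subscheme; once that input is granted, everything else is a routine application of the continuity machinery already developed in this section.
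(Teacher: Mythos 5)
Your overall strategy — approximate the closed immersion by closed immersions into the $S_\alpha$'s, use localization at the finite stages, and transport to the limit via continuity — is the right one and matches the paper's approach. But the base case you invoke is wrong, and the hypothesis you discard is precisely the one that makes the argument work.

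You claim that localization holds at each stage $i_\alpha : Z_\alpha \hookrightarrow S_\alpha$ because ``$S_\alpha$ is smooth over the perfect field $k$,'' and you assert this ``requires no regularity or smoothness assumption on the closed subscheme $Z_\alpha$ itself.'' That is a misreading of both \cite[11.4.2]{CD3} and the quoted remark in the introduction. The localization property proved in \cite[11.4.2]{CD3} is for closed immersions $i:Z\to S$ with \emph{both} $Z$ and $S$ smooth of finite type over a perfect field; the proof runs through purity and the Gysin triangle, which need $Z$ smooth in $S$. Localization for $\DM$ with a \emph{singular} closed subscheme of a smooth ambient scheme is exactly one of the obstructions the paper is engineered to sidestep by passing to the $\cdh$-topology — as the introduction says explicitly, ``in our unsuccessful efforts to prove this property with integral coefficients, we noticed \ldots the issue of dealing with singular schemes.'' So the input you say is ``granted'' is not; your argument has no valid base case.

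This is where the regularity hypothesis on $Z$ in the proposition is actually used, and where your proposal drops it. Because $Z$ is assumed regular, after Zariski-localizing and spreading out, one can apply \cite[9.4.7]{EGA4} (constructibility of regularity along the projective system) to arrange that the lifted immersions $i_\alpha : Z_\alpha \hookrightarrow S_\alpha$ are regular for $\alpha$ large; since $k$ is perfect and $S_\alpha$ is smooth of finite type over $k$, this forces $Z_\alpha$ to be smooth of finite type over $k$. Only then does \cite[11.4.2]{CD3} apply. Your lift $Z_\alpha := Z_{\alpha_0}\times_{S_{\alpha_0}} S_\alpha$ from an arbitrary initial model offers no such control. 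Once the spreading-out step is corrected in this way, the remaining reduction (via \cite[2.3.18]{CD3} to the isomorphism $M_S(X/X-X_Z)\to i_*M_Z(X_Z)$, plus Lemma~\ref{lm:basechangebycontinuity} and Proposition~\ref{prop:continuity&2lim}) is essentially what you describe and what the paper does.
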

\begin{proof}
According to \cite[11.4.2]{CD3}, the proposition
 holds when $S$ is smooth of finite type over $k$
 -- the assumption then implies that $Z$ is smooth of finite
 type over $k$.

According to \cite[2.3.18]{CD3},
 we have only to prove that for any smooth $S$-scheme $X$,
 putting $X_Z \times_S Z$,
 the canonical map in $\DM(S,R)$
\begin{equation} \label{eq1:pf_loc_reg_DM}
M_S(X/X-X_Z) \rightarrow i_*\big(M_Z(X_Z)\big)
\end{equation}
is an isomorphism. This property is clearly local for the Zariski
topology, so that we can even assume that both $X$ and $S$ are
affine.

\bigskip

 Lifting the ideal of definition of $Z$, one can assume that
 $Z$ lifts to a closed subscheme $i_\alpha:Z_\alpha \hookrightarrow S_\alpha$.
 We can also assume that $i_\alpha$ is regular (apply \cite[9.4.7]{EGA4}
 to the normal cone of the $i_\alpha$).
 Thus $Z_\alpha$ is smooth over $k$.
Because $X/S$ is affine of finite presentation, it can be lifted 
 to a smooth scheme $X_\alpha/S_\alpha$ and because $X/S$ is smooth
 we can assume $X_\alpha/S_\alpha$ is smooth. 

Put $X_{Z,\alpha}=X_\alpha \times_{S_\alpha} Z_\alpha$.
Then, applying localization with respect to $i_\alpha$, we obtain that
 the canonical map:
\begin{equation} \label{eq2:pf_loc_reg_DM}
M_{S_\alpha}(X_\alpha/X_\alpha-X_{Z,\alpha})
 \rightarrow i_{\alpha*}(M_{Z_\alpha}\big(X_{Z,\alpha})\big)
\end{equation}
is an isomorphism in $\DM(S_\alpha,R)$.
Of course the analogue of \eqref{eq2:pf_loc_reg_DM}
remains an isomorphism for any $\alpha'>\alpha$.
Given $\alpha'>\alpha$, let us consider the cartesian square
$$
\xymatrix@=14pt{
Z_{\alpha'}\ar^{i_{\alpha'}}[r]\ar_{g}[d] & S_{\alpha'}\ar^{f}[d] \\
Z_\alpha\ar^{i_\alpha}[r] & S_\alpha
}
$$
in which $f:X_{\alpha'}\to X_\alpha$ denotes the transition map.
Then according to \cite[Prop. 2.3.11(1)]{CD3},
 the localization property with respect to $i_\alpha$
 and $i_{\alpha'}$ implies that the canonical base change map
$f^* i_{\alpha,*} \rightarrow i_{\alpha',*} g^*$
is an isomorphism.
By virtue of Lemma \ref{lm:basechangebycontinuity}, 
if $\varphi: S\to S_\alpha$ denote the canonical projection, the pullback
square
$$
\xymatrix@=14pt{
Z\ar^{i}[r]\ar_{\psi}[d] & S\ar^{\varphi}[d] \\
Z_\alpha\ar^{i_\alpha}[r] & S_\alpha
}
$$
induces a base change isomorphism $\derL\varphi^* i_{\alpha,*}\to i_*\derL\psi^*$.
Therefore, the image of the map \eqref{eq2:pf_loc_reg_DM} by $\derL\varphi^*$
is isomorphic to the map \eqref{eq1:pf_loc_reg_DM}, and this ends the proof.
\end{proof}

\subsection{Motives over fields}

This section is devoted to prove Theorem
 \ref{thm:comparisonnis} when one restricts
 to field extensions of $k$:
\begin{prop} \label{prop:fields}
Consider the assumptions of \ref{thm:comparisonnis}
 and let $K$ be an extension field of $k$.
Then the functor
$$
t^*:\HZ_K\Mod\ \rightarrow \DM(K,R)
$$
is an equivalence of symmetric monoidal triangulated categories.
\end{prop}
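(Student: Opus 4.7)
The plan is to use continuity to reduce Proposition \ref{prop:fields} to the Hoyois--Kelly--{\O}stv{\ae}r theorem at the perfect base field $k$.

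Since $k$ is perfect, every field extension $K/k$ is (geometrically) separable, so by Popescu's theorem ($K$ is a filtered colimit of smooth finitely generated $k$-subalgebras; see \cite{spivak}), $\Spec K$ is the projective limit of a cofiltered system $(S_\alpha)_{\alpha \in A}$ of smooth affine $k$-schemes of finite type. Choosing the $A_\alpha$ as subalgebras of $K$, the transition maps $S_\beta \to S_\alpha$ are affine and dominant, so that $(S_\alpha)$ is simultaneously a pro-scheme in the sense relevant for both continuity results at our disposal.

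The next step is to apply continuity on both sides. By Proposition \ref{prop:HZmodpremotivicReg}, $\HZ\Mod$ on $\reg$ is continuous with respect to affine transitions, and by Example \ref{ex:continuity}(2), $\DM(-,R)$ is continuous with respect to dominant affine morphisms. Proposition \ref{prop:continuity&2lim} then yields equivalences
$$
\HZ_K\Mod_c \simeq 2\text{-}\ilim_\alpha \HZ_{S_\alpha}\Mod_c
\quad \text{and} \quad
\DM_c(K, R) \simeq 2\text{-}\ilim_\alpha \DM_c(S_\alpha, R)
$$
on constructible subcategories. Since $t^*$ is a premotivic functor over $\reg$ (Proposition \ref{prop:adjHZModDM}), it commutes with the transition pullbacks, hence with the 2-colimit, and sends the generators $\HZ_{S_\alpha}(n)$ to $\un_{S_\alpha}(n)$. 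It therefore suffices to show that $t^*_S: \HZ_S\Mod \to \DM(S, R)$ is an equivalence for every smooth affine $k$-scheme $S$.

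For this final reduction, I would invoke the theorem of R\"ondigs--{\O}stv{\ae}r extended to positive characteristic by Hoyois, Kelly and {\O}stv{\ae}r \cite{HKO}: at the perfect base field $k$ (after inverting $p$, as assumed in Theorem \ref{thm:comparisonnis}), this furnishes $\HZ_k\Mod \simeq \DM(k, R)$. Propagation from $k$ to a general smooth $k$-scheme $S$ is formal, since both sides are compactly generated by Tate twists of motives of smooth $S$-schemes, and $t^*$ is compatible with the premotivic operations, in particular with $f_\sharp$ for smooth $f$ (Proposition \ref{prop:adjHZModDM}); full faithfulness on generators then reduces to a motivic cohomology computation over $k$. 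The main obstacle is precisely this last step: while the continuity reduction is essentially routine, the passage from \cite{HKO} at the base field to the same equivalence over smooth $k$-schemes requires some care with the premotivic formalism (or a direct appeal to \cite{HKO} in its relative form for smooth schemes over a perfect field).
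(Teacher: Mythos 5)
Your continuity reduction is correct, and for fields it is a genuinely different route from the one the paper takes. Indeed, since $k$ is perfect, every finitely generated $k$-subalgebra $A\subset K$ has separably generated fraction field, so $A[1/f]$ is smooth over $k$ for suitable $f\neq 0$, and these smooth localizations form a cofiltered system of smooth affine $k$-schemes with affine dominant transitions whose limit is $\Spec K$. Combined with Proposition \ref{prop:HZmodpremotivicReg}, Example \ref{ex:continuity}(2), and Proposition \ref{prop:continuity&2lim}, this validly reduces Proposition \ref{prop:fields} to the statement that $t^*$ is an equivalence over smooth affine $k$-schemes. (By contrast, the paper handles the passage from $k$ to a general extension $K$ by reducing modulo purely inseparable extensions to the perfect closure $K^s$; the two reductions are quite different in flavor.)

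The gap is in the final step, and it is a real one, not the minor caveat you flag. Knowing $\HZ_k\Mod\simeq\DM(k,R)$ does \emph{not} formally yield $\HZ_S\Mod\simeq\DM(S,R)$ for $S$ smooth over $k$. Concretely, writing $q\colon S\to\Spec k$, the generators $\HZ_S(Y)(i)$ with $Y/S$ smooth are not of the form $q^*(-)$, so full faithfulness over $S$ does not unwind to a Hom computation over $k$: after using $f_\sharp\dashv f^*$ for the structural map $f\colon Y\to S$ you land on $\Hom_{\HZ_Y\Mod}(\HZ_Y(m),\HZ_Y(Z\times_S Y)(n))$, which is the same kind of group but over $Y$, not over $\Spec k$. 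The R\"ondigs--{\O}stv{\ae}r and HKO theorems are stated over a perfect field and not in a relative form over smooth bases, so there is no off-the-shelf result to invoke. In the paper, the smooth (and more generally essentially smooth) case of Theorem \ref{thm:comparisonnis} is proved by localization with respect to closed points and induction on dimension, and that induction takes Proposition \ref{prop:fields} for residue fields $\kappa(x)$ as its base case. So filling your gap by appealing to Theorem \ref{thm:comparisonnis} would be circular. What is actually needed at this step is an argument not relying on the field case: the paper's direct proof of the perfect-field case of Proposition \ref{prop:fields} (generation of $\HZ_K\Mod$ by Tate twists of smooth projective $K$-schemes via Gabber's uniformization prime to $\ell$, followed by Poincar\'e duality and a motivic cohomology computation) is precisely that missing independent ingredient.
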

In the case where $K$ is a perfect field,
 this result is proved in \cite[5.8]{HKO} in a slightly different 
 theoretical setting. The proof will be given below (page \pageref{proof:fields}),
 after a few steps of preparation.

\begin{num} \label{num:elementary_traces}
In the end, the main theorem will prove the existence of very general
 trace maps, %(see \ref{traces})
 but the proof of
 this intermediate result requires that we give a preliminary construction
 of traces in the following case.

Let $K$ be an extension field of $k$, and $f:Y \rightarrow X$ be a flat
finite surjective morphism of degree $d$
between integral $K$-schemes.
There is a natural morphism $^tf:R_X\to f_\sharp(R_Y)$ in $\uDM(X,R)$,
defined by the transposition of the graph of $f$.
The composition
$$f_\sharp(R_Y)\to R_X\xrightarrow{^tf}f_\sharp(R_Y)$$
is $d$ times the identity of $f_\sharp(R_Y)$;
see \cite[Prop. 9.1.13]{CD3}.
Moreover, if $f$ is radicial (i.e. if the field of functions on $Y$ is a purely
inseparable extension of the field of functions of $X$), then
the composition
$$R_X\xrightarrow{^tf}f_\sharp(R_Y)\xrightarrow{f}R_X$$
is $d$ times the identity of $R_X$;
see \cite[Prop. 9.1.14]{CD3}.
In other words, in the latter case, since $p$ is invertible, the co-unit map
$f_\sharp(R_Y)\to R_X$ is an isomorphism in $\uDM(X,R)$.
\end{num}

\begin{lm}\label{lm:degreeformula0}
Under the assumptions of the previous paragraph, if $f$ is radicial, then
the pullback functor
$$\derL f^*:\DM(X,R)\to\DM(Y,R)$$
is fully faithful.
\end{lm}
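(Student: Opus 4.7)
The plan is to show that the unit $\mathrm{id} \to \derR f_{*}\,\derL f^{*}$ of the adjunction $(\derL f^{*}, \derR f_{*})$ is invertible, as this is equivalent to the full faithfulness of $\derL f^{*}$. Since $f$ is finite, hence proper, the projection formula gives a natural isomorphism
$$\derR f_{*}\bigl(\derL f^{*}(M)\bigr) \simeq M \otimes^{\derL} \derR f_{*}(R_{Y})\, ,$$
so the problem reduces to showing that the unit $\eta \colon R_{X} \to \derR f_{*}(R_{Y})$ is an isomorphism in $\DM(X, R)$.

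To prove this, I would construct a trace map $\tau \colon \derR f_{*}(R_{Y}) \to R_{X}$ mimicking the construction in paragraph~\ref{num:elementary_traces}: the transposed graph ${}^{t}\Gamma_{f}$ yields, at the level of Nisnevich sheaves with transfers on smooth $X$-schemes, a canonical morphism $R_{X} \to f_{*}(R_{Y})$ (since $f$ is finite, $f_{*}$ is exact on sheaves, so no derivation is needed on that side), which after passing to $\DM(X, R)$ gives the trace $\tau$. The same construction being available at the level of finite correspondences, the identities from \cite[Prop.~9.1.13 and 9.1.14]{CD3}, which are statements about compositions of cycles, translate verbatim into
$$\tau \circ \eta = d \cdot \mathrm{id}_{R_{X}} \quad \text{and} \quad \eta \circ \tau = d \cdot \mathrm{id}_{\derR f_{*}(R_{Y})},$$
the second identity using that $f$ is radicial.

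Finally, since $f$ is radicial and finite of degree $d$, the extension of generic residue fields is purely inseparable, and so $d$ is a power of the characteristic exponent $p$ of $K$ (and hence of $k$). Under the hypothesis that $p$ is invertible in $R$, the integer $d$ is invertible; both composites above are therefore isomorphisms, which forces $\eta$ to be an isomorphism. The main obstacle is the explicit construction of $\tau$ directly inside $\DM(X, R)$, since $Y$ is not smooth over $X$ and so $M_{X}(Y)$ is naively only available in the enlarged category $\uDM(X, R)$; this is overcome by noting that the cycles $\Gamma_{f}$ and ${}^{t}\Gamma_{f}$, together with the relations they satisfy, are already formulated at the level of sheaves with transfers on smooth schemes, and hence descend to morphisms in $\DM(X, R)$ once $\derR f_{*}(R_{Y})$ has been identified as the appropriate target.
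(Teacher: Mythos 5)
Your strategy is the ``dual'' of the paper's, and this is where the gap lies. The paper passes to the enlarged category $\uDM(-,R)$ precisely because the finite morphism $f$, though not smooth, \emph{does} admit a left adjoint $f_\sharp$ to $f^*$ there (as a separated morphism of finite type between $X$-schemes of finite type). Then the projection formula $f_\sharp f^*(M)\simeq f_\sharp(R_Y)\otimes^\derL_R M$ is \emph{formal}: it is part of the axioms of a (generalized) premotivic category (cf.\ \cite[1.1.27]{CD3}). The full faithfulness of $f^*$ thus reduces to checking that the co-unit $f_\sharp(R_Y)\to R_X$ is invertible, which is then settled directly by the transposed graph ${}^tf$ and the invertibility of $p$ via \cite[9.1.13, 9.1.14]{CD3}.

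You instead invoke the projection formula for the \emph{right} adjoint, i.e.\ $\derR f_*(\derL f^*(M))\simeq M\otimes^\derL\derR f_*(R_Y)$, ``since $f$ is finite, hence proper.'' This is the genuine gap. The $f_*$-projection formula is \emph{not} an axiom of premotivic categories; establishing it typically requires localization, proper base change, or a duality argument, all of which are exactly the sort of non-formal properties of $\DM$ that this paper is in the business of proving (see the discussion of the localization problem in the introduction and Proposition \ref{prop:DM_localiz_regular}). It does hold for $\HZ$-modules because they inherit the six operations from $\SH$ --- this is how the paper uses it in \ref{num:elementary_traces2} and Lemma \ref{lm:formuledegre} --- but for $\DM$ or $\uDM$ over a possibly singular base $X$ it cannot simply be asserted. (Indeed, once one has the paper's conclusion that $f_\sharp(R_Y)\simeq R_X$ in $\uDM$, one can \emph{deduce} the statement $\derR f_*\derL f^*\simeq\mathrm{id}$ via $\derR f_*\derL f^*(M)\simeq\uHom(f_\sharp(R_Y),M)$, but this already uses the $f_\sharp$-adjunction and its projection formula, so it is not independent of the paper's route.)

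Relatedly, the second degree identity $\eta\circ\tau=d$ is, in the paper, only proved for $\HZ$-modules (Lemma \ref{lm:formuledegre}), not directly in $\DM$. Your remark that the cycle-level relations ``translate verbatim'' from \cite[9.1.13, 9.1.14]{CD3} is plausible for $\tau\circ\eta$, but the identity on the other composite requires dualizing through an internal Hom argument, and the paper is careful to carry this out only where it has the ambient $\SH$-level formalism.

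In short: your argument is essentially the paper's under formal duality, but the two adjoints of $f^*$ are not interchangeable here --- the left adjoint $f_\sharp$ (which exists only in $\uDM$) comes with a free projection formula, while the right adjoint $\derR f_*$ does not. Passing to $\uDM$ and using $f_\sharp$ is therefore not a stylistic choice but the essential move, and the gap you flagged at the end (where $\tau$ lives) is actually the smaller of your two problems.
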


\begin{proof}
As the inclusion $\DM(-,R)\subset\uDM(-,R)$ is fully faithful
and commutes with $\derL f^*$, it is sufficient to prove that the functor
$$f^*:\uDM(X,R)\to\uDM(Y,R)$$
is fully faithful. In other words, we must see that the composition of $f^*$
with its left adjoint $f_\sharp$ is isomorphic to the identity functor
(through the co-unit of the adjunction).
For any object $M$ of $\uDM(X,R)$, we have a projection formula:
$$f_\sharp f^*(M)\simeq f_\sharp(R_Y)\otimes^\derL_R M\, .$$
Therefore, it is sufficient to check that the co-unit
$$f_\sharp(R_Y)\simeq R_X$$
is an isomorphism. Since $f$ is radicial, its degree must be
a power of $p$, hence must be invertible in $R$.
An inverse is provided by the map $^tf:R_X\to f_\sharp(R_Y)$.
\end{proof}

\begin{num} \label{num:elementary_traces2}
These computations can be interpreted in terms of $\HZ$-modules
as follows (we keep the assumptions of \ref{num:elementary_traces}).

Using the internal Hom of $\uDM(X,R)$, one gets
a morphism
$$Tr_f:\derR f_*(R_Y)\to R_X$$
Since the right adjoint of the inclusion $\DM(-,R)\subset\uDM(-,R)$
commutes with $\derR f_*$, the map $Tr_f$ above can be seen as
a map in $\DM(X,R)$. Similarly, since the functor
$t_*:\DM(-,R)\to\HZ\Mod$ commutes with $\derR f_*$, we get a trace morphism
$$Tr_f:\derR f_*\HZ_Y\to\HZ_X$$
in $\HZ_X\Mod$. For any $\HZ_X$-module $E$, we obtain a trace
morphism
$$Tr_f:\derR f_*\derL f^*(E) \to E$$
as follows. Since we have the projection formula
$$\derR f_*(\HZ_Y)=\derR f_*\derL f^*(\HZ_X)\simeq\derR f_*(\un_Y)\otimes^\derL\HZ_X\, ,$$
the unit $\un_X\to\HZ_X$ induces a map
$$\widetilde{Tr}_f:\derR f_*(\un_Y)\to\derR f_*(\un_Y)\otimes^\derL\HZ_X\simeq\derR f_*\derL f^*(\HZ_X)
\simeq\derR f_*(\HZ_Y)\xrightarrow{Tr_f}\HZ_X\, .$$
For any $\HZ_X$-module $E$, tensoring the map $\widetilde{Tr}_f$ with identity
of $E$ and composing with the action $\HZ_X\otimes^\derL E\to E$
leads to a canonical morphism in $\HZ_X\Mod$:
$$Tr_f:\derR f_*\derL f^*(E)\simeq\derR f_*(\un_Y)\otimes^\derL E\to E\, .$$
By construction of these trace maps, we have the following lemma.
\end{num}

\begin{lm}\label{lm:formuledegre}
Under the assumptions of paragraph \ref{num:elementary_traces},
for any $\HZ_X$-module $E$,
the composition of $Tr_f$ with the unit of
the adjunction between $\derL f^*$ and $\derR f_*$
$$E\to\derR f_*\derL f^*(E)\xrightarrow{Tr_f}E$$
is $d$ times the identity of $E$.
If, moreover, $f$ is radicial, then the composition
$$\derR f_*\derL f^*(E)\xrightarrow{Tr_f}E\to
\derR f_*\derL f^*(E)$$
is also $d$ times the identity of $\derR f_*\derL f^*(E)$.
\end{lm}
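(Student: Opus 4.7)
The plan is to reduce both identities to the corresponding statements in $\uDM(X,R)$ already recorded in paragraph \ref{num:elementary_traces}. The key observation is that, by construction, the trace $Tr_f$ on an arbitrary $\HZ_X$-module $E$ depends on $E$ only through the projection formula and the module action, so the problem reduces to identifying certain scalar maps.

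First, unpacking the definitions, one checks that the composition $E\to\derR f_*\derL f^*(E)\xrightarrow{Tr_f}E$ factors as
$$E\simeq\un_X\otimes^\derL E\xrightarrow{\lambda_f\otimes\mathrm{id}_E}\HZ_X\otimes^\derL E\xrightarrow{\mu_E}E,$$
where $\mu_E$ denotes the $\HZ_X$-module action on $E$ and $\lambda_f:=\widetilde{Tr}_f\circ\eta_{\un_X}:\un_X\to\HZ_X$ is obtained by precomposing $\widetilde{Tr}_f$ with the unit $\eta_{\un_X}:\un_X\to\derR f_*(\un_Y)$ of the adjunction $\derL f^*\dashv\derR f_*$. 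Thus the first identity reduces to proving $\lambda_f=d\cdot u$, where $u$ is the unit of the monoid $\HZ_X$. A parallel analysis, using the naturality of $\eta$, shows that (after tensoring with $\mathrm{id}_E$ and composing with $\mu_E$) the second composition is determined by a scalar morphism $\derR f_*(\un_Y)\to\derR f_*(\un_Y)\otimes^\derL\HZ_X\simeq\derR f_*(\HZ_Y)$, obtained by post-composing $\widetilde{Tr}_f$ with $\eta_{\HZ_X}$, and the second identity reduces to showing that this scalar equals $d$ times the natural map $\derR f_*(\un_Y)\to\derR f_*(\HZ_Y)$.

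Second, using the adjunction $(\varphi^*,\varphi_*)$ between $\SH$ and $\uDM$, together with the identification $\HZ_X=\varphi_*(R_X)$ and the fact that $\varphi^*$ sends the unit $\eta_{\un_X}$ to the unit $\eta_{R_X}$, these scalar identities translate into
$$Tr_f\circ\eta_{R_X}=d\cdot\mathrm{id}_{R_X}\quad\text{and}\quad\eta_{R_X}\circ Tr_f=d\cdot\mathrm{id}_{\derR f_*(R_Y)}\text{ (when $f$ radicial)}$$
in $\uDM(X,R)$. By construction of $Tr_f:\derR f_*(R_Y)\to R_X$ from $^tf:R_X\to f_\sharp(R_Y)$ via internal Hom, the first composition is the transport, under the duality between the adjunctions $(f_\sharp,f^*)$ and $(\derL f^*,\derR f_*)$, of the composition $f_\sharp(R_Y)\to R_X\xrightarrow{^tf}f_\sharp(R_Y)$; this equals $d\cdot\mathrm{id}$ by \cite[Prop. 9.1.13]{CD3}. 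Likewise, in the radicial case, the second composition corresponds to $R_X\xrightarrow{^tf}f_\sharp(R_Y)\to R_X$, equal to $d\cdot\mathrm{id}$ by \cite[Prop. 9.1.14]{CD3}.

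The main obstacle is the bookkeeping in the last step: one must carefully match the counit of $(f_\sharp,f^*)$ with the unit of $(\derL f^*,\derR f_*)$ under the internal-Hom identification that defines $Tr_f$ from $^tf$. Once this compatibility is spelled out, both identities of the lemma are immediate translations of the two identities of paragraph \ref{num:elementary_traces}.
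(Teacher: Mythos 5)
The overall reduction strategy is sound and matches what the paper implicitly intends by ``by construction'': factor the two compositions through scalar maps $\un_X\to\HZ_X$, translate via $t_*$ (or $\varphi_*$) into maps $R_X\to\derR f_*(R_Y)\to R_X$ and $\derR f_*(R_Y)\to R_X\to\derR f_*(R_Y)$ in $\uDM(X,R)$, and then apply the degree formulas from paragraph~\ref{num:elementary_traces}. The reduction of the first identity to $\lambda_f=d\cdot u$ and the translation step via $t_*$ commuting with $\derR f_*$ are both correctly organised.

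The problem is in the final ``bookkeeping'' step, which is exactly the one you identify as the main obstacle. The functor $\uHom(-,R_X)$ is contravariant, so it reverses compositions: if $Tr_f=\uHom({}^tf,R_X)$ and $\eta_{R_X}=\uHom(\epsilon,R_X)$ (with $\epsilon:f_\sharp(R_Y)\to R_X$ the counit of $f_\sharp\dashv f^*$), then
$$Tr_f\circ\eta_{R_X}=\uHom({}^tf)\circ\uHom(\epsilon)=\uHom(\epsilon\circ{}^tf,R_X)\quad\text{and}\quad\eta_{R_X}\circ Tr_f=\uHom({}^tf\circ\epsilon,R_X).$$
Thus the first composition of the lemma corresponds to $R_X\xrightarrow{{}^tf}f_\sharp(R_Y)\xrightarrow{\epsilon}R_X$, \emph{not} to $f_\sharp(R_Y)\to R_X\xrightarrow{{}^tf}f_\sharp(R_Y)$ as you wrote; and the second corresponds to the latter. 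You have the two identifications swapped.

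This matters, and it also uncovers an issue in paragraph~\ref{num:elementary_traces} of the paper as printed. Taking that paragraph at face value, the correct transport would make $Tr_f\circ\eta=d\cdot\mathrm{id}$ require $f$ radicial, contradicting the unconditional first assertion of the lemma. In fact paragraph~\ref{num:elementary_traces}'s attribution of the two formulas appears to be reversed relative to what must hold: testing with a split degree-$2$ \'etale cover $Y=X\sqcup X\to X$, one has $\epsilon\circ{}^tf=2\cdot\mathrm{id}_{R_X}$ (always, trace composed with pullback), whereas ${}^tf\circ\epsilon$ on $f_\sharp(R_Y)\simeq R_X\oplus R_X$ is the all-ones matrix, not $2\cdot\mathrm{id}$; the latter becomes $d\cdot\mathrm{id}$ only when $Y\times_X Y$ is a $d$-fold thickening of the diagonal, i.e.\ when $f$ is radicial. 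So the unconditional formula is $\epsilon\circ{}^tf=d\cdot\mathrm{id}_{R_X}$ and the radicial one is ${}^tf\circ\epsilon=d\cdot\mathrm{id}_{f_\sharp(R_Y)}$. Your transport error and the paper's labelling error compensate, so your final conclusion agrees with the lemma, but the argument as written does not establish it; you should reverse your transport identification and then re-derive which of \cite[9.1.13]{CD3} and \cite[9.1.14]{CD3} supplies the unconditional formula.
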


This also has consequences when looking at the $\HZ_K$-modules
associated to $X$ and $Y$. To simplify the notations, we will write
$$\HZ(U)=\HZ_K\otimes^\derL\Sigma^\infty(U_+)$$
for any smooth $K$-scheme $U$.

\begin{lm}\label{lm:formuledegre2}
Under the assumptions of paragraph \ref{num:elementary_traces},
if $d$ is invertible in $R$, and if both $X$ and $Y$
are smooth over $K$, then $\HZ(X)$
is a direct factor of $\HZ(Y)$ in $\HZ_K\Mod$.
\end{lm}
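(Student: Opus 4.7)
The plan is to deduce the statement from Lemma~\ref{lm:formuledegre} by transporting the resulting retraction along the smooth structural morphism $p\colon X\to \Spec(K)$. Both $p$ and $q:=p\circ f\colon Y\to\Spec(K)$ are smooth of the same relative dimension $n$, the latter because $f$ is finite. Applying Lemma~\ref{lm:formuledegre} to the $\HZ_X$-module $E=\HZ_X$, one obtains, in $\HZ_X\Mod$, a composite
$$\HZ_X \xrightarrow{\eta} \derR f_*(\HZ_Y) \xrightarrow{Tr_f} \HZ_X$$
equal to $d\cdot\mathrm{id}_{\HZ_X}$. Since $d$ is invertible in $R$, this already exhibits $\HZ_X$ as a direct factor of $\derR f_*(\HZ_Y)$ in $\HZ_X\Mod$.

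Next, by Proposition~\ref{prop:HZmodrelatif1}, the functor $p_\sharp\colon\HZ_X\Mod\to\HZ_K\Mod$ is well defined, and there are canonical identifications $p_\sharp\HZ_X\simeq\HZ(X)$ and $q_\sharp\HZ_Y\simeq\HZ(Y)$. Applying $p_\sharp$ to the composite above, one gets in $\HZ_K\Mod$:
$$\HZ(X) \xrightarrow{p_\sharp\eta} p_\sharp\derR f_*(\HZ_Y) \xrightarrow{p_\sharp Tr_f} \HZ(X),$$
still equal to $d\cdot\mathrm{id}_{\HZ(X)}$.

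The remaining step, which I expect to be the main obstacle, is the identification $p_\sharp\derR f_*(\HZ_Y)\simeq \HZ(Y)$. Since $f$ is finite flat between smooth $K$-schemes of the same relative dimension, it is a local complete intersection with virtual relative tangent $T_f$ of rank $0$. Ayoub's relative purity in $\SH$ gives $f^!\simeq f^*\otimes\Sigma^{-T_f}$, and the orientation of $\HZ$ trivializes the Thom equivalence $\Sigma^{-T_f}$ in $\HZ\Mod$, yielding $f^!\simeq f^*$ in this setting. Consequently, $f_*$ is left adjoint to $f^*$ in $\HZ\Mod$, and composing the adjunctions $(f_*,f^*)$ with $(p_\sharp,p^*)$ shows that both $p_\sharp\derR f_*$ and $q_\sharp$ are left adjoint to $q^*=f^*p^*$, hence canonically isomorphic. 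Evaluating at $\HZ_Y$ yields the desired identification, and the composite then reads $\HZ(X)\to\HZ(Y)\to\HZ(X)$ equal to $d\cdot\mathrm{id}_{\HZ(X)}$, which is an automorphism since $d$ is invertible in $R$; this exhibits $\HZ(X)$ as a direct factor of $\HZ(Y)$ in $\HZ_K\Mod$, as required.
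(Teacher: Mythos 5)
Your argument is correct in outline but takes a genuinely different, and considerably heavier, route than the paper. The paper's proof is a short Yoneda argument: using only the premotivic adjunctions $(p_\sharp,p^*)$ and $(\derL f^*,\derR f_*)$, one computes $\Hom(\HZ(X),E)\simeq\Hom(\HZ_X,p^*E)$ and $\Hom(\HZ(Y),E)\simeq\Hom(\HZ_X,\derR f_*\derL f^*p^*E)$ for any $\HZ_K$-module $E$; Lemma~\ref{lm:formuledegre} then exhibits the former as a natural retract of the latter, and the Yoneda lemma produces the split embedding $\HZ(X)\hookrightarrow\HZ(Y)$. No object of the form $p_\sharp\derR f_*(\HZ_Y)$ ever needs to be formed or identified.

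Your approach instead applies $p_\sharp$ to the retraction in $\HZ_X\Mod$ and is thus forced to establish $p_\sharp\derR f_*(\HZ_Y)\simeq\HZ(Y)$, which you correctly flag as the main obstacle. This is a Wirthm\"uller/ambidexterity statement, and the sketch you give — factoring $f$ as a regular immersion followed by a projection, invoking relative purity to get $f^!\simeq\Sigma^{-T_f}f^*$, using the orientation of $\HZ$ to trivialize $\Sigma^{-T_f}$, and then matching left adjoints of $q^*$ — is mathematically sound. But it presupposes the full six-functor formalism (in particular $f_!,f^!$ and the purity isomorphism) together with the Thom isomorphism at the level of $\HZ$-modules over smooth $K$-schemes, none of which has been set up at this stage of the paper; the six operations on $\DM$ over $\reg$ appear only as a consequence of Theorem~\ref{thm:comparisonnis}, whose proof passes through this very lemma. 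One can avoid circularity (localization for $\HZ\Mod$ is inherited from $\SH$, so the exceptional operations do exist independently), but making the purity-plus-orientation step rigorous would be a substantial detour. The paper's Yoneda argument buys exactly this: a proof requiring no more than the premotivic adjunctions and the elementary trace from Lemma~\ref{lm:formuledegre}.
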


\begin{proof}
Let $p:X\to\Spec(K)$ and $q:Y\to\Spec(K)$ be the structural maps
of $X$ and $Y$, respectively. Since $pf=q$, for any $\HZ_K$-module $E$, we have:
\begin{align*}
\Hom(\HZ(X),E)
&=\Hom(\HZ_X,p^*(E))\\
\Hom(\HZ(Y),E)
&=\Hom(\HZ_X,\derR f_*\derL f^*p^*(E))\, .
\end{align*}
Therefore, this lemma is a translation of the first assertion of Lemma \ref{lm:formuledegre} and of the Yoneda Lemma.
\end{proof}

\begin{proof}[Proof of Proposition \ref{prop:fields}]\label{proof:fields}
We first consider the case of a perfect field $K$.
The reference is \cite[5.8]{HKO}.
 We use here a slightly different theoretical setting than these authors
 so we give a proof to convince the reader.

Because $t^*$ preserves the canonical compact
generators of both categories, we need only
 to prove it is fully faithful on a family
 of compact generators of $\HZ_K\Mod$ (see \cite[Corollary 1.3.21]{CD3}).
 For any $\HZ_K$-modules $E$, $F$ belonging to a suitable
 generating family of $\HZ_K\Mod$, and
 and any integer $n$, we want to prove that the map
\begin{equation} \label{eq:pf_thm:fields}
\Hom_{{\HZ_K}\Mod}(E,F[n])
 \xrightarrow{t^*} \Hom_{\DM(K,R)}(t^*(E),t^*(F)[n])
\end{equation} 
For this purpose, using the method of \cite[Sec. 1]{RiouCRAS},
with a small change indicated below,
 we first prove that $\HZ_K\Mod$ is generated by objects
 of the form $\HZ(X)(i)$ for a smooth projective $K$-scheme $X$
 and an integer $i$. Since these are compact, it is sufficient to prove the following
 property: for any $\HZ_K$-module $M$ such that
 $$\Hom_{\HZ_K\Mod}(\HZ(X)(p)[q],M)=0$$
 for any integers $p$ and $q$, we must have $M\simeq 0$.
 To prove the vanishing of $M$, it is sufficient to prove the vanishing
 of $M\otimes\ZZ_{(\ell)}$ for any prime $\ell\neq p$.
 On the other hand, for any compact object $C$, the formation of $\Hom(C,-)$ commutes with tensoring by $\ZZ_{(\ell)}$; therefore, we may assume
 $R$ to be a $\ZZ_{(\ell)}$-algebra for some prime number $\ell\neq p$.
 Under this additional assumption, we will prove that,
 for any smooth connected $K$-scheme $X$,
 the object $\HZ(X)=\HZ_k\otimes^\derL\Sigma^\infty(X_+)$
 is in the thick subcategory $\mathcal P$ generated by
 Tate twists of $\HZ_K$-modules of the form $\HZ(W)$ for $W$
 a smooth projective $K$-scheme. Using the induction
 principle explained by Riou in \emph{loc. cit.}, on the dimension $d$ of $X$,
 we see that, given any couple $(Y,V)$, where $Y$ is a smooth
 $K$-scheme of dimension $d$, and $V$ is a dense open subscheme of $Y$, the property that $\HZ(Y)$ belongs to $\mathcal P$ is equivalent to the property $\HZ(V)$ belongs to $\mathcal P$.
 Therefore, it is enough to consider the case of a dense open subscheme of $X$
 which we can shrink at will.
 In particular, applying Gabber's theorem \cite[IX, 1.1]{gabber},
  we can assume there exists a flat, finite, and surjective morphism,
  $f:Y \rightarrow X$ which is of degree prime to $\ell$, and
  such that $Y$ is a dense open subscheme of a smooth
  projective $k$-scheme. Since $\HZ(Y) \in \mathcal P$, Lemma \ref{lm:formuledegre2}
  concludes.

We now are reduced to prove that the map \eqref{eq:pf_thm:fields} is an isomorphism
 when $E=\HZ(X)(i)$ and $F=\HZ(Y)(j)$ for $X$ and $Y$ smooth and projective over $K$.
 Say $d$ is the dimension of $Y$. Then according to \cite[Sec. 5.4]{Deg8},
 $\HZ_K(Y)$ is strongly dualizable with strong dual $\HZ_K(Y)(-d)[-2d]$.
 Then the result follows from the fact that the two members of
 \eqref{eq:pf_thm:fields} compute the motivic cohomology
 group of $X \times_K Y$ in degree $(n-2d,j-i-d)$ (in a compatible way,
 because the functor $t^*$ is symmetric monoidal).
 This achieves the proof of Proposition \ref{prop:fields}
 in the case where the ground field $K$ is perfect.

\bigskip

Let us now consider the general case. Again, we are reduced
 to prove the map \eqref{eq:pf_thm:fields} is an isomorphism
 whenever $E$ and $F$ are compact (hence constructible).
Let $K$ be a finite extension of $k$, and
let $L/K$ be a finite totally inseparable extension of fields,
with corresponding morphism of schemes
$f:\Spec(L) \rightarrow \Spec(K)$.
According to Lemma \ref{lm:degreeformula0}, the functor
$\derL f^*:\DM(K,R) \rightarrow \DM(L,R)$ is fully
 faithful. Moreover, the pullback functor
 $\derL f^*:\HZ_K\Mod\  \rightarrow \HZ_L\Mod\ $
 is fully faithful as well; see the last assertion of Lemma \ref{lm:formuledegre}
 (and recall that the degree of the extension $L/K$ must be a power of $p$,
 whence is invertible in $R$).
Thus, by continuity of the premotivic categories
 $\DM(-,R)$ and $\HZ\Mod$ (see Examples \ref{ex:continuity}(2) and
 \ref{ex:modules_reg}), Proposition \ref{prop:continuity&2lim}
 gives the following useful lemma:
\begin{lm}\label{lm:insepclosure}
Let $K^s$ be the inseparable closure of $K$ (i.e. the
biggest purely inseparable extension of $K$ in some algebraic
closure of $K$). Then the following pullback functors:
$$
\DM_c(K,R) \rightarrow \DM_c(K^s,R)\quad\text{and}\quad
 \ \HZ_K\Mod_c  \rightarrow \HZ_{K^s}\Mod_c
$$
are fully faithful.
\end{lm}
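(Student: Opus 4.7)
The plan is to realize $K^s$ as the filtered union of its finite purely inseparable subextensions $L/K$, so that $\Spec K^s$ is the projective limit of $(\Spec L)_L$ along finite affine transition morphisms; these are automatically dominant (surjective maps between spectra of fields) and take place in $\reg$. By Examples \ref{ex:continuity}(2) and \ref{ex:modules_reg}, both $\DM(-,R)$ and $\HZ\Mod$ are continuous with respect to such systems, so Proposition \ref{prop:continuity&2lim} furnishes canonical equivalences
$$2\text{-}\varinjlim_L\DM_c(L,R)\xrightarrow{\ \sim\ }\DM_c(K^s,R),\qquad 2\text{-}\varinjlim_L\HZ_L\Mod_c\xrightarrow{\ \sim\ }\HZ_{K^s}\Mod_c.$$
It therefore suffices to prove that for each finite totally inseparable $f:\Spec L\to\Spec K$, the two pullback functors $\derL f^*:\DM_c(K,R)\to\DM_c(L,R)$ and $\derL f^*:\HZ_K\Mod_c\to\HZ_L\Mod_c$ are already fully faithful.

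For the $\DM_c$ case, full faithfulness of $\derL f^*$ is exactly Lemma \ref{lm:degreeformula0}, so nothing more needs to be said. The main technical point is the $\HZ$-module case, whose key input is that $d=[L:K]$ is a power of $p$, hence invertible in $R$. Combining the two assertions of Lemma \ref{lm:formuledegre}, one finds that for every $\HZ_K$-module $E$ the two composites
$$E\xrightarrow{\ \eta\ }\derR f_*\derL f^*(E)\xrightarrow{\ Tr_f\ }E,\qquad
\derR f_*\derL f^*(E)\xrightarrow{\ Tr_f\ }E\xrightarrow{\ \eta\ }\derR f_*\derL f^*(E)$$
equal multiplication by $d$. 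Since $d$ is invertible in $R$, both composites are isomorphisms, which forces the unit $\eta:E\to\derR f_*\derL f^*(E)$ itself to be an isomorphism (with inverse $d^{-1}Tr_f$). This gives the desired full faithfulness of $\derL f^*$ on $\HZ_K\Mod$, and a fortiori on constructible objects.

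Finally, I conclude by passing to the $2$-colimit: for any constructible $M,N$ over $K$, the continuity equivalences above yield
$$\Hom_{\DM_c(K^s,R)}(M_{K^s},N_{K^s})\simeq\varinjlim_L\Hom_{\DM_c(L,R)}(M_L,N_L)\simeq\Hom_{\DM_c(K,R)}(M,N),$$
where the second isomorphism holds because each of the canonical maps $\Hom_{\DM_c(K,R)}(M,N)\to\Hom_{\DM_c(L,R)}(M_L,N_L)$ is bijective by the full faithfulness established above, so the filtered system of Hom-groups is essentially constant with value $\Hom_{\DM_c(K,R)}(M,N)$. The identical argument applies word for word in $\HZ\Mod$, and the lemma follows. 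The only step I expect to require genuine care is the invertibility of the unit in the $\HZ$-module case, where both directions of the trace--unit composition must be used to get an honest isomorphism rather than a mere retract.
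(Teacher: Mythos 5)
Your proof is correct and follows essentially the same route as the paper's: reduce by continuity (Proposition \ref{prop:continuity&2lim} applied to the filtered system of finite purely inseparable subextensions of $K^s/K$) to a single finite radicial extension $L/K$, where full faithfulness of $\derL f^*$ on $\DM_c$ is Lemma \ref{lm:degreeformula0}, and full faithfulness on $\HZ$-modules follows from Lemma \ref{lm:formuledegre} since $d=[L:K]$ is a power of $p$, hence invertible in $R$. You are slightly more explicit than the paper in spelling out that \emph{both} assertions of Lemma \ref{lm:formuledegre} are needed to promote the unit $\eta$ from a split monomorphism (or split epimorphism) to an honest isomorphism, with explicit inverse $d^{-1}\mathit{Tr}_f$; the paper condenses this into a citation of the last assertion only, but the underlying argument is the same.
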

With this lemma in hands, to prove that \eqref{eq:pf_thm:fields} is an isomorphism for constructible $\HZ_K$-modules $E$ and $F$, we can replace the field $K$
by the perfect field $K^s$, and this proves Proposition \ref{prop:fields} in full generality.
\end{proof}

\subsection{Proof in the regular case}\label{subsec:proofregcase}

In the course of the proof of Theorem \ref{thm:comparisonnis}, we wil use the following
 lemma:
\begin{lm}\label{lm:oubliDMversHZmod}
Let $T$ and $S$ be regular $k$-schemes
 and $f:T \rightarrow S$ be a morphism of $k$-schemes.
\begin{enumerate}
\item If $T$ is the limit of a projective system of $S$-schemes
with dominant affine smooth transition morphisms,
 then $t_*$ commutes with $f^*$.
\item If $f$ is a closed immersion, and if $S$
is the limit of a projective system of smooth separated
$k$-schemes of finite type with flat affine transition morphims,
 then $t_*$ commutes with $f^*$.
\item If $f$ is an open immersion,
 then $t_*$ commutes with $f_!$.
\end{enumerate}
\end{lm}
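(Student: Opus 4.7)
The plan is to treat the three commutations separately, exploiting a common input: since $(t^*, t_*)$ is a premotivic adjunction (Proposition \ref{prop:adjHZModDM}), $t^*$ commutes with $f_\sharp$ for smooth $f$ and with $f^*$ for arbitrary $f$, whence by passing to right-adjoint mates, $t_*$ commutes with $f^*$ for smooth $f$ and with $f_*$ for arbitrary $f$ between regular $k$-schemes. The three parts then enhance these basic commutations, respectively via continuity, a model-level computation, and localization.

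For part (1), write $T=\varprojlim_\alpha T_\alpha$ with each $T_\alpha$ smooth over $S$ (the intended interpretation of the hypothesis), so the basic input gives $t_*f_\alpha^*\simeq f_\alpha^*t_*$ for each $\alpha$. Then Proposition \ref{prop:continuity&2lim}, combined with the continuity of $\DM(-,R)$ with respect to dominant affine morphisms (Example \ref{ex:continuity}(2)) and of $\HZ\Mod$ with respect to affine morphisms in $\reg$ (Example \ref{ex:modules_reg}), allows us to pass to the limit: for $M\in\DM(S,R)$ and a compact $N\in\HZ_T\Mod$ pulled back from some $N_{\alpha_0}$, the chain of natural identifications
\[
\Hom(N,t_*f^*M)\simeq\varinjlim_\alpha\Hom(N_\alpha,t_*f_\alpha^*M)\simeq\varinjlim_\alpha\Hom(N_\alpha,f_\alpha^*t_*M)\simeq\Hom(N,f^*t_*M)
\]
yields $t_*f^*M\simeq f^*t_*M$ by Yoneda.

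For part (3), consider the right-adjoint mate $j_\sharp t_*\to t_*j_\sharp$ of the premotivic isomorphism $t^*j_\sharp\simeq j_\sharp t^*$. Both functors are triangulated and preserve small sums ($j_\sharp$ as a left adjoint, and $t_*$ because $t^*$ sends the compact generators $\HZ_U(X)(n)$ to the compact generators $M_U(X)(n)$), so it suffices to verify invertibility on the compact generators $M_U(X)(n)$ of $\DM(U,R)$. At the level of the underlying model categories, $j_\sharp$ for an open immersion is the extension-by-zero functor and $t_*=\derR\tilde\varphi_*$ is modeled by forget-transfers; since the inclusion $\sm_U\hookrightarrow\sm_S$ arising from an open immersion is fully faithful and is compatible with the analogous fully faithful inclusion $\smc_U\hookrightarrow\smc_S$, extension-by-zero commutes with forget-transfers at the model level. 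This is where the main obstacle lies: descending this strict commutation to the derived level on the generators requires careful bookkeeping of cofibrant replacements and control of the transfer structure on $\tilde j_\sharp^{\mathit{tr}}$, so that the derived identification $t_*j_\sharp M_U(X)\simeq j_\sharp t_*M_U(X)$ is manifestly the mate map.

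For part (2), combine localization with part (3). By Proposition \ref{prop:DM_localiz_regular}, $\DM(-,R)$ satisfies the localization property along $i=f$ under our assumption on $S$, and $\HZ\Mod$ satisfies it too (inherited from $\SH$, since categories of modules over a ring spectrum inherit localization). For $M\in\DM(S,R)$, apply $t_*$ to the $\DM$-localization triangle $j_\sharp j^*M\to M\to i_*i^*M$ and compare with the $\HZ_S$-localization triangle applied to $t_*M$: using part (3) together with the basic commutations $t_*j^*\simeq j^*t_*$ and $t_*i_*\simeq i_*t_*$, the first and second terms of the two triangles are naturally identified, whence the third: $i_*t_*i^*M\simeq i_*i^*t_*M$. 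The fully faithful functor $i_*$ (another consequence of the localization property) can then be cancelled, yielding $t_*i^*M\simeq i^*t_*M$.
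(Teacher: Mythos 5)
Your overall strategy differs from the paper's in one decisive way: the paper first observes that the forgetful functor $\mathcal O_{\HZ}:\HZ\Mod\to\SH$ is conservative and commutes with $f^*$ for any $f$ and with $j_!$ for any open immersion (last assertion of Prop.~7.2.14 of \cite{CD3}), so that $\mathcal O_{\HZ}\circ t_* = \varphi_*$ and every commutation in the lemma can be checked after replacing $t_*$ by $\varphi_*$. This reduction is what lets the paper dispatch case~(3) instantly, by writing $\varphi_*$ as the explicit composite $\DM(S,R)\xrightarrow{\derL\gamma_*}\Der_{\AA^1}(S,R)\xrightarrow{K}\SH(S)$ and invoking \cite[6.3.11]{CD3} for the factor $\derL\gamma_*$; and it lets the paper dispatch case~(2) via Prop.~\ref{prop:localization\&Voevodsky\_conj}, which identifies ``$Ex(i^*,\varphi_*)$ is an isomorphism'' with the localization property of Prop.~\ref{prop:DM_localiz_regular}. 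You do not use this reduction.

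The consequence is that your argument for part~(3) has a genuine, and self-acknowledged, gap. You correctly set up the abstract mate $j_\sharp t_*\to t_* j_\sharp$ and correctly note that it suffices to test it on compact generators, but the final step --- passing from a strict model-level commutation of extension-by-zero with ``forget transfers'' to the statement that this identifies with the total derived mate --- is exactly the nontrivial content, and you do not supply it. Since $t_*$ is a \emph{right} derived functor ($\derR\tilde\varphi_*$) and $j_\sharp$ is a \emph{left} derived functor, their interaction does not follow from a commutation of point-set functors without extra input (preservation of appropriate (co)fibrant objects, or an explicit computation); this is precisely what \cite[6.3.11]{CD3} provides, and the paper's $\mathcal O_{\HZ}$-reduction is what makes that citation applicable. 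Your parts~(1) and~(2) are essentially sound alternatives, but in~(1) the very first identification $\Hom(N,t_*f^*M)\simeq\varinjlim_\alpha\Hom(N_\alpha,t_*f^*_\alpha M)$ is circular as written: it presupposes that $t_*f^*M$ is the pullback of $t_*f^*_{\alpha_0}M$ along $T\to T_{\alpha_0}$, which is an instance of what is being proved. The fix is easy --- rewrite the left side by adjunction as $\Hom_{\DM(T,R)}(t^*N,f^*M)$, use that $t^*$ commutes with arbitrary pullbacks so that $t^*N$ is a pullback of $t^*N_{\alpha_0}$, apply continuity of $\DM$, then go back by adjunction and use the smooth case --- but the step as you wrote it does not stand.
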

\begin{proof}
The forgetful functor $\mathcal O_{\HZ}:\HZ\Mod\to\SH$ is conservative,
 and it commutes with $f^*$ for any morphism $f$
 and with $j_!$ for any open immersion; see the last assertion of \cite[Prop. 7.2.14]{CD3}.
 Therefore, it is sufficient to check each case of this lemma after replacing $t_*$ by $\varphi_*$.

Then, case (1) follows easily by continuity of $\DM$ and $\SH$
 with respect to dominant maps, and from the case where $f$ is a smooth morphism. Case (2) was proved in Proposition \ref{prop:DM_localiz_regular}.
 (taking into account \ref{prop:localization&Voevodsky_conj}).
 Then case (3) finally follows from results of \cite{CD3}:
 in fact $\varphi_*$ is defined as the following composition:
$$
\DM(S,R) \xrightarrow{\derL \gamma_*} \Der_{\AA^1}(S,R)
 \xrightarrow K \SH(S)
$$
with the notation of \cite[11.2.16]{CD3} ($\Lambda=R$).
 The fact $K$ commutes with $j_!$ is obvious
 and for $\derL \gamma_*$, this is \cite[6.3.11]{CD3}.
\end{proof}

To be able to use the refined version of Popescu's theorem
 proved by Spivakovsky
 (see \cite[Th. 10.1]{spivak}, ``resolution
 by smooth sub-algebras''), we will need the following 
 esoteric tool extracted from an appendix of Bourbaki
 (see \cite[IX, Appendice]{AC89} and, in particular,
 Example 2).

\begin{df}\label{df:gonflement}
Let $A$ be a local ring with maximal ideal $\mathfrak m$.

We define the \emph{$\infty$-gonflement} (resp. \emph{$n$-gonflement})
 of $A$ as the localization of the polynomial $A$-algebra
 $A\lbrack(x_i)_{i \in \NN}\rbrack$
 (resp. $A\lbrack(x_i)_{0 \leq i \leq n}\rbrack$)
 with respect to the prime ideal $\mathfrak m.A[x_i, i \in \NN]$
 (resp. $\mathfrak m.A[x_i, 0 \leq i \leq n]$).
\end{df}

\begin{num}\label{num:gonflement}
Let $B$ (resp. $B_n$) be the $\infty$-gonflement (resp. $n$-gonflement)
 of a local noetherian ring $A$. We will use the following facts
 about this construction, which are either obvious or follow
 from \emph{loc. cit.}, Prop. 2:
\begin{enumerate}
\item The rings $B$ and $B_n$ are noetherian.
\item The $A$-algebra $B_n$ is the localization of a smooth $A$-algebra.
\item The canonical map $B_n \rightarrow B_{n+1}$ is injective.
\item $B=\ilim_{n \in \NN} B_n$, with the obvious transition maps.
\end{enumerate}
We will need the following easy lemma:
\begin{lm}
Consider the notations above. Assume that $A$ is a local henselian
 ring with infinite residue field. Then for any integer $n \geq 0$,
 the $A$-algebra $B_n$ is a filtered inductive limit
 of its smooth and split sub-$A$-algebras.
\end{lm}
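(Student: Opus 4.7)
The plan is to exhibit $B_n$ as a filtered colimit of principal localizations of the polynomial algebra $A[x_0,\dots,x_n]$. Concretely, for each $h$ in the multiplicative set $S := A[x_0,\dots,x_n]\setminus \mathfrak m A[x_0,\dots,x_n]$, set
$$
C_h := A[x_0,\dots,x_n][1/h]\, .
$$
The set $S$ is closed under multiplication, so ordering it by divisibility makes it filtered, and by the very definition of the $n$-gonflement as the localization of $A[x_0,\dots,x_n]$ at $\mathfrak m A[x_0,\dots,x_n]$, the natural maps $C_h \to B_n$ identify each $C_h$ with a sub-$A$-algebra of $B_n$ whose colimit is all of $B_n$. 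Each $C_h$ is smooth over $A$, being a principal localization of a polynomial $A$-algebra.

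It then remains to produce, for every such $h$, an $A$-algebra retraction $C_h \to A$. Such a retraction is the same datum as a tuple $(a_0,\dots,a_n)\in A^{n+1}$ with $h(a_0,\dots,a_n)\in A^{\times}$: the substitution $x_i\mapsto a_i$ gives an $A$-algebra morphism $A[x_0,\dots,x_n]\to A$ which inverts $h$, and therefore factors through $C_h$ uniquely, with restriction the identity of $A$.

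To produce such a tuple, reduce modulo $\mathfrak m$. The image $\bar h \in k[x_0,\dots,x_n]$ of $h$ is non-zero by the choice $h\notin\mathfrak m A[x_0,\dots,x_n]$, where $k = A/\mathfrak m$. Since the residue field $k$ is infinite, the complement of the hypersurface $V(\bar h)\subset \AA^{n+1}_k$ contains a $k$-rational point; lifting its coordinates coordinatewise to $a_0,\dots,a_n\in A$ yields $h(a_0,\dots,a_n)\notin\mathfrak m$, which is equivalent to $h(a_0,\dots,a_n)\in A^{\times}$ because $A$ is local. The only non-trivial input is this last classical fact about infinite fields, which is straightforward. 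Note that the henselian hypothesis on $A$ plays no role in this particular lemma: locality of $A$ together with the infinitude of $k$ suffices, the henselian assumption being imposed here in view of the subsequent use of the lemma.
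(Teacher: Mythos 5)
Your proof is correct, and it follows the same basic strategy as the paper's: write $B_n$ as the filtered union of the smooth $A$-algebras $A[x_0,\dots,x_n][1/h]$ with $h\notin\mathfrak m A[x_0,\dots,x_n]$, then show each such algebra is split over $A$ by producing a $\kappa(s)$-rational point of the nonempty fibre over the closed point (possible since $\kappa(s)$ is infinite) and lifting it to an $A$-point. The one genuine difference is in the lifting step: the paper invokes the general henselian lifting theorem for smooth morphisms (EGA IV, 18.5.17), whereas you observe that $U_n(h)=\Spec\big(A[x_0,\dots,x_n][1/h]\big)$ is an \emph{open subscheme of affine space} $\AA^{n+1}_A$, so that an $A$-point is simply a tuple $(a_0,\dots,a_n)\in A^{n+1}$ with $h(a_0,\dots,a_n)\in A^\times$; one then lifts each residue coordinate arbitrarily along $A\twoheadrightarrow k$, and locality of $A$ does the rest. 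This is more elementary and shows, as you correctly note, that the henselian hypothesis is not actually needed for this particular lemma — it is carried along for the later steps where the $\infty$-gonflement and Popescu's theorem are used. The paper's route has the (minor) virtue of being ready-made to generalize if $B_n$ were replaced by a more complicated smooth $A$-algebra not sitting inside affine space, but for the case at hand your version is the sharper observation.
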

\begin{proof}
We know that $B_n$ is the union of $A$-algebras of the form
 $A[x_1,\hdots,x_n][1/f]$ for a polynomial
 $f \in A[x_1,...,x_n]$
 whose reduction modulo $\mathfrak m$ is non zero.
Let us consider the local scheme $X=\Spec(A)$,
 $s$ be its closed point and put
 $U_n(f)=\Spec(A[x_1,\hdots,x_n][1/f])$ for a polynomial $f$ as above.
 To prove the lemma, it is sufficient to prove that $U_n(f)/X$
 admits a section. By definition, the fiber $U_n(f)_s$ of $U_n(f)$ at 
 the point $s$ is a non empty open subscheme. 
As $\kappa(s)$ is infinite by assumption,
 $U_n(f)_s$ admits a $\kappa(s)$-rational point.
 Thus $U_n(f)$ admits an $S$-point because $X$ is henselian
 and $U_n(f)/X$ is smooth (see \cite[18.5.17]{EGA4}).
\end{proof}
Combining properties (1)-(4) above with the preceding lemma,
 we get the following property:
\begin{itemize}
\item[(G)] Let $A$ be a noetherian local henselian ring
 with infinite residue field,
 and $B$ be its $\infty$-gonflement.
 Then $B$ is a noetherian $A$-algebra which
  is the filtering union of a family $(B_\alpha)_{\alpha \in I}$
	of smooth split sub-$A$-algebras of $B$.
\end{itemize}
\end{num}

\begin{lm}\label{lm:gonfleconserve}
Consider the notations of property (G).
 Then the pullback along the induced map
$p:X'=\Spec(B)\to X=\Spec(A)$ defines a conservative
functor $\derL p^*:\SH(X)\to\SH(X')$.
\end{lm}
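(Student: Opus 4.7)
The plan is to combine two inputs already available in the paper: the existence of sections $s_\alpha : X \to X_\alpha$ provided by property (G), and the continuity of $\SH$ with respect to affine morphisms recorded in Example \ref{ex:continuity}(1). Write $X_\alpha = \Spec(B_\alpha)$; then $X' = \varprojlim_\alpha X_\alpha$ with affine (even flat) transition maps $q_{\beta\alpha}: X_\beta \to X_\alpha$, and each structural map $p_\alpha : X_\alpha \to X$ is smooth and admits an $X$-section $s_\alpha$ (this is what ``split'' means in property (G)).

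First, I would observe that each pullback functor $p_\alpha^* : \SH(X) \to \SH(X_\alpha)$ is faithful on morphisms. Indeed, $p_\alpha \circ s_\alpha = \mathrm{id}_X$ gives $s_\alpha^* \circ p_\alpha^* = \mathrm{id}$ as endofunctors of $\SH(X)$, so any morphism $f$ in $\SH(X)$ with $p_\alpha^*(f) = 0$ satisfies $f = s_\alpha^*(p_\alpha^*(f)) = 0$. In particular each single $p_\alpha^*$ is already conservative.

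Next, I would pass from the $X_\alpha$ to $X'$ by means of Proposition \ref{prop:continuity&2lim}: fix a base index $\alpha_0$; then for any compact $M \in \SH(X)$ and any $E \in \SH(X)$, applying the proposition to their pullbacks to $X_{\alpha_0}$ yields a bijection
$$\varinjlim_{\alpha \geq \alpha_0} \Hom_{\SH(X_\alpha)}\bigl(p_\alpha^* M, p_\alpha^* E\bigr) \xrightarrow{\sim} \Hom_{\SH(X')}\bigl(p^* M, p^* E\bigr).$$
Now assume $p^* E = 0$, and let $f : M \to E$ be any morphism with $M$ one of the compact generators $\Sigma^\infty U_+(m)[n]$ of $\SH(X)$. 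The right-hand side above vanishes, so $p_\alpha^*(f)$ is zero in the colimit on the left, which means that there exists $\beta \geq \alpha$ with $p_\beta^*(f) = q_{\beta\alpha}^*(p_\alpha^*(f)) = 0$ in $\SH(X_\beta)$. The first step then forces $f = 0$, and since $\SH(X)$ is compactly generated by such $M$, varying $M$ and the shifts gives $E = 0$, as required.

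I do not expect a real obstacle: both ingredients -- the sections $s_\alpha$ coming from property (G) and the continuity of $\SH$ -- are already in place, and the argument is formal. The only point that needs a small check is the compactness hypothesis of Proposition \ref{prop:continuity&2lim}, which is automatic for the compact generators used above.
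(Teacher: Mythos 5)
Your proof is correct and takes essentially the same approach as the paper's: the paper also tests against constructible (equivalently, compact) objects, invokes continuity (Proposition \ref{prop:continuity&2lim}) to identify $\Hom(\derL p^*C,\derL p^*E)$ with the colimit over $\alpha$, and concludes by observing that each $p_\alpha$ is split so that $p_\alpha^*$ is injective on Hom-sets. The only difference is stylistic — you isolate the faithfulness of each $p_\alpha^*$ as a preliminary step where the paper folds it into a one-line remark — so there is nothing to add.
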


\begin{proof}
Let $E$ be an object of $\SH(X)$ such that $\derL p^*(E)=0$
in $\SH(X')$. We want to prove that $E=0$.
For this, it is sufficient to prove that, for any
constructible object $C$ of $\SH(X)$, we have
$$\Hom(C,E)=0\, .$$
Given the notations of property (G),
 and any index $\alpha \in I$,
 let $C_i$ and $E_i$ be the respective pullbacks of $C$ and $E$
 along the structural map $p_\alpha:\Spec(B_\alpha) \rightarrow \Spec(A)$.
 %For each non negative integer $n$,
%let $T_n=\Spec(B_n)$, where $B_n$ is the $n$-gonflement of $A$,
%and write $C_n$ and $E_n$ for the respective
%pullbacks of $C$ and $E$ along the structural map
%$\Spec(B_n)\to\Spec(A)$.
Then, by continuity, the map
$$
\varinjlim_\alpha\Hom(C_\alpha,E_\alpha)
 \to \Hom(\derL p^*(C),\derL p^*(E))
$$
is an isomorphism, and thus, according to property (G),
the map
$$
\Hom(C,E)\to\Hom(\derL p^*(C),\derL p^*(E))
$$
 is injective because each map $p_\alpha$ is a split
 epimorphism.
\end{proof}

In order to use $\infty$-gonflements in $\HZ$-modules
without any restriction on the size of the ground field, we will need the
the following trick, which makes use of transfers up to homotopy:
\begin{lm}\label{lm:transcendental_split_DM}
Let $L/K$ be a purely transcendental
extension of fields of transcendence degree $1$, with $K$ perfect, and let
$p:\Spec(L)\to\Spec(K)$ be the induced morphism of schemes.
Then, for any objects $M$ and $N$ of $\DM(K,R)$, if $M$ is compact, then
the natural map
$$\Hom_{\DM(K,R)}(M,N)\to\Hom_{\DM(K,R)}(M,\derR p_*\,p^*(N))=\Hom_{\DM(K,R)}(\derL p^*(M),\derL p^*(N))$$
is a split embedding. In particular, the pullback functor
$$\derL p^*:\DM(K,R)\to\DM(L,R)$$
is conservative.
% Let $S$ be a $k$-scheme, $S'$ be its pullback
% along the extension fields $L/k$,
% and $\pi:S' \rightarrow S$ be the canonical map.
% Then for any motive $K$ over $S$,
% the adjunction map $ad:K \rightarrow \derR\pi_* \derL \pi^*(K)$
% is a split monomorphism in $\DM(S)$.
\end{lm}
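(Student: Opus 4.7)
The plan is to establish the split-embedding statement; conservativity of $\derL p^*$ is then automatic, since any non-zero object in the compactly generated triangulated category $\DM(K,R)$ admits a non-zero morphism from a compact object.

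The strategy is to factor $p$ as $\Spec(L) \xrightarrow{\eta} \AA^1_K \xrightarrow{q} \Spec(K)$, where $\eta$ is the generic point and $q$ is the structural morphism, and to split each of the two resulting pullback maps on Homs separately. For the factor $q$: $\AA^1$-homotopy invariance yields $M_K(\AA^1) \simeq \un$ in $\DM(K,R)$, hence $q_\sharp q^* \simeq \mathrm{id}$ by the projection formula for the smooth left adjoint; by adjunction, $q^*$ is then fully faithful, so that the pullback $q^* \colon \Hom_{\DM(K,R)}(M,N) \to \Hom_{\DM(\AA^1_K,R)}(q^*M, q^*N)$ is a bijection, with explicit inverse $s_0^*$ for the zero section $s_0 \colon \Spec(K) \to \AA^1_K$ (using $q\circ s_0=\mathrm{id}$).

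For the factor $\eta$, I would invoke continuity of $\DM(-,R)$ with respect to dominant affine morphisms (Example \ref{ex:continuity}(2) together with Proposition \ref{prop:continuity&2lim}), applied to the pro-scheme $\Spec(L)=\lim_U U$ with $U$ ranging over dense affine open subschemes of $\AA^1_K$ (transition maps between such are affine dominant, since $U=\Spec(K[t,1/f])$). Using the compactness of $q^*M$, this identifies
$$\Hom_{\DM(L,R)}(p^*M, p^*N) \;\simeq\; \varinjlim_U \Hom_{\DM(U,R)}(p_U^*M, p_U^*N),$$
where $p_U=q\circ j_U$ for $j_U\colon U\hookrightarrow\AA^1_K$. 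Every such $U$ contains a closed point $x$ whose residue degree $n_x$ over $K$ is coprime to $p$ (hence invertible in $R$), because $\AA^1_K$ has infinitely many closed points of every degree, of which only finitely many can lie outside $U$. Composition of pullback along $i_x\colon\Spec(\kappa(x))\to U$ with the degree-$n_x$ trace of the finite flat surjective map $\Spec(\kappa(x))\to\Spec(K)$ constructed in paragraph \ref{num:elementary_traces} via the transposed graph $^{t}f$ produces a map $\Tr_x$ with $\Tr_x\circ p_U^*=n_x\cdot\mathrm{id}$; dividing by $n_x$ yields a retraction of $p_U^*$ at the finite stage $U$.

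The hardest step is to promote these pointwise retractions into a retraction of the map into the full colimit: retractions arising from different pairs $(U,x)$ need not agree off the image of $p_U^*$, so naive assembly fails. I expect the resolution to exploit the "transfers up to homotopy" philosophy indicated in the hypothesis of the lemma---one fixes a cofinal family of opens parametrised by a single closed point of coprime-to-$p$ degree and uses $\AA^1$-homotopies relating sections at distinct closed points to ensure compatibility, possibly after a reduction to a purely inseparable extension of $K$ accessible via Lemmas \ref{lm:degreeformula0} and \ref{lm:insepclosure} in order to enlarge the pool of available points. The hypothesis that $K$ be perfect is used both to validate the inseparable-extension reduction and to ensure that the trace formalism of \ref{num:elementary_traces} applies in $\DM(K,R)$.
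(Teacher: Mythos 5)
Your proof has a genuine gap, and to your credit you identify it yourself: the retractions $r_U$ you build at finite stages do not assemble into a map out of the colimit. This is not a small issue you can expect to patch with a ``philosophy of transfers up to homotopy''; it is precisely the content of the result. Two retractions $r_U$ and $r_{U'}$ built from different closed points $x\in U$ and $x'\in U'$ have no reason to agree on the image of the transition map $\Hom(p_U^*M,p_U^*N)\to\Hom(p_{U'}^*M,p_{U'}^*N)$, so there is no well-defined map out of $\varinjlim_U\Hom(p_U^*M,p_U^*N)$. Fixing a single point $x$ and restricting to the opens $U\ni x$ gives a compatible system, but those $U$ are not cofinal (e.g.\ $\AA^1_K\smallsetminus\{x\}$ is excluded), so this gives a retraction only of the map into a sub-colimit, not of the map whose splitting you need. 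Your suggested escape routes --- passing to a purely inseparable extension, invoking $\AA^1$-homotopies between distinct sections --- are not developed into an argument and do not obviously give the required coherence.

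The paper's proof avoids this entirely by importing a structural result from the theory of generic motives. After the same continuity step that you perform (realizing $\Hom(M,\derR p_*\,\derL p^*N)$ as $\varinjlim_V\Hom(M(V),\uHom(M,N))$, equivalently $\Hom(M(L),\uHom(M,N))$ with $M(L)=\text{``}\varprojlim M(V)\text{''}$ a pro-motive), it cites \cite[Cor.~6.1.3]{Deg5}: the canonical map of pro-motives $M(L)\to M(K)$ is a \emph{split epimorphism}. A section of this map in the category of pro-objects is exactly a coherent system of maps $M(K)\to M(V)$ --- i.e.\ precisely the compatible family of splittings you were trying to assemble by hand. Applying $\Hom(-,\uHom(M,N))$ then produces the split injection in one stroke. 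Your first reduction (the $\AA^1$-invariance step showing $q^*$ is fully faithful for $q:\AA^1_K\to\Spec K$) is correct but unnecessary in the paper's version, since the pro-motive result already encapsulates it. In short: your setup is right and your pointwise trace construction is correct, but the heart of the lemma is the coherence of the splitting, and that requires the generic-motive input that your proposal neither cites nor re-proves.
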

\begin{proof}
Let $I$ be the cofiltering set of affine open neighbourhoods
 of the generic point of $\AA^1_K$ ordered by inclusion.
 Obviously, $\Spec(L)$ is the projective limit of these open neighbourhoods.
 Thus, using continuity for $\DM$ with respect to dominant maps,
 we get that:
$$
\Hom(M,\derR p_*\,\derL p^*(N))=
 \varinjlim_{V \in I^{op}}\Hom(M(V),\uHom(M,N))\, .
$$
We will use the language of generic motives
 from \cite{Deg5}. Recall that $M(L)=\text{``$\varprojlim M(V)$''}$
is a pro-motive in $\DM(K)$, so that the preceding identification now takes
the following form.
$$\Hom(M,\derR p_*\,\derL p^*(N))=\Hom(M(L),\uHom(M,N))\, .$$
 Since, according to \cite[Cor. 6.1.3]{Deg5}, the canonical
 map $M(L) \rightarrow M(K)$ is a split epimorphism of pro-motives,
this proves the first assertion of the lemma. The second assertion is a direct consequence
of the first and of the fact that the triangulated category
$\DM(K,R)$ is compactly generated.
%
%Let $p:S \rightarrow \Spec(k)$ be the structural morphism.
% We deduce that the map $\alpha:p^*(M(L)) \rightarrow p^*(M(k))$
% is a split epimorphism.
% But according to the preceding computation, the adjunction map
% $ad_K:K \rightarrow \derR\pi_* \derL \pi^*(K)$ is obtained by evaluating
% the functor $\uHom(-,K)$ at the morphism of pro-motive
% $\alpha$ (converting pro-motives to homotopy colimits in the obvious way).
% Thus because $\alpha$ is split, we get a spliting of $ad_K$.
\end{proof}

\begin{proof}[Proof of Theorem \ref{thm:comparisonnis}]
We want to prove that for a regular noetherian $k$-scheme
of finite dimension $S$, the adjunction:
$$
t^*:\HZ_S\Mod\ \rightleftarrows \DM(S,R):t_*
$$
is an equivalence of triangulated categories.
 Since the functor $t^*$ preserves compact objects, and since
 there is a generating family of compact
 objects of $\DM(S,R)$ in the essential image of the functor $t^*$,
 it is sufficient to prove that $t^*$ is fully faithful
 on compact objects (see \cite[Corollary 1.3.21]{CD3}): we have to prove that,
 for any compact $\HZ_S$-module $M$,
 the adjunction map
 $\eta_M: M \rightarrow t_*t^*(M)$ is an isomorphism.

\smallskip

\noindent \emph{First case}: We first assume that $S$ is essentially
 smooth -- \emph{i.e.} the localization of a smooth $k$-scheme.
 We proceed by induction on the dimension of $S$.
 The case of dimension $0$ follows from Proposition \ref{prop:fields}.

We recall that the category $\HZ\Mod$ is continuous on $\reg$
 (\ref{prop:HZmodpremotivicReg}).
 Let $x$ be a point of $S$ and $S_x$ be the localization of $S$
 at $x$, $p_x:S_x \rightarrow S$ the natural projection.
 Then it follows from \cite[Prop. 4.3.9]{CD3} that the family of functors:
$$
p_x^*:\HZ_S\Mod \rightarrow \HZ_{S_x}\Mod, x \in S
$$
is conservative.

Since $p_x^*$ commutes with $t^*$ (trivial) and with $t_*$
(according to Lemma \ref{lm:oubliDMversHZmod}),
we can assume that $S$ is
a local essentially smooth $k$-scheme.

To prove the induction case, let $i$ (resp. $j$) be the immersion
 of the closed point $x$ of $S$ (resp. of the open complement $U$
 of the closed point of $S$).
Since the localization property with respect to $i$
 is true in $\HZ\Mod$ (because it is true in $\SH$, using
 the last assertions of Proposition \ref{prop:HZmodpremotivicReg})
 and in $\DM$ (because of Proposition \ref{prop:DM_localiz_regular}
  that we can apply because we have assumed that
	$S$ is essentially smooth),
 we get two morphisms of distinguished triangles:
$$
\xymatrix@=10pt{
j_!j^*(M)\ar[r]\ar[d] & M\ar[r]\ar[d] & i_*i^*(M)\ar[d]\ar[r] &j_!j^*(M)[1]\ar[d] \\
j_!j^*(t_*t^*(M))\ar[r]\ar^\wr[d] & t_*t^*(M)\ar[r]\ar@{=}[d]
 & i_*i^*(t_*t^*(M))\ar^\wr[d]\ar[r] & j_!j^*(t_*t^*(M))[1]\ar[d]^\wr\\
j_!t_*t^*j^*(M)\ar[r] & t_*t^*(M)\ar[r] & i_*t_*t^*i^*(M)\ar[r] &j_!t_*t^*j^*(M)[1]
}
$$
The vertical maps on the second floor are isomorphisms:
both functors $t^*$ and $t_*$ commute with $j^*$ (as $t^*$ is the
left adjoint in a premotivic adjunction, it commutes with $j_!$ and $j^*$,
and this implies that $t_*$ commutes with $j^*$, by transposition);
the functor $t^*$ commutes with $i_*$ because it commutes with $j_!$, $j^*$
and $i^*$, and because the localization property with respect to $i$
is verified in
$\HZ\Mod$ as well as in $\DM$); finally, applying the third assertion of Lemma \ref{lm:oubliDMversHZmod} for $f=j$,
this implies that the functor $t_*$ commutes with $i^*$.
 To prove that the map $\eta_M$ is an isomorphism,
 it is thus sufficient to treat the case of $j_!\eta_{j^*(M)}$
 and of $i_*\eta_{i^*(M)}$.
 This means we are reduced to the cases of $U$ and $\Spec(\kappa(x))$,
 which follow respectively from the inductive assumption
 and from the case of dimension zero.

\smallskip

\noindent \emph{General case}: Note that the previous case
 shows in particular the theorem for any smooth $k$-scheme.
 Assume now that $S$ is an arbitrary regular noetherian $k$-scheme. 
%If $k$ is a finite field, we can use Lemma \ref{lm:transcendental_split_HZ}
% to replace $S$ by $S \otimes_k k(t)$, which is still a regular
% $k$-scheme. In this way, we ensure that all the residue fields
% of $S$ are infinite.
Using \cite[Prop. 4.3.9]{CD3} again,
and proceeding as we already did above
(but considering limits of Nisnevich neighbourhoods instead of
Zariski ones), we may assume that $S$ is henselian.
Let $L=k(t)$ be the field of rational functions, and let us form the following
pullback square.
$$\xymatrix{
S'\ar[r]^q\ar[d]_g&S\ar[d]^f\\
\Spec(k(t))\ar[r]^p&\Spec(k)
}$$
Then the functor
$$\derR p_*\, \derL p^*:\HZ_{k}\Mod\to\HZ_{k}\Mod$$
is conservative: this follows right away from Lemma \ref{lm:transcendental_split_DM}
and Proposition \ref{prop:fields}.
This implies that the functor
$$\derL q^*:\HZ_S\Mod\to\HZ_{S'}\Mod$$
is conservative. To see this, let us consider an object $E$ of $\HZ_S\Mod$
such that $\derL q^*(E)=0$. To prove that $E=0$, it is sufficient to prove that
$\Hom(M,E)=0$ for any compact object $M$ of $\HZ_S\Mod$.
Formula
$$\Hom(\HZ_k,\derR f_*\uHom(M,E))
\simeq\Hom(M,E)$$
implies that
it is sufficient to check that $\derR f_*\uHom(M,E)=0$ for any compact object $M$
(where $\uHom$ is the internal Hom of $\HZ_S\Mod$).

Since the functor $\derR p_*\, \derL p^*$
is conservative, it is thus sufficient to prove that
$$\derR p_*\, \derL p^*\derR f_*\uHom(M,E)=0\, .$$
We thus conclude with the following computations (see \cite[Propositions 4.3.11 and 4.3.14]{CD3}).
\begin{align*}
\derR p_*\, \derL p^*\derR f_*\uHom(M,E)
&\simeq\derR p_*\, \derR g_*\, \derL q^*\uHom(M,E)\\
&\simeq\derR p_*\, \derR g_*\uHom(\derL q^*(M),\derL q^*(E))=0\\
\end{align*}
In conclusion, since the functor $\derL q^*$ commutes with $t_*$
(see Lemma \ref{lm:oubliDMversHZmod}~(1)), we may replace $S$ by $S'$
and thus assume that the residue field of $S$ is infinite.
 Let $B$ be the $\infty$-gonflement of $A=\Gamma(S,\cO_S)$ (Definition \ref{df:gonflement}),
 and $f:T=\Spec(B)\to S$ be the map induced by the inclusion $A\subset B$.
 We know that the functor
 $$\derL f^*:\HZ_S\Mod\to\HZ_T\Mod$$
 is conservative: as the forgetful functor $\HZ\Mod\to\SH$ is
 conservative and commutes with $\derL f^*$, this follows from
 Lemma \ref{lm:gonfleconserve} (or one can reproduce the proof
 of this lemma, which only used the continuity property of $\SH$
 with respect to projective systems of schemes with
 dominant affine transition morphisms).
 Similarly, it follows again from Lemma \ref{lm:oubliDMversHZmod}~(1)
 that the functor $t_*$ commutes with $\derL f^*$.
 As the functor $t^*$ commutes with $\derL f^*$, it is sufficient to prove
 that the functor $t^*$ is fully faithful over $T$, and it is still sufficient to
 check this property on compact objects.
 Since the ring $B$ is noetherian and regular, and has a field of functions with
 infinite transcendance degree over the perfect field $k$ (see \ref{num:gonflement}),
 it follows from Spivakovsky's refinement of Popescu's Theorem~\cite[10.1]{spivak}
 that $B$ is the filtered union of its smooth subalgebras of finite type
 over $k$. In other terms,
 $T$ is the projective limit of a projective system of smooth affine $k$-schemes
 of finite type $(T_\alpha)$
 with dominant transition maps. Therefore, by continuity
 (see Examples \ref{ex:modules_reg} and \ref{ex:continuity}(2)),
 we can apply Proposition \ref{prop:continuity&2lim} twice
 and see that the functor
 $$2\text{-}\varinjlim_\alpha\HZ_{T_\alpha}\Mod_c\simeq\HZ_T\Mod_c
 \to 2\text{-}\varinjlim_\alpha\DM_c(T_\alpha,R)\simeq\DM_c(T,R)$$
 is fully faithful, as a filtered $2$-colimit of functors having this property.
\end{proof}

\section{More modules over motivic Eilenberg-MacLane spectra}

\begin{num}\label{num:positive}
Given a scheme $X$, let $\mathit{Mon}(X)$ be the category of
unital associative monoids in the category of symmetric Tate spectra $\Sp_X$.
The forgetful functor
$$U:\mathit{Mon}(X)\to\Sp_X$$
has a left adjoint, the free monoid functor:
$$F:\Sp_X\to\mathit{Mon}(X)\, .$$
Since the stable model category of symmetric Tate spectra satisfies
the monoid axiom (see \cite[Lemma 4.2]{hoyois2}),
by virtue of a well known theorem of Schwede and Shipley \cite[Theorem 4.1(3)]{SS},
the category $\mathit{Mon}(X)$ is endowed with a combinatorial model category structure
whose weak equivalences (fibrations) are the maps whose image by $U$
are weak equivalences (fibrations) in $\Sp_X$;
furthermore, any cofibrant monoid is also cofibant as an object of $\Sp_X$.
%It is easy to see that, for any morphism of $k$-schemes $f:X\to Y$, the
%pullback functor $f^*:\Sp^+_Y\to\Sp^*_X$ is left Quillen
%and symmetric monoidal, which readily
%implies that the pullback functor
%$f^*:\mathit{Mon}(Y)\to\mathit{Mon}(X)$ is left Quillen as well.
%The commutative diagram
%$$\xymatrix{
%\mathit{Mon}(X)\ar[r]^(.6)U&\Sp^+_X\\
%\mathit{Mon}(Y)\ar[u]^{f^*}\ar[r]^(.6)U&\Sp^+_Y\ar[u]_{f^*}
%}$$
%thus induces a diagram
%$$\xymatrix{
%\mathrm{Ho}(\mathit{Mon}(X))\ar[r]^(.6)U&\SH(X)\\
%\mathrm{Ho}(\mathit{Mon}(Y))\ar[u]^{\derL f^*}\ar[r]^(.6)U&\SH(Y)\ar[u]_{\derL f^*}
%}$$
%which does not commute, but comes with a natural transformation
%$$\derL f^*\, U\to U\, \derL f^*\, .$$
\end{num}

\begin{num}\label{defineHZrelatif}
We fix once and for all a cofibrant resolution
$$\HZ'\to\HZ_k$$
of the motivic Eilenberg-MacLane spectrum $\HZ_k$ in the
model category $\mathit{Mon}(k)$. Given a $k$-scheme $X$ with
structural map $f:X\to\Spec(k)$, we define
$$\HZ_{X/k}=f^*(\HZ')$$
(where $f^*$ denotes the pullback functor in the premotivic model
category $\Sp$). The family $(\HZ_{X/k})_X$ is a cartesian
section of the $\sm$-fibred category of monoids in $\Sp$ which
is also homotopy cartesian (as we have an equality
$\derL f^*(\HZ_k)=\HZ_{X/k}$).
We write $\HZ_{X/k}\Mod$ for the homotopy category of
(left) $\HZ_{X/k}$-modules.

This notation is in conflict with
the one introduced in Definition \ref{df:HZrelatif}.
This conflict disappears up to weak equivalence\footnote{In
the proof of Theorem \ref{thm:comparisonnis}, we used the fact
that the spectra $\HZ_{X/k}$, as defined in Definition \ref{df:HZrelatif}, are \emph{commutative}
monoids of the model category
of symmetric Tate spectra (because we used
Poincar\'e duality in an essential way, in the
case where $X$ is the spectrum of a perfect field).
This new version of motivic Eilenberg-MacLane spectra $\HZ_{X/k}$ is not
required to be commutative anymore (one could force this property by working with fancier model categories of motivic
spectra (some version of the `positive model structure',
as discussed in \cite{hornbostel} for instance),
but these extra technicalities
are not necessary for our purpose.
We shall use Theorem \ref{thm:comparisonnis} in a crucial way, though.}:
when $X$ is regular, the comparison map
$$f^*(\HZ')\to f^*(\HZ_k)$$
is a weak equivalence (Proposition \ref{prop:exactness_regular}).
For $X$ regular, $\HZ_{X/k}$ is thus
a cofibrant resolution of $\HZ_X$
in the model category $\mathit{Mon}(X)$. In particular,
in the case where $X$ is regular, we have a canonical
equivalence of triangulated categories:
$$\HZ_{X/k}\Mod\simeq\HZ_{X}\Mod\, .$$
\end{num}
%
%\begin{prop}\label{prop:HZmodrelatif1}
%For any regular $k$-scheme $X$, the canonical map $\HZ_{X/k}\to\HZ_X$
%induces an equivalence of triangulated categories
%$$\HZ_{X/k}\Mod\simeq\HZ_X\Mod\, .$$
%These equivalence define an equivalence of
%triangulated $\sm$-fibred
%categories over the category $\reg$ of regular $k$-schemes.
%\end{prop}
%
%\begin{proof}
%Simply apply \cite[Prop. 7.2.13]{CD3}
%(which is a functorial reformulation of \cite[Theorem 4.3]{SS}).
%\end{proof}

\begin{prop}\label{prop:HZmodrelatif2}
The assignment $X\mapsto\HZ_{X/k}\Mod$
defines a motivic category over the category of noetherian
$k$-schemes of finite dimension
which has the property of continuity with respect to
arbitrary projective systems with affine
transition maps.
Moreover, when we let $X$ vary,
both the free $\HZ_{X/k}$-algebra (derived) functor
$$L_{\HZ_{X/k}}:\SH(X)\to\HZ_{X/k}\Mod$$
and its right adjoint
$$\cO_{\HZ_{X/k}}:\HZ_{X/k}\Mod\to\SH(X)$$
are morphisms of premotivic triangulated categories
over the category of $k$-schemes.
In other words both functors commute with $\derL f^*$
for any morphism of $k$-schemes $f$, and with $\derL g_\sharp$
for any separated smooth morphism of $k$-schemes $g$.
\end{prop}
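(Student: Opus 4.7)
The plan is to apply the general machinery developed in \cite[Sec.~7.2]{CD3} to the cartesian section of monoids $(\HZ_{X/k})_X$ defined in paragraph \ref{defineHZrelatif}. Since the stable model category of symmetric Tate spectra satisfies the monoid axiom (\cite[Lemma 4.2]{hoyois2}), we first invoke \cite[Prop.~7.2.11]{CD3} to obtain, for each $k$-scheme $X$, the combinatorial model category of (left) $\HZ_{X/k}$-modules, and to show that the assignment $X\mapsto\HZ_{X/k}\Mod$ organizes into an $\sm$-fibred model category over the category of noetherian $k$-schemes of finite dimension. The fact that $(\HZ_{X/k})_X$ is a \emph{homotopy} cartesian section of monoids (recorded in \ref{defineHZrelatif} via the identity $\derL f^*(\HZ_k)=\HZ_{X/k}$) is precisely the hypothesis of \cite[Prop.~7.2.13]{CD3}, which then supplies the premotivic adjunction $L_{\HZ_{X/k}}\dashv\cO_{\HZ_{X/k}}$ between $\SH$ and $\HZ_{X/k}\Mod$. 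That $\HZ\Mod$ is actually \emph{motivic} (i.e.\ satisfies the localization and homotopy properties) is inherited from $\SH$: the forgetful functor $\cO_{\HZ_{X/k}}$ preserves and detects weak equivalences and fibrations, and commutes with $f^*$ and with $j_!$ for an open immersion $j$, so each of the defining properties of a motivic category transports from $\SH$ to $\HZ_{X/k}\Mod$.

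For continuity, I would apply Lemma \ref{lm:trivial_continuity} to the adjunction $L_{\HZ_{X/k}}\dashv\cO_{\HZ_{X/k}}$. The right adjoint $\cO_{\HZ_{X/k}}$ is conservative (by construction), and it commutes with $f^*$ for an arbitrary morphism $f$ (this is the content of the next paragraph, obtained via \cite[Prop.~7.2.14]{CD3} as in the proof of Proposition \ref{prop:HZmodrelatif1}). Since $\SH$ enjoys continuity with respect to all projective systems with affine transition maps (Example \ref{ex:continuity}(1)), the lemma transports this property to $\HZ\Mod$.

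Finally, the commutation statements for $L_{\HZ_{X/k}}$ and $\cO_{\HZ_{X/k}}$ are a direct application of \cite[Prop.~7.2.14]{CD3}: since $\HZ_{X/k}=f^*(\HZ')$ arises by pullback from a \emph{cofibrant} monoid $\HZ'$ over $\Spec(k)$, it is itself cofibrant in $\mathit{Mon}(X)$, and the cited proposition asserts that the forgetful functor from modules over such a monoid commutes with $\derL f^*$ for any morphism of $k$-schemes $f$ and with $\derL g_\sharp$ for any smooth morphism $g$. The corresponding statements for $L_{\HZ_{X/k}}$ follow formally because it is the left adjoint in a premotivic adjunction, so it automatically commutes with $\derL f^*$ and $\derL g_\sharp$. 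The only step requiring genuine care is the identification of $(\HZ_{X/k})_X$ as a homotopy cartesian section of monoids at the model category level; this is where the choice of the cofibrant resolution $\HZ'$ of $\HZ_k$ in $\mathit{Mon}(k)$ in paragraph \ref{defineHZrelatif} is essential, and it is the only real substance hidden behind an otherwise formal argument.
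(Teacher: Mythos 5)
Your proof is correct and follows essentially the same route as the paper, which cites \cite[7.2.13 and 7.2.18]{CD3} for the first assertion, Lemma \ref{lm:trivial_continuity} for continuity, and \cite[7.2.14]{CD3} for the commutation statements. The one place where you diverge is in replacing the citation of \cite[7.2.18]{CD3} by a short sketch of why the motivic (localization and homotopy) properties transport from $\SH$ along the conservative forgetful functor; that is indeed the idea packaged in the cited proposition, so you may as well invoke it directly rather than re-derive it.
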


\begin{proof}
The first assertion comes from \cite[7.2.13 and 7.2.18]{CD3},
the one about continuity is a direct application of
Lemma \ref{lm:trivial_continuity}, and the last one
comes from \cite[7.2.14]{CD3}.
\end{proof}

\begin{rem}
Since the functor $\cO_{\HZ_{X/k}}:\HZ_{X/k}\Mod\to\SH(X)$
is conservative and preserves small sums, the family
of objects of the form $\HZ_{X/k}\otimes^\derL\Sigma^\infty(Y_+)(n)$,
for any separated smooth $X$-scheme $Y$ and any integer $n$, do form
a generating family of compact objects. In particular, the notions
of constructible object and of compact object coincide in $\HZ_{X/k}\Mod$ (see for instance
\cite[Remarks 5.4.10 and 5.5.11]{CD4},
for a context in which these two notions fail to coincide).
\end{rem}

\begin{num}\label{def:adjHZModDMcdh}
For any $k$-scheme $X$, we have canonical morphisms
of monoids in $\Sp_X$:
$$\HZ_{X/k}\to f^*(\HZ_k)\to\HZ_X\, .$$
In particular, we have a canonical functor
$$\HZ_{X/k}\Mod\to\HZ_X\Mod\ , \quad E\mapsto\HZ_X\otimes^\derL_{\HZ_{X/k}}E\, .$$
If we compose the latter with the functor
$$\HZ_X\Mod\xrightarrow{t^*}\DM(X,R)
\xrightarrow{\derL\rho_!}\uDM(X,R)
\xrightarrow{a^*_\cdh}\uDM_{\cdh}\, ,$$
we get a functor
$$\HZ_{X/k}\Mod\to\uDM(X,R)$$
which defines a morphism a premotivic categories.
In particular, this functor takes it values in $\DM_\cdh(X,R)$,
and we obtain a functor
$$\tau^*:\HZ_{X/k}\Mod\to\DM_\cdh(X,R)\, .$$
As $\tau^*$ preserves small sums, it has a right adjoint $\tau_*$,
and we finally get a premotivc adjunction
$$\tau^*:\HZ_{(-)/k}\Mod\leftrightarrows\DM_\cdh(-,R):\tau_*\, .$$
Moreover, the functor $\tau^*$ preserves the canonical
generating families of compact objects. Therefore, the functor $\tau_*$
is conservative and commutes with small sums.
\end{num}

\section{Comparison theorem: general case}

The aim of this section is to prove:

\begin{thm}\label{thm:comparisoncdh}
Let $k$ be a perfect field of characteristic exponent $p$.
Assume that $p$ is invertible in the ring of coefficients $R$.
For any noetherian $k$-scheme of finite dimension $X$, the canonical
functor
$$\tau^*:\HZ_{X/k}\Mod\to\DMcdh(X,R)$$
is an equivalence of categories. 
\end{thm}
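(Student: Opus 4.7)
The plan is to bootstrap from Theorem \ref{thm:comparisonnis} (the regular case) to arbitrary noetherian finite-dimensional $k$-schemes by combining Gabber--Kelly $\cdh$-resolutions with $\cdh$-hyperdescent on both sides. Since $\tau^*$ sends the canonical compact generators $\HZ_{X/k}\otimes^\derL\Sigma^\infty(Y_+)(n)$ to $M_X(Y)(n)$, and since $\tau_*$ is conservative and preserves small sums (as noted in \ref{def:adjHZModDMcdh}), it suffices to show that $\tau^*$ is fully faithful on compact objects, i.e.\ that the unit $E\to\tau_*\tau^*(E)$ is an isomorphism on a generating family of compact $\HZ_{X/k}$-modules. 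Using the continuity properties recorded in Proposition \ref{prop:HZmodrelatif2} together with Example \ref{ex:continuity}(2) and Proposition \ref{prop:continuity&2lim}, the first step is to reduce to the case where $X$ is of finite type over $k$.

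Next, apply Gabber's refinement of de~Jong's alteration theorem, in the form developed by Kelly \cite{kelly}: since the characteristic exponent $p$ is invertible in $R$, any finite-type $k$-scheme $X$ admits a $\cdh$-hypercover $p:X_\bullet\to X$ with each $X_n$ smooth over $k$. On each such $X_n$, Theorem \ref{thm:comparisonnis} combined with the identification $\HZ_{X_n/k}\Mod\simeq\HZ_{X_n}\Mod$ from Proposition \ref{prop:HZmodrelatif1} gives an equivalence $\HZ_{X_n/k}\Mod\simeq\DM(X_n,R)$; together with the fact that the comparison $\DM(X_n,R)\to\DMcdh(X_n,R)$ is an equivalence for $X_n$ smooth (since $\cdh$-descent for sheaves with transfers of smooth schemes over a smooth base reduces to Nisnevich descent), this shows that $\tau^*$ is an equivalence at level $n$.

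The main technical ingredient is then that both premotivic categories satisfy $\cdh$-hyperdescent, so that the levelwise equivalences on $X_\bullet$ force an equivalence on $X$. For $\DMcdh(-,R)$ this is automatic from the construction, the $\cdh$-topology being built in. For $\HZ_{(-)/k}\Mod$, the argument goes via showing that $\HZ_{X/k}$ is a $\cdh$-local object of $\SH(X)[1/p]$: modules over a $\cdh$-local ring spectrum then inherit $\cdh$-descent from the ambient stable category. Establishing $\cdh$-locality of $\HZ_{X/k}$ reduces, via Gabber--Kelly, to its known behaviour on smooth $k$-schemes, where Theorem \ref{thm:comparisonnis} and the smooth identification $\DM\simeq\DMcdh$ do the job.

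The main obstacle is exactly this $\cdh$-hyperdescent property for $\HZ_{(-)/k}\Mod$ after inverting $p$: all other steps are either formal (reduction to compact generators, application of continuity, gluing via a hypercover) or direct applications of Gabber--Kelly and the regular case. Overcoming it amounts to leveraging Theorem \ref{thm:comparisonnis} and Gabber--Kelly reductions to promote the abstract $\cdh$-locality of $\HZ_k$ to a structural property of $\HZ_{X/k}$-modules over an arbitrary noetherian finite-dimensional $k$-scheme.
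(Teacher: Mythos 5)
The overall scaffolding you describe (reduce to constructible objects, then to the finite-type case by continuity, resolve $X$ by a hypercover with smooth pieces, and glue using levelwise applications of Theorem \ref{thm:comparisonnis}) is the right shape, and it matches the paper's strategy. But there is a genuine gap at the heart of your argument: in positive characteristic you cannot in general produce a \emph{$\cdh$-hypercover} $X_\bullet\to X$ with each $X_n$ smooth over $k$. Gabber's refinement of de~Jong only provides alterations (generically finite surjective maps of degree prime to a fixed $\ell\neq p$), and these are not $\cdh$-covers; they are $\ldh$-covers in Kelly's sense. Inverting $p$ alone does not rescue this, because the degree of the alteration typically involves other primes. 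For this reason the paper first reduces, via a standard localization-of-coefficients argument, to the case where $R$ is a $\ZZ_{(\ell)}$-algebra for some $\ell\neq p$, and then works with the $\ldh$-topology throughout. Your phrasing ``Gabber--Kelly $\cdh$-resolutions'' conflates these two topologies.

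Once one switches to the $\ldh$-topology, two further nontrivial inputs are needed which your sketch glosses over. First, you need $\ldh$-descent for constructible $\HZ_{X/k}$-modules: this is Lemma \ref{lm:ldhdescentHZmodfinitetype}, which rests on Kelly's $\ldh$-descent theorem for $\HZ_k$ \cite[Theorem 5.3.7]{kelly}, and is proven by reducing to proper pushforwards of Tate twists using proper base change, not by an abstract ``$\cdh$-local ring spectrum'' argument (which would anyway require proof, and would not automatically give descent of modules over an arbitrary base). Second, your claim that $\DM(X_n,R)\to\DMcdh(X_n,R)$ is an equivalence for $X_n$ smooth ``since $\cdh$-descent for sheaves with transfers over a smooth base reduces to Nisnevich descent'' is not a consequence of the definitions; in the paper, the corresponding statement at the level of constructible objects is Lemma \ref{lm:ldhdescentreg2}, which itself requires Lemma \ref{lm:ldhdescentreg} (a careful comparison of the Nisnevich and $\cdh$ hypercohomology spectra via $\ldh$-descent and Gabber's theorem) together with Corollary \ref{cor:ldhdescenttransfers} ($\cdh$-hypercohomology with transfers agrees with $\ldh$-hypercohomology for $\ZZ_{(\ell)}$-linear coefficients). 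In fact the unconditional equivalence $\DM(X,R)\simeq\DM_\cdh(X,R)$ for regular $X$ is Corollary \ref{cor:DMcdhreg}, which is deduced \emph{from} Theorem \ref{thm:comparisoncdh}, so invoking it here would be circular. You should replace the $\cdh$-hyperdescent plan by the $\ldh$ analogue, with the localization to $\ZZ_{(\ell)}$-coefficients and the two descent lemmas just cited as the real technical content.
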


The proof will take the following path: we will
prove this statement in the case where $X$ is
separated and of finite type over $k$.
For this, we will use Gabber's refinement
of de~Jong's resolution of singularities by alterations, as well
as descent properties for $\HZ_k$-modules proved by Shane Kelly
to see that it is sufficient to consider the case of
a smooth $k$-scheme. In this situation, Theorem \ref{thm:comparisoncdh}
will be a rather formal consequence of Theorem \ref{thm:comparisonnis}.
The general case will be obtained by a continuity argument.

\begin{paragr}
Let $\ell$ be a prime number.
Following S.~Kelly \cite{kelly}, one defines the $\ldh$-topology on the category of
noetherian schemes
as the coarsest Grothen\-dieck topology such that any $\cdh$-cover is
an $\ldh$-cover and any morphism of the form $f:X\to Y$, with $f$ finite, surjective, flat,
and of degree prime to $\ell$ is an $\ldh$-cover.
For instance, if $\{U_i\to X\}_{i\in I}$ is a $\cdh$-cover, and if, for
each $i$ one has a finite surjective flat morphism $V_i\to U_i$ of degree prime
to $\ell$, we get an $\ldh$-cover $\{V_i\to X\}_{i\in I}$.
In the case where $X$ is noetherian, one can show that, up to refinement,
any $\ldh$-cover is of this form; see \cite[Prop. 3.2.5]{kelly}.
We will use several times the following non-trivial fact, which
is a direct consequence of Gabber's
theorem of uniformization prime to $\ell$ \cite[Exp.~IX, Th.~1.1]{gabber}:
locally for the $\ldh$-topology, any quasi-excellent scheme is regular.
In other words, for any noetherian quasi-excellent scheme $X$
(e.g. any scheme of finite type over  field), there exists a morphism
of finite type $p:X'\to X$ which is a covering
for the $\ldh$-topology and has a regular domain.
\end{paragr}

\begin{prop}\label{prop:ldhdescenttransfers}
Let $F$ be a $\cdh$-sheaf with transfers over $X$ which is
$\mathbf{Z}_{(\ell)}$-linear. Then $F$ is an $\ldh$-sheaf and,
for any integer $n$, the map
$$H^n_\cdh(X,F)\to H^n_\ldh(X,F)$$
is an isomorphism.
\end{prop}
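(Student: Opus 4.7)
The strategy is to exploit the generation of the $\ldh$-topology by two classes of morphisms and verify descent for each. By Kelly's refinement theorem \cite[Prop.~3.2.5]{kelly}, any $\ldh$-cover of a noetherian scheme can be refined by a composite of a $\cdh$-cover with finite, surjective, flat morphisms of degree prime to $\ell$. Since $F$ is by assumption already a $\cdh$-sheaf, the essential point is to verify the sheaf axiom, and more generally the vanishing of \v Cech cohomology in positive degrees, for a single morphism $f\colon V\to U$ that is finite, surjective and flat of degree $d$ prime to $\ell$.

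For such a cover, I would exploit the transfer structure on $F$: the transpose ${}^tf$ of the graph of $f$ gives a finite correspondence from $U$ to $V$, and the classical identity ${}^tf\circ f=d\cdot\mathrm{id}_U$ (cf.~\cite[Prop.~9.1.13]{CD3}) shows that $f^*\colon F(U)\to F(V)$ is split-injective once $d$ is inverted, which is automatic since $d$ is a unit in $\mathbf{Z}_{(\ell)}$. To obtain exactness in the middle of the augmented \v Cech complex
$$0\to F(U)\to F(V)\to F(V\times_U V)\to\cdots,$$
I would apply the base change formula for finite correspondences to the square
$$\xymatrix@=12pt{V\times_U V\ar[r]^-{p_2}\ar[d]_{p_1} & V\ar[d]^f\\ V\ar[r]_f & U}$$
which yields $f^*\circ{}^tf=p_{1*}\circ p_2^*$ in $\smcx{\Lambda,U}$. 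Iterating this construction with the various projections of the higher fiber powers (each of which is again finite flat of the same degree $d$, hence provides a valid transfer after division by $d$) produces an explicit contracting homotopy on the augmented \v Cech complex. Thus $F$ is an $\ldh$-sheaf.

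For the cohomological statement, consider the canonical morphism of sites $\pi\colon(\sch)_{\ldh}\to(\sch)_{\cdh}$. By the first part we already have $\pi_*F\simeq F$, so that the Leray spectral sequence reduces the claim to showing $R^q\pi_*F=0$ for $q>0$. This is a sheafified form of the acyclicity above: locally for the $\cdh$-topology every $\ldh$-hypercover may be refined by one built out of elementary finite flat prime-to-$\ell$ covers, on which the \v Cech complex of $F$ is contractible via the same transfer-based homotopy.

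The main obstacle is the verification that the various transfers at each level of the \v Cech complex are simultaneously well-defined and assemble into a genuine contracting homotopy: the iterated fiber products $V\times_U\cdots\times_U V$ may be singular, so one must ensure that the relative cycles underlying the transfers are \emph{special} (resp.~$\Lambda$-universal) in the sense of \cite[Chap.~2]{FSV} and \cite[def.~8.1.28,~8.1.48]{CD3}. This is ultimately where the hypotheses that $d$ is prime to $\ell$ and that $F$ is $\mathbf{Z}_{(\ell)}$-linear enter in a crucial way, allowing one to divide by the relevant degrees throughout the construction.
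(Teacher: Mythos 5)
The paper gives no internal argument for this proposition: its proof is simply the citation \cite[Theorem 3.4.17]{kelly}. Your sketch does capture the mechanism that underlies Kelly's theorem — split-injectivity of $f^*$ along a finite, flat, surjective cover $f\colon V\to U$ of degree $d$ prime to $\ell$, via a trace $\mathrm{tr}_f$ satisfying $\mathrm{tr}_f\circ f^*=d\cdot\mathrm{id}$, followed by an extension to a contracting homotopy on the augmented \v Cech complex using base change of transfers along the projections of the \v Cech nerve.

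There are, however, two genuine gaps. The first is the one you flag at the end: the fibre powers $V\times_U\cdots\times_U V$ are typically singular and possibly non-reduced, so the transfers you need at higher \v Cech levels do not come for free from the finite-correspondence formalism — in particular, these fibre powers are generally not smooth over $U$, so they are not objects of $\smcx{\Lambda,U}$ (you would at least have to work in $\sftcx{\Lambda,U}$), and even there the well-definedness and base-change compatibility of the relevant relative cycles is delicate. This is precisely what forces Kelly to develop a theory of ``presheaves with traces'' in \cite[\S 3.3]{kelly}, which axiomatizes the degree formula and the base-change relation $f^*\circ\mathrm{tr}_f=\mathrm{tr}_{p_1}\circ p_2^*$ in a form that survives non-reduced fibres; establishing that a $\ZZ_{(\ell)}$-linear sheaf with transfers has such traces is a nontrivial part of the argument, not a footnote. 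The second gap is in the cohomological comparison. Showing $R^q\pi_*F=0$ for $q>0$ requires applying the trace-based splitting not to $F$ but to the cohomology presheaves $U\mapsto H^q_{\ldh}(U,F)$. For that you must know that these presheaves themselves carry compatible traces, which in turn uses that $\cdh$-hypercohomology of $F$ can be computed inside the category of sheaves with transfers (the $\cdh$-analogue of Voevodsky's Nisnevich comparison; see \cite[Prop.~3.4.15, 3.4.16]{kelly}, which the paper also invokes in its Corollary \ref{cor:ldhdescenttransfers}). The phrase ``locally for the $\cdh$-topology every $\ldh$-hypercover may be refined by one built out of elementary finite flat prime-to-$\ell$ covers'' is not literally what \cite[Prop.~3.2.5]{kelly} gives (that statement is about covers, not hypercovers), and by itself it does not yield the vanishing of $R^q\pi_*F$. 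Kelly's \S 3.4 supplies exactly these missing ingredients.
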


\begin{proof}
See \cite[Theorem 3.4.17]{kelly}.
\end{proof}

\begin{cor}\label{cor:ldhdescenttransfers}
Assume that $X$ is of finite dimension, and
let $C$ be a complex of $\mathbf{Z}_{(\ell)}$-linear
$\cdh$-sheaves with transfers over $X$.
Then the comparison map of hypercohomologies
$$H^n_\cdh(X,C)\to H^n_\ldh(X,C)$$
is an isomorphism for all $n$.
\end{cor}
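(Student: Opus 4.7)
The plan is to reduce from a complex to a single sheaf, the latter being handled by Proposition \ref{prop:ldhdescenttransfers}. The standard vehicle for such a reduction is the hypercohomology spectral sequence, combined with the fact that the cohomology sheaves of $C$ stay inside the class of $\mathbf{Z}_{(\ell)}$-linear $\cdh$-sheaves with transfers.

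First, I would observe that the category of $\mathbf{Z}_{(\ell)}$-linear $\cdh$-sheaves with transfers is a Grothendieck abelian subcategory of the category of $\cdh$-sheaves with transfers that is stable by kernels and cokernels, so for every integer $q$ the naive cohomology presheaf $\mathcal H^q(C)$ is itself a $\mathbf{Z}_{(\ell)}$-linear $\cdh$-sheaf with transfers. By Proposition \ref{prop:ldhdescenttransfers} each $\mathcal H^q(C)$ is automatically an $\ldh$-sheaf, so the notation $\mathcal H^q(C)$ is unambiguous for the two topologies, and moreover the comparison map induces an isomorphism
\[
H^p_\cdh(X,\mathcal H^q(C)) \xrightarrow{\ \sim\ } H^p_\ldh(X,\mathcal H^q(C))
\]
for all integers $p$ and $q$.

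Next, I would invoke the morphism of sites $\ldh \to \cdh$ to obtain a morphism between the two hypercohomology spectral sequences
\[
E_2^{p,q}(t) = H^p_t\!\bigl(X,\mathcal H^q(C)\bigr) \Longrightarrow H^{p+q}_t(X,C), \qquad t \in \{\cdh, \ldh\},
\]
which is an isomorphism on the $E_2$-page by the previous step. The conclusion then follows provided both spectral sequences are strongly convergent. This is guaranteed by the finiteness of the $\cdh$- and $\ldh$-cohomological dimensions of any noetherian scheme of finite Krull dimension, which, for the $\cdh$-topology, is the classical Suslin--Voevodsky bound and, for the $\ldh$-topology, is established in Kelly's thesis \cite{kelly} as a consequence of Gabber's uniformization theorem.

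The main potential obstacle is this convergence point for an arbitrary (unbounded) complex $C$. If one wishes to avoid appealing to finite cohomological dimension at the level of unbounded complexes, one can first treat the bounded-below case by induction on the amplitude using the truncation triangle $\tau^{\leq q-1}C \to \tau^{\leq q}C \to \mathcal H^q(C)[-q]$ and the five lemma, and then pass to the general case by writing $C = \derR\varprojlim_q \tau^{\geq -q}C$ and using that the Milnor $\lim^1$-sequences on both sides are compatible, the finite cohomological dimension ensuring that the towers satisfy Mittag--Leffler in each fixed degree.
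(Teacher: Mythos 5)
Your overall strategy — comparing hypercohomology spectral sequences whose $E_2$-terms are identified via Proposition \ref{prop:ldhdescenttransfers} and then arguing convergence — is the same as the paper's, so the architecture is right. However, there are two places where the paper is more careful and your argument as written has a gap (or at least an undischarged debt).

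First, the $E_2$-term of the hypercohomology spectral sequence is $H^p_t(X,H^q(C)_t)$, where $H^q(C)_t$ is the cohomology sheaf computed in $t$-sheaves \emph{without transfers} (hypercohomology forgets transfers). To apply Proposition \ref{prop:ldhdescenttransfers} to these $E_2$-terms you need to know that the cohomology objects computed in sheaves-with-transfers and in plain sheaves agree; the paper supplies exactly this, by noting that the forgetful functor from $\mathbf{Z}_{(\ell)}$-linear $t$-sheaves with transfers to $\mathbf{Z}_{(\ell)}$-linear $t$-sheaves is exact (citing Kelly's Prop.\ 3.4.15--3.4.16). You don't address this compatibility. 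Relatedly, your phrase \emph{naive cohomology presheaf} is misleading: the relevant objects are the cohomology sheaves (i.e.\ $\cdh$-sheafifications of the naive presheaf cohomology); that is what lands in the $E_2$-page, not the unsheafified presheaf.

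Second, for convergence of the $\ldh$-spectral sequence you invoke a general bound on $\ldh$-cohomological dimension for arbitrary $\mathbf{Z}_{(\ell)}$-sheaves. The paper deliberately avoids any such global claim: it instead observes that the specific $E_2$-terms $H^p_\ldh(X,H^q(C)_\ldh)$ are, by Proposition \ref{prop:ldhdescenttransfers}, isomorphic to $\cdh$-cohomology groups, which already vanish for $p > \dim X$ — so the $\ldh$-spectral sequence has a bounded $E_2$-page for the objects at hand and therefore converges. This is both more economical and sidesteps the question of whether the $\ldh$-cohomological dimension is bounded for sheaves lacking transfers (the usual averaging/trace argument that kills degree-prime-to-$\ell$ covers in $\mathbf{Z}_{(\ell)}$-linear coefficients uses transfers in an essential way). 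If you want to keep your route you would need to locate and verify the precise statement in \cite{kelly}; as phrased, crediting it to Gabber's uniformization theorem alone is not a justification. Note also that once the $E_2$-vanishing is established, your fallback paragraph about truncations and Milnor $\varprojlim^1$-sequences becomes unnecessary, as the paper's two-line convergence argument already handles unbounded complexes.
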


\begin{proof}
Note that, for $t=\cdh$ or $t=\ldh$, the forgetful functor
from $\mathbf{Z}_{(\ell)}$-linear $t$-sheaves with tranfers
to $\mathbf{Z}_{(\ell)}$-linear $t$-sheaves on the big site of $X$
is exact (this follows from the stronger results given by
\cite[Prop. 3.4.15 and 3.4.16]{kelly} for instance).
Therefore, we have a canonical spectral sequence of the form
$$E^{p,q}_2=H^p_t(X,H^q(C)_t)\Rightarrow H^{p+q}_t(X,C)\, .$$
As the cohomological dimension with respect to the $\cdh$-topology
is bounded by the dimension, this spectral sequence strongly
converges for $t=\cdh$.
Proposition \ref{prop:ldhdescenttransfers}
thus implies that,
for $t=\ldh$, the groups $E^{p,q}_2$ vanish for $p<0$ or $p>\mathrm{dim}\, X$,
so that this spectral sequence also converges in this case. Therefore,
as these two spectral sequences agree on the $E_2$ term, we conclude that
they induce an isomorphism on $E_\infty$.
\end{proof}

\begin{cor}\label{cor:ldhdescentDMcdh}
For $X$ of finite dimension and $R$ an $\mathbf{Z}_{(\ell)}$-algebra,
any object of the triangulated category $\uDM_\cdh(X,R)$ satisfies $\ldh$-descent (see \cite[Definition 3.2.5]{CD3}).
\end{cor}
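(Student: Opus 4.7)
The plan is to bootstrap $\ell dh$-descent in $\uDM_\cdh(X,R)$ from the abelian comparison already established in Corollary \ref{cor:ldhdescenttransfers}. By construction, $\uDM_\cdh(X,R)$ is the $\PP^1$-stabilization and $\AA^1$-localization of the derived category of $\ushtr_\cdh(X,R)$, so its objects enjoy $\cdh$-descent tautologically; the task is to upgrade this to $\ell dh$-descent, which is purely an abelian sheaf issue.

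First, by the standard reduction using the generators of the triangulated $\sm$-fibred category, it suffices to verify, for every $\ell dh$-hypercover $p\colon\mathcal U_\bullet\to U$ of a finite-type $X$-scheme $U$, every $E\in\uDM_\cdh(X,R)$, and every pair of integers $(n,i)$, that the canonical map
$$\Hom_{\uDM_\cdh(X,R)}\bigl(\uM_X(U)(n)[i],E\bigr)\to\plim_{\Delta}\Hom_{\uDM_\cdh(X,R)}\bigl(\uM_X(\mathcal U_\bullet)(n)[i],E\bigr)$$
is bijective. Representing $E$ by a fibrant $\AA^1$-local Tate $\Omega$-spectrum of $\cdh$-sheaves with transfers, I would identify each such mapping group with a $\cdh$-hypercohomology group of $U$ (respectively, of $\mathcal U_\bullet$) with coefficients in a complex $C$ of $\mathbf Z_{(\ell)}$-linear $\cdh$-sheaves with transfers naturally extracted from $E$.

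At this stage, Corollary \ref{cor:ldhdescenttransfers} directly applies and yields isomorphisms $H^\ast_\cdh(U,C)\simeq H^\ast_\ldh(U,C)$, and analogously for each $\mathcal U_n$. Since $\ell dh$-hypercohomology satisfies $\ell dh$-descent by the very construction of the topology, combining these comparisons produces the required bijection in the displayed equation and settles the corollary.

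The main obstacle is the technical identification in the second step: one must know that mapping out of $\uM_X(U)(n)[i]$ in $\uDM_\cdh(X,R)$ really does compute $\cdh$-hypercohomology of $U$ with coefficients in a reasonable complex of transfer sheaves associated to $E$. This requires a convergence argument for the associated Bousfield--Kan descent spectral sequence, which is unproblematic here because $X$ has finite Krull dimension, so that the $\cdh$- and $\ell dh$-cohomological dimensions are bounded --- the same finiteness exploited in the proof of Corollary \ref{cor:ldhdescenttransfers}.
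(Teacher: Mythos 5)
Your approach is correct and is essentially the argument the paper leaves implicit: since objects of $\uDM_\cdh(X,R)$ are represented by Tate $\Omega$-spectra of $\cdh$-fibrant, $\AA^1$-local complexes of $\cdh$-sheaves with transfers, mapping out of the compact generators $\uM_X(U)(n)[i]$ is computed by $\cdh$-hypercohomology of $U$ (and of each term $\mathcal U_m$ of a hypercover) with coefficients in a $\mathbf Z_{(\ell)}$-linear complex, Corollary \ref{cor:ldhdescenttransfers} identifies this with $\ldh$-hypercohomology, and $\ldh$-hypercohomology tautologically satisfies descent for $\ldh$-hypercovers.

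Two small remarks on precision. First, the limit in your displayed comparison map must be a homotopy (derived) limit over $\Delta$, not the underived $\plim$ of $\Hom$-sets, since the descent statement of \cite[Def.~3.2.5]{CD3} is about the augmented cosimplicial mapping spectrum being a limit diagram; you acknowledge this by invoking the descent spectral sequence, but the formula as written reads as the non-derived limit. Second, the ``main obstacle'' you flag at the end (identifying mapping groups with $\cdh$-hypercohomology) is in fact automatic from the fibrancy of the $\Omega$-spectrum representing $E$ and requires no convergence hypothesis; the place where finite Krull dimension genuinely enters is already inside Corollary \ref{cor:ldhdescenttransfers} (boundedness of $\cdh$-cohomological dimension to make the $\ldh$-side spectral sequence converge), which you have correctly invoked.
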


\begin{lm}\label{lm:ldhdescentHZmodfinitetype}
Assume that $X$ is of finite type over the perfect field $k$.
Consider a prime $\ell$ which is distinct from the characteristic exponent
of $k$. If $R$ is a $\mathbf{Z}_{(\ell)}$-algebra, then any compact
object of $\HZ_{X/k}\Mod$ satisfies $\ldh$-descent.
\end{lm}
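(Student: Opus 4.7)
The plan is to reduce the $\ldh$-descent property for compact $\HZ_{X/k}$-modules to Kelly's $\ldh$-descent theorem for $\mathbf{Z}_{(\ell)}$-linear $\cdh$-sheaves with transfers (Corollary \ref{cor:ldhdescenttransfers}, or equivalently its consequence for $\uDM_\cdh$ in Corollary \ref{cor:ldhdescentDMcdh}), transported through the comparison functor $\tau^*\colon \HZ_{X/k}\Mod \to \DMcdh(X,R)$ constructed in paragraph \ref{def:adjHZModDMcdh}.

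First I would reduce to descent for the unit. The subcategory of compact $\HZ_{X/k}$-modules satisfying $\ldh$-descent is thick, so by the remark following Proposition \ref{prop:HZmodrelatif2} it suffices to treat the generators $\HZ_{X/k}\otimes^\derL\Sigma^\infty(Y_+)(n)$ for a separated smooth morphism $f\colon Y\to X$ and $n\in\mathbf{Z}$. By the premotivic structure of Proposition \ref{prop:HZmodrelatif2}, these identify with $\derL f_\sharp(\HZ_{Y/k}(n))$. For any $\ldh$-hypercover $p\colon X_\bullet\to X$, the pullback $p_Y\colon Y\times_X X_\bullet\to Y$ is an $\ldh$-hypercover of $Y$; smooth base change, together with its adjoint form $\derR p_*\circ f'_\sharp\simeq f_\sharp\circ \derR p_{Y,*}$, then reduces $\ldh$-descent of $\derL f_\sharp(\HZ_{Y/k}(n))$ over $X$ to $\ldh$-descent of $\HZ_{Y/k}(n)$ over $Y$. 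Tensoring with the cartesian object $\un_Y(n)$ brings us to $\ldh$-descent of the unit $\HZ_{Y/k}$ in $\HZ_{Y/k}\Mod$, for every $k$-scheme of finite type $Y$.

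For the unit $\HZ_{Y/k}$, I would test descent Hom-wise against compact generators: the statement amounts to saying that the presheaves $Z\mapsto\Hom(N_Z,\HZ_{Z/k}(n)[m])$ on the category of $Y$-schemes satisfy $\ldh$-descent. Applying $\tau^*$ sends $\HZ_{Y/k}$ to the unit $\un_Y$ of $\DMcdh(Y,R)$, which satisfies $\ldh$-descent by Corollary \ref{cor:ldhdescentDMcdh}. To transport descent back to $\HZ\Mod$, I would invoke Gabber's uniformization (\cite[Exp.~IX, Th.~1.1]{gabber}) to refine any $\ldh$-hypercover of $Y$ by one whose terms $Y_n$ are regular, hence smooth over the perfect field $k$. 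On each such term Theorem \ref{thm:comparisonnis} gives $\HZ_{Y_n}\Mod\simeq\DM(Y_n,R)$, so the Hom-groups in question coincide with motivic cohomology groups whose $\ldh$-descent is precisely Kelly's theorem.

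The main obstacle is that $\tau^*$ is not yet known to be an equivalence, so descent in the target does not automatically imply descent in the source. The remedy is the Hom-wise comparison on a Gabber refinement outlined above, matching the two descent spectral sequences term by term: Theorem \ref{thm:comparisonnis} supplies the identification on each regular term, while the bound on $\cdh$-cohomological dimension by $\dim Y$ used in the proof of Corollary \ref{cor:ldhdescenttransfers} ensures strong convergence of both spectral sequences and hence the rigor of the comparison.
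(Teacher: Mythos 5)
Your overall strategy — reduce to the unit via a generating family of the form \(\HZ_{X/k}\otimes^\derL\Sigma^\infty(Y_+)(n)\) and then invoke Kelly's theorem — is close in spirit to the paper's proof, but the reduction step has a genuine gap. The formula you invoke, \(\derR p_*\circ f'_\sharp\simeq f_\sharp\circ\derR p_{Y,*}\), is \emph{not} the adjoint form of smooth base change. Smooth base change is \(f^*\derR p_*\simeq\derR p_{Y,*}f'^*\), and passing to left adjoints of \(f^*,f'^*\) gives \(\derL p^*\derL f_\sharp\simeq\derL f'_\sharp\derL p_Y^*\) (the \(\sharp\)-base change). Your claimed exchange mixes the left adjoint \(\derL f_\sharp\) with the right adjoint \(\derR p_*\); there is a canonical transformation between the two composites, but it is not invertible for a general morphism \(p\). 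For the levels of an \(\ldh\)-hypercover, \(p_n\) may include Nisnevich (\'etale, non-proper) covers, for which \(\derR p_{n,*}\) fails to commute with \(\derL f_\sharp\) (think of \(j_!p_*\not\simeq p_*j'_!\) for a non-proper \(p\) in \'etale cohomology). Moreover, even granting the levelwise exchange, you would then need \(\derL f_\sharp\) to commute with the cosimplicial homotopy limit \(\derR\varprojlim_\Delta\); a left adjoint does not commute with such limits for arbitrary smooth separated \(f\). The paper avoids both problems by choosing as generators objects of the form \(\derR f_*\HZ_{Y/k}(p)\) with \(f:Y\to X\) \emph{projective}: the relevant exchange with \(\derL p_n^*\) is then ordinary proper base change, and \(\derR f_*\), being a right adjoint, automatically commutes with the homotopy limit.

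A secondary remark: once you have reduced to the unit \(\HZ_{Y/k}(n)\) over \(Y\), the statement is exactly that motivic cohomology (with \(\ZZ_{(\ell)}\)-linear coefficients) satisfies \(\ldh\)-descent on \(k\)-schemes of finite type, which is Kelly's theorem \cite[Th.~5.3.7]{kelly} as the paper uses it directly. The detour through \(\tau^*\), a Gabber refinement of the hypercover, and Theorem~\ref{thm:comparisonnis} term by term is unnecessary and introduces additional issues (e.g.\ one must verify that it suffices to check descent on a cofinal family of hypercovers with regular terms, and that the comparison of the two Bousfield--Kan spectral sequences is compatible with the refinement). Stick with the proper pushforward generating family, apply proper base change, commute \(\derR f_*\) past the homotopy limit, and quote Kelly; this gives the clean proof.
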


\begin{proof}
As $X$ is allowed to vary, it is sufficient to prove that, for any
constructible $\HZ_{X/k}$-modules $M$ and any $\ldh$-hypercover $p_\bullet :U_\bullet\to X$,
the map
\begin{equation}\label{proofoflm:ldhdescentHZmodfinitetype1}
\derR\Gamma(X,M)\to\derR\varprojlim_{\Delta_n}\derR\Gamma(U_n,p^*_nM)
\end{equation}
is an isomorphism.
The category of compact objects of $\HZ_X\Mod$
is the thick subcategory generated by objects of the form
$\derR f_*\HZ_{Y/k}(p)$ for $f:Y\to X$ a projective map and $p$ an integer
(this follows right away from the fact that te analogous property is true
in $\SH$).
We may thus assume that $M=\derR f_*\HZ_{Y/k}(p)$.
We can then form the following pullback in the
category of simplicial schemes.
$$\xymatrix{
V_\bullet\ar[r]^g\ar[d]_{q_\bullet}&U_\bullet\ar[d]^{p_\bullet}\\
Y\ar[r]^f&X
}$$
Using the proper base change formula for $\HZ_{(-)/k}$-modules,
we see that the map \eqref{proofoflm:ldhdescentHZmodfinitetype1} is isomorphic
to the map
\begin{equation}\label{proofoflm:ldhdescentHZmodfinitetype2}
\derR\Gamma(Y,\HZ_{Y/k}(p))\to\derR\varprojlim_{\Delta_n}\derR\Gamma(V_n,\HZ_{V_n/k}(p))\, .
\end{equation}
By virtue of Kelly's $\ldh$-descent theorem \cite[Theorem 5.3.7]{kelly},
the map \eqref{proofoflm:ldhdescentHZmodfinitetype2} is an isomorphism.
\end{proof}

\begin{lm}\label{lm:ldhdescentreg}
Let $X$ be a $k$-scheme of finite type. Assume that $R$ is
a $\mathbf{Z}_{(\ell)}$-algebra for $\ell$ a prime number distinct from
the characteristic exponent of $k$.
Let $M$ be an object of $\uDM(X,R)$ satisfying $\ldh$-descent
on the site of smooth $k$-schemes over $X$:
for any $X$-scheme of finite type $Y$ which is smooth
over $k$ and any $\ldh$-hypercover $p:U_\bullet\to Y$
such that $U_n$ is smooth over $k$ for any $n\geq 0$, the map
$$\derR\Hom_{\uDM(X,R)}(R(Y),M(p))
\to\derR\varprojlim_{\Delta_n}\derR\Hom_{\uDM(X,R)}(R(U_n),M(p))$$
is an isomorphism in the derived category of $R$-modules.
Then, for any $X$-scheme $Y$ which is smooth over $k$ and any integer $p$,
the canonical map
$$\derR\Hom_{\uDM(X,R)}(R(Y),M(p))\to
\derR\Hom_{\uDM_\cdh(X,R)}(R(Y),M_\cdh(p))$$
is an isomorphism.
\end{lm}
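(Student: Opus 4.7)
My plan is to reformulate the statement as the vanishing of a suitable cofiber on smooth test objects, and then exploit the interplay between $\ldh$-descent on smooth schemes and the $\cdh$-localization $a^*_\cdh$ via Gabber's uniformization.

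First, I would let $\iota\colon\uDM_\cdh(X,R)\hookrightarrow\uDM(X,R)$ be the fully faithful right adjoint to $a^*_\cdh$ and $\eta\colon M\to\iota(M_\cdh)$ be the unit. By the adjunction, the comparison map of the lemma is obtained by applying $\derR\Hom_{\uDM(X,R)}(R(Y),-(p))$ to $\eta$. Writing $C$ for the cofiber of $\eta$ in $\uDM(X,R)$, the claim reduces to showing that
$$F(U):=\derR\Hom_{\uDM(X,R)}(R(U),C(p))$$
vanishes on $U=Y$ for every $Y$ smooth over $k$.

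Next, I would observe that $C$ enjoys two complementary properties. On the one hand, $C$ satisfies $\ldh$-descent on the site of smooth $k$-schemes over $X$: the hypothesis provides this for $M$, and Corollary \ref{cor:ldhdescentDMcdh} provides it for $\iota(M_\cdh)$ on the entire big site of $X$-schemes (a fortiori on the smooth subsite). On the other hand, $a^*_\cdh(\eta)$ is invertible by construction, so $a^*_\cdh(C)\simeq 0$ in $\uDM_\cdh(X,R)$. Since $a^*_\cdh$ is the $\cdh$-localization functor (cf.\ the proposition preceding Definition \ref{df:cdh-motives}), this should translate into the assertion that the cohomology presheaf $F$ has trivial $\cdh$-sheafification on the big site: for any $V$ and any class in $F(V)$, some $\cdh$-hypercover $V_\bullet\to V$ kills it.

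To conclude, I would fix $Y$ smooth over $k$ and a class $\alpha\in F(Y)$, choose a $\cdh$-hypercover $V_\bullet\to Y$ on which $\alpha$ vanishes, and refine it using Gabber's uniformization theorem \cite[Exp. IX, Th. 1.1]{gabber} (applied componentwise and iterated in the standard Cech-style construction) to obtain an $\ldh$-hypercover $U_\bullet\to Y$ with each $U_n$ regular; since $k$ is perfect and each $U_n$ is of finite type, the $U_n$ are smooth over $k$. The class $\alpha$ still vanishes along $U_\bullet$, and the $\ldh$-descent of $F$ on the smooth subsite then forces $\alpha=0$. Hence $F(Y)=0$, as desired.

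The hard part will be making rigorous the translation from ``$a^*_\cdh(C)=0$ in $\uDM_\cdh(X,R)$'' to ``the cohomology presheaf $F$ has vanishing $\cdh$-sheafification on the big site''; that is, reconciling the localization at the motivic level with sheafification at the level of $\derR\Hom$-presheaves. This step is where the bounded $\cdh$-cohomological dimension of noetherian finite-dimensional schemes---already crucial in Corollary \ref{cor:ldhdescenttransfers}---will do its job, ensuring that the relevant spectral sequences converge and that the passage between $\cdh$-local equivalences in $\uDM$ and $\cdh$-local triviality of cohomology presheaves behaves as expected.
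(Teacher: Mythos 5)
Your reorganization of the problem around the cofiber $C=\mathrm{cofib}(M\to\iota(M_\cdh))$ is clean and the endgame (refine a $\cdh$-hypercover to a smooth $\ldh$-hypercover via Gabber, then invoke $\ldh$-descent) is in the same spirit as the paper's, but there is a genuine gap at the step you yourself flag as ``the hard part,'' and it is not a gap that bounded $\cdh$-cohomological dimension alone fills.

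The issue is the passage from ``$a^*_\cdh(C)=0$ in $\uDM_\cdh(X,R)$'' to ``for every smooth $Y/k$ and every class $\alpha\in F(Y)=\derR\Hom_{\uDM(X,R)}(R(Y),C(p))$, there is a $\cdh$-hypercover on which $\alpha$ vanishes.'' This would follow if the $\cdh$-localization $a^*_\cdh$ inside $\uDM$ were computed by a single Verdier-style hypercolimit over $\cdh$-hypercovers. But $\uDM_\cdh$ is the $\AA^1$-local, $T$-stable derived category of $\cdh$-sheaves with transfers, and $\cdh$-sheafification does not, in general, preserve either $\AA^1$-homotopy invariance or the $\Omega$-spectrum property. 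Consequently the $\cdh$-local replacement of $C$ in $\uDM$ is an iterated transfinite localization interleaving $\cdh$-sheafification with further $\AA^1$-localization and $T$-stabilization, and there is no \emph{a priori} Verdier formula expressing $\derR\Hom_{\uDM_\cdh}(R(Y),C_\cdh)$ as a colimit over hypercovers of $\derR\Hom_{\uDM}(R(U_\bullet),C)$. Bounded $\cdh$-cohomological dimension gives hypercompleteness at the level of $\cdh$-sheaves, which is necessary, but it does not resolve the non-commutativity of $\cdh$-sheafification with $\AA^1$-localization; in fact, establishing that under the stated $\ldh$-descent hypothesis this non-commutativity does not arise is precisely the content of the lemma.

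The paper's proof confronts this head-on: it writes $M$ as an explicit $R\{1\}$-$\Omega$-spectrum $(M_n)$, constructs level-by-level a $\cdh$-local replacement $(N_n)$, and then uses the $\ldh$-descent hypothesis together with Gabber's theorem and the level-wise Verdier formula (for $\cdh$-hypercohomology of \emph{complexes}, not for $\uDM$) to show that each $N_n$ is again $\AA^1$-homotopy invariant and that the bonding maps $N_n\to\uHom(R\{1\},N_{n+1})$ are quasi-isomorphisms. Only after these verifications can one read off $\Gamma(Y,N_p)=\derR\Hom_{\uDM_\cdh}(R(Y),M_\cdh(p))$. Your argument needs an analogue of exactly these verifications; without them, the claimed translation from $\cdh$-local vanishing in $\uDM$ to vanishing of a $\cdh$-sheafified cohomology presheaf of $C$ is unjustified. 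I would therefore not call your ``hard part'' a technical detail to be filled in later: it is the mathematical heart of the statement, and the abstract localization framing does not make it easier.
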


\begin{proof}
Let us denote by $R\{1\}$ the complex
$$R\{1\}=R(1)[1]=\mathrm{ker}(R(\mathbf{A}^1_X-\{0\})\to R)$$ induced by the
structural map $\mathbf{A}^1-\{0\}\times X\to X$.
We may consider that the object $M$ is a fibrant $R\{1\}$-spectrum
in the category of complexes of $R$-linear sheaves with transfers
on the category of $X$-schemes of finite type.
In particular, $M$ corresponds to a collection of
complexes of $R$-linear sheaves with transfers $(M_n)_{\geq 0}$
together with maps $R\{1\}\otimes_R M_n\to M_{n+1}$ such that
we have the following properties.
\begin{itemize}
\item[(i)] For any integer $n\geq 0$ and any $X$-scheme of finite type $Y$, the map
$$\Gamma(Y,M_n)\to\derR\Gamma(Y,M_n)$$ is an isomorphism in the derived
category of $R$-modules (where $\derR\Gamma$ stands for the derived
global section with respect to the Nisnevich topology).
\item[(ii)] For any integer $n\geq 0$, the map
$$M_n\to\derR\uHom(R\{1\},M_{n+1})$$
is an isomorphism in the derived category of Nisnevich sheaves with tranfers
(where $\derR\uHom$ stands for the derived internal Hom).
\end{itemize}
We can choose another $R\{1\}$-spectum $N=(N_n)_{n\geq 0}$ of $\cdh$-sheaves
with transfers, together
with a cofibration of spectra $M\to N$ such that $M_n\to N_n$ is
a quasi-isomor\-phism locally for the $\cdh$-topology, and such that each $N_n$
satisfies $\cdh$-descent: we do this by induction as follows. First, $N_0$
is any fibrant resolution of $(M_0)_\cdh$ for the $\cdh$-local model structure
on the category of complexes of $\cdh$-sheaves with transfers.
If $N_n$ is already constructed, we denote by $E$ the pushout of $M_n$
along the map $R\{1\}\otimes_R M_n\to R\{1\}\otimes_R N_n$, and we factor
the map $E_\cdh\to 0$ into a trivial cofibration followed by a fibration
in the $\cdh$-local model structure.

Note that, for any $X$-scheme $Y$ which is smooth over $k$, the map
$$H^i(Y,M_n)\to H^i(Y,N_n)$$
is an isomorphism of $R$-modules for any integers $i\in\mathbf{Z}$ and $n\geq 0$.
Indeed, as, by virtue of Gabber's theorem of resolution of singularities
by $\ldh$-alterations \cite[Exp.~IX, Th.~1.1]{gabber},
one can write both sides with the Verdier formula in the
following way (because of our hypothesis on $M$ and by construction of $N$):
$$H^i(Y,E)\simeq
\varinjlim_{U_\bullet\to Y}H^i(\derR\varprojlim_{\Delta_j}\Gamma(U_j,M_n))
\ \text{for $E=M_n$ or $E=N_n$,}$$
where $U_\bullet\to Y$ runs over the filtering category of $\ldh$-hypercovers
of $Y$ such that each $U_j$ is smooth over $k$.
It is also easy to see from this formula that each $N_n$ is
$\mathbf{A}^1$-homotopy invariant and that the maps
$$N_n\to\uHom(R\{1\},N_{n+1})$$
are isomorphisms. In other words, $N$ satisfies the analogs of
properties (i) and (ii) above with respect to the $\cdh$-topology.
We thus get the following identifications for $p\geq 0$:
\begin{align*}
\Gamma(Y,M_p)&=\derR\Hom_{\uDM(X,R)}(R(Y),M(p))\\
\Gamma(Y,N_p)&=\derR\Hom_{\uDM_\cdh(X,R)}(R(Y),M_\cdh(p))\, .
\end{align*}
The case where $p<0$ follows from the fact that, for $d=-p$, $R(Y)(d)[2d]$
is then a direct factor of $R(Y\times\mathbf{P}^{d})$ (by the projective
bundle formula in $\uDM_{\cdh}(X,R)$).
\end{proof}

\begin{lm}\label{lm:ldhdescentreg2}
Let $X$ be a smooth separated
$k$-scheme of finite type. Assume that $R$ is
a $\mathbf{Z}_{(\ell)}$-algebra for $\ell$ a prime number distinct from
the characteristic exponent of $k$.
If $M$ and $N$ are two constructible objects of $\DM(X,R)$,
then the comparison map
$$\derR\Hom_{\DM(X,R)}(M,N)\to\derR\Hom_{\DM_\cdh(X,R)}(M,N)$$
is an isomorphism in the derived category of $R$-modules.
\end{lm}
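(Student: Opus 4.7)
My plan is to reduce this statement to Lemma \ref{lm:ldhdescentreg} by verifying that $N$, viewed as an object of $\uDM(X,R)$, satisfies $\ldh$-descent on the site of smooth $k$-schemes over $X$; this descent will come from Theorem \ref{thm:comparisonnis} combined with Lemma \ref{lm:ldhdescentHZmodfinitetype}.

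\textbf{Step 1 (reduce on $M$).} Both functors $\derR\Hom_{\DM(X,R)}(-,N)$ and $\derR\Hom_{\DM_\cdh(X,R)}(-,N)$ are exact and send the comparison functor $\DM(X,R)\to\DM_\cdh(X,R)$ to something behaving well on compact generators. Since the constructible objects of $\DM(X,R)$ form the thick subcategory generated by the $M_X(Y)(p)$ for $Y$ a smooth separated $X$-scheme of finite type and $p\in\ZZ$, we may assume $M=M_X(Y)(p)$. Because $X$ is smooth over $k$, so is $Y$.

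\textbf{Step 2 (move into $\uDM$).} Using the embedding $\derL\rho_!\colon\DM(X,R)\to\uDM(X,R)$, which sends $M_X(Y)(p)$ to $\uM_X(Y)(p)=R(Y)(p)$, and the adjunction $\derL\rho_!\dashv\derR\rho^*$ (together with the factorization $\DM\to\uDM\to\uDM_\cdh$ through which our comparison map is defined), it suffices to show that the natural map
$$\derR\Hom_{\uDM(X,R)}(R(Y)(p),\derL\rho_!(N))\to\derR\Hom_{\uDM_\cdh(X,R)}(R(Y)(p),\derL a^*_\cdh\derL\rho_!(N))$$
is an isomorphism for all $Y$ smooth separated of finite type over $X$ and all $p\in\ZZ$.

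\textbf{Step 3 (apply Lemma \ref{lm:ldhdescentreg}).} By that lemma, it is enough to check that $\derL\rho_!(N)$ satisfies $\ldh$-descent on smooth $k$-schemes over $X$: for any smooth $k$-scheme $Y$ over $X$ and any $\ldh$-hypercover $U_\bullet\to Y$ by smooth $k$-schemes, the map
$$\derR\Hom_{\uDM(X,R)}(R(Y),\derL\rho_!(N)(p))\to\derR\varprojlim_{\Delta_n}\derR\Hom_{\uDM(X,R)}(R(U_n),\derL\rho_!(N)(p))$$
should be an isomorphism. Since $X$ is smooth over $k$, hence regular, Theorem \ref{thm:comparisonnis} identifies $N$ with a constructible $\HZ_X$-module $\widetilde N$, and the equivalence $\HZ_X\Mod\simeq\HZ_{X/k}\Mod$ of paragraph \ref{defineHZrelatif} makes $\widetilde N$ a constructible $\HZ_{X/k}$-module. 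By Lemma \ref{lm:ldhdescentHZmodfinitetype}, $\widetilde N$ satisfies $\ldh$-descent in $\HZ_{X/k}\Mod$. The required $\ldh$-descent in $\uDM(X,R)$ then follows from the fact that, for $Y$ smooth over $X$ (hence over $k$), both sides compute the same Nisnevich-type hypercohomology: the identification $\derR\Hom_{\DM(X,R)}(M_X(Y)(p),N)\simeq\derR\Hom_{\HZ_X\Mod}(\HZ(Y)(p),\widetilde N)$ provided by Theorem \ref{thm:comparisonnis} is functorial in $Y$, and transports the $\ldh$-descent property.

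\textbf{Main obstacle.} The one delicate point is Step 3, namely the translation of $\ldh$-descent from the $\HZ_{X/k}\Mod$ side to the $\uDM(X,R)$ side. The subtlety is that Lemma \ref{lm:ldhdescentHZmodfinitetype} gives descent in $\HZ_{X/k}\Mod$, while Lemma \ref{lm:ldhdescentreg} requires descent for an object of $\uDM(X,R)$ on smooth $k$-schemes; one has to check that the equivalence of Theorem \ref{thm:comparisonnis} (a statement about Nisnevich motives, not generalized ones) is compatible with the enlargement $\derL\rho_!\colon\DM\to\uDM$ in a way that makes the two hypercohomology descent statements equivalent. This is where one uses that the Hom computations on both sides are controlled by the same Nisnevich sheaf cohomology whenever $Y$ is smooth over $X$, so that the issue reduces to comparing $\ldh$-hypercover descent for a single presheaf on smooth $X$-schemes.
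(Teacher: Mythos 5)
Your proof is correct and takes essentially the same route as the paper's: reduce to $M=R(Y)(p)$ with $Y$ smooth over $X$ (hence over $k$), apply Lemma \ref{lm:ldhdescentreg} to reduce to $\ldh$-descent for $N$ on the site of $X$-schemes smooth over $k$, and obtain that descent by transporting Lemma \ref{lm:ldhdescentHKmodfinitetype}\footnote{Typo corrected to \ref{lm:ldhdescentHZmodfinitetype}.} through the equivalence of Theorem \ref{thm:comparisonnis}. The ``main obstacle'' you flag is genuinely the only subtle point, and you resolve it the same way the paper does, namely by using that the equivalence $\HZ_Y\Mod\simeq\DM(Y,R)$ is premotivic (hence compatible with the pullbacks along $U_n\to Y\to X$) so that the two hypercohomology descent statements over a smooth $k$-scheme $Y$ of finite type over $X$ are literally the same statement; you are more explicit about this than the paper, which handles it in a single sentence.
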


\begin{proof}
It is sufficient to prove this in the case where $M=R(Y)(p)$
for $Y$ a smooth $X$-scheme and $p$ any integer.
By virtue of Lemma \ref{lm:ldhdescentreg},
it is sufficient to prove that any constructible object of $\DM(X,R)$
satisfies $\ldh$-descent on the site of $X$-schemes which are smooth over $k$.
By virtue of Theorem \ref{thm:comparisonnis},
it is thus sufficient to prove the analogous property for constructible
$\HZ_X$-modules, which follows from Lemma \ref{lm:ldhdescentHZmodfinitetype}.
\end{proof}

\begin{proof}[Proof of Theorem \ref{thm:comparisoncdh}]
It is sufficient to prove that the restriction of the comparison functor
\begin{equation}\label{eq:proofcomparisoncdh1}
\HZ_{X/k}\Mod\to\DMcdh(X,R) \ , \quad M\mapsto \tau^*(M)
\end{equation}
to constructible $\HZ_{X/k}$-modules is fully faithful (by virtue
of \cite[Corollary 1.3.21]{CD3}, this is because both
triangulated categories are compactly generated and because
the functor \eqref{eq:proofcomparisoncdh1} preserves the canonical
compact generators). It is easy to see that this functor is fully faithful
(on constructible objects) if and only if, for any prime $\ell\neq p$,
its $R\otimes\mathbf{Z}_{(\ell)}$-linear version has this property
(this is because the functor \eqref{eq:proofcomparisoncdh1}
preserves compact objects, which implies that its right adjoint
commutes with small sums, hence both functors
commute with the operation of tensoring by $\ZZ_{(\ell)}$).
Therefore, we may assume that a prime number $\ell\neq p$ is given and that
$R$ is a $\mathbf{Z}_{(\ell)}$-algebra.
We will then prove the property of being fully faithful
first in the case where $X$ is
of finite type over $k$, and then, by a limit argument, in general.

Assume that $X$ is of finite type over $k$,
and consider constructible $\HZ_{X/k}$-modules $M$ and $N$.
We want to prove that, the map
\begin{equation}\label{eq:proofcomparisoncdh1bis}
\derR\Hom_{\HZ_{X/k}\Mod}(M,N)\to\derR\Hom_{\DM_{\cdh}(X,R)}(\tau^*(M),\tau^*(N))
\end{equation}
is an isomorphism (here all the $\derR\Hom$'s take their values in
the triangulated category of topological $S^1$-spectra;
see \cite[Theorem 3.2.15]{CD3} for the existence (and uniqueness) of such an enrichment).
By virtue of Gabber's theorem of resolution of singularities
by $\ldh$-alterations \cite[Exp.~IX, Th.~1.1]{gabber},
we can choose an $\ldh$-hypercover $p_\bullet:U_\bullet\to X$,
with $U_n$ smooth, separated, and of finite type over $k$ for any non negative
integer $n$. We then have the following chain of isomorphisms,
justified respectively by $\ldh$-descent for constructible
$\HZ_{X/k}$-modules (Lemma \ref{lm:ldhdescentHZmodfinitetype}),
by the comparison theorem relating the category
of $\HZ$-modules with $\DM$ over regular $k$-schemes (Theorem \ref{thm:comparisonnis}),
by Lemma \ref{lm:ldhdescentreg2}, and finally by the fact that
any complex of $R$-modules with transfers on the category of separated
$X$-schemes of finite type which satisfies $\cdh$-descent must
satisfy $\ldh$-descent as well (Corollary \ref{cor:ldhdescenttransfers}):
\begin{align*}
\derR\Hom_{\HZ_{X/k}\Mod}(M,N)
&\simeq\derR\varprojlim_{\Delta_n}
\derR\Hom_{\HZ_{U_n}\Mod}(\derL p^*_nM,\derL p^*_nN)\\
&\simeq\derR\varprojlim_{\Delta_n}
\derR\Hom_{\DM(U_n,R)}(\derL p^*_n\, t^*(M),\derL p^*_n\, t^*(N))\\
&\simeq\derR\varprojlim_{\Delta_n}
\derR\Hom_{\DM_\cdh(U_n,R)}(\derL p^*_n\, \tau^*(M),\derL p^*_n\, \tau^*(N))\\
&\simeq\derR\Hom_{\DM_{\cdh}(X,R)}(\tau^*(M),\tau^*(N))\, .
\end{align*}

It remains to treat the case of an arbitrary noetherian $k$-scheme $X$.
It is easy to see that the property that the functor \eqref{eq:proofcomparisoncdh1}
is fully faithful (on constructible objects) is local on $X$ with respect
to the Zariski topology. Therefore, we may assume that $X$ is affine with
structural ring $A$. We can then write $A$ as a filtering colimit of
$k$-algebras of finite type $A_i\subset A$, so that we obtain a projective
system of $k$-schemes of finite type $\{X_i=\Spec A_i\}_i$ with
affine and dominant transition maps, such that $X=\varprojlim_i X_i$.
But then, by continuity (applying Proposition \ref{prop:continuity&2lim}
twice, using Lemma \ref{lm:trivial_continuity}
for $\HZ_{X/k}\Mod$, and Example \ref{ex:continuity}(2) for $\DMcdh(X,R)$),
we have cano\-ni\-cal equivalences of categories at the level
of constructible objects:
\begin{align*}
\HZ_{X/k}\Mod_c
&\simeq 2\text{-}\varinjlim_i\HZ_{X_i/k}\Mod_c\\
&\simeq 2\text{-}\varinjlim_i\DMcdh(X_i,R)_c\\
&\simeq\DMcdh(X,R)_c\, .
\end{align*}
In particular, the functor \eqref{eq:proofcomparisoncdh1} is
fully faithful on constructible objects, and this ends the proof.
\end{proof}

\begin{cor}\label{cor:DMcdhreg}
Let $X$ be a regular noetherian $k$-scheme of finite dimension.
Then the canonical functor
$$\DM(X,R)\to\DM_\cdh(X,R)$$
is an equivalence of symmetric monoidal triangulated categories. 
\end{cor}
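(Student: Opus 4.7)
The plan is to deduce this statement formally from the two comparison theorems already at our disposal, namely Theorem \ref{thm:comparisonnis} (comparing $\HZ_X\Mod$ with $\DM(X,R)$ for $X$ regular) and Theorem \ref{thm:comparisoncdh} (comparing $\HZ_{X/k}\Mod$ with $\DM_\cdh(X,R)$ for $X$ any noetherian finite-dimensional $k$-scheme). The point is that when $X$ is regular, both comparison theorems apply, and the canonical comparison functor $\DM(X,R)\to\DM_\cdh(X,R)$ sits in a commutative triangle between the two equivalences they provide.

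First, I would recall from the remark inside paragraph \ref{defineHZrelatif} that for $X$ regular the map $f^*(\HZ')\to f^*(\HZ_k)$ is a stable $\AA^1$-equivalence (by Proposition \ref{prop:exactness_regular} applied to the structural map $f:X\to\Spec(k)$), so that $\HZ_{X/k}$ is a cofibrant resolution of $\HZ_X$ in $\mathit{Mon}(X)$ and
\[
\HZ_{X/k}\Mod\ \xrightarrow{\ \sim\ }\ \HZ_X\Mod
\]
is an equivalence of symmetric monoidal triangulated categories.
Combining this with Theorem \ref{thm:comparisonnis}, which gives an equivalence $t^*:\HZ_X\Mod\xrightarrow{\sim}\DM(X,R)$, we obtain a symmetric monoidal equivalence $\HZ_{X/k}\Mod\xrightarrow{\sim}\DM(X,R)$.

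Next, I would invoke the construction of $\tau^*$ in \ref{def:adjHZModDMcdh}: by its very definition, $\tau^*$ is the composition of the canonical map $\HZ_{X/k}\Mod\to\HZ_X\Mod$, of $t^*$, and of the canonical functor $\derL a_\cdh^*\circ\derL\rho_!:\DM(X,R)\to\DM_\cdh(X,R)$ induced by the adjunctions in \eqref{eq:nis2cdh_tri}. In other words, there is a commutative square of symmetric monoidal triangulated functors
\[
\xymatrix@C=40pt{
\HZ_{X/k}\Mod\ar[r]^-\sim\ar[d]_{\tau^*} & \DM(X,R)\ar[d]^{\mathrm{can}} \\
\DM_\cdh(X,R)\ar@{=}[r] & \DM_\cdh(X,R)
}
\]
in which the top horizontal arrow is the equivalence constructed above, and the left vertical arrow is an equivalence by Theorem \ref{thm:comparisoncdh} (since $X$, being regular noetherian of finite dimension over $k$, is in particular noetherian of finite dimension over $k$, and $p$ is invertible in $R$ by the hypothesis of Theorem \ref{thm:comparisonnis}). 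By two-out-of-three, the canonical functor $\DM(X,R)\to\DM_\cdh(X,R)$ is then an equivalence of symmetric monoidal triangulated categories, as required.

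There is essentially no obstacle, beyond verifying that the various functors really do assemble into a commutative diagram; this is a bookkeeping exercise built into the definition of $\tau^*$ (see \ref{def:adjHZModDMcdh}), since $\tau^*$ was set up precisely as $\derL a_\cdh^*\circ\derL\rho_!\circ t^*$ composed with the extension-of-scalars functor $\HZ_{X/k}\Mod\to\HZ_X\Mod$. The only hypothesis on $R$ used is the invertibility of the characteristic exponent $p$ of $k$, which is exactly what Theorem \ref{thm:comparisonnis} requires, and which is already implicit in the statement (via the appeal to Theorem \ref{thm:comparisoncdh}).
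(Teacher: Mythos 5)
Your proof is correct and is precisely the argument the paper intends: the paper's proof simply states that the result is a combination of Theorems \ref{thm:comparisonnis} and \ref{thm:comparisoncdh} together with the equivalence $\HZ_{X/k}\Mod\simeq\HZ_X\Mod$ for $X$ regular (which the paper cites as Proposition \ref{prop:HZmodrelatif1}, and which you invoke via the remark in \ref{defineHZrelatif}). You have spelled out the implicit two-out-of-three argument in the commutative triangle relating the two comparison functors, which is exactly the intended content.
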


\begin{proof}
This is a combination of
Theorems \ref{thm:comparisonnis} and \ref{thm:comparisoncdh},
and of Proposition \ref{prop:HZmodrelatif1}.
\end{proof}

Remark that we get for free the following result, which generalizes
Kelly's $\ldh$-descent theorem:

\begin{thm}\label{thm:ldhdescentHZmod}
Let $k$ be a field of characteristic exponent $p$, $\ell$
a prime number distinct from $p$, and $R$ a $\mathbf{Z}_{(\ell)}$-algebra.
Then, for any noetherian $k$-scheme of finite dimension $X$, any object of $\HZ_{X/k}\Mod$
satisfies $\ldh$-descent.
\end{thm}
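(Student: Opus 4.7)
The plan is to deduce this from Theorem \ref{thm:comparisoncdh} together with Corollary \ref{cor:ldhdescentDMcdh}, so that essentially no new work is needed.

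First I would recall that, under the standing hypotheses ($X$ noetherian of finite dimension and $R$ a $\mathbf{Z}_{(\ell)}$-algebra), Corollary \ref{cor:ldhdescentDMcdh} already tells us that every object of $\uDM_\cdh(X,R)$ satisfies $\ldh$-descent. Since $\DM_\cdh(X,R)$ is by definition (Definition \ref{df:cdh-motives}) a full localizing triangulated subcategory of $\uDM_\cdh(X,R)$, every object of $\DM_\cdh(X,R)$ already inherits $\ldh$-descent.

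Next I would invoke Theorem \ref{thm:comparisoncdh}: for our $k$-scheme $X$, the functor
\[
\tau^*:\HZ_{X/k}\Mod\longrightarrow\DM_\cdh(X,R)
\]
is an equivalence of triangulated categories. Moreover this equivalence is part of a premotivic adjunction (see \ref{def:adjHZModDMcdh}), hence it is compatible with the operations $\derL f^*$ and $\derL f_\sharp$ used to formulate descent. Consequently, an $\HZ_{X/k}$-module $M$ satisfies $\ldh$-descent if and only if its image $\tau^*(M)$ in $\DM_\cdh(X,R)$ does, because for any $X$-scheme $Y$ with structural map $f:Y\to X$ and any $\ldh$-hypercover $p_\bullet:U_\bullet\to Y$, the mapping spectra
\[
\derR\Hom(\un_Y(n)[m],\derL f^*M)\ \text{and}\ \derR\Hom(\un_{U_j}(n)[m],\derL p_j^*\derL f^*M)
\]
agree under $\tau^*$ with their counterparts in $\DM_\cdh$.

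Combining these two observations: given any $M\in\HZ_{X/k}\Mod$, the object $\tau^*(M)\in\DM_\cdh(X,R)\subset\uDM_\cdh(X,R)$ satisfies $\ldh$-descent by Corollary \ref{cor:ldhdescentDMcdh}, and by the equivalence of Theorem \ref{thm:comparisoncdh} we transport this property back to $M$. There is no real obstacle in this argument; the only thing worth checking carefully is the formal verification that $\ldh$-descent is preserved under a premotivic equivalence, which is immediate from the fact that the descent condition is formulated purely in terms of the $\sm$-fibred structure and that $\tau^*$ and $\tau_*$ commute with $\derL f^*$ (and $\tau^*$ with $\derL f_\sharp$).
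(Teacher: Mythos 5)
Your proof is correct and takes essentially the same route as the paper, which simply states that the result follows immediately from Theorem \ref{thm:comparisoncdh} and Corollary \ref{cor:ldhdescentDMcdh}. You have merely spelled out the (standard) verification that $\ldh$-descent transports across a premotivic equivalence, which the paper leaves implicit.
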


\begin{proof}
This follows immediately from Theorem \ref{thm:comparisoncdh}
and from Corollary \ref{cor:ldhdescentDMcdh}.
\end{proof}

Similarly, we see that $\DM_\cdh$ is continuous is
a rather general sense.

\begin{thm}\label{thm:continuityabsDMcdh}
The motivic category $\DM_\cdh(-,R)$ has the properties of
localization with respect to any closed immersion
as well as the property of
continuity with respect to arbitrary projective systems
with affine transition maps
over the category of noetherian $k$-schemes of finite dimension.
\end{thm}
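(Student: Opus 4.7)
The plan is to transfer both properties from $\HZ_{(-)/k}\Mod$ to $\DM_\cdh(-,R)$ via the equivalence of premotivic triangulated categories $\tau^*$ provided by Theorem \ref{thm:comparisoncdh}. Since $\tau^*$ is an equivalence commuting with all the structure of a premotivic category (and, by uniqueness of adjoints, with the right adjoints $\derR f_*$ as well as with $j_!$ for $j$ open and $i_*$ for $i$ closed), both the localization property and the continuity property are equivalent on the two sides. It therefore suffices to establish them for $\HZ_{(-)/k}\Mod$.

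Continuity is immediate: Proposition \ref{prop:HZmodrelatif2} already asserts that $X\mapsto\HZ_{X/k}\Mod$ satisfies continuity with respect to arbitrary projective systems of noetherian $k$-schemes of finite dimension with affine transition maps.

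For localization with respect to a closed immersion $i\colon Z\hookrightarrow X$ with open complement $j\colon U\hookrightarrow X$, I exploit the premotivic adjunction $L_{\HZ}\dashv\cO_{\HZ}$ of Proposition \ref{prop:HZmodrelatif2}. The forgetful functor $\cO_{\HZ}\colon\HZ_{(-)/k}\Mod\to\SH$ is conservative, preserves small sums, commutes with $\derL f^*$ for any morphism $f$ of $k$-schemes, and with $\derL g_\sharp$ for any smooth $g$ (in particular with $j_!=j_\sharp$); by mating with respect to these adjunctions, it then commutes with $\derR f_*$ for any $f$, and in particular with $i_*$ for any closed $i$. The stable homotopy category $\SH$ satisfies localization by the work of Morel--Voevodsky and Ayoub. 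Given $M\in\HZ_{X/k}\Mod$, I form the cone $C$ of $j_!j^*M\to M$; the unit map $M\to i_*i^*M$ induces a canonical morphism $C\to i_*i^*M$. Applying $\cO_{\HZ}$ identifies this morphism with the analogous one in $\SH(X)$, which is an isomorphism by localization in $\SH$; conservativity of $\cO_{\HZ}$ then forces $C\xrightarrow{\sim} i_*i^*M$ in $\HZ_{X/k}\Mod$, yielding the desired localization triangle. Transferring through $\tau^*$ concludes.

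I expect no genuine obstacle, as this is essentially bookkeeping around the premotivic formalism: each commutation of $\cO_{\HZ}$ with one of the six operations follows either from Proposition \ref{prop:HZmodrelatif2} directly, or by conjugation of adjunctions across the premotivic adjunction $L_{\HZ}\dashv\cO_{\HZ}$. The only non-formal inputs are the localization property for $\SH$ (Ayoub) and, crucially, the equivalence provided by Theorem \ref{thm:comparisoncdh} which allows us to work with $\HZ$-modules rather than directly with $\cdh$-motives; both are already available.
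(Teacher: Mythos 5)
Your proof is correct and takes essentially the same approach as the paper: both transfer localization and continuity from $\HZ_{(-)/k}\Mod$ to $\DM_\cdh(-,R)$ via the equivalence of Theorem \ref{thm:comparisoncdh}. The only difference is that the paper's proof simply cites Proposition \ref{prop:HZmodrelatif2} (which already asserts that $X\mapsto\HZ_{X/k}\Mod$ is a \emph{motivic} category in the sense of \cite{CD3}, a notion that includes the localization property), whereas you re-derive localization for $\HZ$-modules from the corresponding property of $\SH$ using conservativity of the forgetful functor $\cO_\HZ$ and its commutation with the relevant operations---this is precisely the argument underlying the cited proposition.
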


\begin{proof}
Since $\HZ_{(-)/k}\Mod$ has these properties, Theorem \ref{thm:comparisoncdh}
allows to transfer it to $\DM_\cdh(-,R)$.
\end{proof}

%\section{Homotopy t-structure}
%
%\input{tstructure}

\section{Finiteness}

\begin{num}\label{num:constructpremotivic}
In this section, all the functors are derived
functors, but we will drop $\derL$ or $\derR$ from the notations.
The triangulated motivic category $\DM_\cdh(-,R)$ is endowed
with the six operations $\otimes_R$, $\uHom_R$, $f^*$, $f_*$, $f_!$ and $f^!$
which satisfy the usual properties; see \cite[Theorem 2.4.50]{CD3}
for a summary.

Recall that an object of $\DM_\cdh(X,R)$
is constructible if and only if it is compact.
Here is the behaviour of the six operations with respect to
constructible objects in $\DM_\cdh(-,R)$, when we restrict
ourselves to $k$-schemes (see \cite[4.2.5, 4.2.6, 4.2.10, 4.2.12]{CD3}):
\begin{itemize}
\item[(i)] constructible objects are stable by tensor products;
\item[(ii)] for any morphism $f:X\to Y$, the functor
$f^*:\DM_\cdh(Y,R)\to\DM_\cdh(X,R)$ preserves constructible objects;
\item[(iii)] The property of being constructible is local for the
Zariski topology;
\item[(iii)] given a closed immersion $i:Z\to X$ with open
complement $j:U\to X$, an object $M$ of $\DM_\cdh(X,R)$
is constructible if and only if $i^*(M)$ and $j^*(M)$
are constructible;
\item[(iv)] the functor $f_!:\DM_\cdh(X,R)\to\DM_\cdh(Y,R)$
preserves constructible objects for any separated morphism of finite type $f:X\to Y$.
\end{itemize}
\end{num}

\begin{prop}\label{prop:abspurity}
Let $i:Z\to X$ be a closed immersion of codimension $c$
between regular $k$-schemes.
Then there is a canonical isomorphism $i^!(R_X)\simeq R_Z(-c)[-2c]$
in $\DM_\cdh(Z,R)$.
\end{prop}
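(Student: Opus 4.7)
The plan is to reduce, via Popescu's theorem and continuity, to the case where both $Z$ and $X$ are smooth over $k$, where the desired isomorphism becomes the classical relative purity.

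The assertion is Zariski-local on $X$, so we may assume $X$ and $Z$ are affine. Since $k$ is perfect and $X$ is regular affine, Spivakovsky's refinement of Popescu's theorem (already used in the proof of Theorem \ref{thm:comparisonnis}) lets us write $X=\varprojlim_\alpha X_\alpha$ as a projective limit of smooth affine $k$-schemes of finite type with flat affine transition morphisms. Lifting the ideal defining $Z\subset X$ to an ideal of some $X_{\alpha_0}$ produces a closed subscheme $Z_{\alpha_0}\hookrightarrow X_{\alpha_0}$ whose pullback is $Z$. Applying \cite[9.4.7]{EGA4} to the normal cone, exactly as in the proof of Proposition \ref{prop:DM_localiz_regular}, we may, after enlarging $\alpha_0$, assume that for every $\alpha\geq\alpha_0$ the closed immersion $i_\alpha:Z_\alpha\hookrightarrow X_\alpha$ is regular of codimension $c$ with $Z_\alpha$ smooth over $k$.

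For each $\alpha\geq\alpha_0$ the immersion $i_\alpha$ is between smooth $k$-schemes, so relative purity for $\DM$ over smooth bases (which, via the equivalences $\DM_\cdh(X_\alpha,R)\simeq\DM(X_\alpha,R)\simeq\HZ_{X_\alpha}\Mod$ from Corollary \ref{cor:DMcdhreg} and Theorem \ref{thm:comparisonnis}, reduces to the Morel--Voevodsky purity theorem in $\SH$ combined with the Thom isomorphism for the oriented ring spectrum $\HZ$) yields a canonical Gysin isomorphism
\begin{equation*}
\pur_{i_\alpha}\colon R_{Z_\alpha}(-c)[-2c]\xrightarrow{\ \sim\ }i_\alpha^!\,R_{X_\alpha}
\end{equation*}
in $\DM_\cdh(Z_\alpha,R)$. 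These isomorphisms are compatible with the transition morphisms $p_{\beta\alpha}:X_\beta\to X_\alpha$: since $p_{\beta\alpha}$ is smooth, the smooth base change formula $q_{\beta\alpha}^*\,i_\alpha^!\simeq i_\beta^!\,p_{\beta\alpha}^*$ (where $q_{\beta\alpha}:Z_\beta\to Z_\alpha$ is the induced map) identifies the pullback of $\pur_{i_\alpha}$ with $\pur_{i_\beta}$.

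It remains to pass to the projective limit. By the continuity of $\DM_\cdh$ with respect to projective systems with affine transition maps (Theorem \ref{thm:continuityabsDMcdh}), the compatible system $(i_\alpha^!R_{X_\alpha})_\alpha$ descends to an object of $\DM_\cdh(Z,R)$ which, via smooth base change applied to the projections $X\to X_\alpha$ and the identity $R_X=p_\alpha^*R_{X_\alpha}$, must coincide with $i^!R_X$. Transporting the isomorphisms $\pur_{i_\alpha}$ across the resulting $2$-colimit yields the desired isomorphism $\pur_i\colon R_Z(-c)[-2c]\xrightarrow{\ \sim\ }i^!R_X$. The main obstacle lies precisely in this last step: one must verify that the smooth base change compatibilities at finite stages genuinely force the limiting object to represent $i^!R_X$, which ultimately rests on the six-functor formalism for $\DM_\cdh$ over noetherian $k$-schemes, available via Corollary \ref{cor:DM_reg&6functors} together with its extension to the singular case through Theorem \ref{thm:comparisoncdh}.
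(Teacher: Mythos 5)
Your strategy---Zariski-localize, present $X$ as a limit of smooth affine $k$-schemes via Popescu/Spivakovsky, invoke relative purity at each finite stage, then pass to the limit by continuity---is the same as the paper's. But several of the steps, as you write them, are not justified or are false, and the two citations the paper relies on are precisely the ones needed to close those gaps.

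First, the opening claim ``the assertion is Zariski-local on $X$'' is not obvious and is not free. The object $i^!(R_X)$ is defined by adjunction, and there is no a priori reason an isomorphism $R_Z(-c)[-2c]\simeq i^!(R_X)$ can be checked on an open cover of $X$. The paper justifies this reduction by invoking the reformulation of absolute purity in \cite[Appendix, Theorem A.2.8(ii)]{CD4}, which expresses the purity property in terms of a canonically defined Gysin-type morphism whose invertibility is a local condition. Omitting this step leaves the very first reduction unsupported.

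Second, your assertion that Spivakovsky's refinement of Popescu's theorem produces a presentation $X=\varprojlim_\alpha X_\alpha$ with \emph{flat} transition morphisms is false. The theorem writes the regular affine ring as a filtered union of its smooth $k$-subalgebras of finite type; the inclusion of one smooth $k$-subalgebra into another is a dominant morphism of smooth affine $k$-schemes, but need not be flat (for instance $k[u,v]\hookrightarrow k[x,y]$, $u\mapsto x$, $v\mapsto xy$, is dominant but not flat). You have apparently transplanted the flatness hypothesis of Proposition \ref{prop:DM_localiz_regular}---which is an \emph{assumption} there, not an output of Popescu---into a context where it does not hold.

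This makes the ``smooth base change formula $q_{\beta\alpha}^*\,i_\alpha^!\simeq i_\beta^!\,p_{\beta\alpha}^*$'' unavailable: the transition maps $p_{\beta\alpha}$ are not smooth, and for the same reason neither are the projections $X\to X_\alpha$ (which are not even of finite type). The six-functor formalism gives an exchange isomorphism between $i^!$ and $p^*$ only when $p$ is smooth (via relative purity for $p$); there is no such exchange for a general morphism. So the compatibility of the Gysin isomorphisms along the tower is unproven, and the identification of the limiting object with $i^!(R_X)$ is as well. You notice this yourself at the end (``the main obstacle lies precisely in this last step''), but appealing to the six-functor formalism in general does not help. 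What actually makes the limit argument work in the paper is \cite[4.3.12]{CD3} combined with continuity of $\DM_\cdh$; that result is tailored to descend exactly this kind of statement along a projective system without any flatness or smoothness on the transition maps, once the purity statement has been put in the local form provided by \cite[Appendix, Theorem A.2.8(ii)]{CD4}.

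In short: right strategy, but both pivotal reductions (locality, and passage to the limit) are missing the actual arguments, and the flatness claim is incorrect.
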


\begin{proof}
In the case where $X$ and $Z$ are smooth over $k$, this is
a direct consequence of the relative purity theorem.
For the general case, using the reformulation of the absolute purity theorem
of \cite[Appendix, Theorem A.2.8(ii)]{CD4}, we see that
it is sufficient to prove this proposition locally
for the Zariski topology over $X$. Therefore we may assume
that $X$ is affine. Since $\DM_\cdh(-,R)$ is continuous (\ref{thm:continuityabsDMcdh}),
using Popescu's theorem and \cite[4.3.12]{CD3}, we
see that it is sufficient to treat the case where $X$ is smooth of finite
type over $k$. But then, this is
a direct consequence of the relative purity theorem.
\end{proof}

\begin{prop}\label{prop:remindertrace}
Let $f:X\to Y$ be a morphism of noetherian $k$-schemes.
Assume that both $X$ and $Y$ are integral and that $f$ is
finite and flat of degree $d$.
Then, there is a canonical natural transformation
$$\mathit{Tr}_f:\derR f_*\derL f^*(M)\to M$$
for any object $M$ of $\DM_\cdh(X,R)$
such that the composition with the unit of the adjunction
$(\derL f^*,\derR f_*)$
$$M\to \derR f_*\derL f^*(M)\xrightarrow{\mathit{Tr}_f} M$$
is $d$ times the identity of $M$.
\end{prop}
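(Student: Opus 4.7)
The plan is to adapt to $\uDM_\cdh(Y,R)$ the trace construction carried out over a base field in paragraphs \ref{num:elementary_traces}--\ref{num:elementary_traces2} (the statement has to be read with $M\in\DM_\cdh(Y,R)$, as otherwise $\derL f^*(M)$ is ill-defined). One first constructs a morphism $Tr_f:\derR f_*(R_X)\to R_Y$ with the desired degree property, and then extends it to arbitrary coefficients $M$ by tensor multiplication via the projection formula.

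Since both $X$ and $Y$ are integral and $f$ is finite and flat of degree $d$, the transposition of the graph of $f$ is a well-defined finite $Y$-correspondence from $Y$ to $X$ (condition (1) in the footnote to \ref{num:shtr} follows from flatness; the remaining conditions may be checked after passing to a $\cdh$-cover by normalization if needed). This yields a morphism $^tf:R_Y\to f_\sharp(R_X)=\uM_Y(X)$ in $\uDM_\cdh(Y,R)$. Let $\phi:f_\sharp(R_X)\to R_Y$ be the counit of the adjunction $(f_\sharp,f^*)$, which corresponds to the graph $\Gamma_f$ itself. The composite $\phi\circ{}^tf\in\End_{\uDM_\cdh(Y,R)}(R_Y)$ is represented by the intersection product $\Gamma_f\circ{}^t\Gamma_f\in c(Y,Y)$, which equals $d\cdot[\Delta_Y]$ because each geometric fibre of $f$ has length $d$; hence $\phi\circ{}^tf=d\cdot\mathrm{id}_{R_Y}$, as in \cite[Prop.~9.1.13]{CD3}. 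Applying the contravariant functor $\uHom_R(-,R_Y)$ and using the canonical identification $\uHom_R(f_\sharp R_X,R_Y)\simeq\derR f_*(R_X)$ (a formal consequence of the $(f_\sharp,f^*)$-adjunction combined with the projection formula for $f_\sharp$), one obtains a morphism $Tr_f:\derR f_*(R_X)\to R_Y$. Under the same duality the counit $\phi$ corresponds to the unit $\eta_{R_Y}:R_Y\to\derR f_*(R_X)=\derR f_*\derL f^*(R_Y)$ of the adjunction $(\derL f^*,\derR f_*)$, so the previous identity dualizes to
$$Tr_f\circ\eta_{R_Y}=d\cdot\mathrm{id}_{R_Y}\ \text{in }\uDM_\cdh(Y,R)\, .$$
As $f$ is proper, $\derR f_*(R_X)\simeq\derR f_!(R_X)$ is constructible, so $Tr_f$ is in fact a morphism in $\DM_\cdh(Y,R)$.

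For an arbitrary $M\in\DM_\cdh(Y,R)$, the properness of $f$ together with the projection formula (cf.~\cite[2.4.50]{CD3}) provides a natural isomorphism $\derR f_*\derL f^*(M)\simeq\derR f_*(R_X)\otimes^{\derL}_R M$ under which the unit $\eta_M$ corresponds to $\eta_{R_Y}\otimes^{\derL}\mathrm{id}_M$. Setting $\mathit{Tr}_f^M:=Tr_f\otimes^{\derL}\mathrm{id}_M:\derR f_*(R_X)\otimes^{\derL}_R M\to R_Y\otimes^{\derL}_R M\simeq M$, one obtains the required natural transformation and the identity $\mathit{Tr}_f^M\circ\eta_M=d\cdot\mathrm{id}_M$ follows at once from the previous step. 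The most delicate point in the argument is the bookkeeping of the various duality and adjunction isomorphisms---in particular the identification $\uHom_R(\phi,R_Y)=\eta_{R_Y}$ and the compatibility of the projection formula for a proper morphism with the unit $\eta$---but these are formal consequences of the closed symmetric monoidal structure on $\uDM_\cdh$ together with the six-operations formalism available from Theorem \ref{thm:continuityabsDMcdh}.
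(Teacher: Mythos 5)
Your proof is correct and follows essentially the same path as the paper's: it constructs $\mathit{Tr}_f$ on the unit object via the transposed graph ${}^tf$ and internal-Hom duality (as in paragraphs \ref{num:elementary_traces} and \ref{num:elementary_traces2}, transported to $\uDM_\cdh$/$\DM_\cdh$), verifies the degree identity $\phi\circ{}^tf=d\cdot\mathrm{id}$ by the intersection-theoretic computation underlying \cite[Prop.~9.1.13]{CD3}, and then extends to general $M$ by the projection formula for the proper morphism $f$. The only differences are that you spell out the cycle-level degree computation and the duality bookkeeping $\uHom(\phi,R_Y)=\eta_{R_Y}$ that the paper leaves implicit, and that you correctly flag the typo in the statement ($M$ should live over $Y$, not $X$).
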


\begin{proof}
As in paragraphs \ref{num:elementary_traces}
and \ref{num:elementary_traces2} (simply replacing
$\uDM(X,R)$ and $\DM(X,R)$ by $\uDM_\cdh(X,R)$
and $\DM_\cdh(X,R)$, respectively), we construct
$$\mathit{Tr}_f:\derR f_*(R_X)=\derR f_*\derL f^*(R_Y)\to R_Y$$
such that the composition with the unit
$$R\to \derR f_*(R_X)\xrightarrow{\mathit{Tr}_f} R_Y$$
is $d$. Then, since $f$ is proper, we have a projection
formula
$$\derR f_*(R_X)\otimes^\derL_R M\simeq\derR f_*\derL f^*(M)$$
and we construct
$$\mathit{Tr}_f:\derR f_*\derL f^*(M)\to M$$
as
$$M\otimes^\derL_R\big(\derR f_*(R_X)\xrightarrow{\mathit{Tr}_f} R_Y \big)\, .$$
This ends the construction of $\mathit{Tr}_f$ and the proof of this
proposition.
\end{proof}

\begin{thm}\label{thm:constructmotivic}
The six operations preserve constructible objects in $\DM_\cdh(-,R)$
over quasi-excellent $k$-schemes.
In particular, we have the following properties.
\begin{itemize}
\item[(a)] For any morphism of finite type between quasi-excellent
$k$-schemes, the functor $f_*:\DM_\cdh(X,R)\to\DM_\cdh(Y,R)$
preserves constructible objects.
\item[(b)] For any separated morphism of finite type between quasi-excellent
$k$-schemes $f:X\to Y$, the functor
$f^!:\DM_\cdh(Y,R)\to\DM_\cdh(X,R)$ preserves constructible objects.
\item[(c)] If $X$ is a quasi-excellent $k$-scheme, for any constructible
objects $M$ and $N$ of $\DM_\cdh(M,N)$, the object $\uHom_R(M,N)$
is constructible.
\end{itemize}
\end{thm}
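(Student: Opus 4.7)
The plan is to follow Gabber's strategy for constructible $\ell$-torsion étale sheaves, adapted to $\DM_\cdh$. All required tools are at hand: Gabber's prime-to-$p$ uniformization theorem \cite[Exp.~IX, Th.~1.1]{gabber}, absolute purity (Proposition \ref{prop:abspurity}), trace maps for finite flat surjective morphisms (Proposition \ref{prop:remindertrace}), and continuity for $\DM_\cdh$ (Theorem \ref{thm:continuityabsDMcdh}).

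The full statement follows from (a)--(c). Moreover, (b) follows from (a) by factoring a separated morphism of finite type as $f = p \circ i$ with $i$ a closed immersion and $p$ smooth: $p^! \simeq p^*(d)[2d]$ preserves constructibility by relative purity, and the localization triangle $i_* i^! \to \mathrm{id} \to j_* j^*$ (with $j$ the open complement of $i$) expresses $i^!$ in terms of operations preserving constructibility by (a) and items (ii)--(iv) of \ref{num:constructpremotivic}. Item (c) follows from (a) and (b) by dévissage on $M$: reduce to $M = q_! R_Y(n)$ with $q$ separated of finite type, then apply $\uHom_R(q_! R_Y(n), N) \simeq q_* q^!(N)(-n)$. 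For (a), Nagata compactification together with \ref{num:constructpremotivic}(iv) reduces matters to showing: for any open immersion $j\colon U \to X$ of quasi-excellent $k$-schemes, $j_*$ preserves constructibility.

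To prove this last statement, let $M \in \DM_\cdh(U,R)$ be constructible and let $i\colon Z \to X$ be the reduced closed complement of $U$. The localization triangle
\begin{equation*}
j_! M \to j_* M \to i_* i^* j_* M \xrightarrow{+1}
\end{equation*}
reduces the question to showing that $i^* j_* M$ is constructible on $Z$. Proceed by noetherian induction on $Z$. Constructibility being Zariski-local, and by continuity with respect to affine pro-systems (Theorem \ref{thm:continuityabsDMcdh}), we may localize $X$ to the henselization at the generic point of an irreducible component of $Z$. Now apply Gabber's uniformization to the pair $(X,Z)$: there is a proper surjective morphism $p\colon X'\to X$, decomposable as a sequence of blow-ups with regular centers and finite surjective flat morphisms of degree prime to $p$, such that $X'$ is regular, $Z' := p^{-1}(Z)_{\mathrm{red}}$ is a strict normal crossing divisor, and $p$ is finite flat of some degree $d$ prime to $p$ over a dense open $V \subset X$.

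Upstairs on $X'$, a dévissage on $M$ reduces the computation to the case $M = R_U$; iterated absolute purity (Proposition \ref{prop:abspurity}) applied to the smooth strata of $Z'$ then expresses $(i')^* (j')_* R_{U'}$ as a finite successive extension of Tate twists of constant sheaves supported on smooth closed subschemes of $Z'$, hence as a constructible object on $Z'$. Over $V$, the trace map of Proposition \ref{prop:remindertrace} combined with proper base change realizes $i^* j_* M|_{Z \cap V}$ as a direct factor of the pushforward of this upstairs computation (using that $d$ is invertible in $R$), hence as a constructible object. On the closed complement $Z \setminus V$, which is strictly smaller, the noetherian induction hypothesis applies. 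Gluing via the excision triangle on $Z$ yields constructibility of $i^* j_* M$ on all of $Z$. The main obstacle is the precise execution of this descent step---cleanly combining the trace argument over $V$ with proper base change, and meshing the noetherian induction on $Z \setminus V$ with the explicit iterated-purity computation on the NCD boundary. Both are standard in spirit (compare \cite{CD4} for the étale motivic case) but require careful bookkeeping.
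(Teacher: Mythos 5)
Your proposal follows essentially the same strategy as the paper's proof: reduce (b) and (c) to (a), reduce (a) to proving that $j_*(R_U)$ is constructible for a dense open immersion $j$, and then run Gabber's geometric argument based on uniformization, absolute purity, and trace maps. The paper, however, does not spell Gabber's argument out; it cites the proof of \cite[Lem.~6.2.7]{CD4} verbatim, observing that the only step that does not make sense in an abstract motivic triangulated category is the construction of trace maps (sublemma \cite[6.2.12]{CD4}), and that $\DM_\cdh$ has these natively by Proposition \ref{prop:remindertrace}. You instead attempt to reproduce the inner workings of that argument.

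There is one genuine gap in your version, and it concerns the coefficients. You assert that Gabber's uniformization produces a cover of degree $d$ ``prime to $p$'' and then invoke invertibility of $d$ in $R$. This is wrong on two counts. First, Gabber's theorem \cite[Exp.~IX, Th.~1.1]{gabber} gives, for a \emph{fixed auxiliary prime} $\ell\neq p$, an alteration of degree prime to $\ell$; it does \emph{not} give degree prime to $p$ (the degree may involve arbitrary primes other than $\ell$). Second, since $R$ is merely assumed to be a $\ZZ[1/p]$-algebra, an integer prime to $\ell$ need not be invertible in $R$, so the trace argument does not go through as stated. The paper resolves this by a preliminary reduction to the case where $R$ is a $\ZZ_{(\ell)}$-algebra for some $\ell\neq p$: since constructible objects are compact and both triangulated categories are compactly generated, $\DM_\cdh(X,R\otimes\ZZ_{(\ell)})$ is the idempotent completion of $\DM_\cdh(X,R)\otimes\ZZ_{(\ell)}$, and one then applies \cite[Appendix, Prop.~B.1.7]{CD4} to see it suffices to treat $\ZZ_{(\ell)}$-algebras one $\ell$ at a time. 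Only after this reduction is the degree produced by Gabber's theorem automatically invertible in the coefficient ring. Your proposal omits this step, and without it the trace argument over the dense open $V$ breaks down.

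Beyond this, your outline is correct in spirit, and you candidly acknowledge that the descent step combining the trace argument with proper base change and noetherian induction needs careful bookkeeping; the paper deliberately delegates precisely this bookkeeping to \cite[Lem.~6.2.7]{CD4} rather than repeat it.
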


\begin{proof}[Sketch of proof]
It is standard that properties (b) and (c) are
corollaries of property (a); see the proof of
\cite[Cor. 6.2.14]{CD4}, for instance.
Also, to prove (a), the usual argument (namely
\cite[Lem. 2.2.23]{ayoub}) shows that it is sufficient
to prove that, for any morphism of finite type $f:X\to Y$,
the object $f_*(R_X)$ is constructible.
As one can work locally for the Zariski topology on $X$
and on $Y$, one may assume that $f$ is separated (e.g. affine)
and thus that $f=pj$ with $j$ an open immersion and $p$ a proper
morphism. As $p_!=p_*$ is already known to preserve
constructible objects, we are thus reduced to prove that,
for any dense open immersion $j:U\to X$,
the object $j_*(R_U)$ is constuctible. This is where the serious
work begins. First, using the fact that constructible objects
are compact, for any prime $\ell\neq p$, the triangulated
category $\DM_\cdh(X,R\otimes\ZZ_{(\ell)})$ is the
idempotent completion of the triangulated category
$\DM_\cdh(X,R)\otimes\ZZ_{(\ell)}$.
Therefore, using \cite[Appendix, Prop. B.1.7]{CD4},
we easily see that it is sufficient to consider the case where $R$
is a $\ZZ_{(\ell)}$-algebra for some prime $\ell\neq p$.
The rest of the proof consists to follow word for word a beautiful
argument of Gabber: the very proof of \cite[Lem. 6.2.7]{CD4}.
Indeed, the only part of the proof of \emph{loc. cit.} which is not
meaningful in an abstract motivic triangulated category is
the proof of the sublemma \cite[6.2.12]{CD4}, where we need
the existence of trace maps for flat finite surjective morphisms
satisfying the usual degree formula. In the case of $\DM_\cdh(X,R)$,
we have such trace maps
natively: see Proposition \ref{prop:remindertrace}.
\end{proof}

\section{Duality}

In this section, we will consider a field $K$ of
exponential characteristic $p$, and will focus our attention
on $K$-schemes of finite type. As anywhere else in this
article, the ring of coefficients $R$ is assumed to be a $\ZZ[1/p]$-algebra.

\begin{prop}\label{prop:purelyinsepext}
Let $f:X\to Y$ be a surjective finite radicial morphism of
noetherian $K$-schemes of finite dimension. Then
the functor
$$\derL f^*:\DM_\cdh(Y,R)\to\DM_\cdh(X,R)$$
is an equivalence of categories and is
canonically isomorphic to the functor $f^!$.
\end{prop}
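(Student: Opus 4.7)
The plan is to reduce the statement to an elementary computation with finite correspondences, and then leverage the radicial hypothesis to deduce the equivalence. Working Zariski-locally on $Y$, we may assume $Y$ is affine and integral; since $f$ is a universal homeomorphism (surjective, finite, radicial), $X$ is then also integral. The degree $d=[\kappa(X):\kappa(Y)]$ of $f$ is a power of $p$, and hence invertible in $R$.

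The key step is to show that the canonical map $\uM_Y(X)\to R_Y$ is an isomorphism in $\uDM_\cdh(Y,R)$. Let ${}^tf\colon R_Y\to\uM_Y(X)$ denote the morphism induced by the transpose of the graph of $f$, viewed as a finite correspondence from $Y$ to $X$. By the standard composition rules for finite correspondences, one has $f\circ{}^tf=d\cdot\mathrm{id}_{R_Y}$ (\cite[Prop.~9.1.13]{CD3}), while the radicial hypothesis yields ${}^tf\circ f=d\cdot\mathrm{id}_{\uM_Y(X)}$ (\cite[Prop.~9.1.14]{CD3}). Since $d$ is invertible in $R$, the morphisms $f$ and ${}^tf/d$ are mutually inverse, so $\uM_Y(X)\to R_Y$ is an isomorphism. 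As $f$ is proper, we have $\uM_Y(X)\simeq\derR f_*(R_X)$, giving $\derR f_*(R_X)\simeq R_Y$ in $\DM_\cdh(Y,R)$. Combining this with the projection formula for the proper morphism $f$, we see that for any $M\in\DM_\cdh(Y,R)$ the unit map
$$
M\to\derR f_*\derL f^*(M)\simeq M\otimes^\derL_R\derR f_*(R_X)\simeq M
$$
is an isomorphism. Hence $\derL f^*$ is fully faithful, and in particular conservative.

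For essential surjectivity, we use that the universal homeomorphism $f$ induces an equivalence of small \'etale sites, so that, combined with the formal smoothness of smooth morphisms, every smooth $X$-scheme $V$ is, Nisnevich-locally on $X$, of the form $U\times_Y X$ for some smooth $Y$-scheme $U$. Together with Nisnevich descent in $\DM_\cdh(X,R)$ and the identity $\derL f^*(\uM_Y(U)(n))=\uM_X(U\times_Y X)(n)$, this shows that the essential image of $\derL f^*$ generates $\DM_\cdh(X,R)$ under colimits, so $\derL f^*$ is an equivalence with quasi-inverse $\derR f_*$. Finally, $f$ being proper gives $\derR f_!=\derR f_*$, so $f^!$ is by definition the right adjoint of $\derR f_*$; since $\derR f_*$ is now an equivalence, its quasi-inverse $\derL f^*$ is also a right adjoint of $\derR f_*$, so $\derL f^*\simeq f^!$ by uniqueness of adjoints. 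The main technical input is the radicial identity ${}^tf\circ f=d\cdot\mathrm{id}$, which is precisely what distinguishes the radicial case from the merely flat finite surjective one.
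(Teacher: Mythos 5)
Your proof has two genuine gaps, both of which the paper's argument is carefully designed to avoid.

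First, the trace computation is applied without its hypotheses. The identities ${}^tf\circ f=d\cdot\mathrm{id}$ and $f\circ{}^tf=d\cdot\mathrm{id}$ you cite from \cite[Prop.~9.1.13, 9.1.14]{CD3} (and, in the $\cdh$ setting, Proposition \ref{prop:remindertrace} of this paper) are stated for a finite \emph{flat} surjective morphism between \emph{integral} schemes. A surjective finite radicial morphism of noetherian schemes need not be flat: the normalization $\AA^1\to\Spec\bigl(k[x,y]/(y^2-x^3)\bigr)$, $t\mapsto(t^2,t^3)$, is finite, surjective and radicial, but it is not flat. Your Zariski-local reduction to $Y$ integral (and $X$ reduced after replacing it by $X_{\mathrm{red}}$, which is harmless for $\cdh$-motives) does not produce flatness. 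The paper resolves precisely this point by combining the localization property of $\DM_\cdh$ (Theorem \ref{thm:continuityabsDMcdh}) with noetherian induction to reduce the \emph{conservativity} of $\derL f^*$ to the case where one has passed to a dense open of $Y$; there $f$ becomes flat, integrality is available, the degree is a power of $p$, and Proposition \ref{prop:remindertrace} applies. Without that stratification step, the transpose-of-graph argument is not justified.

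Second, the essential surjectivity argument is not correct as stated. It is not true in general that a universal homeomorphism $f:X\to Y$ allows one to write every smooth $X$-scheme, Nisnevich-locally on $X$, as the base change of a smooth $Y$-scheme; the topological invariance of the small \'etale site does not extend to the big smooth site in the way your sketch requires. The paper sidesteps this by invoking \cite[Prop.~2.1.9]{CD3}, which shows that for a motivic triangulated category satisfying localization, $\derL f^*$ along a surjective finite radicial morphism is an equivalence as soon as it is conservative. That proposition packages both essential surjectivity and the identification with $f^!$, using the localization property rather than a lifting statement for smooth schemes, and it reduces the entire proposition to the conservativity claim that the noetherian induction then handles.

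In short: your strategy of ``compute the transpose and conclude'' is the right intermediate step, but only after the problem has been reduced — via \cite[Prop.~2.1.9]{CD3} and noetherian induction using localization — to the generic, flat, integral situation. As written, your proof applies the degree formula outside its range of validity and relies on a false Nisnevich-local lifting claim.
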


\begin{proof}
By virtue of \cite[Prop.~2.1.9]{CD3}, it is sufficient to prove that
pulling back along such a morphism $f$ induces
a conservative functor $\derL f^*$ (the fact that $\derL f^*\simeq f^!$
come from the fact that if $\derL f^*$ is an equivalence of categories, then
so is its right adjoint $f_!\simeq\derR f_*$, so that $\derL f^*$ and $f^!$
must be quasi-inverses of the same equivalence of categories).
Using the localization property as well as a suitable noetherian
induction, it is sufficient to check this property generically
on $Y$. In particular, we may assume that $Y$ and $X$ are integral
and that $f$ is moreover flat. Then the degree of $f$ must be some
power of $p$, and Proposition \ref{prop:remindertrace} then
implies that the functor $\derL f^*$ is faithful (and thus conservative).
\end{proof}

\begin{prop}\label{prop:generators}
Let $X$ be a scheme of finite type over $K$, and $Z$
a fixed nowhere dense closed subscheme of $X$.
Then the category of constructible motives $\DM_{\cdh,c}(X,R)$
is the smallest thick subcategory containing objects of the
form $f_!(R_Y)(n)$, where $f:Y\to X$ is a projective morphism
with $Y$ regular, such that $f^{-1}(Z)$ is either empty, the whole scheme $Y$
itself, or the support of a strict normal crossing divisor, while $n$ is
any integer.
\end{prop}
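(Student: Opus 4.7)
Let $\mathcal{T}\subseteq\DM_{\cdh,c}(X,R)$ denote the thick subcategory described in the statement. Since $\DM_{\cdh,c}(X,R)$ is generated by the motives $g_\sharp(R_U)(n)$ for $g:U\to X$ smooth separated of finite type, and since the Thom isomorphism identifies $g_\sharp(R_U)\simeq g_!(R_U)(d)[2d]$ when $g$ has relative dimension $d$, the plan is to prove by noetherian induction on $\dim U$ that $g_!(R_U)(n)\in\mathcal T$ for every separated morphism of finite type $g:U\to X$ and every $n\in\ZZ$.

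By Nagata compactification followed by Chow's lemma, write $g=\bar g\circ j$ with $j:U\hookrightarrow\bar U$ a dense open immersion and $\bar g:\bar U\to X$ projective. Letting $i:\bar U\setminus U\hookrightarrow\bar U$ denote the closed complement, the localization triangle for $(j,i)$ in $\DM_\cdh$ (available by Theorem \ref{thm:continuityabsDMcdh}), pushed forward by $\bar g$, reads
\[
g_!(R_U)\to\bar g_*(R_{\bar U})\to(\bar g i)_*(R_{\bar U\setminus U}),
\]
whose right-most term lies in $\mathcal T$ by the inductive hypothesis: every generator $g''_!(R_{U''})(m)$ of $\DM_{\cdh,c}(\bar U\setminus U,R)$ has $\dim U''<\dim U$, and pushing it forward to $X$ along $\bar g\circ i\circ g''$ lands in $\mathcal T$ by induction. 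We are thus reduced to showing $\bar g_*(R_{\bar U})(n)\in\mathcal T$ for each projective $\bar g:\bar U\to X$. Invoking the standard reduction used in the proof of Theorem \ref{thm:constructmotivic} (localizing one prime at a time, using that constructible objects are compact), we may further assume that $R$ is a $\ZZ_{(\ell)}$-algebra for some fixed prime $\ell\neq p$.

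Apply Gabber's theorem of $\ell'$-alteration \cite[Exp.~IX, Th.~1.1]{gabber} to $\bar U$ with respect to the closed subset $\bar g^{-1}(Z)\cup\bar U_{\mathrm{sing}}$ (handling the degenerate cases $\bar g^{-1}(Z)=\emptyset$ or $\bar U$ trivially, in which $Y=\bar U$ resolved by Gabber suffices): one obtains a projective morphism $h:Y\to\bar U$ with $Y$ regular, $h^{-1}(\bar g^{-1}(Z))$ a strict normal crossings divisor, and a dense open $V\subseteq\bar U$ disjoint from $\bar g^{-1}(Z)$ on which $h$ is finite, flat and surjective of degree $d$ prime to $\ell$. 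By Proposition \ref{prop:remindertrace} (applied componentwise, after decomposing $V$ and $h^{-1}(V)$ into irreducible components to arrange integrality), $R_V$ is a direct factor of $(h|_V)_*(R_{h^{-1}(V)})=j^*(h_*(R_Y))$ in $\DM_\cdh(V,R)$, where $j:V\hookrightarrow\bar U$ denotes the open immersion.

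The final step combines the two localization triangles associated to $j$ and its closed complement $i:T=\bar U\setminus V\hookrightarrow\bar U$:
\[
j_!(R_V)\to R_{\bar U}\to i_*(R_T),\qquad j_!j^*(h_*(R_Y))\to h_*(R_Y)\to i_*i^*(h_*(R_Y)),
\]
each pushed forward by $\bar g$. The middle term of the pushed-forward second triangle is $f_*(R_Y)$ for $f:=\bar g\circ h:Y\to X$, which is projective with $Y$ regular and $f^{-1}(Z)=h^{-1}(\bar g^{-1}(Z))$ a strict normal crossings divisor (or empty or all of $Y$); thus $f_*(R_Y)(n)\in\mathcal T$ directly from the definition of $\mathcal T$. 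The rightmost terms of both pushed-forward triangles are pushforwards along $\bar g\circ i$ (and its restriction) of constructible objects on $T$ (respectively on $\bar U\setminus U$); since these closed subschemes have dimension $<\dim U$, the induction hypothesis places them in $\mathcal T$. Thickness of $\mathcal T$ then successively places $\bar g_*(j_!j^*(h_*(R_Y)))(n)$ in $\mathcal T$ (cofibre of maps in $\mathcal T$), then its direct summand $\bar g_*(j_!(R_V))(n)$, and finally $\bar g_*(R_{\bar U})(n)$ via the pushed-forward first triangle, completing the induction. The main hurdle is the combined output of Gabber's theorem, which must simultaneously furnish a regular $Y$, an SNC structure for $f^{-1}(Z)$, and a dense open on which $h$ is finite flat of degree prime to $\ell$; this is precisely what \cite[Exp.~IX, Th.~1.1]{gabber} is tailored to deliver.
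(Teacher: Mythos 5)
Your proof follows essentially the same strategy as the paper's -- Gabber's projective $\ell'$-alteration theorem combined with the trace maps of Proposition~\ref{prop:remindertrace}, organized by noetherian induction on dimension -- with a slightly different entry point: you start from the $g_\sharp$-generators and use Nagata--Chow compactification, whereas the paper invokes \cite[Lem.~2.2.23]{ayoub} to obtain projective generators $M^{\mathit{BM}}(Y/X)(n)$ directly, and then uses $\cdh$-descent to reduce to $Y$ integral before altering.

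There are, however, a couple of genuine gaps to close. First, to place $(\bar g i)_*\bigl(i^*(h_*(R_Y))\bigr)(n)$ in $\mathcal T$, the inductive hypothesis as you stated it -- which only concerns motives of the form $g'_!(R_{U'})(n)$ -- does not apply directly, because $i^*(h_*(R_Y))$ is an arbitrary constructible object over $T$, not a constant one. You must first invoke proper base change for the square formed by $h$ and $i$ to identify $i^*(h_*(R_Y))\simeq(h_T)_*(R_{h^{-1}(T)})$, whence $(\bar g i)_*(i^*(h_*(R_Y)))\simeq(f\circ i')_!(R_{h^{-1}(T)})$ with $i':h^{-1}(T)\hookrightarrow Y$ the closed immersion; then the inductive hypothesis applies because $h^{-1}(T)$ is nowhere dense in $Y$, of dimension $<\dim U$. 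Second, both the existence of a dense open $V\subset\bar U$ disjoint from $\bar g^{-1}(Z)$ and the density of $h^{-1}(V)$ in $Y$ silently presuppose that $\bar g^{-1}(Z)$ is nowhere dense in $\bar U$ and that $Y$ dominates every component of $\bar U$; this fails when some but not all irreducible components of $\bar U$ map into $Z$, so a preliminary $\cdh$-descent reduction to $\bar U$ integral is needed (which is exactly why the paper passes to integral projective $Y$ before applying Gabber). Two smaller points: the relevant citation for the $\ell'$-alteration theorem with the strict normal crossing condition is \cite[Exp.~X, Theorem~2.1]{gabber} rather than Exp.~IX,~Th.~1.1; and Chow's lemma yields a projective \emph{modification} of $\bar U$, not automatically a projective compactification of the original open $U$, so the reduction to $\bar g$ projective is cleanest after first reducing to $U$ affine by Mayer--Vietoris.
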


\begin{proof}
Let $\mathcal{G}$ be the family of objects of the form
$f_!(R_Y)(n)$, with $f:Y\to X$ a projective morphism,
$Y$ regular, $f^{-1}(Z)$ either empty or
the support of a strict normal crossing divisor, and $n$
any integer. We already know that any element of $\mathcal{G}$
is constructible. Since the constructible objects of $\DM_\cdh(X,R)$
precisely are the compact objects, which do form a generating
family of the triangulated category $\DM_\cdh(X,R)$, it is sufficient
to prove that the family $\mathcal{G}$ is generating.
Let $M$ be an object of $\DM_\cdh(X,R)$ such that
$\Hom(C,M[i])=0$ for any element $C$ of $\mathcal{G}$ and any integer $i$.
We want to prove that $M=0$.
For this, it is sufficient to prove that $M\otimes\ZZ_{(\ell)}=0$
for any prime $\ell$ which not invertible in $R$ (hence, in particular,
is prime to $p$). Since, for any compact object $C$ of $\DM_\cdh(X,R)$,
we have
$$\Hom(C,M[i])\otimes\ZZ_{(\ell)}\simeq\Hom(C,M\otimes\ZZ_{(\ell)}[i])\, ,$$
and since $f_!$ commutes with tensoring with $\ZZ_{(\ell)}$
(because it commutes with small sums), we may assume that $R$ is
a $\ZZ_{(\ell)}$-algebra for some prime number $\ell\neq p$.
Under this extra hypothesis, we will prove directly that
$\mathcal{G}$ generates the thick category of compact objects.
Let $T$ be the smallest thick subcategory of $\DM_\cdh(X,R)$
which contains the elements of $\mathcal{G}$. 

For $Y$ a separated $X$-scheme of finite type, we put
$$M^{\mathit{BM}}(Y/X)=f_!(R_Y)$$
with $f:Y\to X$ the structural morphism.
If $Z$ is a closed subscheme of $Y$ with open complement $U$,
we have a canonical distinguished triangle
$$M^{\mathit{BM}}(U/X)\to M^{\mathit{BM}}(Y/X)\to M^{\mathit{BM}}(Z/X)\to M^{\mathit{BM}}(Z/X)[1]\, .$$
We know that the subcategory of constructible objects
of $\DM_\cdh(X,R)$ is the smallest thick subcategory which
contains the objects of the form $M^{\mathit{BM}}(Y/X)(n)$ for $Y\to X$
projective, and $n\in\ZZ$; see \cite[Lem.~2.2.23]{ayoub}.
By $\cdh$-descent (as formulated in \cite[Prop.~3.3.10~(i)]{CD3}),
we easily see that objects of the form $M^{\mathit{BM}}(Y/X)(n)$ for $Y\to X$
projective, $Y$ integral, and $n\in\ZZ$, generate the thick
subcategory of constructible objects of $\DM_\cdh(X,R)$.
By noetherian induction on the dimension of such
a $Y$, it is sufficient to prove
that, for any projective $X$-scheme $Y$, there exists a dense open
subscheme $U$ in $Y$ such that $M^{\mathit{BM}}(U/X)$ belongs to $T$.
By virtue of Gabber's refinement of de~Jong's theorem of resolution
of singularities by alterations \cite[Exp.~X, Theorem 2.1]{gabber},
there exists a projective
morphism $Y'\to Y$ which is generically flat, finite surjective
of degree prime to $\ell$, such that $Y'$ is regular, and such that
the inverse image of $Z$ in $Y'$ is either empty, the whole scheme $Y'$,
or the support of a strict normal crossing divisor. Thus, by induction, for any
dense open subscheme $V\subset Y'$, the motive $M^{\mathit{BM}}(V/X)$ belongs
to $T$. But, by assumption on $Y'\to Y$, there exists a dense
open subscheme $U$ of $Y$ such that, if $V$ denote the pullback of $U$
in $Y'$, the induced map $V\to U$
is a finite, flat and surjective morphism between integral $K$-schemes
and is of degree prime to $\ell$. By virtue of Proposition \ref{prop:remindertrace},
the motive $M^{\mathit{BM}}(U/X)$ is thus a direct factor of $M^{\mathit{BM}}(V/X)$, and since
the latter belongs to $T$, this shows that $M^{\mathit{BM}}(Y/X)$
belongs to $T$ as well, and this achieves the proof.
\end{proof}

\begin{thm}\label{thm:duality}
Let $X$ be a separated $K$-scheme of finite type, with structural
morphism $f:X\to\Spec(K)$. Then the object $f^!(R)$ is dualizing.
In other words, for any constructible object $M$ in $\DM_\cdh(X,R)$, the
natural map
\begin{equation}\label{eq:thm:duality1}
M\to\derR\uHom_R(\derR\uHom_R(M,f^!(R),f^!(R)))
\end{equation}
is an isomorphism.
In particular, the natural map
\begin{equation}\label{eq:thm:duality2}
R_X\to\derR\uHom_R(f^!(R),f^!(R))
\end{equation}
is an isomorphism in $\DM_\cdh(X,R)$.
\end{thm}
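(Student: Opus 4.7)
The plan is to reduce the first assertion (which implies the second, since \eqref{eq:thm:duality2} is its special case $M = R_X$) to an explicit local computation, via a generating family of constructible objects together with the absolute purity theorem.

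The class of $M \in \DM_\cdh(X,R)$ for which the biduality map is an isomorphism forms a thick subcategory of the constructible objects. By Proposition \ref{prop:generators} it is thus enough to check \eqref{eq:thm:duality1} for $M = g_!(R_Y)(n)$, with $g:Y\to X$ projective, $Y$ regular, and $n \in \ZZ$. Setting $h = f \circ g$, the standard adjunctions among the six operations (in particular $g_! \simeq g_*$, since $g$ is proper, and $g^! f^! \simeq h^!$), together with the invertibility of Tate twists, give canonical identifications
\[
\uHom_R(M, f^!R) \simeq g_*(h^!R)(-n), \qquad
\uHom_R\bigl(\uHom_R(M, f^!R), f^!R\bigr) \simeq g_*\bigl(\uHom_R(h^!R, h^!R)\bigr)(n).
\]
Tracing through these identifications, the biduality map for $M$ is obtained by applying $g_*(-)(n)$ to the natural map $R_Y \to \uHom_R(h^!R, h^!R)$. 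It therefore suffices to prove that this last map is an isomorphism, for any regular separated $K$-scheme $Y$ of finite type, with structural morphism $h: Y \to \Spec K$.

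This question is Zariski local on $Y$, so one may assume $Y$ to be affine, pick a closed immersion $i: Y \hookrightarrow \AA^n_K$ into some affine space, and factor $h$ as $p \circ i$ with $p: \AA^n_K \to \Spec K$ the smooth structural morphism. Relative purity gives $p^!(R) \simeq R_{\AA^n_K}(n)[2n]$; since both $Y$ and $\AA^n_K$ are regular, absolute purity (Proposition \ref{prop:abspurity}) applies to $i$ and yields $i^!(R_{\AA^n_K}) \simeq R_Y(-c)[-2c]$, where $c$ is the locally constant codimension of $i$. Composing, $h^!(R) \simeq R_Y(n-c)[2(n-c)]$ is an invertible object of $\DM_\cdh(Y,R)$, whence $\uHom_R(h^!R, h^!R) \simeq R_Y$ canonically and the biduality map is indeed an isomorphism. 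The main subtlety — and the reason for the factorisation through affine space — is that over an imperfect $K$ a regular $Y$ need not be smooth over $K$, so one cannot compute $h^!(R)$ directly by relative purity; absolute purity only requires regularity of source and target, so the intermediate smooth map $p$ provides exactly the flexibility needed to make the argument robust without any perfectness hypothesis on $K$.
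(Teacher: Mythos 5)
Your proof is correct, and it agrees with the paper's through the first two steps: the reduction via Proposition~\ref{prop:generators} to objects of the form $g_!(R_Y)(n)$ with $Y$ regular, and the Grothendieck--Verdier manipulation showing that the biduality map for such $M$ is $g_*(-)(n)$ applied to $R_Y\to\derR\uHom_R(h^!R,h^!R)$. Where you diverge is in handling the resulting statement for a regular (non-smooth) $Y$: you Zariski-localize, choose a closed immersion $i\colon Y\hookrightarrow\AA^n_K$, and compute $h^!(R)$ directly by stringing together relative purity for the smooth projection $\AA^n_K\to\Spec K$ and absolute purity (Proposition~\ref{prop:abspurity}) for $i$, concluding that $h^!(R)$ is $\otimes$-invertible. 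The paper instead observes that a regular $K$-scheme of finite type becomes smooth after a finite purely inseparable extension $L/K$, and invokes Proposition~\ref{prop:purelyinsepext} (radicial descent, built from the trace maps of Proposition~\ref{prop:remindertrace}) to transport the problem to $\DM_\cdh$ over a smooth scheme, where relative purity alone finishes. Your route is somewhat more self-contained in the sense that it never leaves the base field $K$; the price is that it leans on the full strength of the absolute purity theorem (whose proof uses continuity and Popescu's theorem), while the paper only needs relative purity in the smooth case once the radicial equivalence of Proposition~\ref{prop:purelyinsepext} is in hand. Both ingredients are established earlier in the paper, so there is no circularity in either argument.

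One small caveat worth being explicit about: for a closed immersion $i\colon Y\hookrightarrow\AA^n_K$ with $Y$ regular, the codimension is only locally constant, so Proposition~\ref{prop:abspurity} should be applied componentwise; you acknowledge this by calling $c$ the locally constant codimension, which is fine since the assertion being proved is Zariski-local on $Y$ anyway.
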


\begin{proof}
By virtue of Proposition \ref{prop:generators},
it is sufficient to prove that the map \eqref{eq:thm:duality1}
is an isomorphism for $M=p_!(R_Y)$ with $p:Y\to X$ projective and $Y$
regular. We then have
$$\derR\uHom_R(M,f^!(R))\simeq p_!\derR\uHom_R(R_Y,p^!f^!(R))\simeq p_!p^!(f^!(R))\, ,$$
hence
\begin{align*}
\derR\uHom_R(\derR\uHom_R(M,f^!(R),f^!(R))
&\simeq\derR\uHom_R(p_!p^!(f^!(R)),f^!(R))\\
&\simeq p_!\derR\uHom_R(p^!f^!(R),p^!f^!(R))\, .
\end{align*}
The map \eqref{eq:thm:duality1} is thus, in this case, the image
by the functor $p_!$ of the map $R_Y\to \derR\uHom_R(p^!f^!(R),p^!f^!(R))$.
In other words, it is sufficient to prove that the map \eqref{eq:thm:duality2}
is an isomorphism in the case where $X$ is regular (and projective over $K$).
But $X$ is then smooth on a finite purely inseparable extension $L$ of $K$.
By virtue of Proposition \ref{prop:purelyinsepext}, we may assume that $X$
is actually smooth over $K$. But then, if $d$ is the dimension of $X$, since
$\DM_\cdh$ is oriented, we have a purity isomorphism $f^!(R)\simeq R_X(d)(2d]$.
Since we obviously have the identification,
$R_X\simeq\derR\uHom_R(R_X(d),R_X(d))$, this achieves
the proof.
\end{proof}

\begin{rem}
The preceding theorem means that, if we restrict to separated $K$-schemes
of finite type, the whole formalism of Grothendieck-Verdier duality
holds in the setting of $R$-linear $\cdh$-motives.
In other words, for a separated $K$-scheme of finite type $X$
with structural map $f:X\to\Spec(K)$, we define the functor $D_X$ by
$$D_X(M)=\derR\uHom_R(M,f^!(R))$$
for any object $M$ of $\DM_\cdh(X,R)$.
We already know that $D_X$ preserves constructible objects and that
the natural map $M\to D_X(D_X(M))$ is invertible for any constructible object
$M$ of $\DM_\cdh(X,R)$. For any objects $M$ and $N$ of $\DM_\cdh(X,R)$, if
$N$ is constructible, we have a natural isomorphism
\begin{equation}\label{eq:dualityfexcept0}
\derR\uHom_R(M,N)\simeq D_X(M\otimes^\derL_RD_X(N))\, .
\end{equation}
For any $K$-morphism between separated $K$-schemes of finite type
$f:Y\to X$, and for any constructible objects $M$ and $N$ in
$\DM_\cdh(X,R)$ and $\DM_\cdh(Y,R)$, respectively, we have the
following natural identifications.
\begin{align}
D_Y(f^*(M))&\simeq f^!(D_X(M))\label{eq:dualityfexcept1}\\
f^*(D_X(M))&\simeq D_Y(f^!(M))\label{eq:dualityfexcept2}\\
D_X(f_!(N))&\simeq f_*(D_Y(N))\label{eq:dualityfexcept3}\\
f_!(D_Y(N))&\simeq D_X(f_*(N))\label{eq:dualityfexcept4}
\end{align}
\end{rem}

\section{Bivariant cycle cohomology}

\begin{prop}\label{prop:insepclosureeff}
Let $K$ be a field of characteristic exponent $p$, and
$K^s$ its inseparable closure.
\begin{itemize}
\item[(a)] The map
$u:\Spec(K^s)\to\Spec(K)$ induces fully faithful functors
$$u^*:\uDMe(K,R)\to\uDMe(K^s,R)
\quad\text{and}\quad
u^*:\uDMe_\cdh(K,R)\to\uDMe_\cdh(K^s,R)\, .$$
\item[(b)] We have a canonical equivalence of categories
$$\DMe(K^s,R)\simeq\uDMe_\cdh(K^s,R)\, .$$
\item[(c)] At the level of non-effective motives, we have
canonical equivalences of categories
$$\DM(K,R)\simeq\DM_\cdh(K,R)\simeq\uDM_\cdh(K,R)\, .$$
\item[(d)] The pullback functor
$$u^*:\DM(K,R)\to\DM(K^s,R)$$
is an equivalence of categories.
\end{itemize}
\end{prop}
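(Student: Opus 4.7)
The plan is to prove (a)--(d) in order. The overall strategy is to use continuity to reduce all questions to finite purely inseparable extensions $K_\alpha/K$, for which the trace formalism of \ref{num:elementary_traces} and Proposition~\ref{prop:purelyinsepext} apply, and to use Kelly's theorem for part (b) as a black box.

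For (a), I would write $K^s=\varinjlim K_\alpha$ as the filtered colimit of its finite purely inseparable sub-extensions, so that $\Spec(K^s)=\varprojlim\Spec(K_\alpha)$ with dominant affine transition maps. The generalized effective and $\cdh$-effective categories are continuous with respect to such projective systems (by a formal variant, at the effective level, of the continuity of $\DM$ and $\DM_\cdh$ recalled in Example~\ref{ex:continuity}(2)), so Proposition~\ref{prop:continuity&2lim} reduces the fully faithfulness of $u^*$ to the same statement for each structural map $v_\alpha:\Spec(K_\alpha)\to\Spec(K)$. These $v_\alpha$ are finite, flat, surjective, and of $p$-power degree $[K_\alpha:K]$, so the argument of Lemma~\ref{lm:degreeformula0} transposes verbatim to $\uDMe$ and $\uDMe_\cdh$: the trace $^t v_\alpha:R_K\to v_{\alpha,\sharp}(R_{K_\alpha})$ constructed in \ref{num:elementary_traces} composed with the counit is multiplication by an invertible scalar in $R$, whence $v_{\alpha,\sharp}(R_{K_\alpha})\simeq R_K$, and the projection formula forces $v_\alpha^*$ to be fully faithful.

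For (b), over the perfect field $K^s$ (with $p$ invertible in $R$), this equivalence is Kelly's extension of Voevodsky's comparison theorem \cite{kelly}: Gabber's refinement of de~Jong's resolution of singularities by $\ldh$-alterations combined with the identification of $\cdh$- and $\ldh$-cohomology for sheaves with transfers (Proposition~\ref{prop:ldhdescenttransfers}) yields an equivalence
$$
\DMe(K^s,R)\xrightarrow{\derL\rho_!}\uDMe(K^s,R)\xrightarrow{\derL a_\cdh^*}\uDMe_\cdh(K^s,R).
$$

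For (c), the first equivalence $\DM(K,R)\simeq\DM_\cdh(K,R)$ is Corollary~\ref{cor:DMcdhreg} applied to the regular scheme $\Spec(K)$. For the second equivalence $\DM_\cdh(K,R)=\uDM_\cdh(K,R)$, I would show that motives of smooth $K$-schemes with twists already generate all of $\uDM_\cdh(K,R)$. After the standard reduction to an $R$ that is a $\ZZ_{(\ell)}$-algebra for some $\ell\neq p$, Gabber's alteration together with $\ldh$-descent (Corollary~\ref{cor:ldhdescentDMcdh}) reduces the problem to showing $M_K(Y')\in\DM_\cdh(K,R)$ for every regular $K$-scheme $Y'$ of finite type. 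Choose a closed embedding $Y'\hookrightarrow\AA^N_K$; since both source and target are regular $k$-schemes, this is a regular closed immersion of codimension $c=N-d$ with $d=\dim Y'$. Combining absolute purity (Proposition~\ref{prop:abspurity}) for this immersion with smooth purity for the projection $\AA^N_K\to\Spec(K)$ identifies $q^!(R_K)\simeq R_{Y'}(d)[2d]$ for the structural map $q:Y'\to\Spec(K)$, and the relative duality formula $q_\sharp(-)\simeq q_!(-\otimes q^!(R_K))$ then expresses $M_K(Y')=q_\sharp(R_{Y'})$ as a Tate twist and shift of $q_!(R_{Y'})$; the latter lies in $\DM_\cdh(K,R)$ because the six operations preserve the subcategory of constructibles (Theorem~\ref{thm:constructmotivic}), and Tate twists are invertible.

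For (d), combine (c) with Proposition~\ref{prop:purelyinsepext}, which makes each intermediate functor $v_\alpha^*:\DM_\cdh(K,R)\to\DM_\cdh(K_\alpha,R)$, as well as each transition functor in the induced inductive system, an equivalence. Continuity (Example~\ref{ex:continuity}(2)) via Proposition~\ref{prop:continuity&2lim} then gives
$$
\DM_\cdh(K^s,R)_c\simeq 2\text{-}\varinjlim_\alpha\DM_\cdh(K_\alpha,R)_c\simeq\DM_\cdh(K,R)_c,
$$
and compact generation upgrades this to an equivalence on the full categories. The main obstacle in this outline is the step in (c) identifying $M_K(Y')$ with a Tate twist of $q_!(R_{Y'})$ via absolute purity for regular schemes: this is precisely the ingredient that allows one to stay inside the subcategory of $\DM_\cdh$ generated by smooth motives even when the base field $K$ is not perfect and regular $K$-schemes of finite type need not be smooth over $K$.
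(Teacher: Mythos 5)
Parts (a), (b), and (d) of your outline agree with the paper's proof (continuity plus the degree/trace formula for finite purely inseparable extensions for (a), Kelly's theorem for (b), and continuity plus Proposition \ref{prop:purelyinsepext} for (d)), and your reasoning there is sound.

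Part (c), however, takes a genuinely different route from the paper, and it has a gap. After the reduction (via Corollary \ref{cor:ldhdescentDMcdh} and Gabber's theorem) to showing $M_K(Y')\in\DM_\cdh(K,R)$ for $Y'$ regular of finite type over $K$, you appeal to a ``relative duality formula'' $q_\sharp(-)\simeq q_!(-\otimes q^!(R_K))$ for the non-smooth structural map $q\colon Y'\to\Spec(K)$. This formula is not available at this stage. It is not the classical relative purity isomorphism (which only applies to smooth $q$), and the only place the paper establishes its consequence $q_!q^!(R)\simeq M(Y')$ (Equation \eqref{num:bivcohDMbis1}) is in Remark \ref{num:bivcohDMbis}, whose derivation passes through Equation \eqref{eq:MYfunct4}, and Remark \ref{rem:MYfunct} explicitly invokes the equality $\DM_\cdh(k,R)=\uDM_\cdh(k,R)$ from the very Proposition \ref{prop:insepclosureeff}(c) you are trying to prove. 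Concretely, decomposing $q=\pi\circ i$ with $i\colon Y'\hookrightarrow\AA^N_K$ closed and $\pi$ smooth, smooth purity rewrites $q_\sharp(R_{Y'})$ as $\pi_!(i_\sharp(R_{Y'})(N)[2N])$, whereas absolute purity plus localization controls $i_*(R_{Y'})$; the missing ingredient is a comparison $i_\sharp(R_{Y'})\simeq i_*(R_{Y'})(-c)[-2c]$, and that Gysin-type identification is precisely what cannot be asserted without circularity here (and is also exactly why the paper does not attempt a direct argument over $K$).

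The paper sidesteps this entirely: it first proves $\DM_\cdh(L,R)=\uDM_\cdh(L,R)$ only for a \emph{perfect} field $L$, where Gabber's theorem gives $\ldh$-hypercovers that are \emph{smooth} over $L$ (since regular equals smooth over a perfect base), so no duality is needed; then it applies this to $L=K^s$ and uses the full faithfulness of $u^*$ on $\uDM_\cdh$ established in (a) to descend the generation statement to $K$. You should replace the second half of your (c) with this two-step reduction to $K^s$, rather than trying to handle regular-but-not-smooth $K$-schemes directly.
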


\begin{proof}
In all cases, $u^*$ has a right adjoint $\derR u_*$
which preserves small sums (because $u^*$ preserves
compact objects, which are generators).

Let us prove that the functor
$$u^*:\uDMe(K)\to\uDMe(K^s)$$
is fully faithful.
By continuity (see \cite[Example 11.1.25]{CD3}),
it is sufficient to prove that, for any finite
purely inseparable extension $L/K$, the pullback functor
along the map $v:\Spec(L)\to\Spec(K)$,
$$v^*:\DMe(K,R)\to\DMe(L,R)\, ,$$
is fully faithful.
As, for any field $E$, we have a fully faithful
embedding
$$\DMe(E,R)\to\uDMe(E,R)$$
which is compatible with pulbacks (see \cite[Prop.~11.1.19]{CD3}),
it is sufficient to prove that the pullback functor
$$v^*:\uDMe(K,R)\to\uDMe(L,R)$$
is fully faithful. In this case, the functor $v^*$ has a left adjoint $v_\sharp$,
and we must prove that the co-unit
$$v_\sharp\, v^*(M)\to M$$
is fully faithful for any object $M$ of $\uDMe(K)$.
The projection formula $v_\sharp\, v^*(M)=v_\sharp(R)\otimes^\derL_R M$
reduces to prove that the co-unit $v_\sharp\, v^*(R)\to R$ is
an isomorphism, which follows right away from \cite[Prop.~9.1.14]{CD3}.
The same arguments show that the functor
$$u^*:\uDMe_\cdh(K,R)\to\uDMe_\cdh(K^s,R)$$
is fully faithful.

The canonical functor 
$$\DMe(L,R)\to\uDMe_\cdh(L,R)$$
is an equivalence of categories for any perfect
field $L$ of exponent characteristic $p$ by a result
in Kelly's thesis (more precisely the right
adjoint of this functor is an equivalence
of categories; see the last assertion of \cite[Cor.~5.3.9]{kelly}).

The fact that the functor
$$u^*:\DM_c(K,R)\to\DM_c(K^s,R)$$
is an equivalence of categories follows by continuity from
the fact that the pullback functor
$$\DM_c(K,R)\to\DM_c(L,R)$$
is an equivalence of categories for any finite
purely inseparable extension $L/K$
(see \cite[Prop.~2.1.9 and 2.3.9]{CD3}).
As the right adjoint of $u^*$ preserves small sums,
this implies that $u^*:\DM(K,R)\to\DM(K^s,R)$ is fully faithful.
Since any compact object of $\DM(K^s,R)$ is in the essential
image and since $\DM(K^s,R)$ is compactly generated, this
proves that $u^*:\DM(K,R)\to\DM(K^s,R)$ is an equivalence
of categories; see \cite[Corollary 1.3.21]{CD3}.

As we already know that the functor
$$\DM(K,R)\to\DM_\cdh(K,R)$$
is an equivalence of categories (Cor.~\ref{cor:DMcdhreg}), it remains to prove
that the functor
$$\DM_\cdh(K,R)\to\uDM_\cdh(K,R)$$
is an equivalence of categories (or even an equality).
Note that we have
$$\DM_\cdh(L,R)=\uDM_\cdh(L,R)$$
for any perfect field of exponent characteristic $p$.
This simply means that motives of the form $M(X)(n)$,
for $X$ smooth over $L$ and $n\in\ZZ$, do form a generating
family of $\uDM(L,R)$. To prove this, let us consider an 
object $C$ of $\DM_\cdh(L,R)$ such that 
$$\Hom(M(X)(n),C[i])=0$$
for any smooth $L$-scheme $X$ and any integers $n$ and $i$.
To prove that $C=0$, since, for any compact object $E$
and any localization $A$ of the
ring $\ZZ$, the functor $\Hom(E,-)$
commutes with tensoring by $A$, we may assume that $R$ is a $\ZZ_{(\ell)}$-algebra
for some prime number $\ell\neq p$.
Under this extra assumption, we know that the object $C$
satisfies $\ldh$-descent (see Corollary \ref{cor:ldhdescentDMcdh}).
Since, by Gabber's theorem, any scheme of finite type over $L$ is smooth
locally for the $\ldh$-topology, this proves that $C=0$.

Finally, let us consider an object $C$ of $\uDM_\cdh(K,R)$
such that $\Hom(M,C)=0$ for any object $M$ of $\DM_\cdh(K,R)$.
Then, for any object $N$ of $\DM_\cdh(K^s,R)$, we have
$\Hom(N,u^*(C))=0$: indeed, such an $N$ must be of the form
$u^*(M)$ for some $M$ in $\DM_\cdh(K,R)$, and the functor
$u^*$ is fully faithful on $\uDM_\cdh(-,R)$.
Since $K^s$ is a perfect field, this proves that $u^*(C)=0$,
and using the fully faithfulness of $u^*$ one last time
implies that $C=0$. This proves that $\DM_\cdh(K,R)=\uDM_\cdh(K,R)$
and achieves the proof of the proposition.
\end{proof}

\begin{cor}\label{cor:cancel}
Let $K$ be a field of exponent characteristic $p$.
Then the infinite suspension functor
$$\Sigma^\infty:\uDMe_\cdh(K,R)\to\uDM_\cdh(K,R)=\DM_\cdh(K,R)$$
is fully faithful.
\end{cor}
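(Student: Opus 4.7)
The plan is to reduce to the case of a perfect field via the inseparable closure, where one can invoke Voevodsky's cancellation theorem. Concretely, set $u:\Spec(K^s)\to\Spec(K)$ and contemplate the naturality square
\begin{equation*}
\xymatrix@=14pt{
\uDMe_\cdh(K,R)\ar[r]^{\Sigma^\infty}\ar[d]_{u^*} & \uDM_\cdh(K,R)\ar[d]^{u^*} \\
\uDMe_\cdh(K^s,R)\ar[r]^{\Sigma^\infty} & \uDM_\cdh(K^s,R)
}
\end{equation*}
which commutes up to canonical isomorphism since $\Sigma^\infty$ is a morphism of premotivic categories and $u^*$ commutes with $\Sigma^\infty$ by construction.

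First I would deal with the perfect case. Since $K^s$ is perfect, Proposition \ref{prop:insepclosureeff}(b) identifies $\uDMe_\cdh(K^s,R)\simeq\DMe(K^s,R)$, and Proposition \ref{prop:insepclosureeff}(c) applied to $K^s$ identifies $\uDM_\cdh(K^s,R)\simeq\DM(K^s,R)$. Under these equivalences, the bottom row becomes the standard $\PP^1$-stabilization functor $\DMe(K^s,R)\to\DM(K^s,R)$, which is fully faithful by Voevodsky's cancellation theorem (as recalled in the remark following \ref{num:shtr}; note that $R$ is a $\ZZ[1/p]$-algebra, which is exactly the hypothesis under which cancellation is available for general perfect fields via Kelly/Suslin–Voevodsky).

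Next I would propagate this back to $K$ using the two vertical functors. The left vertical is fully faithful by Proposition \ref{prop:insepclosureeff}(a). For the right vertical, the identifications $\uDM_\cdh(K,R)\simeq\DM(K,R)$ and $\uDM_\cdh(K^s,R)\simeq\DM(K^s,R)$ provided by Proposition \ref{prop:insepclosureeff}(c) translate it into $u^*:\DM(K,R)\to\DM(K^s,R)$, which is an equivalence of categories by Proposition \ref{prop:insepclosureeff}(d). A diagram chase then yields, for any $M,N\in\uDMe_\cdh(K,R)$, the chain
\begin{align*}
\Hom(M,N)
&\xrightarrow{u^*,\simeq}\Hom(u^*M,u^*N)\\
&\xrightarrow{\Sigma^\infty,\simeq}\Hom(\Sigma^\infty u^*M,\Sigma^\infty u^*N)\\
&\simeq\Hom(u^*\Sigma^\infty M,u^*\Sigma^\infty N)\\
&\xleftarrow{u^*,\simeq}\Hom(\Sigma^\infty M,\Sigma^\infty N)\, ,
\end{align*}
whose composition is the map induced by $\Sigma^\infty$ over $K$; thus $\Sigma^\infty$ is fully faithful, as desired.

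The only non-formal input here is Voevodsky's cancellation theorem over the perfect field $K^s$, and this is precisely the step where the hypothesis that $p$ is invertible in $R$ is used (through the perfectness reduction of Proposition \ref{prop:insepclosureeff}). The rest is a straightforward diagram chase built on the equivalences and fully faithful embeddings already established in Proposition \ref{prop:insepclosureeff}. No genuine obstacle is expected, since each piece of the square has already been analyzed in the proposition that precedes the corollary.
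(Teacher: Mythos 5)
Your proof is correct and follows essentially the same route as the paper: pass to the inseparable closure $K^s$, identify the categories over $K^s$ via Proposition \ref{prop:insepclosureeff}(b) and (c), invoke Voevodsky's cancellation theorem over the perfect field $K^s$, and then use the fully faithfulness of $u^*$ on effective and stable categories (parts (a), (c), (d)) to transport the conclusion back to $K$ via the naturality square. The explicit four-term diagram chase you write out is exactly the content that the paper leaves implicit when it says ``Proposition \ref{prop:insepclosureeff} allows to conclude.''
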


\begin{proof}
Let $K^s$ be the inseparable closure of $K$.
The functor
$$\Sigma^\infty:\uDMe_\cdh(K^s,R)\to\uDM_\cdh(K^s,R)=\DM_\cdh(K^s,R)$$
is fully faithful: this follows from the fact that the functor
$$\Sigma^\infty:\DMe(K^s,R)\to\DM(K^s,R)$$
is fully faithful (which is a reformulation of
Voevodsky's cancellation theorem \cite{cancel}) and from
assertions (b) and (c) in Proposition \ref{prop:insepclosureeff}.

Pulling back along the
map $u:\Spec(K^s)\to\Spec(K)$ induces an essentially
commutative diagram of the form
$$\xymatrix{
\uDMe_\cdh(K)\ar[r]^{\Sigma^\infty}\ar[d]_{u^*}&\uDM_\cdh(K)\ar[d]^{u^*}\ar@{=}[r]
&\DM_\cdh(K,R)\ar[d]^{u^*}\\
\uDMe_\cdh(K^s)\ar[r]^{\Sigma^\infty}&\uDM_\cdh(K^s)\ar@{=}[r]&\DM_\cdh(K^s,R)
}$$
and thus, Proposition \ref{prop:insepclosureeff} allows to conclude.
\end{proof}

\begin{num}\label{num:defbivmotcoh}
The preceding proposition and its corollary explain why it is essentially
harmless to only work with perfect ground fields\footnote{Note
however that the recent work of Suslin \cite{suslin} should
provide explicit formulas such as the one of Theorem \ref{thm:bivcycleDM6op}
for separated schemes of finite type over non-perfect infinite fields.}.
From now on, we will focus on our fixed perfect field $k$
of characteristic exponent $p$, and will work with separated $k$-schemes of finite type.

Let $X$ be a separated $k$-scheme of finite type and $r\geq 0$ an integer.
Let $\zequi(X,r)$ be the presheaf with transfers
of equidimensional relative cycles of dimension $r$ over $k$
(see \cite[Chap.~2, page 36]{FSV}); its evaluation
at a smooth $k$-scheme $U$ is the free group of cycles in $U\times X$
which are equidimensionnal of relative dimension $r$ over $k$;
see \cite[Chap.~2, Prop.~3.3.15]{FSV}. 
%Given another separated
%$k$-scheme of finite type $Y$, the presheaf
%$\zequi(Y,X,r)$ is defined by the formula
%$$\zequi(Y,X,r)(U)=\zequi(X,r)(U\times Y)\, .$$
If $\Delta^\bullet$ denotes the usual cosimplicial $k$-scheme,
$$\Delta^n=\Spec\big(k[t_0,\ldots,t_n]/(\sum_i t_i=1)\big)\, ,$$
then, for any presheaf of ablian groups $F$, the Suslin
complex $\underline{C}_*(F)$ is the complex associated to the
simplicial presehaf of abelian groups $F((-)\times\Delta^\bullet)$.
Let $Y$ be another $k$-scheme of finite type.
After Friedlander and Voevodsky, for $r\geq 0$, the ($R$-linear)
\emph{bivariant cycle cohomology
of $Y$ with coefficients in cycles on $X$} is defined as
the following $\cdh$-hypercohomology groups:
\begin{equation}
A_{r,i}(Y,X)_R=H^{-i}_\cdh(Y,\underline{C}_*(\zequi(X,r))_\cdh\otimes^\derL R)\, .
\end{equation}\label{eq:def:bivcyclecoh}
Since $\ZZ(Y)$ is a compact object in the derived category of
$\cdh$-sheaves of abelian groups, we have a canonical isomorphism
\begin{equation}\label{eq:univcoef:bivcyclecoh}
\derR\Gamma(Y,\underline{C}_*(\zequi(X,r))_\cdh\otimes^\derL R)
\simeq\derR\Gamma(Y,\underline{C}_*(\zequi(X,r))_\cdh)\otimes^\derL R
\end{equation}
in the derived category of $R$-modules.
We also put $A_{r,i}(Y,X)_R=0$ for $r<0$.

Recall that, for any separated $k$-scheme of finite type $X$,
we have its motive $M(X)$ and its motive with compact support $M^c(X)$.
Seen in $\DM(k,R)$, they are the objects associated to the
presheaves with transfers $R(X)$ and $R^c(X)$ on smooth $k$-schemes:
for a smooth $k$-scheme $U$, $R(X)(U)$ (resp. $R^c(X)(U)$)
is the free $R$-module on the set of cycles in $U\times X$
which are finite (resp. quasi-finite) over $U$ and dominant over an irreducible
component of $U$. We will also denote by $M(X)$ and $M^c(X)$
the corresponding objects in $\DM_\cdh(k,R)$ through the
equivalence $\DM(k,R)\simeq\DM_\cdh(k,R)$.
\end{num}

\begin{thm}[Voevodsky, Kelly]\label{thm:bivcohDM}
For any integers $r,i\in\ZZ$,
there is a canonical isomorphism of $R$-modules
$$A_{r,i}(Y,X)_R\simeq\Hom_{\DM(k,R)}(M(Y)(r)[2r+i],M^c(X))\, .$$
\end{thm}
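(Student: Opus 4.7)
The plan is to reduce the statement, via the equivalences of categories established earlier in the paper, to the classical computation of bivariant cycle cohomology due to Friedlander--Voevodsky, extended to positive characteristic (with $\ZZ[1/p]$-coefficients) by Kelly.

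First, I would invoke Proposition \ref{prop:insepclosureeff}(c) together with Corollary \ref{cor:cancel} in order to transport the computation into the effective cdh-motivic category $\uDMe_\cdh(k,R)$. Proposition \ref{prop:insepclosureeff}(c) gives equivalences $\DM(k,R) \simeq \DM_\cdh(k,R) = \uDM_\cdh(k,R)$, so the Hom-group on the right-hand side of the theorem may equivalently be computed in $\uDM_\cdh(k,R)$. Corollary \ref{cor:cancel} then says that the infinite suspension functor $\Sigma^\infty: \uDMe_\cdh(k,R) \to \uDM_\cdh(k,R)$ is fully faithful, so that it suffices to exhibit an effective lift of $M^c(X)(r)[2r]$ and compute the Hom-group in $\uDMe_\cdh(k,R)$.

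Second, and this is the heart of the argument, I would identify the effective cdh-motive $\underline{C}_*(\zequi(X,r))_\cdh \otimes^\derL R$ with an effective lift of $M^c(X)(r)[2r]$. This identification (up to the comparison between Nisnevich and $\cdh$-topologies) is the content of the Friedlander--Voevodsky bivariant cycle cohomology theorem \cite[Chap.~4]{FSV}, established originally under the assumption of resolution of singularities. Kelly's thesis \cite{kelly} extends it to arbitrary perfect fields of characteristic exponent $p$ after inverting $p$; his argument replaces Nisnevich descent by $\cdh$-descent and uses Gabber's uniformization by $\ldh$-alterations, relying in an essential way on the equivalence $\DMe(k,R) \simeq \uDMe_\cdh(k,R)$ recorded in Proposition \ref{prop:insepclosureeff}(b).

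Third, since Suslin complexes are $\AA^1$-homotopy invariant, the Yoneda lemma in the derived category of $\cdh$-sheaves with transfers identifies Hom-groups out of $\underline{M}(Y)[i]$ in $\uDMe_\cdh(k,R)$ with $\cdh$-hypercohomology of $Y$, giving
$$\Hom_{\uDMe_\cdh(k,R)}\bigl(\underline{M}(Y)[i],\, \underline{C}_*(\zequi(X,r))_\cdh \otimes^\derL R\bigr) \simeq H^{-i}_\cdh\bigl(Y,\, \underline{C}_*(\zequi(X,r))_\cdh \otimes^\derL R\bigr),$$
which by definition equals $A_{r,i}(Y,X)_R$. Composing the three identifications yields the theorem. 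The main obstacle is Step 2: exporting Voevodsky's explicit representability of motives with compact support by Suslin complexes of equidimensional cycles into the $\cdh$-setting in positive characteristic, where one genuinely needs Kelly's results. The first and third steps are essentially formal consequences of the comparison and continuity results already established in the paper.
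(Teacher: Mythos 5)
Your proposal follows the same overall route as the paper's own (very terse) proof, which is essentially a citation: Friedlander--Voevodsky \cite[Chap.~5, Prop.~4.2.3]{FSV} in characteristic zero (reformulated via Voevodsky's cancellation theorem), Kelly \cite[Prop.~5.5.11]{kelly} for $\ZZ[1/p]$-coefficients over a perfect field of characteristic $p>0$, and finally the passage to a general $\ZZ[1/p]$-algebra $R$ via \eqref{eq:univcoef:bivcyclecoh}. Your expansion into three steps, and your attribution of the substantive content to Kelly's extension (Gabber's alterations plus $\ldh$-descent) of Voevodsky's representability result, is an accurate gloss on what those citations amount to.

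There is, however, a sign error in your Step 2 which, taken literally, breaks the computation. You assert that $\underline{C}_*(\zequi(X,r))_\cdh\otimes^\derL R$ is an effective lift of $M^c(X)(r)[2r]$. What the Friedlander--Voevodsky/Kelly theorem actually says is that $\underline{C}_*\zequi(X,r)$ is an effective model of $\derR\uHom_R(R(r)[2r],M^c(X))$ (informally, of ``$M^c(X)(-r)[-2r]$''): this is exactly what is needed to convert $\Hom_{\DM(k,R)}(M(Y)(r)[2r+i],M^c(X))$ into a Hom-group out of $M(Y)[i]$, readable as $\cdh$-hypercohomology of $Y$. Notice that $M^c(X)(r)[2r]$ is already effective for $r\geq 0$, so there would be nothing to lift; the whole point of the Suslin complex of $r$-equidimensional cycles is to give an effective presentation of the $r$-fold \emph{negative} Tate twist of $M^c(X)$. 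For a quick sanity check take $X=\AA^1$, $r=1$, $Y=\Spec(k)$, $i=0$: one computes $A_{1,0}(\Spec k,\AA^1)\simeq\ZZ$, in agreement with $\Hom(R(1)[2],M^c(\AA^1))\simeq\Hom(R(1)[2],R(1)[2])\simeq R$, whereas maps from $R$ to $M^c(\AA^1)(1)[2]\simeq R(2)[4]$ vanish. Correcting this twist, the rest of your argument is sound and matches the paper's.
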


\begin{proof}
For $R=\ZZ$, in view of Voevodsky's cancellation theorem, this is a reformulation
of \cite[Chap.~5, Prop.4.2.3]{FSV}
in characteristic zero; the case where the exponent characteristic is $p$,
with $R=\ZZ[1/p]$, is proved by Kelly in \cite[Prop.~5.5.11]{kelly}.
This readily implies this formula for a general $\ZZ[1/p]$-algebra $R$
as ring of coefficients, using \eqref{eq:univcoef:bivcyclecoh}.
\end{proof}

\begin{rem}\label{rem:MYfunct}
Let $g:Y\to\Spec(k)$ be a separated morphism of finite type.
The pullback functor
\begin{equation}\label{eq:MYfunct1}
\derL g^*:\DM_\cdh(k,R)\to\DM_\cdh(Y,R)
\end{equation}
has a left adjoint
\begin{equation}\label{eq:MYfunct2}
\derL g_\sharp:\DM_\cdh(Y,R)\to\DM_\cdh(k,R)\, .
\end{equation}
Indeed, this is obviously true if we replace $\DM_\cdh(-,R)$
by $\uDM_\cdh(-,R)$. Since we have $\DM(k,R)\simeq\DM_\cdh(k,R)=\uDM_\cdh(k,R)$
(\ref{prop:insepclosureeff}~(c)), the restriction of
the functor
$$\derL g_\sharp:\uDM_\cdh(Y,R)\to\uDM_\cdh(k,R)$$
to $\DM_\cdh(Y,R)\subset\uDM_\cdh(Y,R)$ provides the
left adjoint of the pullback functor $\derL g^*$ in the
fibred category $\DM_\cdh(-,R)$. This construction does not only
provide a left adjoint, but also computes it: the motive of $Y$
is the image by this left adjoint of the constant motive on $Y$:
\begin{equation}\label{eq:MYfunct3}
M(Y)=\derL g_\sharp(R_Y)\, .
\end{equation}
We also deduce from this description of $\derL g_\sharp$ that,
for any object $M$ of $\DM_\cdh(k,R)$, we have a
canonical isomorphism
\begin{equation}\label{eq:MYfunct4}
\derR g_*\derL g^*(M)\simeq\derR\uHom_R(M(Y),M)
\end{equation}
(where $\uHom_R$ is the internal Hom of $\DM_\cdh(k,R)$): again, this
readily follows from the analogous formula in $\uDM_\cdh(-,R)$).

If we wite $z(X,r)$ for the $\cdh$-sheaf asociated to $\zequi(X,r)$
(which is compatible with the notations of Suslin and
Voevodsky, according to \cite[Chap.~2, Thm.~4.2.9]{FSV}),
we thus have another way of expressing the preceding theorem.
\end{rem}

\begin{cor}\label{cor:bivcohDM}
With the notations of Remark \ref{rem:MYfunct},
we have a canonical isomorphism of $R$-modules:
$$A_{r,i}(Y,X)_R\simeq
\Hom_{\DM_\cdh(Y,R)}(R_Y(r)[2r+i],\derL g^*(M^c(X)))\, .$$
\end{cor}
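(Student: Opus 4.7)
The plan is to deduce this directly from Theorem \ref{thm:bivcohDM} by applying the adjunction $(\derL g_\sharp,\derL g^*)$ recorded in Remark \ref{rem:MYfunct}. Concretely, I will combine three ingredients: the Voevodsky--Kelly formula, the identification $\DM(k,R)\simeq\DM_\cdh(k,R)$, and the description $M(Y)=\derL g_\sharp(R_Y)$ together with the fact that $\derL g_\sharp$ commutes with Tate twists and shifts.

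First, by Theorem \ref{thm:bivcohDM}, we have a canonical isomorphism
$$A_{r,i}(Y,X)_R\simeq\Hom_{\DM(k,R)}(M(Y)(r)[2r+i],M^c(X))\, .$$
Second, Corollary \ref{cor:DMcdhreg} (or, equivalently, Proposition \ref{prop:insepclosureeff}(c) in the case of a perfect field) identifies $\DM(k,R)$ with $\DM_\cdh(k,R)$ compatibly with the canonical objects $M(Y)$ and $M^c(X)$, so the $\Hom$ above may equally well be computed in $\DM_\cdh(k,R)$.

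Third, by \eqref{eq:MYfunct3}, we have $M(Y)=\derL g_\sharp(R_Y)$. Since $\derL g_\sharp$ is the left adjoint of a morphism of premotivic triangulated categories, it commutes with shifts and with Tate twists (the latter because its right adjoint $\derL g^*$ is a symmetric monoidal functor commuting with Tate twists, so the statement follows from a standard adjunction argument). Therefore
$$M(Y)(r)[2r+i]\simeq\derL g_\sharp\bigl(R_Y(r)[2r+i]\bigr)\, .$$
Plugging this into the isomorphism of the first step and applying the adjunction $(\derL g_\sharp,\derL g^*)$ gives
$$\Hom_{\DM_\cdh(k,R)}\bigl(\derL g_\sharp(R_Y(r)[2r+i]),M^c(X)\bigr)
\simeq\Hom_{\DM_\cdh(Y,R)}\bigl(R_Y(r)[2r+i],\derL g^*(M^c(X))\bigr)\, ,$$
which is the desired formula.

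There is no real obstacle here; the content of the statement is entirely in Theorem \ref{thm:bivcohDM} and the existence of the left adjoint $\derL g_\sharp$ established in Remark \ref{rem:MYfunct}. The only point that deserves a moment's verification is the commutation of $\derL g_\sharp$ with Tate twists, but this is a formal consequence of the fact that $\derL g^*$ is symmetric monoidal and commutes with Tate twists.
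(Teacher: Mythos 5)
Your proof is correct and is exactly the argument the paper intends: the Remark~\ref{rem:MYfunct} sets up precisely the left adjoint $\derL g_\sharp$, the identity $M(Y)=\derL g_\sharp(R_Y)$, and the identification $\DM(k,R)\simeq\DM_\cdh(k,R)$ so that the corollary drops out of Theorem~\ref{thm:bivcohDM} by adjunction. Your verification that $\derL g_\sharp$ commutes with Tate twists (using that $\derL g^*$ is a monoidal equivalence on twists and that twists are invertible) is the right routine check to make explicit.
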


\begin{num}\label{num:bivcohDMbis}
The preceding corollary is not quite the most natural
way to express bivariant cycle cohomology $A_{r,i}(Y,X)$.
Keeping track of the notations of Remark \ref{rem:MYfunct},
we can see that there is a canonical isomorphism
\begin{equation}\label{num:bivcohDMbis1}
g_!\, g^!(R)\simeq M(Y)\, .
\end{equation}
Indeed, we have:
$$\derR\uHom_R(g_!g^!(R),R)=\derR g_*\derR\uHom_R(g^!(R),g^!(R))\, .$$
But Grothendieck-Verdier duality (\ref{thm:duality}) implies that
$$R_Y=\derR\uHom_R(g^!(R),g^!(R))\, ,$$
and thus \eqref{eq:MYfunct4} gives:
$$\derR\uHom_R(g_!g^!(R),R)\simeq\derR g_*\derL g^*(R)\simeq\derR\uHom_R(M(Y),R)\, .$$
Since the natural map
$$M\to\derR\uHom_R(\derR\uHom_R(M,R),R)$$
is invertible for any constructible motive $M$ in $\DM_\cdh(k,R)$,
we obtain
the identification \eqref{num:bivcohDMbis1} (note that $M(Y)$ is
constructible; see \cite[Lemma 5.5.2]{kelly}).
\end{num}

\begin{cor}\label{cor:bivcohDM2}
With the notations of Remark \ref{rem:MYfunct},
we have a canonical isomorphism of $R$-modules:
$$A_{r,i}(Y,X)_R\simeq
\Hom_{\DM_\cdh(Y,R)}(g^!(R)(r)[2r+i],g^!(M^c(X)))\, .$$
\end{cor}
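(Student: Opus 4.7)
The plan is to rewrite the right-hand side by a chain of standard manipulations ($(g_!,g^!)$-adjunction, projection formula, and the identification $g_!g^!(R)\simeq M(Y)$) and then invoke Theorem~\ref{thm:bivcohDM}. None of the steps should be substantial; the result is essentially a formal corollary of the previous discussion.

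First, I would apply the adjunction between $g_!$ and $g^!$ to get
\[
\Hom_{\DM_\cdh(Y,R)}(g^!(R)(r)[2r+i],\, g^!(M^c(X)))
\simeq \Hom_{\DM_\cdh(k,R)}(g_!\bigl(g^!(R)(r)[2r+i]\bigr),\, M^c(X))\, .
\]
Next, since Tate twists and shifts on $Y$ are pulled back from $\Spec(k)$ (that is, $R_Y(r)[2r+i]\simeq g^*(R(r)[2r+i])$), and since $R(r)[2r+i]$ is $\otimes$-invertible, the projection formula for $g_!$ (part of the six-functor formalism available thanks to Theorem~\ref{thm:continuityabsDMcdh} and the summary in \ref{num:constructpremotivic}) yields
\[
g_!\bigl(g^!(R)(r)[2r+i]\bigr)
\simeq g_!\bigl(g^!(R)\otimes^{\mathbf L}_R g^*(R(r)[2r+i])\bigr)
\simeq g_!g^!(R)\otimes^{\mathbf L}_R R(r)[2r+i]\, .
\]

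Now I would invoke the canonical isomorphism $g_!g^!(R)\simeq M(Y)$ established in \eqref{num:bivcohDMbis1} (itself a consequence of Grothendieck--Verdier duality, Theorem~\ref{thm:duality}). This identifies the object appearing in the Hom with $M(Y)(r)[2r+i]$. Finally, Theorem~\ref{thm:bivcohDM} (applied via the equivalence $\DM(k,R)\simeq\DM_\cdh(k,R)$ of Proposition~\ref{prop:insepclosureeff}(c)) converts the resulting Hom-group into $A_{r,i}(Y,X)_R$, completing the identification.

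There is no real obstacle here: the only points worth spelling out are the justification of the projection formula in the form used and the compatibility of the identification $g_!g^!(R)\simeq M(Y)$ with Tate twists, both of which follow from the six-functor formalism on $\DM_\cdh(-,R)$ over $k$-schemes of finite type as already recalled in \ref{num:constructpremotivic} and \ref{num:bivcohDMbis}.
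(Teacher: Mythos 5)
Your argument is correct and is essentially the intended proof: the paper places paragraph \ref{num:bivcohDMbis}, which establishes $g_!g^!(R)\simeq M(Y)$, immediately before this corollary precisely so that the $(g_!,g^!)$-adjunction and the projection formula reduce the claim to Theorem~\ref{thm:bivcohDM}. Your chain of identifications (adjunction, projection formula to move the Tate twist and shift outside $g_!$, the identification $g_!g^!(R)\simeq M(Y)$, then Theorem~\ref{thm:bivcohDM} via $\DM(k,R)\simeq\DM_\cdh(k,R)$) matches the paper's route exactly.
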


\begin{num}\label{num:McXDM}
Let $f:X\to\Spec(k)$ be a separated morphism of finite type.
We want to describe $M^c(X)$ in terms of the six operations in $\DM_\cdh(-,R)$.
\end{num}

\begin{prop}\label{prop:McXDM}
With the notations of \ref{num:McXDM},
there are canonical isomorphisms
$$M^c(X)\simeq\derR f_*\, f^!(R)\simeq\derR\uHom_R(f_!(R_X),R)$$
in the triangulated category $\DM_\cdh(k,R)$.
\end{prop}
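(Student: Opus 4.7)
The plan is to establish the two isomorphisms in succession. For the second isomorphism, the argument is a straightforward application of the Grothendieck-Verdier duality formalism in $\DM_\cdh(-,R)$ developed in Theorem \ref{thm:duality} and its surrounding remarks. Using the natural identification $D_X(R_X) = \derR\uHom_R(R_X, f^!(R)) \simeq f^!(R)$ together with the exchange formula \eqref{eq:dualityfexcept3} applied with $N = R_X$, we obtain
$$\derR\uHom_R(f_!(R_X), R) \simeq D_{\Spec(k)}(f_!(R_X)) \simeq \derR f_*(D_X(R_X)) \simeq \derR f_*(f^!(R)),$$
where the first equality uses that on $\Spec(k)$ the duality functor is just $\derR\uHom_R(-, R)$.

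For the first isomorphism, I would proceed by representability. By Proposition \ref{prop:insepclosureeff}(c), $\DM_\cdh(k, R) \simeq \DM(k, R)$, which is compactly generated by Tate twists of motives $M(Y)$ of smooth $k$-schemes of finite type. Theorem \ref{thm:bivcohDM} already identifies
$$\Hom_{\DM(k, R)}(M(Y)(r)[2r + i], M^c(X)) \simeq A_{r, i}(Y, X)_R.$$
For the other side, writing $M(Y) = g_\sharp(R_Y)$ for $g : Y \to \Spec(k)$ the structural morphism, forming the base change $f' : X_Y \to Y$ and $g' : X_Y \to X$, and applying smooth base change $\derL g^* \derR f_* \simeq \derR f'_* \derL g'^*$ together with the smooth pullback compatibility $\derL g'^* f^! \simeq f'^! \derL g^*$ (both valid in the motivic category $\DM_\cdh(-, R)$ thanks to Theorem \ref{thm:continuityabsDMcdh} and the six-functor formalism), one reduces to computing the group $\Hom_Y(R_Y(r)[2r + i], \derR f'_*(f'^!(R_Y)))$.

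The main obstacle is then to identify this last group with $A_{r, i}(Y, X)_R$ naturally in $Y$. To achieve this I would reduce to the case where $X$ is smooth over $k$ via Gabber's resolution by $\ell$-alterations (after localizing to a $\ZZ_{(\ell)}$-algebra of coefficients for $\ell \neq p$, following the pattern used in the proof of Proposition \ref{prop:generators}). In the smooth case of dimension $d$, absolute purity (Prop \ref{prop:abspurity}) provides $f^!(R) \simeq R_X(d)[2d]$, and combining the already-proved second isomorphism with the identification $f_!(R_X) \simeq M(X)(-d)[-2d]$ for $f$ smooth, together with Voevodsky's classical duality $M^c(X) \simeq D(M(X))(d)[2d]$ for smooth $X$, yields
$$\derR f_*(f^!(R)) \simeq D(f_!(R_X)) \simeq D(M(X))(d)[2d] \simeq M^c(X).$$
The general case follows by $\cdh$-descent: both sides descend along $\cdh$-hypercovers with smooth entries provided by Gabber, the left-hand side by its very construction as an object built from $\cdh$-sheaves and the right-hand side because the six operations in $\DM_\cdh(-, R)$ are compatible with $\cdh$-descent (cf. Corollary \ref{cor:ldhdescentDMcdh} and Theorem \ref{thm:continuityabsDMcdh}).
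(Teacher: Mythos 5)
Your derivation of the second isomorphism is fine and essentially matches the paper's final step: dualize using Theorem \ref{thm:duality} and the exchange formula \eqref{eq:dualityfexcept3}. For the first isomorphism, though, the paper takes a completely different route from yours. It first establishes $\derR\uHom_R(M^c(X),R)\simeq f_!(R_X)$ directly, by choosing a compactification $j:X\hookrightarrow\bar X$ and observing that both sides are naturally identified with the homotopy fiber of the restriction $\derR q_*(R_{\bar X})\to\derR r_*(R_{\partial\bar X})$: the right-hand side by the localization-triangle definition of $f_!$, and the left-hand side because $M^c(X)$ is the cofiber of $M(\partial\bar X)\to M(\bar X)$ by \cite[Chap.~5, Prop.~4.1.5]{FSV} (char.~0) and \cite[Prop.~5.5.5]{kelly} (general, $\ZZ[1/p]$-linear). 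Taking duals then gives $M^c(X)\simeq\derR f_*f^!(R)$.

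The gap in your version is the final ``$\cdh$-descent'' step of the reduction to $X$ smooth. Gabber's uniformization \cite[Exp.~IX, Th.~1.1]{gabber} does \emph{not} provide $\cdh$-hypercovers with smooth entries: in characteristic $p>0$ it only produces $\ldh$-alterations --- projective, generically finite flat of degree prime to $\ell$, with regular total space --- which are not abstract blow-ups and hence not $\cdh$-covers. (Only resolution of singularities would give smooth $\cdh$-hypercovers, and that is unavailable here.) Consequently neither $M^c(-)$ nor $\derR f_*f^!(R)$ satisfies plain descent along the covers that Gabber produces; one must instead run a noetherian induction combined with localization triangles and the trace maps of Proposition \ref{prop:remindertrace} (the pattern of Proposition \ref{prop:generators}), showing that the comparison is a retract generically and then inducting on the dimension. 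You gesture at this pattern but the concluding sentence replaces it with a descent claim that is not correct as stated, and the citations to Corollary \ref{cor:ldhdescentDMcdh} and Theorem \ref{thm:continuityabsDMcdh} do not say that ``the six operations are compatible with $\cdh$-descent.'' There is also a missing piece at the start of your reduction: before doing any descent or induction you would first need to construct the natural comparison map from one side to the other, which your Hom-group computation on generators does not by itself provide. The paper's compactification argument avoids all of this by identifying both objects with the same explicit homotopy fiber at once.
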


\begin{proof}
If $f$ is proper, then $f_!(R_X)=\derR f_*(R_X)$, while $M^c(X)=M(X)$
(we really mean equality here, in both cases). Therefore,
we also have
$$\derR\uHom_R(M^c(X),R)=\derR\uHom_R(M(X),R)\simeq\derR f_*(R_X)=f_!(R_X)$$
in a rather canonical way: the identification $\derR\uHom_R(M(X),R)\simeq\derR f_*(R_X)$
can be constructed in $\uDM_\cdh(K,R)$, in which case it can be promoted
to a canonical weak equivakence at the level of the model category of
symmetric Tate spectra of complexes of ($R$-linear) $\cdh$-sheaves with transfers
over the category of separated $K$-schemes of finite type.
In particular, for any morphism $i:Z\to X$ with $g=fi$ proper, we have
a commutative diagram of the form
$$\xymatrix{
\derR\uHom_R(M(X),R)\ar[r]^(.6)\sim\ar[d]_{i^*}&\derR f_*(R_X)\ar[d]^{i^*}\\
\derR\uHom_R(M(Z),R)\ar[r]^(.6)\sim&\derR g_*(R_Z)
}$$
in the (stable model category underlying the) triangulated category $\DM_\cdh(X,R)$.

In the general case, let us choose
an open embedding $j:X\to\bar X$ with a proper $k$-scheme $q:\bar X\to\Spec(k)$,
such that $f=qj$. Let $\partial \bar X$ be a closed subscheme of $\bar X$ such that
$\bar X\setminus\partial \bar X$ is the image of $j$, and write $r:\partial\bar X\to\Spec(k)$
for the structural map. What precedes means that there is a canonical identification
between the homotopy fiber of the restriction map
$$\derR q_*(R_{\bar X})\to\derR r_*(R_{\partial\bar X})$$
and the homotopy fiber of the restriction map
$$\derR\uHom_R(M(\bar X),R)\to\derR\uHom_R(M(\partial\bar X),R)\, .$$
But, by definition of $f_!(R_X)$, and by virtue of \cite[Chap.~5, Prop.~4.1.5]{FSV}
in characteristic zero, and of \cite[Prop.~5.5.5]{kelly} in general,
this means that we have a canonical isomorphism
$$\derR\uHom_R(M^c(X),R)\simeq f_!(R_X)\, .$$
By duality (\ref{thm:duality}),
taking the dual of this identification leads to a canonical isomorphism
$\derR f_*\, f^!(R)\simeq M^c(X)$.
\end{proof}

\begin{thm}\label{thm:bivcycleDM6op}
Let $Y$ and $X$ be two separated $k$-schemes of finite type
with structural maps $g:Y\to\Spec(k)$ and $f:X\to\Spec(k)$.
Then, for any $r\geq 0$, there is a natural identification
$$A_{r,i}(Y,X)_R\simeq\Hom_{\DM_\cdh(k,R)}(g_!g^!(R)(r)[2r+i],\derR f_*f^!(R))\, .$$
\end{thm}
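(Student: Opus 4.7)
The plan is to assemble the statement from the immediately preceding results by a short formal computation, using only adjunctions, the projection formula, and the two main identifications already established.

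I would start from Corollary \ref{cor:bivcohDM2}, which provides
\[
A_{r,i}(Y,X)_R\simeq\Hom_{\DM_\cdh(Y,R)}(g^!(R)(r)[2r+i],g^!(M^c(X)))\, .
\]
Next I would plug in Proposition \ref{prop:McXDM}, which gives $M^c(X)\simeq\derR f_*\, f^!(R)$ in $\DM_\cdh(k,R)$; applying $g^!$ to this natural isomorphism yields $g^!(M^c(X))\simeq g^!\, \derR f_*\, f^!(R)$. So the right-hand side becomes
\[
\Hom_{\DM_\cdh(Y,R)}(g^!(R)(r)[2r+i],\,g^!(\derR f_*f^!(R)))\, .
\]

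Now I would invoke the $(g_!,g^!)$ adjunction (available for the separated morphism of finite type $g$, by Theorem \ref{thm:constructmotivic} and the six-functor formalism for $\DM_\cdh$) to move the $g^!$ to the left-hand argument, obtaining
\[
\Hom_{\DM_\cdh(k,R)}(g_!(g^!(R)(r)[2r+i]),\,\derR f_*f^!(R))\, .
\]
Finally I would use the projection formula $g_!(M\otimes^\derL_R g^*(N))\simeq g_!(M)\otimes^\derL_R N$ (with $M=g^!(R)$ and $N=R(r)$, observing that $g^*(R(r))=R_Y(r)$), together with the fact that $g_!$ is triangulated and hence commutes with the shift $[2r+i]$, to identify
\[
g_!(g^!(R)(r)[2r+i])\simeq g_!g^!(R)(r)[2r+i]\, ,
\]
which yields exactly the desired formula. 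The identification $g_!g^!(R)\simeq M(Y)$ from \eqref{num:bivcohDMbis1} is not strictly needed here, but it provides the conceptual interpretation of the source of the Hom group as twisted motivic (co)homology of $Y$.

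There is no serious obstacle at this stage: all the genuine content has been absorbed into Corollary \ref{cor:bivcohDM2} (which itself rests on the bivariant identification of Voevodsky--Kelly, Theorem \ref{thm:bivcohDM}), Proposition \ref{prop:McXDM} (which is the identification of $M^c(X)$ with $\derR f_* f^!(R)$ via Grothendieck--Verdier duality, Theorem \ref{thm:duality}), and the existence and standard properties of the six operations on $\DM_\cdh$ over $k$-schemes (Theorem \ref{thm:constructmotivic} and \cite[2.4.50]{CD3}). The one point requiring a sentence of justification is the compatibility of $g_!$ with Tate twists, which follows from the projection formula as above.
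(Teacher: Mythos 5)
Your proof is correct and matches the paper's own argument, which is simply the one-line remark that the theorem is obtained by combining Corollary \ref{cor:bivcohDM2} with Proposition \ref{prop:McXDM}. You have merely spelled out the $(g_!,g^!)$ adjunction and the projection-formula compatibility of $g_!$ with Tate twists that the authors leave implicit; these are standard consequences of the six-functor formalism already established for $\DM_\cdh(-,R)$ and raise no issue.
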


\begin{proof}
We simply put Corollary \ref{cor:bivcohDM2} and Proposition \ref{prop:McXDM}
together.
\end{proof}

\begin{cor}\label{cor:higherChow}
Let $X$ be an equidimensional quasi-projective $k$-scheme of dimension $n$,
with structural morphism $f:X\to\Spec(k)$, and consider any subring $\Lambda\subset\QQ$
in which the characteristic exponent of $k$ is invertible.
Then, for any integers $i$ and $j$, we have a natural isomorphism
$$\Hom_{\DM_\cdh(X,\Lambda)}(\Lambda_X(i)[j],f^!\Lambda)\simeq\mathrm{CH}^{n-i}(X,j-2i)\otimes\Lambda$$
(where $\mathrm{CH}^{n-i}(X,j-2i)$ is Bloch's higher Chow group.
\end{cor}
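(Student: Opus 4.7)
The plan is to deduce the statement from Theorem \ref{thm:bivcycleDM6op} by specializing to $Y=\Spec(k)$, and then to invoke the known identification between Friedlander--Voevodsky bivariant cycle cohomology and Bloch's higher Chow groups for equidimensional quasi-projective schemes.

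First, I take $Y=\Spec(k)$ (so $g=\mathrm{id}$, hence $g_!g^!(\Lambda)=\Lambda$) and apply Theorem \ref{thm:bivcycleDM6op}: for every $r\geq 0$ and every integer $s$, it yields a natural isomorphism
\[
A_{r,s}(\Spec(k),X)_\Lambda\simeq\Hom_{\DM_\cdh(k,\Lambda)}(\Lambda(r)[2r+s],\derR f_*f^!(\Lambda)).
\]
The $(\derL f^*,\derR f_*)$ adjunction, together with the identity $\derL f^*(\Lambda)=\Lambda_X$ and the commutation of $\derL f^*$ with Tate twists and shifts, rewrites the right-hand side as
\[
\Hom_{\DM_\cdh(X,\Lambda)}(\Lambda_X(r)[2r+s],f^!\Lambda).
\]
Setting $r=i$ and $s=j-2i$ (which is allowed as soon as $i\geq 0$; the case $i<0$ is treated separately below), I therefore obtain
\[
\Hom_{\DM_\cdh(X,\Lambda)}(\Lambda_X(i)[j],f^!\Lambda)\simeq A_{i,\,j-2i}(\Spec(k),X)_\Lambda.
\]

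Second, I appeal to the Friedlander--Voevodsky/Kelly identification of bivariant cycle cohomology with Bloch's higher Chow groups. Recall that, by definition, $A_{r,s}(\Spec(k),X)_\Lambda$ is the $(-s)$-th cohomology of the Suslin complex $\underline{C}_*(\zequi(X,r))\otimes^\derL\Lambda$ evaluated at $\Spec(k)$. For an equidimensional quasi-projective $k$-scheme $X$ of dimension $n$, the comparison theorem of Friedlander--Voevodsky (\cite[Chap.~5]{FSV} in characteristic zero, extended to arbitrary perfect fields with $\ZZ[1/p]$-coefficients by Kelly, cf. \cite[Prop.~5.5.14]{kelly}) produces a canonical isomorphism
\[
A_{r,s}(\Spec(k),X)_\Lambda\simeq\mathrm{CH}_r(X,s)\otimes\Lambda=\mathrm{CH}^{n-r}(X,s)\otimes\Lambda.
\]
Combining this with the previous display for $r=i$ and $s=j-2i$ yields exactly the stated isomorphism.

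Finally, for the case $i<0$, note that both sides behave consistently with respect to the projective bundle formula. Indeed, $\mathrm{CH}^{n-i}(X,j-2i)$ for $i<0$ can be realized as a direct summand of $\mathrm{CH}^{n-i}(X\times\mathbf{P}^{-i},j-2i)$ via the projective bundle formula for higher Chow groups; on the motivic side, the projective bundle formula in $\DM_\cdh(X,\Lambda)$ identifies $\Lambda_X(i)[2i]$ for $i<0$ as the corresponding direct factor of $\Lambda_{X\times\mathbf{P}^{-i}}$, so one reduces to non-negative twists.

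The main subtlety is the second step: the passage from the presheaf-with-transfers formulation (equidimensional cycle complexes $\zequi(X,r)$) to Bloch's cubical or simplicial higher Chow groups requires the quasi-projective, equidimensional hypothesis, Kelly's $\cdh$-descent for $\ZZ[1/p]$-linear cycle complexes, and a moving lemma, all of which are genuinely non-trivial inputs; however all this is already packaged in the cited results from \cite{FSV} and \cite{kelly}, so the argument here is essentially a bookkeeping exercise reducing the statement to those inputs.
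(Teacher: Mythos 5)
Your proof takes essentially the same route as the paper's: specialize Theorem \ref{thm:bivcycleDM6op} to $Y=\Spec(k)$, use the $(\derL f^*,\derR f_*)$ adjunction to move the Hom group to $\DM_\cdh(X,\Lambda)$, and then invoke the Friedlander--Voevodsky/Kelly comparison between bivariant cycle cohomology $A_{r,s}(\Spec(k),X)$ and Bloch's higher Chow groups. The paper's proof is terse (``reformulation of the preceding theorem and of \cite[Chap.~5, Prop.~4.2.9]{FSV}'', with Kelly's theorems replacing the characteristic-zero ingredient \cite[Thm.~4.2.2]{FSV}), and it does not explicitly mention the range of twists, so your observation that the case $i<0$ requires a separate argument is a genuine and useful point: Theorem \ref{thm:bivcycleDM6op} is stated only for $r\geq 0$, and $A_{r,s}$ vanishes for $r<0$ by convention while both sides of the corollary can be nonzero for $i<0$ (e.g.\ $\mathrm{CH}^1(\Spec k,1)\cong k^\times$).

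That said, your treatment of $i<0$ is slightly imprecise as written. The phrase ``the projective bundle formula identifies $\Lambda_X(i)[2i]$ for $i<0$ as a direct factor of $\Lambda_{X\times\mathbf{P}^{-i}}$'' compares objects living over different bases; what is actually true is that $\Lambda_X(i)[2i]$ is a direct factor of $\derR\pi_*(\Lambda_{X\times\PP^{-i}})$, where $\pi:X\times\PP^{-i}\to X$ is the projection, so one should pass through $\Hom_{\DM_\cdh(X\times\PP^{-i},\Lambda)}(\Lambda[j-2i],(f\pi)^!\Lambda)$ via the $(\pi_!,\pi^!)$ adjunction, apply the already-proved case $i'=0$ to the equidimensional scheme $X\times\PP^{-i}$ of dimension $n-i$, and then extract the right summand. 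Moreover, matching the resulting motivic direct-sum decomposition with the one coming from the projective bundle formula for higher Chow groups requires a (standard but nontrivial) compatibility check between the FV/Kelly comparison isomorphism and the two projective bundle formulas. These are fillable gaps, but you should spell them out rather than wave at them. As a minor bibliographic point, the paper reduces to Kelly's Theorems 5.4.19 and 5.4.21 (as ingredients to rerun the proof of \cite[Prop.~4.2.9]{FSV}), not to a single proposition stating the comparison directly; if the reference you cite does state the comparison directly, that is fine, but you should double-check it.
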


\begin{proof}
In the case where $k$ is of characteristic zero, this
is a reformulation of the preceding theorem and of
\cite[Chap.~5, Prop.~4.2.9]{FSV}.
For the proof of \emph{loc. cit.} to hold \emph{mutatis mutandis}
for any perfect field $k$ of characteristic $p>0$ (and with $\ZZ[1/p]$-linear
coefficients), we see that
apart from Proposition \ref{prop:insepclosureeff}
and Theorem \ref{thm:bivcohDM} above, the only ingredient
that we need is the $\ZZ[1/p]$-linear version of
\cite[Theorem 4.2.2]{FSV}, which is provided by results
of Kelly \cite[Theorems 5.4.19 and 5.4.21]{kelly}.
\end{proof}

\begin{cor}\label{cor:Chow}
Let $X$ be a separated $k$-scheme of finite type, with
structural morphism $f:X\to\Spec(k)$.
For any subring  $\Lambda\subset\QQ$ in which $p$ is invertible,
there is a natural isomorphism
$$\mathrm{CH}_n(X)\otimes\Lambda\simeq\Hom_{\DM_\cdh(X,\Lambda)}(\Lambda_X(n)[2n],f^!\Lambda)$$
for any integer $n$ (where $\mathrm{CH}_n(X)$ is the usual Chow group of
cycles of dimension $n$ on $X$, modulo rational equivalence).
\end{cor}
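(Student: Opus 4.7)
The plan is to deduce this corollary from Corollary \ref{cor:higherChow} by reduction to the equidimensional quasi-projective case. When $X$ is equidimensional of dimension $d$ and quasi-projective, applying Corollary \ref{cor:higherChow} with $(i,j)=(n,2n)$ yields immediately
$$\Hom_{\DM_\cdh(X,\Lambda)}(\Lambda_X(n)[2n],f^!\Lambda)\simeq\mathrm{CH}^{d-n}(X,0)\otimes\Lambda\simeq\mathrm{CH}_n(X)\otimes\Lambda,$$
since Bloch's $\mathrm{CH}^p(X,0)$ is the classical codimension-$p$ Chow group, and codimension $d-n$ in an equidimensional scheme of dimension $d$ coincides with dimension $n$. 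Moreover, the general statement reduces to the case where $X$ is reduced: the closed immersion $X_{\mathrm{red}}\to X$ is finite, surjective and radicial, hence by Proposition \ref{prop:purelyinsepext} induces an equivalence $\DM_\cdh(X,\Lambda)\simeq\DM_\cdh(X_{\mathrm{red}},\Lambda)$ compatible with $f^!\Lambda$, while $\mathrm{CH}_n(X)=\mathrm{CH}_n(X_{\mathrm{red}})$ by definition.

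Assuming $X$ reduced, I would proceed by noetherian induction on $\dim X$. Since $k$ is perfect and $X$ is reduced, the smooth locus of $X$ is dense; pick a dense affine open $j\colon U\hookrightarrow X$ contained in it, and regroup connected components of $U$ by dimension, so that $U$ is a finite disjoint union of equidimensional smooth affine $k$-schemes, to each of which the base case applies. Let $i\colon Z\hookrightarrow X$ denote the closed complement, so $\dim Z<\dim X$ and the inductive hypothesis applies to $Z$. By Theorem \ref{thm:continuityabsDMcdh}, we have the localization triangle
$$i_*(fi)^!\Lambda\to f^!\Lambda\to j_*(fj)^!\Lambda\to i_*(fi)^!\Lambda[1]$$
in $\DM_\cdh(X,\Lambda)$ (using $i^!f^!=(fi)^!$ and $j^*f^!=(fj)^!$). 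Applying $\Hom_{\DM_\cdh(X,\Lambda)}(\Lambda_X(n)[2n+\bullet],-)$ produces a long exact sequence to be compared, via a five-lemma chase, with the Bloch--Levine localization long exact sequence for higher Chow groups in equal characteristic.

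The main obstacle is twofold. First, running the five-lemma at all relevant degrees requires Corollary \ref{cor:higherChow}-style identifications on $Z$ for all higher Chow groups $\mathrm{CH}_n(Z,m)$, not merely $m=0$; the cleanest remedy is to strengthen the induction so as to prove the full analogue of Corollary \ref{cor:higherChow} for arbitrary separated $k$-schemes of finite type (not only the equidimensional quasi-projective ones), and only then specialize at $(i,j)=(n,2n)$. Second, one must verify that the Bloch--Levine boundary maps are compatible with the motivic localization boundaries under the comparison isomorphisms; this naturality is essentially inherited from the constructions in \cite[Chap.~5]{FSV} and \cite{kelly} upon which Corollary \ref{cor:higherChow} itself rests.
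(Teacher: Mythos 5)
Your proposal takes a genuinely different — and considerably heavier — route than the paper's. The paper's proof is a two-line argument: by \cite[Chap.~4, Thm.~4.2]{FSV} together with Kelly's $\ZZ[1/p]$-linear extension \cite[Thm.~5.4.19]{kelly}, one has $\mathrm{CH}_n(X)\otimes\Lambda\simeq A_{n,0}(\Spec(k),X)_\Lambda$ for \emph{any} separated $k$-scheme of finite type $X$, and then Theorem~\ref{thm:bivcycleDM6op} with $Y=\Spec(k)$ (so $g=\mathrm{id}$, hence $g_!g^!(\Lambda)=\Lambda$), $r=n$, $i=0$ gives
$$A_{n,0}(\Spec(k),X)_\Lambda\simeq\Hom_{\DM_\cdh(k,\Lambda)}(\Lambda(n)[2n],\derR f_*f^!(\Lambda))\simeq\Hom_{\DM_\cdh(X,\Lambda)}(\Lambda_X(n)[2n],f^!\Lambda)\, ,$$
with no restriction to quasi-projective or equidimensional $X$. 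Your approach instead starts from Corollary~\ref{cor:higherChow} (which \emph{is} restricted to equidimensional quasi-projective $X$) and tries to remove the hypotheses by a d\'evissage: reduction to the reduced case via Proposition~\ref{prop:purelyinsepext}, then noetherian induction and localization triangles on both the motivic side and the higher Chow side. The reduction-to-reduced step and the grouping of $U$ into equidimensional smooth affines are fine, but, as you yourself honestly flag, the d\'evissage is not actually carried out: you would need (a) the full analogue of Corollary~\ref{cor:higherChow} in all weights for lower-dimensional singular pieces (hence a stronger induction hypothesis than the corollary you are trying to prove), (b) the Bloch--Levine localization long exact sequence for higher Chow groups on arbitrary separated $k$-schemes of finite type, and (c) a verified compatibility between the motivic boundary maps of the triangle $i_*(fi)^!\Lambda\to f^!\Lambda\to j_*(fj)^!\Lambda$ and the Bloch--Levine boundary maps, which is precisely the kind of delicate diagram-chase that the bivariant-cycle-cohomology formulation of Friedlander and Voevodsky was designed to package once and for all. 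So while your outline is not wrong, it re-derives a hard theorem already available (namely $\mathrm{CH}_n(X)\otimes\Lambda\simeq A_{n,0}(\Spec(k),X)_\Lambda$, which holds with no quasi-projectivity or equidimensionality hypothesis) instead of simply citing it, and leaves the genuinely delicate compatibility (c) unaddressed. The paper's route is both shorter and more general from the start.
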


\begin{proof}
Thanks to \cite[Chap.~4, Theorem~4.2]{FSV} and to
\cite[Theorem 5.4.19]{kelly}, we know that
$$\mathrm{CH}_n(X)\otimes\Lambda\simeq A_{n,0}(\Spec(k),X)_\Lambda\, .$$
We thus conclude with Theorem \ref{thm:bivcycleDM6op} for
$r=n$ and $i=0$.
\end{proof}

\section{Realizations}

\begin{num}\label{num:realizationDMh}
Recall from paragraph \ref{num:generalized_DM} that,
for a noetherian scheme $X$,
and a ring a coefficients $\Lambda$, one can define
the $\Lambda$-linear triangulated category of mixed
motives over $X$ associated to the $\h$-topology
$\DM_{\h}(X,\Lambda)$. The latter construction is the subject
of the article \cite{CD4}, in which we see that $\DM_\h(X,\Lambda)$
is a suitable version of the theory of \'etale mixed motives.
In particular, we have a natural
functor induced by the $\h$-sheafification functor:
\begin{equation}\label{eq:hsheafDM}
\DM_\cdh(X,\Lambda)\to\DM_\h(X,\Lambda)\ ,
\quad M\mapsto M_\h\, .
\end{equation}
These functors are part of a premotivic adjunction in the sense of
\cite[Def.~1.4.6]{CD3}.

From now on, we assume that the schemes $X$ are defined over a given
field $k$ and that the characteristic exponent of $k$ is invertible in $\Lambda$.
Since both $\DM_\cdh$ and $\DM_\h$ are motivic categories
over $k$-schemes in the sense of \cite[Def.~2.4.45]{CD3}
(see Theorem \ref{thm:continuityabsDMcdh}
above and \cite[Theorem 5.6.2]{CD4},
respectively), we have the following formulas
(see \cite[Prop. 2.4.53]{CD3}):
\begin{align}
(M\otimes^\derL_\Lambda N)_\h&\simeq M_\h\otimes^\derL_\Lambda N_\h\\
(\derL f^*(M))_\h&\simeq\derL f^*(M_\h) \quad\text{(for any morphism $f$)}\\
(\derL f_\sharp(M))_\h&\simeq\derL f_\sharp(M_\h)
\quad\text{(for any smooth separated morphism $f$)}\\
(f_!(M))_\h&\simeq f_!(M_\h)
\quad\text{(for any separated morphism of finite type $f$)}\label{eq:hsheaff_!}
\end{align}
Note finally that the functor \eqref{eq:hsheafDM} has fully faithful
right adjoint; its essential image consists of objects of $\DM_\cdh$
which satisfy the property of cohomological $\h$-descent
(see \cite[Def.~3.2.5]{CD3}).
\end{num}

\begin{lm}\label{lm:hsheafcommutedirectimage1}
Let $f:X\to \Spec k$ be a separated morphism of finite type.
Then the natural morphism
$$(\derR f_*(\Lambda_X))_\h\to\derR f_*((\Lambda_X)_\h)$$
is invertible in $\DM_\h(k,\Lambda)$.
\end{lm}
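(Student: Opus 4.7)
The plan is to use Nagata compactification to reduce to an open immersion, and then apply Verdier duality. First, factor $f = \bar{f} \circ j$ with $j: X \hookrightarrow \bar{X}$ an open immersion and $\bar{f}: \bar{X} \to \Spec k$ proper. As $\bar{f}$ is proper, $\derR\bar{f}_* = \bar{f}_!$ commutes with $h$-sheafification by formula \eqref{eq:hsheaff_!}, so the problem reduces to the comparison
\[
(\derR j_*(\Lambda_X))_\h \xrightarrow{\sim} \derR j^\h_*(\Lambda^\h_X)
\]
in $\DM_\h(\bar{X})$.

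One natural way to attack this is to use the localization property, which holds both in $\DM_\cdh$ and in $\DM_\h$ (Theorem \ref{thm:continuityabsDMcdh} and its $h$-analogue from \cite{CD4}). Letting $i:Z \hookrightarrow \bar{X}$ denote the complementary closed immersion, one gets a morphism of localization triangles
\[
i_* i^!(\Lambda_{\bar{X}}) \to \Lambda_{\bar{X}} \to \derR j_*(\Lambda_X) \to i_* i^!(\Lambda_{\bar{X}})[1]
\]
comparing its $h$-sheafification with the corresponding triangle in $\DM_\h(\bar{X})$; the middle term is trivially an isomorphism because $\alpha^*=(-)_\h$ preserves the unit, and the left term is controlled by whether the formation of $i^!(\Lambda_{\bar{X}})$ commutes with $\alpha^*$, using that $i_*=i_!$ commutes with $\alpha^*$. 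More efficiently, one may invoke Grothendieck–Verdier duality (Theorem \ref{thm:duality} and Proposition \ref{prop:McXDM}) to rewrite
\[
\derR f_*(\Lambda_X) \simeq D_{\Spec k}\bigl(f_!(f^!(\Lambda_k))\bigr),
\]
and similarly in the $h$-setting. Since $f_!$ commutes with $\alpha^*$ by \eqref{eq:hsheaff_!}, both routes reduce the question to the single compatibility $(f^!(\Lambda_k))_\h \simeq f^{\h,!}(\Lambda^\h_k)$ for the dualizing complex, from which the compatibility of $D = \uHom_\Lambda(-,f^!\Lambda_k)$ with $\alpha^*$ on constructible objects follows by the usual six-functor formalism of \cite{CD3}.

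The remaining task, which is the main obstacle, is thus the compatibility of $f^!$ with $\alpha^*$ applied to the unit. The natural route is to use Gabber's theorem of resolution of singularities by alterations, repeatedly exploited in the excerpt, to find an $h$-cover $p: X' \to X$ with $X'$ smooth over $k$; on $X'$ the absolute purity isomorphism (Proposition \ref{prop:abspurity}) identifies $(fp)^!(\Lambda_k)$ with a Tate twist of the unit, which is trivially compatible with $h$-sheafification. Because pullback along an $h$-cover is conservative in $\DM_\h$ (by the very definition of $h$-descent) and commutes with $\alpha^*$, one can transfer the compatibility back to $X$, completing the proof.
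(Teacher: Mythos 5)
Your reduction has a genuine gap in the last step. Both of your routes—via the localization triangle at the boundary of the compactification, or via $\derR f_*(\Lambda_X)\simeq D_k\bigl(f_!f^!(\Lambda_k)\bigr)$—funnel the question into showing that $f^!(\Lambda_k)$ (resp.\ $i^!(\Lambda_{\bar X})$) commutes with $h$-sheafification. But the descent argument you then sketch does not close. An $\ldh$- or alteration cover $p:X'\to X$ with $X'$ smooth gives you, via absolute purity, an identification of $(fp)^!(\Lambda_k)=p^!f^!(\Lambda_k)$ with a Tate twist of the unit. Pullback along $p$ is indeed conservative in $\DM_\h$ and commutes with $\alpha^*=(-)_\h$; but this tells you how $p^*$ behaves, not $p^!$, and for a Gabber alteration $p$ is proper and generically finite, not smooth, so there is no identity $p^!\simeq p^*(d)[2d]$ available. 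To run the descent you would need $p^!$ to commute with $\alpha^*$, which is essentially the statement you are trying to prove. Likewise, in the localization route, $i^!(\Lambda_{\bar X})$ for the boundary inclusion $i:Z\hookrightarrow\bar X$ is not covered by absolute purity: $Z$ and $\bar X$ will not be regular in general. So the compatibility of the exceptional functors with $\alpha^*$ is not something you can establish at this stage of the development.

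The paper's proof is considerably shorter and avoids $f^!$-compatibility entirely. After reducing to $k$ perfect, so that $\DM_\cdh(k,\Lambda)=\uDM_\cdh(k,\Lambda)$ and $\DM_\h(k,\Lambda)=\uDM_\h(k,\Lambda)$, one observes that $\alpha^*$ is symmetric monoidal and sends $\derL f_\sharp(\Lambda_X)$ to $\derL f_\sharp((\Lambda_X)_\h)$. The object $\derL f_\sharp(\Lambda_X)\simeq f_!f^!(\Lambda)$ is constructible (Theorem~\ref{thm:constructmotivic} and \eqref{num:bivcohDMbis1}), and constructible objects of $\DM_\cdh(k,\Lambda)$ have strong duals (Proposition~\ref{prop:generators} together with Poincar\'e duality, since objects with strong duals form a thick subcategory). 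A symmetric monoidal functor preserves strong duals, and $\derR f_*(\Lambda_X)$ is precisely the strong dual of $\derL f_\sharp(\Lambda_X)$—on both the $\cdh$- and the $\h$-side. That is the whole proof: no Nagata compactification, no localization triangle, no direct analysis of $f^!$ or $i^!$. You had all the pieces—you even invoked duality to express $\derR f_*(\Lambda_X)$—but the right move is to notice that once $\derL f_\sharp(\Lambda_X)$ is known to be strongly dualizable, monoidality of $\alpha^*$ does the rest automatically, instead of trying to track each exceptional operation through $\alpha^*$ separately.
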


\begin{proof}
We may assume that $k$ is a perfect field (using
Prop. \ref{prop:insepclosureeff}~(d)
as well as its analogue for the $\h$-topology (which
readily follows from \cite[Prop.~6.3.16]{CD4})).
We know that $\DM_\cdh(k,\Lambda)=\uDM_\cdh(k,\Lambda)$
by Prop. \ref{prop:insepclosureeff}~(c), and similarly
that $\DM_\h(k,\Lambda)=\uDM_\h(k,\Lambda)$ (since, by virtue of
de Jong's theorem of resolution of singularities by alterations,
locally for the $\h$-topology, any $k$-scheme of finite type is smooth).
The functor
$$\uDM_\cdh(k,\Lambda)\to\uDM_\h(k,\Lambda)\ , \quad  M\mapsto M_\h$$
is symmetric monoidal and sends $\derL f_\sharp(\Lambda_X)$ to
$\derL f_\sharp((\Lambda_X)_\h)$.
On the other hand, the motive $\derL f_\sharp(\Lambda_X)\simeq f_!(f^!(\Lambda))$
is constructible (see \eqref{num:bivcohDMbis1} for $g=f$ and Theorem \ref{thm:constructmotivic}),
whence has a strong dual in $\DM_\cdh(k,\Lambda)$
(since objects with a strong dual form a thick subcategory, this follows
from Proposition \ref{prop:generators}, by Poincar\'e duality;
see \cite[Theorems 2.4.42 and 2.4.50]{CD3}).
The functor $M\mapsto M_\h$ being symmetric monoidal, it preserves
the property of having a strong dual and preserves strong duals.
Since $\derR f_*(\Lambda_X)$ is the (strong) dual of $\derL f_\sharp(\Lambda_X)$
both in $\uDM_\cdh(k,\Lambda)$ and in $\uDM_\h(k,\Lambda)$, this proves this
lemma.
\end{proof}

\begin{lm}\label{lm:derRfcommutesumscdhandh}
Let $f:X\to Y$ be a $k$-morphism between separated
$k$-schemes of finite type. Then the functors
$$\derR f_*:\DM_\cdh(X,\Lambda)\to\DM_\cdh(Y,\Lambda)\quad
\text{and}\quad\derR f_*:\DM_\h(X,\Lambda)\to\DM_\h(Y,\Lambda)$$
commute with small sums.
\end{lm}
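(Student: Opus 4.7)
The plan is to deduce the statement from the general principle that, in a compactly generated triangulated category, the right adjoint of a triangulated functor commutes with small sums if and only if the functor itself preserves compact objects. Thus it suffices to show that
$$\derL f^*:\DM_\cdh(Y,\Lambda)\to\DM_\cdh(X,\Lambda)\quad\text{and}\quad\derL f^*:\DM_\h(Y,\Lambda)\to\DM_\h(X,\Lambda)$$
preserve compact objects.

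For $\DM_\cdh$, the category is compactly generated by objects of the form $M_Y(V)(n)=\Lambda_Y(V)(n)$, where $V$ is smooth separated of finite type over $Y$ and $n\in\ZZ$ (this is built into Definition \ref{df:cdh-motives}, and such objects are compact since constructibility equals compactness in $\DM_\cdh$ by Remark \ref{rem:constructcompact}). The functor $\derL f^*$ preserves this generating family, since by the base-change formula at the level of representables,
$$\derL f^*\bigl(M_Y(V)(n)\bigr)\simeq M_X(V\times_Y X)(n),$$
and $V\times_Y X\to X$ is again smooth separated of finite type. Since the compact objects of $\DM_\cdh(Y,\Lambda)$ form the thick subcategory generated by these generators, the triangulated functor $\derL f^*$ preserves all compact objects, and hence $\derR f_*$ commutes with small sums.

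The same argument applies verbatim in $\DM_\h$: the category $\DM_\h(X,\Lambda)$ is compactly generated by the Tate twists of motives of smooth separated $X$-schemes of finite type (\cite[Theorem 5.6.2]{CD4}), these are again preserved by pullback along $f$, and compactness coincides with constructibility in this setting as well. Therefore $\derL f^*$ preserves compact objects and its right adjoint $\derR f_*$ commutes with small sums.

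The only potential subtlety — and the step worth pausing over — is the identification of compact objects with the thick subcategory generated by the obvious compact generators in both $\DM_\cdh$ and $\DM_\h$; once this is in hand (and it is, by the premotivic formalism of \cite{CD3} together with Theorem \ref{thm:continuityabsDMcdh} and its $\h$-analogue), the proof reduces to the one-line computation above with fibre products of smooth schemes.
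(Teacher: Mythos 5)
Your argument for the $\cdh$-case is correct and is exactly what the paper does: $\DM_\cdh(X,\Lambda)$ is compactly generated by the $M_X(V)(n)$ for $V/X$ smooth separated of finite type, $\derL f^*$ preserves this generating family via the base-change formula, hence $\derL f^*$ preserves compact objects, and so its right adjoint $\derR f_*$ commutes with small sums by Neeman's Brown representability.

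The $\h$-case, however, contains a genuine gap. You assert that ``compactness coincides with constructibility in this setting as well,'' but this is precisely where the $\cdh$ and $\h$ (or \'etale) settings differ, and the paper itself flags this: in the remark after Proposition~\ref{prop:HZmodrelatif2} it points to \cite[Remarks 5.4.10 and 5.5.11]{CD4} as ``a context in which these two notions fail to coincide'' --- that context being the \'etale/$\h$-topology one. The obstruction is that in $\DM_\h(X,\Lambda)$ the objects $M_X(V)(n)$, although constructible generators, need not be compact: the $\h$-local (or \'etale-local) replacement of a representable sheaf involves infinite Galois cohomology, and without a uniform bound on cohomological dimension of residue fields the functor $\Hom(M_X(V)(n),-)$ need not commute with arbitrary sums. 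Consequently the generating family you exhibit is not known to consist of compact objects, and the Brown-representability step breaks down. This is exactly why the paper does not run the $\cdh$-argument ``verbatim'' for $\h$, but instead cites \cite[Prop.~5.5.10]{CD4}, which establishes the commutation of $\derR f_*$ with sums in $\DM_\h$ by a different route that does not presuppose the naive generators are compact. To repair your proof you would need either to establish compact generation of $\DM_\h(X,\Lambda)$ together with compactness of the $M_X(V)(n)$ under the standing hypotheses (which requires cohomological-dimension input), or to give a direct argument along the lines of \emph{loc.~cit.}
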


\begin{proof}
In the case of $\cdh$-motives follows from the fact that the functor
$$\derL f^*:\DM_\cdh(Y,\Lambda)\to\DM_\cdh(X,\Lambda)$$
sends a family of compact generators into a family of compact objects.
The case of $\h$-motives is proven in \cite[Prop.~5.5.10]{CD4}.
\end{proof}

\begin{prop}\label{prop:hsheafcommutedirectimage2}
Let $f:X\to Y$ be a $k$-morphism between separated
$k$-schemes of finite type. Then, for any object $M$
of $\DM_\cdh(X,\Lambda)$, the natural map
$$\derR f_*(M)_\h\to\derR f_*(M_\h)$$
is invertible in $\DM_\h(Y,\Lambda)$.
\end{prop}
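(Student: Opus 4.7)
The plan is to combine the generator description of $\DM_\cdh(X,\Lambda)$ provided by Proposition \ref{prop:generators} with Grothendieck--Verdier duality over $k$ (Theorem \ref{thm:duality}) in order to reduce the assertion to a direct computation on motives of smooth projective schemes.

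To begin, I would observe that both $M\mapsto(\derR f_*(M))_\h$ and $M\mapsto\derR f_*(M_\h)$ are triangulated functors from $\DM_\cdh(X,\Lambda)$ to $\DM_\h(Y,\Lambda)$ that commute with small sums: the functor $(-)_\h$ is a left adjoint, and Lemma \ref{lm:derRfcommutesumscdhandh} handles $\derR f_*$. Since $\DM_\cdh(X,\Lambda)$ is compactly generated, it is enough to show that the comparison $\alpha_M\colon(\derR f_*(M))_\h\to\derR f_*(M_\h)$ is invertible when $M$ is constructible. By Proposition \ref{prop:generators}, such an $M$ lies in the thick subcategory generated by motives of the form $p_!(\Lambda_Z)(n)$ with $p\colon Z\to X$ projective, $Z$ regular (hence smooth over $k$, since $k$ is perfect), and $n\in\ZZ$. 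Tate twists commute with both sides; since $p$ is proper, $p_!=\derR p_*$ in both categories, and $(-)_\h$ commutes with $p_!$ by \eqref{eq:hsheaff_!}. Writing $h=fp\colon Z\to Y$, the problem reduces to proving
\[
(\star)\qquad(\derR h_*\Lambda_Z)_\h\simeq\derR h_*\Lambda_{Z,\h}\quad\text{in}\quad\DM_\h(Y,\Lambda),
\]
for an arbitrary morphism $h\colon Z\to Y$ between separated $k$-schemes of finite type, with $Z$ smooth of some dimension $d$ over $k$.

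The next step is to exploit Grothendieck--Verdier duality. Letting $g_Y\colon Y\to\Spec(k)$, $\omega_Y=g_Y^!(\Lambda)$, and $D_Y=\uHom(-,\omega_Y)$, with $D_Y^\h$ denoting its $\h$-analogue, absolute purity (Proposition \ref{prop:abspurity}, applied to a suitable closed immersion of $Z$ into an affine space) gives $D_Z(\Lambda_Z)\simeq\Lambda_Z(d)[2d]$. Identity \eqref{eq:dualityfexcept3} then produces the identification
\[
\derR h_*(\Lambda_Z)\;\simeq\; D_Y\bigl(h_!\Lambda_Z\bigr)(-d)[-2d]
\]
in $\DM_\cdh(Y,\Lambda)$, and the same computation carried out in $\DM_\h(Y,\Lambda)$ yields $\derR h_*(\Lambda_{Z,\h})\simeq D_Y^\h\bigl(h_!\Lambda_{Z,\h}\bigr)(-d)[-2d]$. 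Combined with \eqref{eq:hsheaff_!}, these identifications reduce $(\star)$ to the compatibility
\[
(\dagger)\qquad(D_Y N)_\h\simeq D_Y^\h(N_\h)\quad\text{in}\quad\DM_\h(Y,\Lambda),
\]
for every constructible $N\in\DM_\cdh(Y,\Lambda)$.

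The main remaining step will be to prove $(\dagger)$. Both $(-)_\h\circ D_Y$ and $D_Y^\h\circ(-)_\h$ are contravariant triangulated functors on the subcategory of constructible objects, so it suffices to verify the natural comparison map is invertible on a thick-generating family. Applying Proposition \ref{prop:generators} with $X$ replaced by $Y$ supplies such a family consisting of motives $q_!(\Lambda_W)(m)$, for $q\colon W\to Y$ projective, $W$ smooth over $k$ of some dimension $e_W$, and $m\in\ZZ$. For such a generator, \eqref{eq:dualityfexcept4} (combined with $q_!=\derR q_*$) and purity yield
\[
D_Y\bigl(q_!\Lambda_W(m)\bigr)\;\simeq\; q_!\bigl(D_W(\Lambda_W)\bigr)(-m)\;\simeq\; q_!\Lambda_W(e_W-m)[2e_W]\, ;
\]
applying $(-)_\h$ and using \eqref{eq:hsheaff_!} produces $q_!\Lambda_{W,\h}(e_W-m)[2e_W]$, which coincides with the analogous computation of $D_Y^\h\bigl(q_!\Lambda_{W,\h}(m)\bigr)$ in $\DM_\h$. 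This establishes $(\dagger)$ on generators, hence on all constructibles, and therefore proves $(\star)$ and the proposition.
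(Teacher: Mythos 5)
The paper proves this result by a shorter route: after the same reduction to constructible $M$ and the proper case (using that $\DM_\cdh(X,\Lambda)$ is generated by $\derR g_*(\Lambda_{X'}(n))$ for $g$ proper), it reduces to the constant motive $\Lambda_{X'}$ and then invokes Lemma \ref{lm:hsheafcommutedirectimage1}, whose proof rests on \emph{strong} duality in $\DM_\cdh(k,\Lambda)$: constructible objects over the perfect field $k$ have strong duals, $\derR f_*(\Lambda_X)$ is the strong dual of $M(X)$, and the symmetric monoidal functor $(-)_\h$ automatically preserves strong duals and the canonical pairing. Your proof replaces this with a Grothendieck--Verdier duality argument \emph{over the base $Y$} (via Theorem \ref{thm:duality}, Proposition \ref{prop:generators}, and relative purity for the smooth scheme $Z$). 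This is a genuinely different and in one respect more self-contained route, since it avoids the passage to $\Spec(k)$ that the paper's last line implicitly relies on.

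There is, however, a genuine gap in the final step. Reducing $(\dagger)$ to generators $N=q_!\Lambda_W(m)$ and then computing that $\bigl(D_Y(N)\bigr)_\h$ and $D^\h_Y(N_\h)$ are \emph{both isomorphic} to $q_!\Lambda_{W,\h}(e_W-m)[2e_W]$ does not by itself show that the \emph{natural comparison map} $\bigl(D_Y N\bigr)_\h\to D^\h_Y(N_\h)$ is an isomorphism: you have exhibited abstractly isomorphic objects, not that the given natural transformation realizes the isomorphism. The comparison map factors through the exchange morphism $(\omega_Y)_\h\to\omega_Y^\h$ for the dualizing complex, whose invertibility is of exactly the same type as the $f^!$-compatibility that this proposition is ultimately used to prove (Corollary \ref{cor:hsheaf6operations}); one must therefore actually trace the natural map through the chain of identifications (local duality $\derR\uHom(q_!A,B)\simeq \derR q_*\derR\uHom(A,q^!B)$, the identity $q^!\omega_Y=\omega_W$, and relative purity $\omega_W\simeq\Lambda_W(e_W)[2e_W]$) and observe that it reduces to the exchange $(\omega_W)_\h\to\omega^\h_W$ for the \emph{smooth} scheme $W$, which is an isomorphism precisely because purity expresses $g_W^!$ through $g_W^*$, with which $(-)_\h$ does commute. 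This is why the paper works with strong duals instead of Verdier duals: preservation of strong duals by a symmetric monoidal functor is formal and handles the natural map automatically, whereas Verdier duality requires the extra verification that you omitted. Finally, a small misattribution: the isomorphism $D_Z(\Lambda_Z)\simeq\Lambda_Z(d)[2d]$ for $Z$ smooth over $k$ comes from relative purity for the smooth structure morphism $Z\to\Spec(k)$, not from Proposition \ref{prop:abspurity}, which concerns closed immersions of regular schemes.
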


\begin{proof}
The triangulated category $\DM_\cdh(X,\Lambda)$ is compactly generated by
objects of the form $\derR g_*(\Lambda_{X'}(n)$ for $g:X'\to X$ a proper
morphism and $n$ any integer; see \cite[Prop.~4.2.13]{CD3}, for instance.
Since the lemma is already known in the case of
proper maps (see equation \eqref{eq:hsheaff_!}),
we easily deduce from Lemma \ref{lm:derRfcommutesumscdhandh}
that we may assume $M$ to be isomorphic to the constant motive $\Lambda_X$.
In this case, we conclude with Lemma \ref{lm:hsheafcommutedirectimage1}.
\end{proof}

\begin{cor}\label{cor:hsheaf6operations}
Under the assumptions of paragraph \ref{num:realizationDMh}, the restriction
of the motivic functor
$M\mapsto M_\h$ \eqref{eq:hsheafDM}
to constructible objects commutes with the
six operations of Grothendieck over the category of separated $k$-schemes of finite
type.
\end{cor}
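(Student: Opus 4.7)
The plan is to verify the commutation of the $\h$-sheafification $M \mapsto M_\h$ with each of the six operations on constructible objects by combining the premotivic adjunction structure, Proposition \ref{prop:hsheafcommutedirectimage2}, and the generator theorem of Proposition \ref{prop:generators}. By the formulas recalled in paragraph \ref{num:realizationDMh}, this functor commutes with $\otimes^\derL_\Lambda$, $\derL f^*$, and $f_!$ on all objects (not merely constructibles); Proposition \ref{prop:hsheafcommutedirectimage2} moreover handles the commutation with $\derR f_*$ for any $k$-morphism between separated $k$-schemes of finite type. It remains to establish the commutations with $f^!$ and $\uHom_\Lambda$.

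For the commutation with $f^!$, I would treat two base cases first. When $f$ is smooth of relative dimension $d$, relative purity gives $f^! \simeq \derL f^*(d)[2d]$, so the statement reduces to the known case of $\derL f^*$. When $f = i : Z \hookrightarrow Y$ is a closed immersion with open complement $j : U \hookrightarrow Y$, I apply $(-)_\h$ to the localization triangle
$$i_*\, i^!(N) \to N \to \derR j_*\, j^*(N) \to i_*\, i^!(N)[1]$$
in $\DM_\cdh(Y, \Lambda)$ and compare with the corresponding triangle in $\DM_\h(Y, \Lambda)$, which exists by the localization property of any motivic category. Since $(-)_\h$ commutes with $i_! = i_*$, with $j^*$, and (by Proposition \ref{prop:hsheafcommutedirectimage2}) with $\derR j_*$, this comparison, together with the full faithfulness of $i_*$, yields $(i^! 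N)_\h \simeq i^!(N_\h)$ for any $N$. For a general separated morphism of finite type $f : X \to Y$, the question of whether the canonical natural map $(f^! N)_\h \to f^!(N_\h)$ is an isomorphism can be checked Zariski-locally on $X$ (since $(-)_\h$ commutes with $u^! \simeq u^*$ for open immersions $u$); we may thus assume $X$ affine, and then $f$ factors as a closed immersion $X \hookrightarrow \AA^n_Y$ followed by the smooth projection $\AA^n_Y \to Y$, so the commutation follows from the two base cases.

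For the commutation with $\uHom_\Lambda$ on pairs of constructibles, Proposition \ref{prop:generators} (applied with $Z = \emptyset$) shows that the constructible objects of $\DM_\cdh(X, \Lambda)$ are generated as a thick subcategory by motives of the form $M = f_!(\Lambda_Y)(n)$ with $f : Y \to X$ projective and $Y$ regular. Since the class of $M$ for which $(\uHom_\Lambda(M, N))_\h \simeq \uHom_\Lambda(M_\h, N_\h)$ holds for every $N$ is itself a thick subcategory of $\DM_\cdh(X, \Lambda)$, it suffices to verify this for such generators. For them, the standard adjunction identity $\uHom_\Lambda(f_!(P), N) \simeq f_*\, \uHom_\Lambda(P, f^!(N))$ specializes to
$$\uHom_\Lambda(M, N) \simeq f_*\bigl(f^!(N)\bigr)(-n),$$
and the desired commutation then follows from the previously established cases for $f_*$ (Proposition \ref{prop:hsheafcommutedirectimage2}) and $f^!$ applied to the projective morphism $f$.

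The main obstacle is the $f^!$ case for a closed immersion, resolved by the localization-triangle comparison above; once this is in hand, the remaining steps amount to routine bookkeeping combining the already-known premotivic compatibilities with the generator theorem.
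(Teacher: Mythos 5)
Your proof is correct but takes a genuinely different route from the paper. For the compatibility with $f^!$, you argue geometrically: reduce by relative purity and Zariski localization to the case of a closed immersion, and then compare the two localization triangles
$$i_*\, i^!(N) \to N \to \derR j_*\, j^*(N) \to i_*\, i^!(N)[1]$$
in $\DM_\cdh$ and $\DM_\h$, using Proposition \ref{prop:hsheafcommutedirectimage2} for the $\derR j_*$ term and full faithfulness of $i_*$. The paper instead establishes the $\uHom$-compatibility first, using the \emph{simplest} generators $\derL f_\sharp(\Lambda_X)$ for $f$ smooth (so that $\derR\uHom(\derL f_\sharp(\Lambda_X),N) \simeq \derR f_* f^*(N)$ reduces everything to Proposition \ref{prop:hsheafcommutedirectimage2}); it then deduces $f^!$ for $f:X\to\Spec(k)$ from a chain of manipulations with internal Hom, and finally obtains general $g^!$ from the duality formula \eqref{eq:dualityfexcept1}. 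Your approach buys a more hands-on treatment of $f^!$ that does not pass through Grothendieck--Verdier duality; the paper's approach buys a lighter generator theorem for $\uHom$ (you invoke Proposition \ref{prop:generators}, which is more machinery than necessary -- the elementary generators $f_\sharp(\Lambda_X)$ for $f$ smooth already suffice, via $\uHom(f_\sharp\Lambda_X,N)\simeq f_* f^*N$). Two minor points to tighten: (1) when you factor $f:X\to Y$ through $\AA^n_Y$, the question must also be localized on $Y$, not just on $X$ (one needs $Y$ affine to globally embed an affine $X$ of finite type over $Y$ into some $\AA^n_Y$; this is harmless since the statement is also Zariski-local on $Y$); (2) in the closed-immersion step, one should observe that the canonical base-change map $(i^!N)_\h\to i^!(N_\h)$ (defined by adjunction through $i_!=i_*$) is the one that fits into the morphism of triangles, so that the two-out-of-three argument applies to \emph{that} map and not merely to some unspecified isomorphism of cones. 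Both fixes are routine.
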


\begin{proof}
After Proposition \ref{prop:hsheafcommutedirectimage2}, we see that it is sufficient
to prove the compatibility with internal Hom and with operations of the form $g^!$ for any morphism $g$ between
separated $k$-schemes of finite type.

Let us prove that, for any separated $k$-scheme of finite type $Y$ and any
constructible objects $A$ and $N$ of $\DM_\cdh(Y,\Lambda)$, the natural map
$$\derR\uHom(A,N)_\h\to\derR\uHom(A_\h,N_\h)$$
is invertible in $\DM_\h(Y,\Lambda)$. We may assume that $A=f_\sharp(\Lambda_X)$
for some smooth morphism $f:X\to Y$. Since we have the canonical identification
$$\derR\uHom(\derL f_\sharp(\Lambda_X),N)\simeq\derR f_*\, f^*(N)\, ,$$
we conclude by using the isomorphism
provided by Proposition \ref{prop:hsheafcommutedirectimage2} in the case where
$M=f^*(N)$.

Consider now a separated morphism of finite type $f:X\to\Spec k$.
For any constructible objects $M$ and $N$ of $\DM_\cdh(X,\Lambda)$ and
$\DM_\cdh(k,\Lambda)$, respectively, we have:
\begin{align*}
\derR f_*(\derR\uHom(M_\h,f^!(N)_\h))
&\simeq\derR f_*(\derR\uHom(M,f^!(N))_\h)\\
&\simeq(\derR f_*\derR\uHom(M,f^!(N)))_\h\\
&\simeq\derR\uHom(f_!(M),N)_\h\\
&\simeq\derR\uHom(f_!(M_\h),N_\h)\\
&\simeq\derR f_*(\derR\uHom(M_\h,f^!(N_\h)))\, .
\end{align*}
Therefore, for any object $C$ of $\DM_\h(k,\Lambda)$, there is an isomorphism:
\begin{align*}
\derR\Hom(\derL f^*(C)\otimes^\derL_\Lambda M_\h,f^!(N)_\h)\simeq
\derR\Hom(\derL f^*(C)\otimes^\derL_\Lambda M_\h,f^!(N_\h))\, .
\end{align*}
Since the constructible objects of the form $M_\h$ are a generating family of
$\DM_\h(k,\Lambda)$, this proves that the natural map
$$f^!(N)_\h\to f^!(N_\h)$$
is an isomorphism.
The functor $M\mapsto M_\h$ preserves internal Hom's
of constructible objects, whence it follows
from Formula \eqref{eq:dualityfexcept0} that it preserves duality.
Therefore, Formula \eqref{eq:dualityfexcept1}
shows that it commutes with operations of the form $g^!$ for any morphism $g$ between
separated $k$-schemes of finite type.
\end{proof}

\begin{rem}
In the case where $\Lambda$ is of positive characteristic, the trianguated category
$\DM_\h(X,\Lambda)$ is canonically equivalent to the derived category
$\Der(X_\et,\Lambda)$
of the abelian category of sheaves of $\Lambda$-modules on the small \'etale
site of $X$; see \cite[Cor.~5.4.4]{CD4}. Therefore, Corollary \ref{cor:hsheaf6operations}
then provides a system of triangulated functors
$$\DM_\cdh(X,\Lambda)\to\Der(X_\et,\Lambda)$$
which preserve the six operations when restricted to constructible objects.
Moreover, constructible objects of
$\DM_\h(X,\Lambda)$ correspond to the full subcategory
$\Der^b_{\mathit{ctf}}(X_\et,\Lambda)$
of the category $\Der(X_\et,\Lambda)$
which consists of bounded complexes of sheaves of
$\Lambda$-modules over $X_\et$ with constructible
cohomology, and which are of finite tor-dimension;
see \cite[Cor.~5.5.4 (and Th.~6.3.11)]{CD4}.
Therefore, for $\ell\neq p$,
using \cite[Prop. 7.2.21]{CD4}, we easily get $\ell$-adic realizations which
are compatible with the six operations (on constructible objects)
over separated $k$-schemes of finite type:
$$\DM_{\cdh,c}(X,\ZZ[1/p])\to\Der^b_c(X_\et,\ZZ_\ell)\to
\Der^b_c(X_\et,\QQ_\ell)\, .$$
For instance, this gives an alternative proof of some of the
results of Olsson (such as \cite[Theorem 1.2]{Olsson1}).

Together with Theorem \ref{thm:bivcycleDM6op}, Corollary \ref{cor:hsheaf6operations}
is thus a rather functorial way to construct cycle class maps in \'etale
cohomology (and in any mixed Weil cohomology, since they define realization functors
of $\DM_\h(-,\QQ)$ which commute with the six operations on constructible objects; 
see \cite[17.2.5]{CD3} and \cite[Theorem 5.2.2]{CD4}). 
This provides a method to prove independence of $\ell$
results as follows. Let $X$ be a separated $k$-scheme of finite type, with
structural map $a:X\to\Spec k$, and $f:X\to X$
any $k$-morphism. Then $f$ induces an endomorphism of $\derR a_*(\ZZ[1/p]_X)$
in $\DM_\cdh(k,\ZZ[1/p])$. Since the latter object is constructible
(by Theorem \ref{thm:constructmotivic}~(a)), it has a strong dual
(as explained in the proof of Lemma \ref{lm:hsheafcommutedirectimage1}),
and thus one can define the trace of the morphism induced by $f$,
which is an element of $\ZZ[1/p]$ (since one can identify $\ZZ[1/p]$
with the ring of endomorphisms of the constant motive $\ZZ[1/p]$
in $\DM_\cdh(k,\ZZ[1/p])$ using Corollary \ref{cor:Chow}).
Let $\ell$ be a prime number distinct from the characteristic exponent
of $k$. Since the $\ell$-adic realization functor is symmetric monoidal,
it preserves the property  of having a strong dual and preserves traces
of endomorphisms of objects with strong duals. Therefore, if $\bar k$ is any choice of an
algebraic closure of $k$, and if $\bar X=\bar k\otimes_k X$, the
number
$$\sum_i(-1)^i\mathrm{Tr}\big [ f^*:H^i_\et(\bar X,\QQ_\ell)
\to H^i_\et(\bar X,\QQ_\ell) \big ]$$
is independent of $\ell$ and belongs to $\ZZ[1/p]$:
Corollary \ref{cor:hsheaf6operations} implies that it is the image through the
unique morphism of rings $\ZZ[1/p]\to\QQ_\ell$ of the trace of the
endomorphism of the motive $\derR a_*(\ZZ[1/p]_X)$ induced by $f$.
This might be compared with Olsson's proof in the case where $f$ is finite;
see \cite[Theorem 1.2]{Olsson2}.
One may also replace $H^i(\bar X,\QQ_\ell)$ with the evaluation at $X$ of
any mixed Weil cohomology defined on smooth $k$-schemes, and still use the
same argument.
\end{rem}

\begin{rem}
If the ring $\Lambda$ is a $\QQ$-algebra,
the functor $M\mapsto M_\h$ defines an equivalence of categories
$\DM_\cdh(X,\Lambda)\simeq\DM_\h(X,\Lambda)$ (so that Corollary \ref{cor:hsheaf6operations}
becomes a triviality).
This is because, under the extra hypothesis that $\QQ\subset\Lambda$,
the abelian categories of $\cdh$-sheaves of $\Lambda$-modules
with transfers and of $\h$-sheaves of $\Lambda$-modules are equivalent: by a limit
argument, it is sufficient to prove this when $X$ is excellent, and then, this is an exercise
which consists to put together \cite[Prop.~10.4.8, Prop.~10.5.8, Prop.~10.5.11 and Th.~3.3.30]{CD3}.
\end{rem}

\bibliographystyle{amsalpha}
\bibliography{DMcdh}
\end{document}